\documentclass[openright,a4paper,american,11pt]{amsart}

\usepackage[latin1]{inputenc}

\usepackage{amsmath}
\usepackage{amssymb}
\usepackage{hyperref}
\usepackage{babel}

\usepackage{amsfonts}
\input xy
\xyoption{all}
\addtolength{\textwidth}{4cm}
\addtolength{\oddsidemargin}{-2cm}
\addtolength{\evensidemargin}{-2cm}

\usepackage[dvips]{graphicx}
\usepackage{boxedminipage}
\usepackage{color}

\newcommand{\ZZ}{{\mathbb Z}}

\newcommand{\PP}{{\mathbb P}}

\newcommand{\CC}{{\mathbb C}}
\newcommand{\RR}{{\mathbb R}}

\newcommand{\TT}{{\mathbb T}}

\newcommand{\D}{{\mathcal D}}
\newcommand{\C}{{\mathcal C}}
\newcommand{\E}{{\mathcal E}}
\newcommand{\F}{{\mathcal F}}
\newcommand{\M}{{\mathcal M}}
\newcommand{\N}{{\mathcal N}}
\newcommand{\I}{{\mathfrak I}}
\renewcommand{\H}{{\mathcal H}}
\renewcommand{\S}{{\mathcal S}}
\renewcommand{\P}{{\mathcal P}}
\renewcommand{\L}{{\mathcal L}}

\renewcommand{\Im}{\text{Im\ }}
\newcommand{\Ve}{\text{Vert}}
\newcommand{\Ed}{\text{Edge}}
\renewcommand{\div}{\text{div}}
\newcommand{\val}{\text{val}}
\newcommand{\oC}{\overset{\circ}C}
\newcommand{\Q}{{\mathcal Q}}
\newcommand{\Sh}{\text{Sh}}

\newtheorem{thm}{Theorem}[section]
\newtheorem{defi}[thm]{Definition}
\newtheorem{defn}[thm]{Definition}
\newtheorem{prop}[thm]{Proposition}
\newtheorem{lemma}[thm]{Lemma}
\newtheorem{cor}[thm]{Corollary}

          {\theoremstyle{definition}
\newtheorem{rem}[thm]{Remark}}
          {\theoremstyle{definition}
\newtheorem{exa}[thm]{Example}}

\setcounter{tocdepth}{2}

\begin{document}
\title{Genus 0 characteristic numbers of the tropical projective plane}
\author{Benoît Bertrand}
\address{Benoît Bertrand, Université Paul Sabatier, Institut Mathématiques de Toulouse, 
118 route de Narbonne, F-31062 Toulouse Cedex 9, France}
\email{benoit.bertrand@math.univ-toulouse.fr}
\author{Erwan Brugallé}
\address{Erwan Brugallé, Université Pierre et Marie Curie,  Paris 6, 4 place Jussieu, 75 005 Paris, France} 
\email{brugalle@math.jussieu.fr}
\author{Grigory Mikhalkin}
\address{Grigory Mikhalkin, Section de mathématiques
Université de Genève,
Villa Battelle, 7 route de Drize,
1227 Carouge, Suisse} 
\email{grigory.mikhalkin@unige.ch}
\date{\today}

\subjclass[2010]{14T05, 14N10}

\keywords{Tropical geometry, characteristic numbers, enumerative
  geometry, projective plane}

\begin{abstract}
Finding the so-called characteristic numbers of the complex projective plane ${\mathbb C}P^2$ is a classical
problem of enumerative geometry posed by Zeuthen more than a century ago.
For a given $d$ and $g$ one has to find the number of degree $d$ genus $g$ curves that pass through a certain generic
configuration of points and at the same time are tangent to a certain generic configuration of lines. The total number
of points and lines in these two configurations is $3d-1+g$ so that the answer is a finite integer number.

In this paper we translate this classical problem to the corresponding enumerative problem of tropical geometry
in the case when $g=0$. Namely, we show that the tropical problem is well-posed and establish a special case
of the correspondence theorem that ensures that the corresponding tropical and classical numbers coincide.
Then we use the floor diagram calculus to reduce the problem to pure
combinatorics. As a consequence,  we 
express 
genus 0 characteristic
numbers of $\CC P^2$ in terms of open Hurwitz numbers.\thanks{Research is supported in part by the project TROPGEO
  of the 
  European Research Council. 
Also B.B. is partially supported by the ANR-09-BLAN-0039-01, E.B. is
partially supported by the ANR-09-BLAN-0039-01 and ANR-09-JCJC-0097-01,  and
G.M. is partially supported by the Swiss National Science Foundation 
grants n° 125070 and 126817.}  
 \end{abstract}

\maketitle
\tableofcontents

\section{Introduction}\label{intro}

Characteristic numbers were considered by nineteenth century geometers
among which S. Maillard (\cite{Mai71}) who computed them in degree
$3$, H. Zeuthen (\cite{Zeu73}) who did third and fourth degree cases,
and H. Schubert (\cite{Schu79}). Modern mathematicians confirmed and
extended their predecessor's results thanks in particular to
intersection theory. P. Aluffi 
computed, for instance, all characteristic numbers for plane cubics and
some of them for  plane quartics (see \cite{Alu89},
\cite{Alu90}, \cite{Al1} and 
\cite{Alu92}), and R. Vakil completed to confirm  Zeuthen's computation
of all characteristic numbers of plane quartics in \cite{Vak99}.
 R. Pandharipande computed characteristic numbers  in
 the rational case in
\cite{Pan99},  Vakil achieved the genus $1$ case in  \cite{Vak01},
 and T. Graber, J. Kock and
Pandharipande computed  genus  $2$ characteristic numbers of plane
curves in
 \cite{GKP02}. 
 In this
article, we 
provide a new insight to 
the genus $0$ case and give new formulas via its tropical counterpart and
by reducing the problem to combinatorics of floor diagrams.

Before going
into the details of the novelties of this paper, let us recall the
classical problem of computing characteristic numbers of $\CC P^2$.
\vspace{1ex}

Let $d$, $g$ and $k$ be non negative integer numbers such that $g\le
\frac{(d-1)(d-2)}{2}$ and $k\le 3d+g-1$ and $d_1,\ldots, d_{3d+g-1-k}$ be
positive integer numbers. For any configurations $\P=\{p_1,\ldots, p_k
\}$ of $k$ points in $\CC P^2$, and $\L =\{L_1,\ldots, L_{3d+g-1-k}
\}$ of $3d+g-1-k$ complex non-singular algebraic curves in $\CC P^2$
such that $L_i$ has degree $d_i$, we consider the set $\S(d,g,\P,\L)$
of holomorphic maps $f:C\to\CC P^2$ from an irreducible non-singular  
complex algebraic curve of genus $g$, 
passing through all points $p_i\in\P$, tangent to all curves
$L_i\in\L$, and such that $f(C)$ has degree $d$ in $\CC P^2$.

If the constraints $\P$ and $\L$ are chosen generically, then the set
$\S(d,g,\P,\L)$ is finite, and the characteristic number
$N_{d,g}(k;d_1,\ldots ,d_{3d+g-1-k})$ is defined as
$$N_{d,g}(k;d_1,\ldots
,d_{3d+g-1-k})=\sum_{f\in\S(d,g,\P,\L)}\frac{1}{|Aut(f)|}$$
where $Aut(f)$ is the group of automorphisms of the map $f:C\to\CC
P^2$, i.e. isomorphisms $\Phi:C\to C$ such that $f\circ\Phi=f$. 
It depends only on $d$,
$g$, $k$ and $d_1,\ldots, d_{3d+g-1-k}$ (see for example \cite{Vak01}). In this text, we will use the
shorter notation $N_{d,g}(k;d_1^{i_1},\ldots ,d_{l}^{i_l})$ which
indicates that the integer $d_j$ is chosen $i_j$ times.
 Let us describe the characteristic numbers in some special instances.

 \begin{exa}
The number $N_{d,g}(3d-1+g)$ is the usual Gromov-Witten invariant of
degree $d$ and genus $g$ of $\CC P^2$.
\end{exa}

\begin{exa}
The numbers $N_{2,0}(5)$, $N_{2,0}(4;1)$, and $N_{2,0}(3;1^2)$ are
easy to compute by hand, and thanks to  projective duality we have

$$N_{2,0}(k;1^{5-k}) =N_{2,0}(5-k;1^k) = 2^{k} \ \ for \ 0\le k\le
2$$
 \end{exa}
\begin{exa}
All characteristic numbers $N_{3,0}(k;1^{8-k})$ for
 rational cubic curves have been computed by Zeuthen
(\cite{Zeu2}) and 
 confirmed by 
 Aluffi (\cite{Al1}). We sum up part of their results in the following table.
$$\begin{array}{c|c|c|c|c|c|c|c|c|c}
k & 8 & 7 & 6 & 5 & 4 & 3 & 2 & 1 & 0
\\ \hline N_{3,0}(k;1^{8-k}) & 12 & 36 & 100& 240 &480 & 712 & 756&
600 & 400
\end{array} $$
\end{exa}

\begin{exa}
The number $N_{2,0}(0;2^5)$ has been  computed 
independently by Chasles (\cite{Chas1}) and De Jonquiere. More than
one century later, Ronga, Tognoli, and Vust showed in \cite{RoToVu}
that it is possible to choose 5 real conics in such a way that all
conics tangent to these 5 conics are real. See also \cite{Sot2} and
\cite{Ghy1} for a historical account and digression on this subject.
See also Example \ref{ex real conic} for a tropical version of the
arguments from \cite{RoToVu}. 
We list below the numbers $N_{2,0}(k;2^{5-k})$.
$$  N_{2,0}(4;2) = 6 \ \ \ \  \ \ N_{2,0}(3;2^2) =36
\ \ \ \ \ \ N_{2,0}(2;2^3) = 184$$
$$ N_{2,0}(1;2^4) = 816 \ \  \ \ \ \ \ \ N_{2,0}(0;2^5) =3264
 $$
 \end{exa}

More generally,  the characteristic numbers
$N_{d,g}(k;1^{3d-1+g-k})$  of $\CC P^2$ determine
all the numbers $N_{d,g}(k;d_1,\ldots,d_{3d-1+g-k})$.
Indeed, by degenerating 
the non-singular curve 
$L_{d_{3d-1+g-k}}$
 to the union of two
non-singular curves of lower degrees
intersecting transversely, we obtain the following formula (see for
example {\cite[Theorem 8]{RoToVu}})
\begin{equation}\label{break constraint}
\begin{array}{ccl}
N_{d,g}(k;d_1,\ldots,d_{3d-1+g-k})&=&
2d'_{3d-1+g-k}d''_{3d-1+g-k}N_{d,g}(k+1;d_1,\ldots,d_{3d-2+g-k})\\ &&
  \\ &&
+ N_{d,g}(k;d_1,\ldots,d'_{3d-1+g-k}) 
+ N_{d,g}(k;d_1,\ldots,d''_{3d-1+g-k})
 \end{array}
\end{equation}
where $d_{3d-1+g-k}=d'_{3d-1+g-k}+ d''_{3d-1+g-k}$.

\vspace{3ex}
This paper contains three main contributions.
First we identify tropical
tangencies between two tropical morphisms. 
Then we 
deduce from the location of tropical tangencies a
 Correspondence Theorem which  allows one 
to compute characteristic numbers of $\CC P^2$
in genus 0. Finally, using the floor decomposition technique 
we provide a new insight on these characteristic
numbers and their relation to Hurwitz numbers.

Thanks to a series of Correspondence Theorems initiated in \cite{Mik1}
(see also \cite{NishinouSiebert}, \cite{Sh3}, \cite{Nishinou},
\cite{Tyomkin}, \cite{Mik08}), 
tropical geometry turned out to be a powerful tool to solve
enumerative problems in complex and real algebraic geometry.
 However, until now,  
 Correspondence Theorems  dealt with  problems only involving 
simple incidence conditions, i.e. 
with enumeration of curves intersecting transversally a given
set of constraints. 
Correspondence Theorems  \ref{Corres} and \ref{Corres2} in this paper
are the first 
ones concerning  
plane curves satisfying  tangency conditions to a given set of curves.
In the case of simple incidences, the (finitely many)
tropical curves arising as the limit
of amoebas of the  enumerated complex curves 
 could be 
 identified
  considering the tropical limit of embedded complex curves. This is no
 longer enough to identify the tropical limit of
 tangent curves, and we first refine previous studies
 by
considering the tropical limit of a
 family of holomorphic maps to a given projective space. This gives
 rise to the notion of \textit{phase-tropical curves} and morphisms. 
For a treatment of 
 phase-tropical geometry more general than the one in this paper, 
we refer to \cite{Mik08}. 
Using tropical morphisms and their approximation by holomorphic maps, we
 identify tropical tangencies between tropical morphisms, and prove
 Theorems  \ref{Corres} and  \ref{Corres2}.
Note that A.~Dickenstein and L.~Tabera also studied in \cite{Dic1}
 tropical tangencies but in a slightly different context, i.e. 
tangencies between tropical cycles instead of
tropical morphisms.

As in the works cited above, 
Theorems  \ref{Corres} and  \ref{Corres2}
 allow us to solve our enumerative problem 
by exhibiting
some special configurations of constraints for which we can actually
find all complex curves satisfying our conditions. In particular we 
obtain more 
 information 
 than using only the intersection theoretical
approach from complex geometry.
As a consequence, when
all constraints are real we are also able to identify all
 real curves  matching our constraints. 
This is the starting observation in applications of tropical geometry in
real algebraic geometry, which 
 already turned out to be fruitful
 (see for example \cite{Mik1}, \cite{IKS1}, \cite{Ber3},
\cite{Ber4}, \cite{Br14}). In this paper 
we just provide 
few examples 
of
such 
applications to real enumerative geometry
in Section \ref{sec:real enumerative}.
However there is no doubt that Theorems \ref{Corres} and
\ref{Corres2}
 should
lead to further results in real enumerative geometry.

The next step after proving our Correspondence Theorems is to use generalized  
\textit{floor diagrams} to reduce
the computations to pure 
 combinatorics. 
In the particular case where only incidence conditions are considered they are equivalent to
those defined in \cite{Br7} (see also \cite{Br6b}), and used later in several
contexts (see for example \cite{FM}, \cite{ArdBlo}, \cite{BGM}, 
\cite{Ber4}, \cite{Br8} , \cite{Br14}). 
Note that  floor
decomposition 
technique  
 has strong connections with the Caporaso and Harris method
(see \cite{CapHar1})  extended later by Vakil (see \cite{Vak1}), and
with the neck-stretching method in symplectic field theory (see
\cite{EGH}, \cite{IP00}).
This method allows one to
solve an enumerative problem by induction on the dimension of the
ambient space, i.e. to reduce enumerative problems in $\CC P^n$ to
enumerative problems in $\CC P^{n-1}$. In the present paper, the
enumerative problem we are concerned with is to count curves which interpolate
a given configuration of points and are tangent to a given set of
curves. 
On the
level 
 of maps, tangency conditions are naturally interpreted as 
ramification conditions. In particular, the 1-dimensional analogues of
characteristic numbers are \textit{Hurwitz numbers}, which are the
number of  maps from a (non-fixed) genus $g$ curve to a (fixed)
genus $g_0$ curves with a fixed ramification profile at some fixed points.
Hence using floor diagrams, we express
characteristic numbers of $\CC P^2$  in terms of
Hurwitz numbers.
Surprisingly, other 
$1$-dimensional 
 enumerative invariants also appear in this
expression. These are the so-called
\textit{open Hurwitz numbers},
 a slight generalization of Hurwitz
numbers defined and computed in \cite{Br13}. 
Computations of characteristic numbers of $\CC P^2$ performed in 
\cite{Pan99},    \cite{Vak01}, and
 \cite{GKP02},
were
done by induction 
 on 
 the degree of the enumerated curves. 
 To our
 knowledge, this is the first time that characteristic numbers are
expressed in  
 terms 
 of their analogue in dimension 1, i.e. in terms of
(open)
 Hurwitz numbers.

\vspace{2ex}
Here is the plan of the paper. In Section \ref{sec2} we review standard
definitions we need from tropical geometry. In Section \ref{sec3} we define
tropical tangencies and state our Correspondence Theorems.
Even though we can reduce all degrees $d_i$ of the constraints to 1 by Formula
(\ref{break constraint}),
we still leave $d_1,\ldots,d_{3d-1-k}$ 
in the statement of Correspondence Theorem  \ref{Corres},
in view of possible application to 
real geometry (see e.g. Example  \ref{ex real conic}).
Section \ref{generic conf} 
is devoted to the proof of technical lemmas on generic
configurations of constraints.
The multiplicity of a tropical curve in Theorems \ref{Corres} and 
\ref{Corres2} are defined
by a determinant, 
 and we give in Section \ref{practical} a practical
way of computing this determinant which will be used in Section
\ref{floor dec}. 
We introduce
in Section \ref{sec:phase}
 the notion of phase-tropical curves and morphisms, and the
tropical limit of a family of holomorphic maps, 
and we prove Theorems \ref{Corres} and 
\ref{Corres2} in Section \ref{sec:proof corres}.
 In Section \ref{floor dec}, floor diagrams are introduced, formulas for characteristic numbers in which they appear are proved, and examples given.

\vspace{2ex}
We end this 
 introduction
 elaborating on possible natural generalizations of the
techniques presented in this paper. All definitions, statements, and
proofs should generalize 
with no difficulty 
to the case of rational
curves in $\CC P^n$ intersecting cycles and tangent to non-singular
hypersurfaces. The resulting floor diagrams would then be a generalization of
those defined in \cite{Br7} and \cite{Br6}. The enumeration of plane
curves with higher order tangency conditions to other curves should
also be doable in principle using our methods.
 This would first necessitate to identify
tropicalizations of higher order tangencies between curves,
generalizing the simple tangency case treated in 
Section \ref{sec:trop pretang}. However this identification might be
intricate, and will certainly lead to much more different cases than for simple
tangencies (third order tangencies to a line are dealt with in \cite{BruLop2012}).
In  turn, the use of tropical techniques in the computation of higher
genus
characteristic
numbers requires some substantial additional work. The
main difficulty is that superabundancy appears for positive genus:
some combinatorial types appearing as solution of the enumerative
problem might be of actual dimension strictly bigger than the expected
one (see Remark \ref{rem:coarse def}). Hence before enumerating tropical curves, in addition to the
balancing condition one has first to
understand extra necessary conditions for a tropical morphism to be
the tropical limit of a family of algebraic maps 
(see Section \ref{sec:phase}). 
Using the techniques developed in
\cite{Br12}, we succeeded to compute genus 1 characteristic numbers 
 of $\CC P^2$. These results will appear in a separate paper. Also
for a small number of tangency constraints, it is possible to find a
configuration of constraints for which no superabundant curve shows
up. In this case Theorem \ref{Corres} applies,  the proof only requiring
minor adjustments.

\vspace{2ex}
\textbf{Acknowledgement:} We would like to thank the anonymous referee for
 numerous remarks which helped us to improve our original text.

\section{Tropical curves and morphisms}\label{sec2}
In this section, we define abstract tropical curves, their morphisms
to $\RR^n$, and tropical cycles in $\RR^2$. 

\subsection{Tropical curves}\label{defi trop curve}

Given a finite graph $C$ (i.e. $C$ has a finite number of edges and
vertices) we denote 
by $\Ve(C)$ the set of its vertices,
 by $\Ve^0(C)$ the set of its vertices which are not $1$-valent, 
 and by 
$\Ed(C)$ the set of its edges. By definition, the valency of a vertex
 $v\in\Ve(C)$, denoted by $\val(v)$,
 is the number of edges in $\Ed(C)$ adjacent to $v$. 
Throughout the text
we will  identify a graph and any of its topological realization.
Next definition is taken from \cite{Br13}. Tropical curves with
boundary will be needed in Section \ref{floor dec}. 

\begin{defi}\label{def:trop curve}
An \emph{irreducible  tropical curve} $C$ with boundary is a finite compact connected 
  graph
with $\Ed(C)\ne\emptyset$, together with  a set of 1-valent vertices  $\Ve^\infty(C)$ of $C$,
 such that 
\begin{itemize}
\item  $C\setminus  \Ve^\infty(C)$ is equipped with 
 a complete inner metric;

\item the vertices of $\Ve^0(C)$ have non-negative integer weights,
 i.e. 
$C$ is equipped  with a map
$$\begin{array}{ccc}
\Ve^0(C) &\longrightarrow & \ZZ_{\ge 0}
\\ v&\longmapsto & g_v
\end{array} $$

\item any $2$-valent vertex $v$ of $C$ satisfies $g_v\ge 1$.

\end{itemize} 

If  $v$ is an element of $\Ve^0(C)$,
 the integer $g_v$ is called the genus of $v$.
The genus of $C$  is defined as  
$$g(C)=b_1(C) + \sum_{v\in\Ve^0(C)}g_v$$
where $b_1(C)$ denotes the first Betti number of $C$. 

\vspace{1ex}
An element of $\Ve^\infty(C)$ is called a \emph{leaf} of $C$, and its
adjacent edge is 
called an \emph{end} of $C$. 
A 1-valent vertex of $C$  not in $\Ve^\infty(C)$ is
called
a \emph{boundary component} of $C$.
\end{defi}

Definition \ref{def:trop curve} might appear very general, whereas we
essentially deal  with \emph{rational}
tropical curves, i.e.  of genus 0, in the rest of the paper. 
The reason for us to give such a general definition is that
 the general
framework for correspondence theorems between complex algebraic and
tropical curves we set up in 
Section \ref{sec:trop limit} 
is valid for
tropical curves as in Definition \ref{def:trop curve}, and may be used
in  future  correspondences between complex algebraic
and tropical curves of any genus.

It follows immediately from Definition \ref{def:trop curve} that
the leaves of $C$ are 
at the infinite distance from all the other
points of $C$. 
We denote 
 by $\partial C$ the set of the boundary components of $C$, by
 $\Ed^\infty(C)$ the set of ends of $C$,
and by 
 $\Ed^{0}(C)$ the set of its edges which are not adjacent to a
1-valent vertex.

A
\textit{punctured tropical curve} $C'$ is given by $C\setminus\P$
where $C$ is a tropical curve, and $\P$ is a subset of
$\Ve^\infty(C)$. Note that $C'$ has a tropical structure inherited
from $C$.
Elements of $\P$ are called  \textit{punctures}.
An end of $C'$ is said to be \textit{open} if it is adjacent to a
puncture, and \textit{closed} otherwise. 

\begin{exa}
In Figure~\ref{Fig:tropical-curves} we depict some examples of the
simplest rational tropical curves. Boundary components and vertices of
$\Ve^0(C)$ are depicted as  black dots and vertices of
$\Ve^\infty(C)$ are omitted so that an edge not ending at a black
vertex is of infinite length and that no difference is made on the
picture between punctured and non-punctured tropical curves. 
\end{exa}

\begin{figure}[h]
\centering
\begin{tabular}{cccccc} 
\begin{minipage}{0.15\linewidth}
\centering
\includegraphics[height=4cm, angle=90]{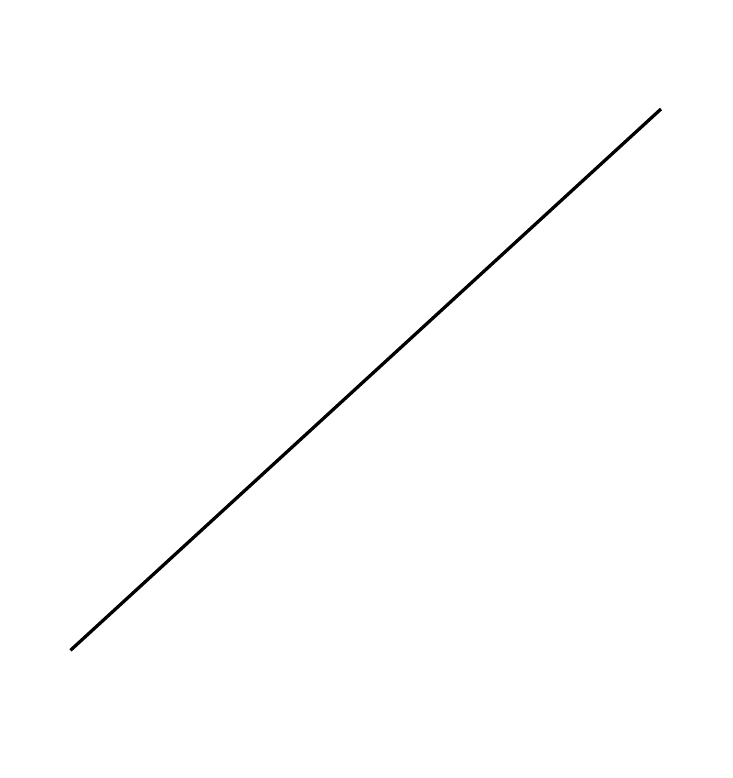}
\end{minipage} &\hspace{3ex}
& \begin{minipage}{0.15\linewidth}
\centering
\includegraphics[height=4cm, angle=0]{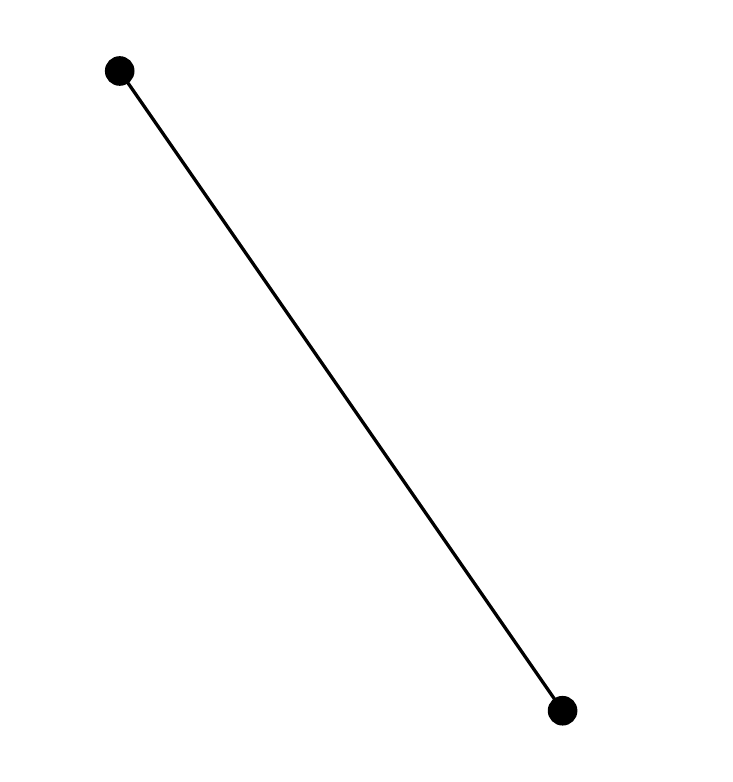}
 \end{minipage}
 &
\begin{minipage}{0.15\linewidth}
\centering
\includegraphics[height=4cm, angle=0]{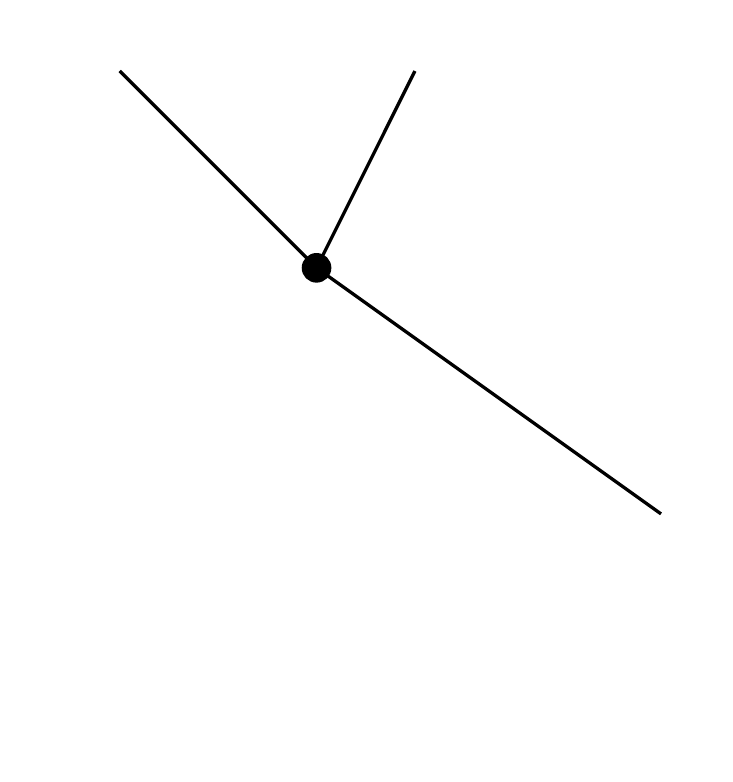}
\end{minipage}
&
\begin{minipage}{0.15\linewidth}
\centering
\includegraphics[height=4cm, angle=0]{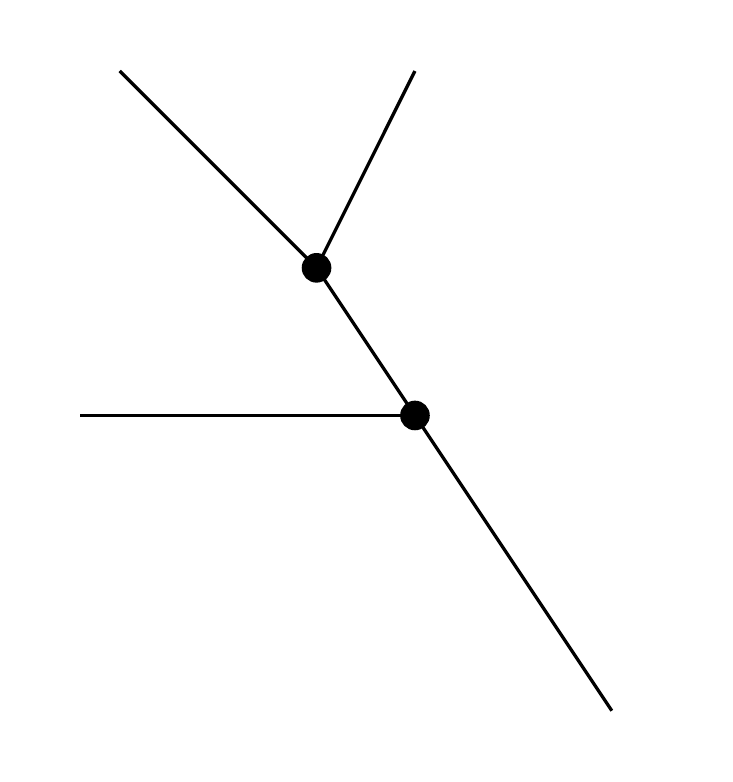}
\end{minipage}
&
\begin{minipage}{0.15\linewidth}
\centering
\includegraphics[height=4cm, angle=0]{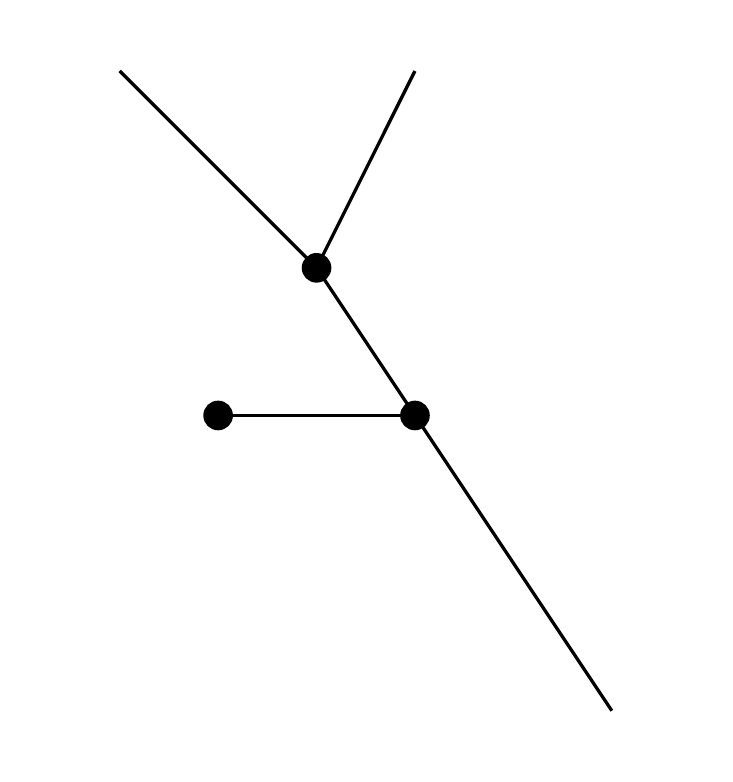}
\end{minipage}
\\ a)& &b) &c) & d) & e) 
\end{tabular}

\caption{A few rational tropical curves}
\label{Fig:tropical-curves}
\end{figure}

Given $C$  a tropical curve, and $p$  a point on $C$ which is not a
1-valent vertex, we can construct a new tropical curve $\widetilde C_p$ by 
attaching to $C$ a closed end $e_p$ (i.e. an end with a leaf point at
infinity) at $p$,
and by setting $g_p=0$ if $p\notin\Ve^0(C)$.
 The natural map
$\pi:\widetilde C_p\to C$ which contracts the edge $e_p$ to $p$
is called a \textit{tropical modification}. If we denote by $v$ the
1-valent vertex of $\widetilde C_p$ adjacent to $e_p$, the restriction
of the map $\pi$ to the punctured tropical curve $\widetilde
C_p\setminus\{v\}$ is called an \textit{open tropical modification}.

\begin{exa}
The closed curve depicted in Figure~\ref{Fig:tropical-curves}d can
be obtained by a tropical modification from the curve of
Figure~\ref{Fig:tropical-curves}c which in turn can be obtained
modifying the infinite closed segment of
Figure~\ref{Fig:tropical-curves}a.  More generally, every rational
tropical curve without boundary can be obtained from the infinite
closed segment (Figure~\ref{Fig:tropical-curves}a) by a finite sequence of
tropical modifications.
\end{exa}

\subsection{Tropical morphisms}
Given $e$  an edge of a tropical curve $C$, we choose a point $p$ in
the interior of $e$ and
a unit vector $u_e$
of the tangent line to $C$ at $p$ 
(recall that $C$ is equipped with a metric). 
Of course,  the vector $u_e$
depends on the choice of $p$ and is well-defined only up to
multiplication by -1, but this will not
matter in the following. We will sometimes need $u_e$ to have a
prescribed direction, and we will then specify this direction.
The
standard inclusion of $\ZZ^n$ in $\RR^n$ induces a standard
inclusion of $\ZZ^n$ in the tangent space of $\RR^n$  at any point of
$\RR^n$. 
A vector in $\ZZ^n$ is said to be \emph{primitive} if the
greatest common divisor of its
coordinates equals $1$.

\begin{defi}\label{def:trop morphism}
Let $C$ be a punctured
  tropical curve.
A
continuous map  $f : C\to \RR^n$ is a
  \emph{tropical morphism} if
\begin{itemize}

\item for any edge $e$ of $C$, the restriction $f_{|e}$ is a
  smooth map with $df(u_e)=w_{f,e}u_{f,e}$ where
 $u_{f,e}\in\ZZ^n$ is a
  primitive vector, and $w_{f,e}$ is a non-negative integer;

\item for any vertex $v$ in $\Ve^0(C)$ whose adjacent  edges are
  $e_1,\ldots,e_k$,
  one has the balancing condition
$$\sum_{i=1}^k  w_{f,e_i}u_{f,e_i}=0 $$
where $u_{e_i}$ is chosen so that it points away from $v$.
\end{itemize}

\end{defi}

The integer $w_{f,e}$
 is
called the \textit{weight of the edge $e$ with respect to $f$}.  When
no confusion is possible, we simply  speak 
about the weight of an edge, without referring to the morphism $f$.
If 
$w_{f,e}=0$, we say that the morphism $f$ \textit{contracts} the edge
$e$. 
The morphism $f$ is called
\textit{minimal} if $f$ does not contract
any 
edge.
If the morphism $f$ is proper then an open end of 
$C$ has to have a non-compact image, while a closed end has to be
contracted. Hence
if a proper tropical morphism $f:C\to\RR^n$ is minimal, then
$\Ve^\infty(C)=\emptyset$. 
The morphism $f$ is called an
\textit{immersion} if it is a topological immersion, i.e. if $f$ is a
local homeomorphism on its image. 

\begin{rem}\label{rem:coarse def}
Definition \ref{def:trop morphism} is a rather coarse
definition of a tropical morhism when $C$ has positive genus. Indeed,
in contrast to the case of rational curves, one can easily construct
tropical morphisms from a positive genus tropical curve which are
superabundant, i.e. whose space of deformation has a strictly bigger
dimension that the expected one (see {\cite[Section 2]{Mik1}}). In
particular, when the corresponding situation in classical geometry is
regular (i.e. with no superabundancy phenomenon) as in
the case of projective plane curves, such a superabundant tropical
morphism is unlikely to be presented as the tropical limit of a family
of holomorphic maps (see Section \ref{sec:trop limit} for the
definition of the tropical limit).

One may refine Definition \ref{def:trop morphism}, still using pure
combinatoric, to get rid of many of these superabundant tropical
morphisms.
 One of these possible refinements, explored in  \cite{Br12}, 
is to require in addition that the map
$f:C\to\RR^n$ should be \emph{modifiable}: the map $f$ has to be
liftable to any sequence of tropical modifications of $\RR^n$ with smooth center
(see \cite{Mik3}). This definition of modifiable tropical
morphisms relies on the more general definition 
 of a tropical morphism $f:C\to X$
where $X$ is any non-singular tropical variety. In addition to the
balancing condition,  a tropical
morphism $f:C\to X$ has to satisfy some combinatorial conditions coming from
complex algebraic geometry, such as the Riemann-Hurwitz condition at
points of $C$ mapped to the 1-dimensional squeleton of $X$ (see
\cite{Br13} for the case when $X$ is a tropical curve).

However since this paper is about enumeration of rational curves,
 Definition  \ref{def:trop morphism}, even if coarse, is sufficient for our
purposes here.
\end{rem}

\begin{figure}[h]
\centering
\begin{tabular}{ccc}
\includegraphics[height=5cm, angle=0]{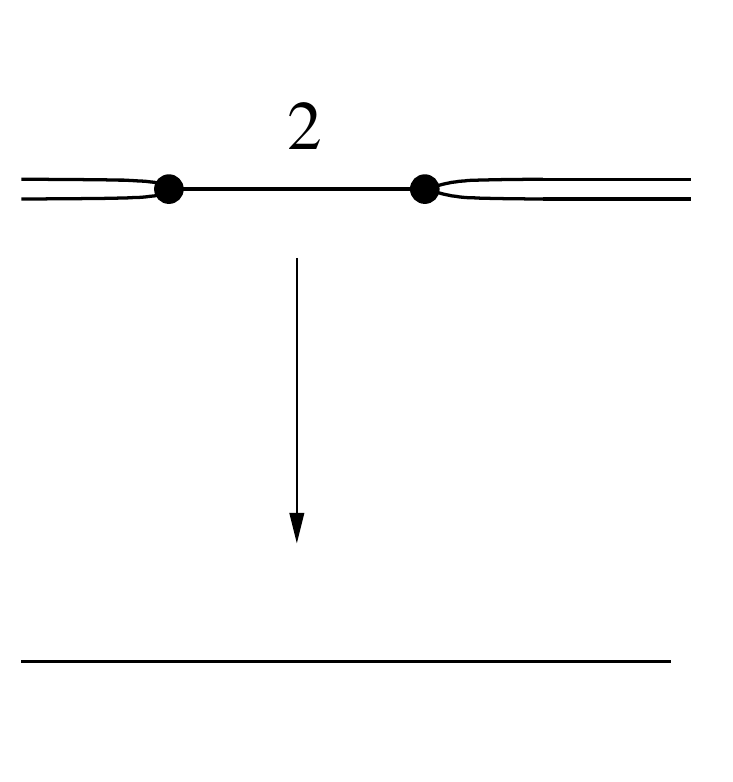}&  \hspace{10ex} & 
\includegraphics[height=5cm, angle=0]{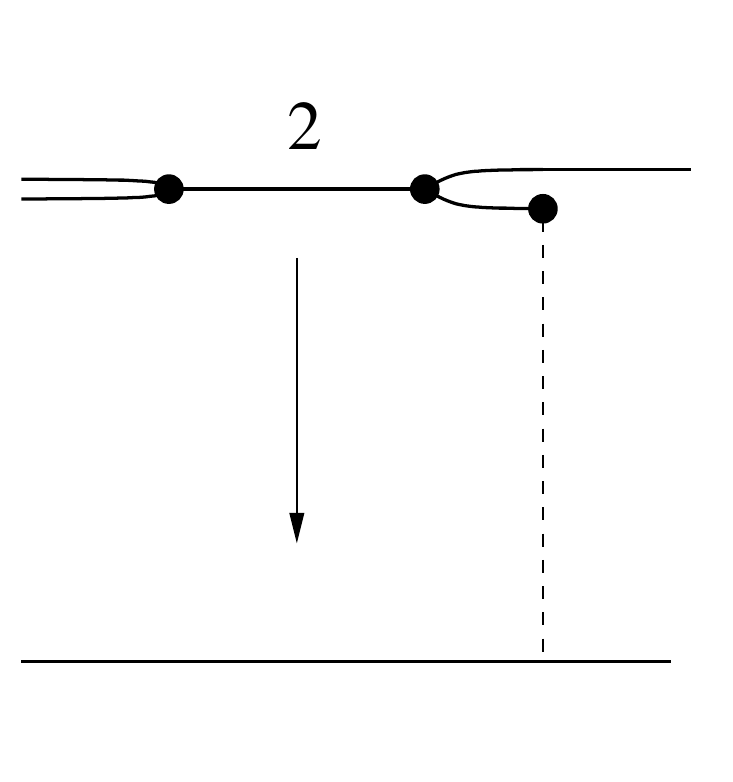}
\\ a)& & b) 
\end{tabular}

\caption{Example of tropical morphisms 
 $f:C\to \RR$}
\label{Fig:morphism-dim1}
\end{figure}

\begin{figure}[h]
\centering
\includegraphics[height=9cm, angle=0]{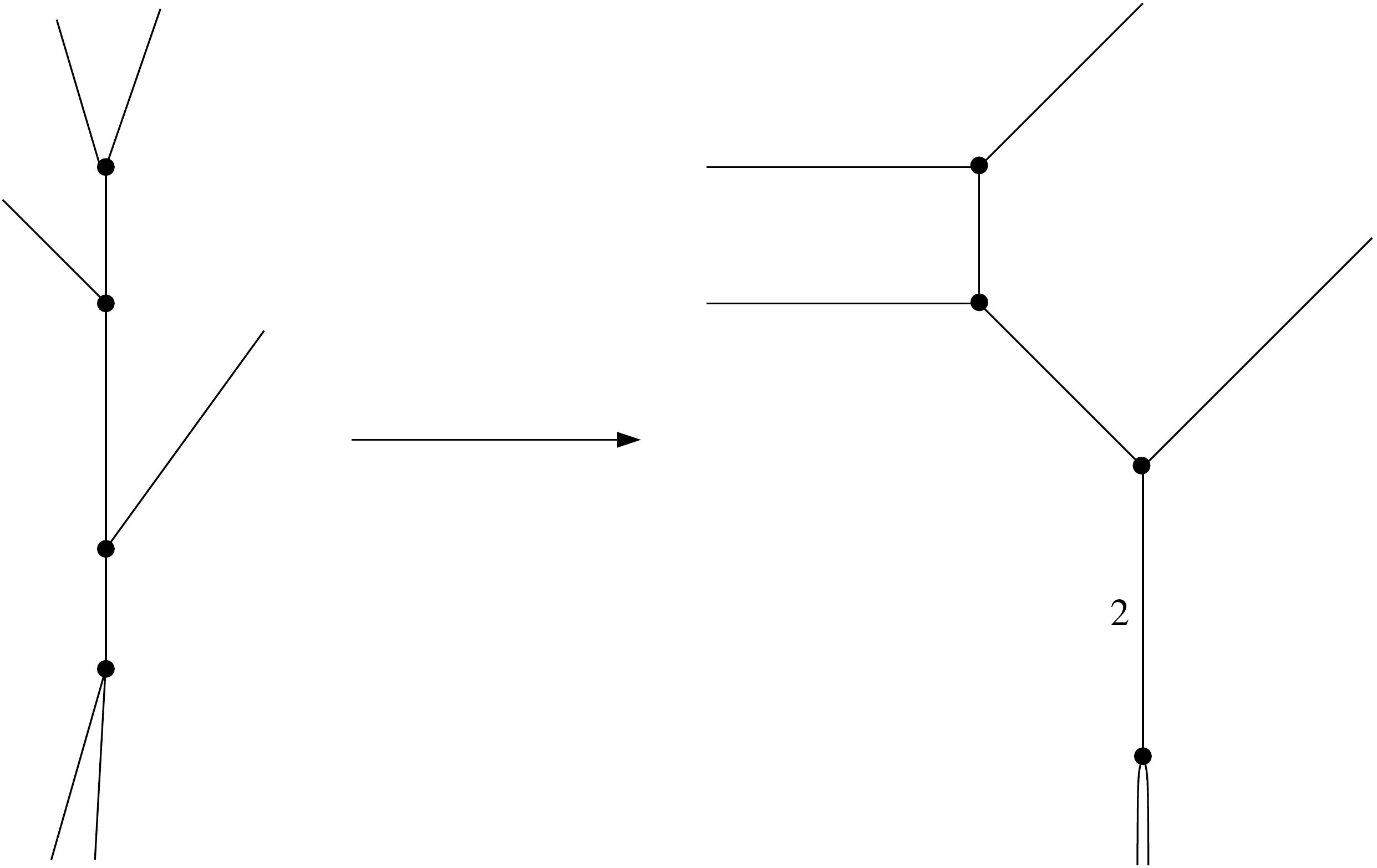}
\caption{Example of a morphism: a plane conic}
\label{Fig:morphism-dim2}
\end{figure}

\begin{exa}\label{Example2}
We represent morphisms from a curve to $\RR$ as in
Figure~\ref{Fig:morphism-dim1}. The weights of the edges with respect
to the morphism label the edge if they exceed
$1$. Figure~\ref{Fig:morphism-dim1}a represents a double cover of
the real line by a tropical curve without boundary, and
Figure~\ref{Fig:morphism-dim1}b represents
an open cover of $\RR$ (see Section \ref{open Hurwitz}) by a
tropical curve
with one boundary component.
\end{exa}

\begin{exa}
In contrast
to the one dimensional case, for morphisms from a curve to $\RR^2$, we
label the {\bf image} of an edge with the corresponding weight as it
often allows us to omit the source of the morphism which is then
implicit.  In Figure~\ref{Fig:morphism-dim2} we depicted a plane
conic $C$, which is the image in $\RR^2$ of a morphism from a trivalent
punctured curve with 
four vertices in $\Ve^0(C)$.
\end{exa}

Two tropical morphisms
$f_1:C_1\to\RR^n$ and $f_2:C_2\to\RR^n$ are said to be
\textit{isomorphic} if there exists
 an 
isometry
$\phi:C_1\to
C_2$ such that $f_1=f_2\circ\phi$
and $g_{\phi (v)}=g_v$ for any $v\in\Ve^0(C_1)$.
In this text, we consider  
 tropical curves and tropical morphisms up to
isomorphism. 

A less restrictive equivalence relation is the one associated to
 \textit{combinatorial types}.
Two  tropical morphisms $f_1:C_1\to  \RR^n$ and
$f_2:C_2\to  \RR^n$    
are said to have the same combinatorial type if 
there exists a
homeomorphism of graphs $\phi:C_1\to C_2$  inducing two bijections 
$\Ve^{\infty}(C_1)\to \Ve^{\infty}(C_2)$ and 
$\partial C_1\to \partial C_2$, and such that
for all edges $e$ of $C_1$ and all vertices 
 $v\in\Ve^{0}(C)$ 
 one has
 $$ w_{f_1,e}=w_{f_2,\phi(e)},\ \ 
u_{f_1,e}=u_{f_2,\phi(e)},\ \  \mbox{and} \ \  g_{\phi (v)}=g_v.$$

Given a combinatorial type $\alpha$ of  tropical morphisms,
we denote by $\M_\alpha$ the space of 
all such  tropical morphisms having this combinatorial
type. Given $f\in\M_\alpha$, we say that $\M_\alpha$ is the
 \textit{rigid deformation space}
of $f$.

\begin{lemma}[Mikhalkin, {\cite[Proposition 2.14]{Mik1}}]\label{exp dim 1}
Let $\alpha$ be a combinatorial type of  
 tropical morphisms $f:C\to
\RR^n$ where $C$ is a rational tropical curve with $\partial C=\emptyset$.
Then the space  $\M_\alpha$ is  an open convex polyhedron in
the vector space $\RR^{n+|\Ed^0(\alpha)|}$, and 
$$  \dim \M_\alpha =   |\Ed^\infty(\alpha)|  +n-3 
 -
\sum_{v\in \Ve^0(\alpha)}(val(v)-3).$$

\end{lemma}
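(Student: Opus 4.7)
The plan is to write down an explicit parameterization of $\M_\alpha$ and observe that it identifies this space with the open positive orthant in $\RR^n \times \RR^{|\Ed^0(\alpha)|}$; the dimension formula then follows from a routine Euler-characteristic computation for trees. Since $C$ is rational and $\partial C = \emptyset$, the decomposition $0 = g(C) = b_1(C) + \sum_{v}g_v$ forces $C$ to be a tree whose vertices of $\Ve^0(C)$ all have genus $0$ and, by the last axiom of Definition~\ref{def:trop curve}, valency at least $3$.

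Concretely, I would fix once and for all a root vertex $v_0 \in \Ve^0(\alpha)$ and consider the map
$$\Phi \colon \RR^n \times \RR_{>0}^{|\Ed^0(\alpha)|} \;\longrightarrow\; \M_\alpha$$
sending $(x_0,(\ell_e)_{e\in\Ed^0(\alpha)})$ to the unique tropical morphism of combinatorial type $\alpha$ with $f(v_0)=x_0$ and with the given lengths on the bounded edges. To see that $\Phi$ is well defined, I would recover $f$ vertex by vertex: for any $v\in\Ve^0(C)$ the fact that $C$ is a tree gives a unique path $v_0 = w_0, w_1, \dots, w_r = v$, and one sets
$$f(v) \;=\; x_0 + \sum_{i=1}^{r} \ell_{e_i}\, w_{f,e_i}\, u_{f,e_i},$$
where $e_i$ is the edge joining $w_{i-1}$ to $w_i$ and $u_{f,e_i}$ is oriented from $w_{i-1}$ to $w_i$; uniqueness of the path guarantees consistency. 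The balancing condition at each $v\in\Ve^0(\alpha)$ involves only the decorations $w_{f,e}$ and $u_{f,e}$, which are frozen by the combinatorial type, so it is automatically satisfied and imposes no constraint on the parameters. The ends contribute no parameters because they carry no length. Thus $\Phi$ is a bijection, and the defining inequalities $\ell_e > 0$ exhibit $\M_\alpha$ as an open convex polyhedron in $\RR^{n+|\Ed^0(\alpha)|}$, of dimension $n + |\Ed^0(\alpha)|$.

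It remains to rewrite $n + |\Ed^0(\alpha)|$ in the form stated in the lemma. Since $C$ is a tree, $|\Ve(C)| = |\Ed(C)| + 1$, and because every leaf of $C$ lies in $\Ve^\infty(C)$ and is adjacent to exactly one end, $|\Ve^\infty(C)| = |\Ed^\infty(\alpha)|$, which yields $|\Ve^0(\alpha)| = |\Ed^0(\alpha)| + 1$. The handshake lemma, after isolating the contribution of the $1$-valent leaves, gives
$$\sum_{v\in\Ve^0(\alpha)} \val(v) \;=\; 2|\Ed(C)| - |\Ve^\infty(C)| \;=\; 2|\Ed^0(\alpha)| + |\Ed^\infty(\alpha)|,$$
and subtracting $3|\Ve^0(\alpha)| = 3|\Ed^0(\alpha)| + 3$ produces
$$\sum_{v\in\Ve^0(\alpha)}(\val(v)-3) \;=\; -|\Ed^0(\alpha)| + |\Ed^\infty(\alpha)| - 3.$$
Substituting this into $|\Ed^\infty(\alpha)| + n - 3 - \sum_{v}(\val(v)-3)$ gives exactly $n + |\Ed^0(\alpha)|$, matching the dimension computed above.

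The only real subtlety, and the one I would be most careful about, is arguing that no further equations cut down the parameter space — concretely, that the balancing condition does not re-appear as a closed relation on the $\ell_e$'s. For a rational curve this is immediate from the tree structure, since the image of every vertex is determined unambiguously by edge accumulation along a unique path and the balancing equations at any vertex are identities among fixed integer vectors. For higher genus one would instead have to impose the closure conditions around each cycle, which is what leads to the usual superabundancy issues mentioned in Remark~\ref{rem:coarse def}; but in the rational case treated here these conditions are vacuous.
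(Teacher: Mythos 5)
Your proof is correct and follows essentially the same route as the paper's: both parameterize $\M_\alpha$ by the image of a root vertex together with the lengths of the bounded edges, identifying it with $\RR^n\times\RR_{>0}^{|\Ed^0(\alpha)|}$ (you additionally spell out the handshake-lemma computation verifying the dimension formula, which the paper leaves implicit). The only thing the paper does that you omit is the degenerate case $\Ve^0(\alpha)=\emptyset$, where there is no root vertex to choose and $\M_\alpha=\RR^n/\RR u_{f,e}$ must be treated separately.
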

\begin{proof}
We remind the proof in order to fix notations we will need later in
Section \ref{practical}. If $\Ve^0(\alpha)\ne\emptyset$, 
choose a root vertex $v_1$ of $\alpha$, and an ordering
$e_1,\ldots,e_{|\Ed^0(\alpha)|}$ of the edges in
$\Ed^0(\alpha)$. Given $f:C\to\RR^n$ in $\M_\alpha$, we write
$f(v_1)=(x_1,\ldots ,x_n)\in\RR^n$, and
we denote by
$l_i\in\RR^* $ the length of the edge $e_i\in\Ed^0(C)$. 
 Then 
$$\M_\alpha=\{(x_1,\ldots, x_n,l_1,\ldots,l_{|\Ed^0(\alpha)|}\quad |\quad 
l_1,\ldots, \ldots,l_{|\Ed^0(\alpha)|}>0\}=\RR^n\times
\RR_{>0}^{|\Ed^0(\alpha)|}.$$ 
If $\Ve^0(C)=\emptyset$, then $\M_\alpha=\RR^n/\RR u_{f,e}$, where $e$
is the only edge of $\alpha$.
\end{proof}
 Other choices of $v_1$ and of the ordering of elements of $\Ed^0(\alpha)$
provide other coordinates on $\M_\alpha$, and the change of
coordinates is given by an element
of $GL_{n+|\Ed^0(\alpha)|}(\ZZ)$.

\begin{exa}\label{coord syst ex}
In the simplest case when $\alpha$ is the combinatorial type of
tropical morphisms $f:C\to\RR^2$ with $\Ve^0(C)=\{v\}$,
the space $\M_\alpha$ is
$\RR^2$ and the coordinates are 
given by $f(v)$.

If $\alpha$ is the morphism depicted in Figure~\ref{Fig:morphism-dim2},
$\M_\alpha$ is the unbounded polyhedron $\RR^2\times\RR_{> 0}^3$
with coordinates $\left\{x_1, x_2, l_1, l_2, l_3\right\}$ where $x_1$
and $x_2$ are the coordinates of the image of the lowest vertex and
$l_1, l_2$ and $l_3$ are the lengths of the bounded edges at the
source ordered from bottom to top.
\end{exa}

\subsection{Tropical cycles}
Here we fix notations concerning standard facts in tropical
geometry. We refer for example to \cite{Mik3}, \cite{St2},  \cite{Mik9}, or
\cite{BIT}
 for more details.

An \textit{effective tropical 1-cycle} in $\RR^2$
 is the  tropical divisor defined by some
tropical polynomial $P(x,y)$, whose Newton
polygon is  denoted by $\Delta(P)$.
Given a positive integer $d$, 
we denote by $T_d$ the triangle in
$\RR^2$ with vertices $(0,0)$, $(0,d)$ and $(d,0)$. 

The number 1 in 1-cycle stands for dimension 1 as such cycle is necessarily a graph.
Since any smaller-dimensional cycles in $\RR^2$ can only be 0-cycles
which
are just linear combinations of points in $\RR^2$,
 in this paper we will be just saying
``cycles in $\RR^2$" for 1-cycles.

\begin{defi}\label{degree-dim2}
An effective tropical cycle in $\RR^2$ defined by a tropical
polynomial $P(x,y)$ is said to have
 \emph{degree $d\ge 1$} if 
 $\Delta(P)\subset T_d$ and  $\Delta(P)\nsubseteq T_{d-1}$.
\end{defi}

Recall that  a tropical polynomial
induces a subdivision of its Newton polygon.

\begin{defi}
An effective tropical cycle in $\RR^2$ of degree $d$ defined by a tropical
polynomial $P(x,y)$ is said to be \emph{non-singular} if
$\Delta(P)=T_d$ and
the subdivision of $\Delta(P)$ induced by $P(x,y)$ is primitive,
i.e. contains only triangles of Euclidean area $\frac{1}{2}$. 

The tropical cycle is said to be \emph{simple} if this subdivision of $\Delta(P)$
contains only triangles and parallelograms.
\end{defi}

Two effective tropical cycles are said to be of the same
\textit{combinatorial type} if they have the same Newton polygon, and
the same dual subdivision. As in the case of tropical morphisms, we
denote by $\M_\alpha$ the set of all effective tropical cycles of a given
combinatorial type $\alpha$. 
\begin{lemma}\label{dim comb type ns}
If $\alpha$ is a combinatorial type of 
non-singular 
effective tropical
cycles, then $\M_\alpha$ is an open convex polyhedron in
$\RR^{|\Delta(\alpha)\cap \ZZ^2|-1}$.
\end{lemma}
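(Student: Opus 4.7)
The plan is to parametrize effective tropical cycles with fixed Newton polygon $\Delta = \Delta(\alpha)$ by their defining tropical polynomials, and then to identify $\M_\alpha$ with an explicit polyhedral subset of that parameter space modulo the natural translation action. Writing $\mathcal{A} = \Delta \cap \ZZ^2$, every effective tropical cycle with Newton polygon $\Delta$ is the corner locus of a tropical polynomial of the form
\[
P(x,y) = \max_{(i,j) \in \mathcal{A}} \bigl(a_{ij} + ix + jy\bigr),
\]
parametrized by the coefficient vector $(a_{ij}) \in \RR^{\mathcal{A}}$. The non-singularity hypothesis forces every lattice point of $\mathcal{A}$ to appear as a vertex of the induced subdivision, so no coefficient is redundant, and two coefficient vectors define the same cycle if and only if they differ by a global additive constant. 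Consequently, the space of effective tropical cycles with Newton polygon $\Delta$ is naturally identified with $\RR^{\mathcal{A}} / \RR \cdot \mathbf{1} \cong \RR^{|\mathcal{A}|-1}$.

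I would then invoke the standard description of the subdivision of $\Delta$ induced by $P$ as the projection to $\RR^2$ of the lower faces of the convex hull of the lifted point set $\{(i,j,a_{ij}) : (i,j) \in \mathcal{A}\} \subset \RR^3$. Fixing the combinatorial type $\alpha$ then becomes the following condition on $(a_{ij})$: for every triangle $\sigma = \{(i_0,j_0),(i_1,j_1),(i_2,j_2)\}$ of the prescribed subdivision and every other lattice point $(i,j) \in \mathcal{A} \setminus \sigma$, the lift $(i,j,a_{ij})$ must lie strictly above the unique affine plane through the three lifts $(i_k,j_k,a_{i_k j_k})$. Each such ``strictly above'' condition is a strict linear inequality in the $a_{ij}$, with coefficients determined by the barycentric coordinates of $(i,j)$ relative to $\sigma$, and each is visibly invariant under the translation action of $\RR \cdot \mathbf{1}$.

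The preimage of $\M_\alpha$ in $\RR^{\mathcal{A}}$ is thus the intersection of finitely many open half-spaces, hence an open convex polyhedron, which descends to an open convex polyhedron in $\RR^{|\mathcal{A}|-1}$, giving the required conclusion. The main point requiring genuine care is verifying that these triangle-based inequalities truly cut out precisely the combinatorial type $\alpha$—that no additional constraints arising from edges or vertices of the subdivision need to be imposed, and symmetrically that no spurious faces of the lower convex hull sneak in. This is exactly where primitivity is used: since every lattice point of $\Delta$ is already a vertex of the subdivision, the lower convex hull is determined by strict convexity across each of its triangular faces alone, so the triangle inequalities suffice.
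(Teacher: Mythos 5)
Your proof is correct, and in fact the paper states this lemma without proof (it is treated as a standard consequence of the duality between tropical polynomials and subdivisions recalled in the same section), so your argument supplies exactly the expected justification: parametrize by coefficient vectors modulo $\RR\cdot\mathbf{1}$, use primitivity to see that every lattice point is a vertex of the subdivision (hence no coefficient is redundant and all defining conditions are strict inequalities), and cut out the fixed dual subdivision by finitely many open half-spaces. The only cosmetic mismatch is a sign convention: the paper uses the $\max$-plus convention, under which the relevant object is the \emph{upper} hull of the lifted points and the lifts of the remaining lattice points must lie strictly \emph{below} the affine span of each triangle's lifts; with that adjustment everything goes through verbatim.
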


If $f : C \to \RR^2$ is   a non-constant
 tropical morphism, then the image $f(C)$ is a balanced polyhedral
 graph in  $\RR^2$, and hence is defined by some
tropical polynomial $P(x,y)$ (see {\cite[Proposition 2.4]{Mik12}}). The Newton polygon of
the morphism 
$f:C\to\RR^2$, denoted by $\Delta(f)$, is defined as the Newton
polygon of $P(x,y)$.
The polygon $\Delta(f)$ is
well-defined up to 
translation by a vector in $\ZZ^2$, in particular the \textit{degree
  of $f$} is well-defined.

The \textit{genus} of a tropical cycle $A$ is the smallest genus of a tropical
immersion $f : C \to \RR^2$ such that $f(C)=A$. Note that if $A$ is
simple and $f:C\to\RR^2$ is such a tropical immersion of minimal
genus then the tropical curve $C$ contains only 3-valent vertices,
which are called \textit{the vertices} of $A$.

We can refine the notion of Newton polygon of a tropical cycle $A$ (or
of an algebraic curve in $(\CC^*)^2$) by
encoding how $A$ intersect the toric divisors at infinity. Namely, we
define the \textit{Newton fan} of $A$ to be the multiset of vectors 
$w_eu_e$ where $e$ runs over all unbounded edges of $A$, $w_e$ is the
weight of $e$ (i.e. the integer length of its dual edge), and $u_e$ is
the primitive integer vector of $e$ pointing to the unbounded
direction of $e$. The definition in the case 
of algebraic curves in $(\CC^*)^2$ works similarly.

\begin{rem}
Note that our simple effective cycles are in 1-1 correspondence with 
3-valent immersed tropical curves such that their self-intersections
are isolated points that come as intersection of different edges 
at their interior points. We call them tropical cycles (instead of calling them
tropical curves) to emphasize their role as constraints.
\end{rem}

\section{Tangencies}\label{sec3}
\subsection{Tropical pretangencies in $\RR^2$}\label{sec:trop pretang}
Our definition of tropical pretangencies 
between tropical morphisms
is motivated by the study of tropical
tangencies in \cite{Br12}, to which we refer  for more details.
We also refer to \cite{Dic1} for the notion of tropical tangencies between two
tropical cycles in $\RR^2$.
Let $f:C\to\RR^2$ be a tropical morphism, and $L$ be a 
simple
tropical cycle in $\RR^2$.

\begin{defi}\label{defi tang}
The  tropical morphism $f$
 is said to be \emph{pretangent} to 
 $L$ if
there exists a connected component $E$ of the set theoretic intersection
of $f(C)$ and $L$ which contains  either  a
vertex of $L$ or the image of a vertex of
$C$.

The set $E\subset\RR^2$ is called a \emph{pretangency set} of $f$
and $L$. A connected component of $f^{-1}(E)\subset C$  is
called a \emph{pretangency component} of $f$ with $L$ 
 if $E$ contains either a vertex of $C$ or a point
$p$ such that $f(p)$ is a vertex of $L$.
\end{defi}

\begin{figure}[h]
\centering
\begin{tabular}{ccccc}
\includegraphics[height=4.5cm, angle=0]{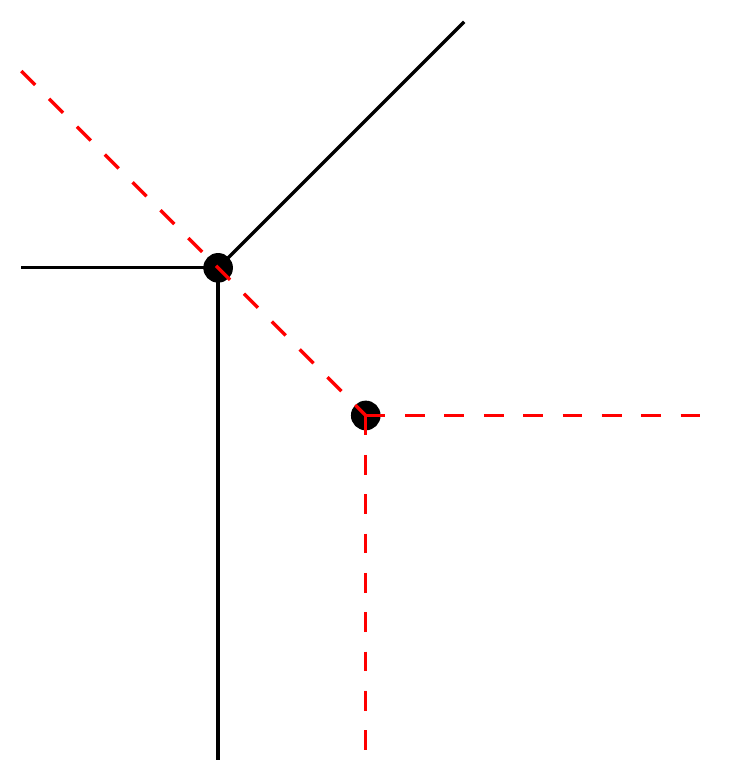}&  \hspace{10ex} & 
\includegraphics[height=4.5cm, angle=0]{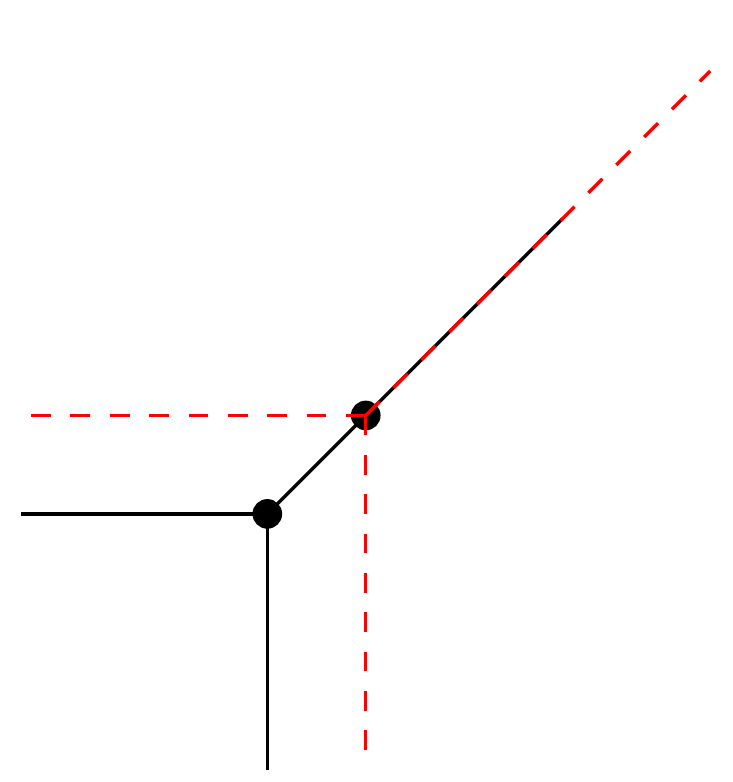}&  \hspace{10ex} & 
\includegraphics[height=4.5cm, angle=0]{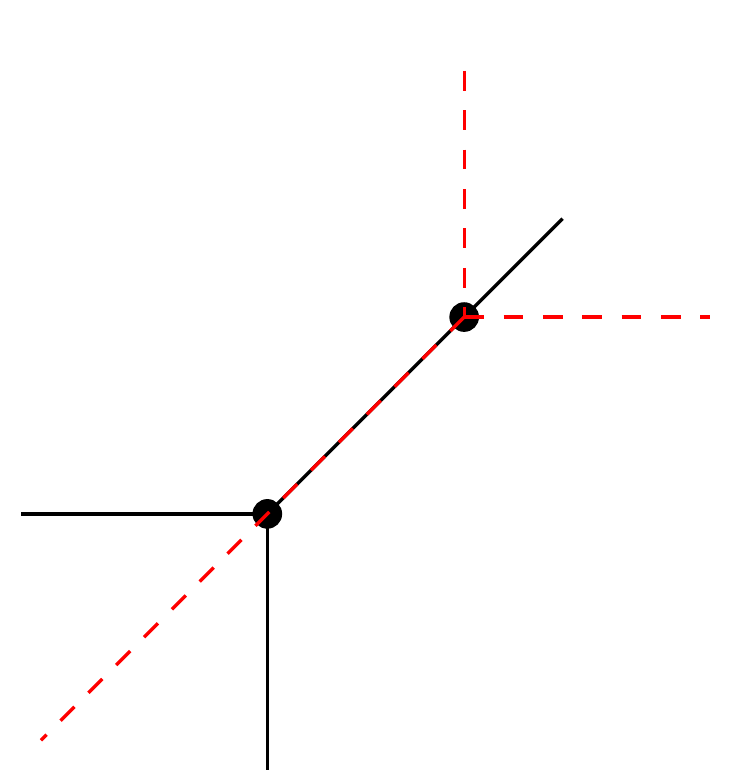}
\\ a)& & b)& & c) \\
\end{tabular}

\begin{tabular}{ccccc}
& 
\includegraphics[height=4.5cm, angle=0]{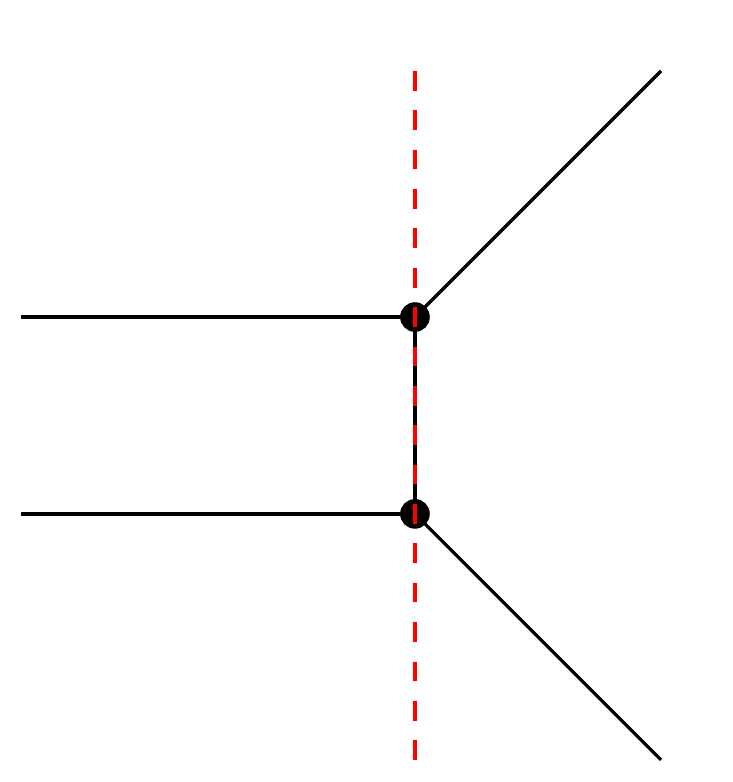}&  \hspace{10ex} & \includegraphics[height=4.5cm, angle=0]{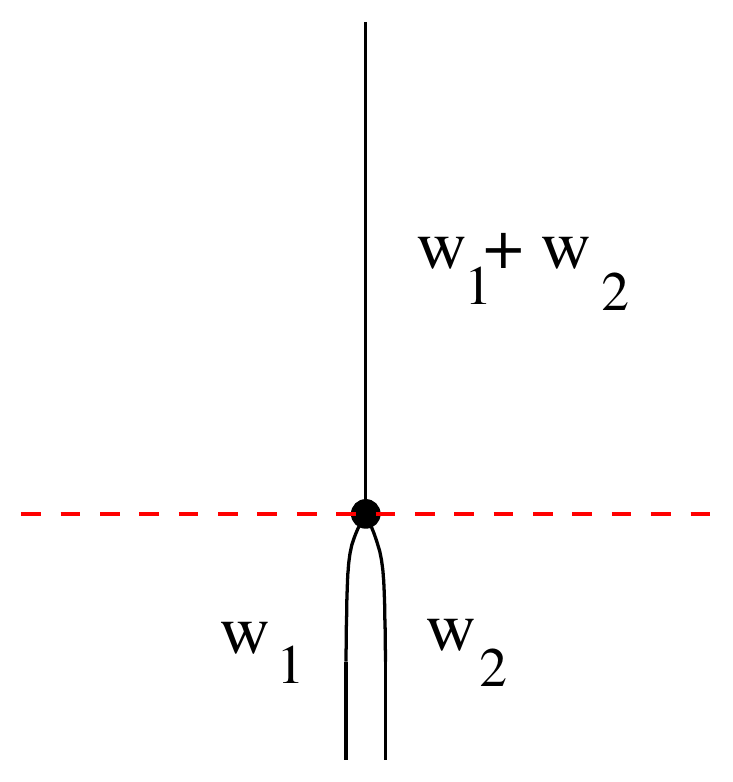}\\
 &d) & & e) & 
\end{tabular}

\caption{Pretangent morphisms}
\label{Fig:pretang}
\end{figure}

\begin{exa}
In  Figure~\ref{Fig:pretang} we  depicted several examples 
of the image of a morphism pretangent to a cycle which is represented by doted lines.
\end{exa}

It is clear that not any pretangency set corresponds to some
classical tangency point. For example, the two tropical lines in Figure
\ref{Fig:pretang}b are
pretangent, but this pretangency set doesn't correspond to
any tangency point 
between
two complex algebraic lines in $\CC P^2$. 
However, given any approximation of $f$ (if one exists) and 
any approximation of $L$ by algebraic curves, 
the accumulation set of tangency points of these approximations 
must lie
 inside the pretangency sets of $f$
and $L$ (see Section \ref{sec:proof corres}, or \cite{Br12}).

\subsection{Correspondence}\label{corres ns}

As in Section \ref{intro} let $d\ge 1$, $ k\ge 0$, and
$d_1,\ldots, d_{3d-1-k}>0$ be some integer numbers,
 and choose $\P=\{p_1,\ldots, p_k \}$ a set of $k$ points in
$\RR^2$, 
and 
 $\L=\{L_1,\ldots, L_{3d-1-k} \}$ a set of $3d-1-k$
non-singular effective tropical cycles
in $\RR^2$ such that $L_i$ has degree $d_i$. 
We denote by $\S^\TT(d,\P,\L)$ the set
of minimal  tropical morphisms 
$f:C\to \RR^2$ of degree $d$, where $C$ is a rational tropical curve with 
$\partial C=\emptyset$,  
passing through all 
points $p_i$ and pretangent to all curves $L_i$.

We suppose now that the configuration $(\P,\L)$ is \textit{generic}. 
The precise definition of this word will be given in 
Section \ref{generic conf}, Definition \ref{def generic}.
 For the moment, it is sufficient to have
in mind that the set of generic configurations
is a dense open subset of the set of all configurations with a given
number of points and tropical cycles.
The proof of the next three statements will be given in Section
\ref{generic conf}.
 Proposition \ref{generic curves} and Lemma
\ref{inter constr}
are direct consequences of Corollary \ref{finite2}

\begin{prop}\label{generic curves}
The set $\S^\TT(d,\P,\L)$ is finite, and any of its element
 $f:C\to\RR^2$ satisfies
\begin{itemize}
\item $C$ is a 3-valent curve with exactly 3d leaves, all of them
  of weight 1;
\item  $f(\Ve^0(C))\bigcap\Big(\bigcup_{L\in\L} \Ve^{0}(L)\cup\P
  \Big)=\emptyset $, i.e. no vertex of $C$ is mapped to a vertex of
  a curve  in $\L$ nor to a point in $\P$; 

\item  given $p\in\P$, if $x$ and $x'$
 are in $f^{-1}(p)$, then the (unique) path $\gamma$ in $C$ from
 $x$ to $x'$ is mapped to segment in $\RR^2$ by $f$;

\item  given $L\in\L$, there exists a connected subgraph
  $\Gamma\subset C$ which contains all pretangency components of $f$ with
  $L$, and such that $f(\Gamma)$ is a segment in $\RR^2$.

\end{itemize} 
\end{prop}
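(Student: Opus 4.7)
The plan is to identify $\S^\TT(d,\P,\L)$ as a finite union, over combinatorial types $\alpha$ of degree-$d$ rational tropical morphisms, of the subsets of $\M_\alpha$ cut out by the $k$ point conditions and $3d-1-k$ pretangency conditions, and then argue by dimension counting on $\M_\alpha$ that for generic $(\P,\L)$ only the desired types survive.

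First, by Lemma \ref{exp dim 1}, for a combinatorial type $\alpha$ of a rational degree-$d$ morphism $f:C\to\RR^2$ with $\partial C=\emptyset$, one has
$$\dim\M_\alpha = |\Ed^\infty(\alpha)|-1-\sum_{v\in\Ve^0(\alpha)}(\val(v)-3).$$
Since the Newton fan of a degree-$d$ morphism consists of three groups of vectors in the primitive directions $(-1,0)$, $(0,-1)$, $(1,1)$, each group of total weight $d$, we have $|\Ed^\infty(\alpha)|\le 3d$ with equality precisely when every end of $C$ has weight $1$. Consequently $\dim\M_\alpha\le 3d-1$, with equality iff $\alpha$ is trivalent and every end has weight $1$.

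Second, the definition of generic configuration (Definition \ref{def generic}) will be arranged so that each point condition $p_i\in f(C)$ and each pretangency condition with $L_j$ cuts out a codimension-$1$ locally polyhedral subset of $\M_\alpha$, while any further coincidence---a vertex of $C$ lying over a vertex of some $L_j$ or over some $p_i$; two preimages of a common $p\in\P$ whose connecting path is not mapped to a single segment; pretangency components with a given $L\in\L$ that fail to lie in a connected subgraph mapped to a segment---costs at least one additional codimension. Since the total number of imposed conditions equals $(3d-1-k)+k = 3d-1$, only types $\alpha$ with $\dim\M_\alpha\ge 3d-1$ can support an element of $\S^\TT(d,\P,\L)$, and any such element must avoid the extra codimension-$\ge 1$ degenerations. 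This yields the four bullet points simultaneously, and finiteness follows from the fact that there are only finitely many combinatorial types with $\dim\M_\alpha=3d-1$ and that $3d-1$ independent codimension-$1$ conditions reduce each to a finite set.

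The main obstacle is the last bullet: the pretangency condition with a fixed $L\in\L$ is not a priori a single codimension-$1$ condition on $\M_\alpha$, because the set-theoretic intersection $f(C)\cap L$ may split into many components and several of them may be pretangency sets simultaneously. The crux is to show that, for generic $(\P,\L)$, having $r$ disjoint pretangency components with $L$ (not all contained in a common subgraph mapped to a segment) costs strictly more than $r$ codimensions on $\M_\alpha$, so that the budget $3d-1$ is exceeded. This is where the combinatorial classification of pretangencies from Section~\ref{sec:trop pretang} will be used, and is the content deferred to Corollary \ref{finite2} in Section~\ref{generic conf}, from which Proposition~\ref{generic curves} is then read off directly.
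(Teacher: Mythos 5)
Your proposal is correct and follows essentially the same route as the paper: the paper likewise derives Proposition~\ref{generic curves} directly from Corollary~\ref{finite2}, whose proof (via Proposition~\ref{finite}) is exactly the dimension count you outline, carried out rigorously by introducing marked tropical morphisms, the incidence varieties $\I_\alpha$ with their two projections, and the $\ZZ$-linearity of the evaluation map (Lemma~\ref{inj}) to make ``each condition costs codimension one, each degeneration costs more'' precise. The crux you correctly identify (the third and fourth bullets) is handled there by adding an auxiliary marked point constrained to a second copy of the same constraint and checking that a path not mapped to a segment forces an extra codimension.
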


Let
$f:C\to\RR^2$ be an element of  $\S^\TT(d,\P,\L)$, 
and let us denote by $\alpha$ its combinatorial type.
Given $p\in\P$ (resp. $L\in \L$),
 we denote by $\lambda_p$ (resp. $\lambda_L$) the set of elements
of $\M_{\alpha}$ in a small neighborhood $U_f$ of $f$ 
which pass through $p$ (resp. are pretangent to $L$).
\begin{lemma}\label{lem1}
If $U_f$ is small enough, then
 $\lambda_q$ 
 spans a classical affine hyperplane $\Lambda_q$ defined
over $\ZZ$
 in
$\M_{\alpha}$ for any $q$ in $\P\cup\L$.
\end{lemma}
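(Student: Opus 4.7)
The plan is to work in the explicit coordinates on $\M_\alpha$ furnished by the proof of Lemma~\ref{exp dim 1}, namely the position $f(v_1)=(x_1,x_2)$ of a root vertex together with the lengths $l_1,\ldots,l_{|\Ed^0(\alpha)|}$ of the bounded edges. In these coordinates, the position $f(v)$ of any vertex $v\in\Ve^0(C)$ is an affine function whose linear part has integer coefficients, obtained by summing the integer vectors $l_i w_{f,e_i} u_{f,e_i}$ along a path from $v_1$ to $v$. I would first shrink $U_f$ so that, for each $q\in\P\cup\L$, the combinatorial pattern describing how $f'(C)$ meets $q$ stays constant as $f'$ varies in $U_f\cap\M_\alpha$; this is possible because Proposition~\ref{generic curves} ensures that the incidence and pretangency data at $f$ are discrete and generic.

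For a point constraint $q=p\in\P$, Proposition~\ref{generic curves} places $p$ in the relative interior of $f(e)$ for some edge $e$ of $C$. The condition $p\in f'(C)$ then becomes the single scalar equation
\[
\det\bigl(p-f'(v),\,u_{f,e}\bigr)=0,
\]
where $v$ is an endpoint of $e$. Since $u_{f,e}\in\ZZ^2$ and $f'(v)$ depends $\ZZ$-affinely on the coordinates of $\M_\alpha$, the linear part of this equation has integer coefficients.

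For a pretangency constraint $q=L\in\L$, Proposition~\ref{generic curves} provides a connected subgraph $\Gamma\subset C$ containing all pretangency components of $f$ with $L$ and such that $f(\Gamma)$ is a segment. Three subcases arise: (i) $\Gamma$ is a single vertex $w\in\Ve^0(C)$ with $f(w)$ in the relative interior of an edge $\ell$ of $L$; (ii) $f(\Gamma)$ is a single point equal to a vertex $v_L$ of $L$ lying in the interior of $f(e)$ for an edge $e$ of $C$; (iii) $f(\Gamma)$ is a non-degenerate segment contained in the interior of some edge $\ell$ of $L$. In subcases (i) and (ii) the constraint reduces respectively to $f'(w)\in\ell$ and $v_L\in f'(e)$, each again a single linear equation whose coefficients, expressed in terms of the primitive integer normal to $\ell$ or of $u_{f,e}$, are integers.

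Subcase (iii) is the delicate one and will be the main obstacle. Here the combinatorial type $\alpha$ already forces every edge of $\Gamma$ to share the primitive direction of $\ell$, so the remaining constraint is merely the single affine condition that the common line carrying $f'(\Gamma)$ coincide with the line supporting $\ell$; this is again one $\ZZ$-linear equation, and one must check that the combinatorial stability on $U_f$ genuinely reduces the pretangency requirement to this single scalar equation rather than imposing further coincidences. Granted this, $\lambda_q$ is cut out in $U_f\cap\M_\alpha$ by one non-trivial affine equation with integer linear part, so it lies in a classical affine hyperplane $\Lambda_q$ defined over $\ZZ$. Since $f\in\lambda_q$ and $U_f\cap\M_\alpha$ is open, $\lambda_q$ contains a relative neighborhood of $f$ in $\Lambda_q$, and therefore spans $\Lambda_q$.
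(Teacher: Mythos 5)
Your argument is correct and is essentially the paper's: the paper dismisses the lemma as an immediate consequence of the $\ZZ$-linearity of the evaluation and forgetful maps, and your explicit equations in the coordinates $(f(v_1),l_1,\ldots,l_{|\Ed^0(\alpha)|})$ are exactly the content of that linearity (they reappear as equations (\ref{equ pt}), (\ref{equ dte1}) and (\ref{equ dte2}) in Section \ref{practical}). The one point to make explicit --- for point constraints as well, not only in your subcase (iii) --- is that when $f^{-1}(q)$ meets several edges the resulting equations all coincide, since by properties (3) and (4) of Definition \ref{def extreme} the displacement between the relevant vertices is an integer multiple of the common direction $u_q$ and is therefore killed by $\det(\,\cdot\,,u_q)$, so that $\lambda_q$ lies in a single hyperplane rather than a union of several through $f$.
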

\begin{proof}
This is an immediate consequence of the $\ZZ$-linearity of
 the evaluation and forgetful maps $ev$ and $ft$
 (see Section \ref{generic conf}).
\end{proof}

\begin{lemma}\label{inter constr}
We have
$$\bigcap_{q\in\P\cup\L}\Lambda_q=\{f\}. $$
\end{lemma}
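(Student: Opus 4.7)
The plan is to match the dimension of the deformation space $\M_\alpha$ against the number of linearly independent affine conditions imposed by the constraints in $\P\cup\L$, and then conclude via either a direct genericity argument or, if $\S^\TT(d,\P,\L)$ has already been shown to be finite, via a contradiction with finiteness.

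First I would compute $\dim \M_\alpha$. By Proposition \ref{generic curves} the combinatorial type $\alpha$ is that of a rational $3$-valent tropical morphism $f:C\to\RR^2$ with exactly $3d$ ends, with $\partial C=\emptyset$ and $\Ve^\infty(C)\neq\emptyset$ encoding the closed ends. Applying Lemma \ref{exp dim 1} with $n=2$, $|\Ed^\infty(\alpha)|=3d$, and $\mathrm{val}(v)=3$ for every $v\in\Ve^0(\alpha)$, I obtain
$$\dim \M_\alpha \;=\; 3d + 2 - 3 - 0 \;=\; 3d-1.$$
Second, I count constraints: $\P\cup\L$ contains $k+(3d-1-k)=3d-1$ elements, and by Lemma \ref{lem1}, in the small neighborhood $U_f$ each constraint $q\in\P\cup\L$ cuts out exactly one affine hyperplane $\Lambda_q\subset \M_\alpha$. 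Since $f\in\bigcap_{q\in\P\cup\L}\Lambda_q$, this intersection is a non-empty affine subspace of $\M_\alpha$.

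Now I would argue that this intersection is $0$-dimensional. Suppose for contradiction that it contains a point $f'\neq f$. Because $\M_\alpha$ is an open convex polyhedron (Lemma \ref{exp dim 1}) and each $\Lambda_q$ is a classical affine hyperplane, an open subsegment $I\subset[f,f']$ is contained in $U_f\cap \M_\alpha\cap \bigcap_{q}\Lambda_q$. By Lemma \ref{lem1}, every tropical morphism in $I$ still passes through every $p_i\in\P$ and is pretangent to every $L_j\in\L$, so $I\subset \S^\TT(d,\P,\L)$. This yields an infinite family of tropical morphisms in $\S^\TT(d,\P,\L)$, contradicting the finiteness asserted in Proposition \ref{generic curves}. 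Hence $\bigcap_q \Lambda_q=\{f\}$.

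The main obstacle is the logical ordering: the text declares both Proposition \ref{generic curves} and Lemma \ref{inter constr} to be consequences of Corollary \ref{finite2}, so using finiteness directly would be circular unless Corollary \ref{finite2} has been established beforehand. If one wants to avoid this, the argument should be rephrased in purely linear-algebraic terms: one shows that the $3d-1$ affine linear forms defining the hyperplanes $\Lambda_q$ are linearly independent at $f$, which is precisely the transversality condition built into the definition of a generic configuration $(\P,\L)$ (Definition \ref{def generic}). This translates into the fact that the combined evaluation/pretangency map $\M_\alpha\to(\RR^2)^k\times\RR^{3d-1-k}$ has injective differential at $f$, and since both source and target have dimension $3d-1$, the fiber through $f$ is the single point $\{f\}$.
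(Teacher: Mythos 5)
Your proof is correct and follows essentially the same route as the paper: the authors give no separate argument for this lemma, stating only that it is a direct consequence of Corollary \ref{finite2}, i.e.\ precisely the finiteness-contradiction you spell out (a positive-dimensional intersection of the $\Lambda_q$ would meet $U_f$ in an infinite family of morphisms satisfying all constraints). Your worry about circularity is unfounded, since Proposition \ref{generic curves} and this lemma are both derived independently from Corollary \ref{finite2}, which is proved beforehand in Section \ref{generic conf}; your alternative linear-algebraic phrasing via injectivity of the evaluation map is in any case exactly what the proof of Proposition \ref{finite} establishes.
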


Let us associate a weight to the (classical) linear spaces $\Lambda_q$.
Given $p\in\P$,
 we denote by $\E(p)$ the set of
  edges of $C$  which contain a point of $f^{-1}(p)$ and we define
$$w_p=\sum_{e\in\E(p)} w_{f,e}.$$
Given $L\in\L$, we denote by  
$E_L$ the union of all pretangency components of $f$ with $L$,
by $\mu$ the cardinal of $E_L\cap \Ve^0(C)$,
 and by $\lambda$ the number of ends of $C$ contained in
$E_L$. If $v\in\Ve^0(L)$, we denote by 
$\E(v)$ the set of
 edges of $C$  which contain a point of $f^{-1}(v)$, and we define
\begin{equation}\label{wL}
w_L = \left(\sum_{v\in\Ve^0(L)}\quad \sum_{e\in\E(v)} w_{f,e}\right)  + \mu  -\lambda.
\end{equation}
Equivalently, we can define $w_L$ as follows
$$
w_L= \sum_{e\in\E(v)} w_{f,e}  \quad \quad \text{if
 $f(E_L)=v\in\Ve^0(L)$,}
$$

$$
w_L=\left(\sum_{v\in\Ve^0(L)}\quad \sum_{e\in\E(v)} (w_{f,e} +1)\right) +
 \kappa -2b_0(E_L)  \quad \text{otherwise}
$$
where $\kappa$ is the number of edges of $C\setminus E_L$ 
 adjacent to a vertex of $C$ in $E_L$.

\begin{defi}\label{def trop mult}
The  \emph{$(\P,\L)$-multiplicity of $f:C\to\RR^2$}, denoted by $\mu_{(\P,\L)}(f)$, is
defined as the tropical intersection number in
$\M_{\alpha}$ of all the tropical
hypersurfaces $\mu_{q}\Lambda_{q}$
 divided
by the number of automorphisms of $f$. That
is to say, 
$$\mu_{(\P,\L)}(f)=\frac{\left|\det\left(\Lambda_{p_1},\ldots,
\Lambda_{p_k},\Lambda_{L_1},\ldots, \Lambda_{L_{3d-1-k}} 
\right) \right| }{|Aut(f)|}\prod_{q\in\P\cup\L}w_q.
$$
The morphism $f$ is said to be \emph{tangent} to $\L$ if $\mu_{\P,\L}(f)\ne 0$.
\end{defi}

\begin{figure}[h]
\centering
\includegraphics[height=9cm, angle=0]{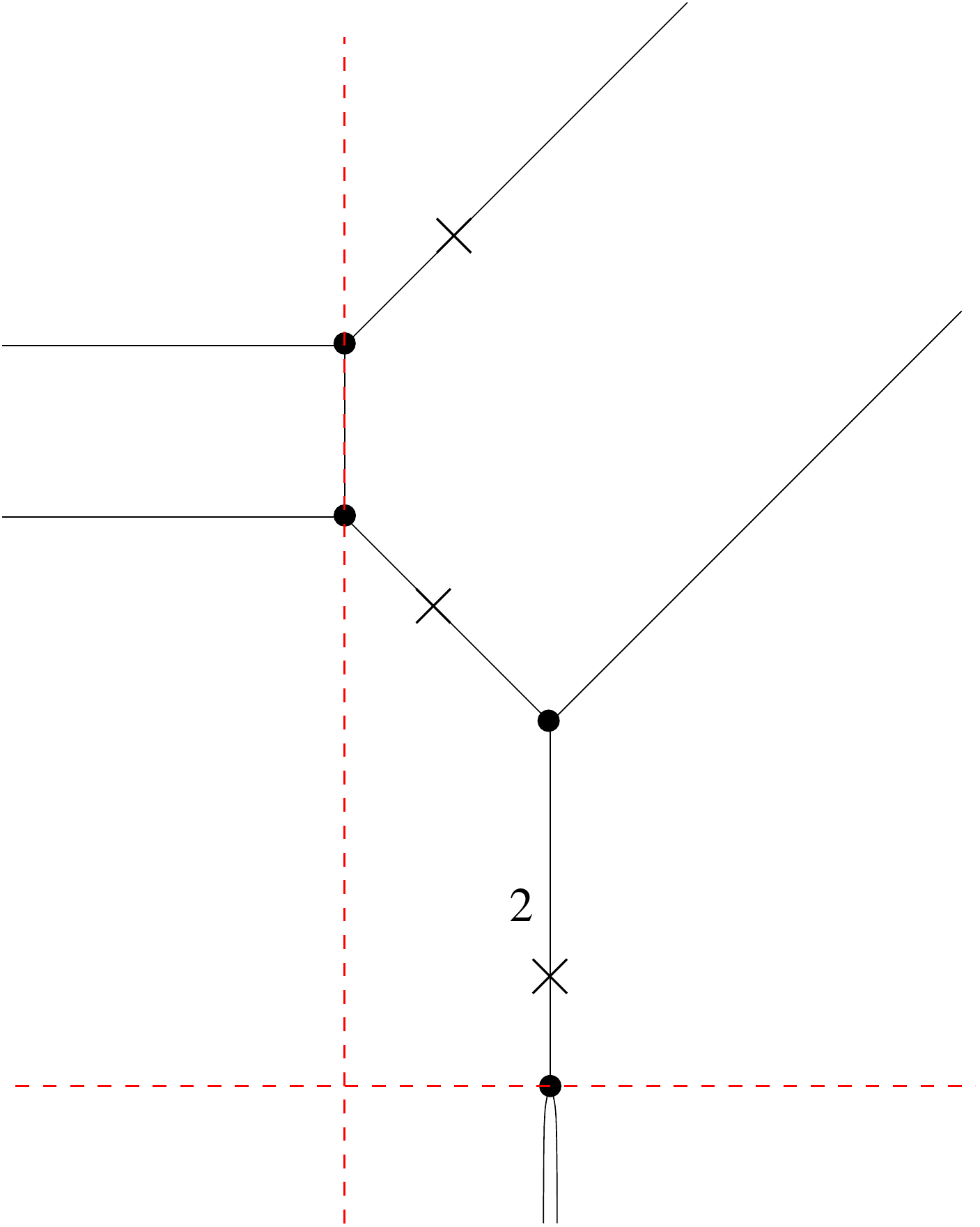}
\caption{A conic tangent to two lines and passing through three points}
\label{Fig:conic-with-constraints}
\end{figure}

\begin{exa} \label{Exa:conic}
For the morphism of Figure~\ref{Fig:conic-with-constraints}, in the
coordinate system 
described in Example \ref{coord syst ex},
let $y = \alpha_1$ and $x = \alpha_2$
be the equations respectively of the horizontal and vertical lines
and $(a_1,b_1), (a_2,b_2),$ and $(a_3,b_3)$ be the coordinates of the
three points from bottom to top. Then the hyperplanes $\Lambda_1, \dotsc, \Lambda_5$ have equations

$$\left\{
\begin{array}{rl}
y  & = \alpha_1 \\
x  & = a_1 \\
y + 2 l_1 + a_2 -a_1 & = b_2 \\
y + l_2 & = \alpha_2\\
y + 2 l_1 + l_2 + l_3 + a_3 -\alpha_2 & = b_3\\  
\end{array}
\right.$$

and  $$ 
\left|\det\left(\Lambda_1 \dotsc \Lambda_5\right)  \right| =  
\begin{vmatrix}
0 & 1 & 0 & 1 & 0\\ 
1 & 0 & 1 & 0 & 1\\
0 & 0 & 2 & 0 & 2\\
0 & 0 & 0 & 1 & 1\\
0 & 0 & 0 & 0 & 1
 \end{vmatrix}=2. $$

All the weights are $1$ except the ones associated to the bottom most point and the one associated to the tangency to the vertical line which are $2$ thus the formula of Definition~\ref{def trop mult} becomes $
\mu(f)  = \frac{2}{2}\times 2^2 =4$.

\end{exa}

\begin{thm}[Correspondence Theorem]\label{Corres}
With the hypothesis above, 
we have
$$N_{d,0}(k;d_1,\ldots,d_{3d-1-k})= \sum_{f\in\S^\TT(d,\P,\L)}\mu_{(\P,\L)}(f).$$ 

\end{thm}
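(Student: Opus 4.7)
The plan is to prove the correspondence by the standard strategy of lifting $(\P,\L)$ to a one-parameter family of complex configurations $(\P_t,\L_t)_{t>1}$ whose tropical limit (in the sense developed in Section \ref{sec:phase}) is $(\P,\L)$, and showing that the two counts agree as $t\to\infty$. Concretely, I would first lift each $p_i\in\P$ to a family $p_i(t)\in(\CC^*)^2$ with $(\log_t|z_1|,\log_t|z_2|)\to p_i$, and lift each $L_i\in\L$ to a smooth complex curve $L_i(t)$ of degree $d_i$ whose amoeba rescaled by $1/\log t$ converges to $L_i$, equipped with an arbitrary but fixed choice of phase along the edges. By properness of the moduli space of stable genus-$0$ maps of bounded degree, the family $\S(d,0,\P_t,\L_t)$ is compact after passing to a subsequence, and any accumulation point yields a phase-tropical morphism $f_\infty:C\to\RR^2$ whose underlying tropical morphism must satisfy every incidence and tangency condition imposed by $(\P,\L)$.

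The first main step is to verify that the underlying tropical morphism of any such limit lies in $\S^\TT(d,\P,\L)$. Incidence with a point passes directly to the limit. For tangency, the key observation is that if $f_t$ is tangent to $L_i(t)$ at a smooth classical point then their two local branches share the same tangent direction, which forces their tropicalizations to overlap on a nontrivial segment or to meet at a common vertex; this is precisely the combinatorial content of Definition \ref{defi tang}. Combined with Proposition \ref{generic curves}, which guarantees that for generic $(\P,\L)$ every element of $\S^\TT(d,\P,\L)$ is 3-valent with controlled pretangency sets, this identifies the possible tropical limits.

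The second main step is the count of classical lifts. Fix $f\in\S^\TT(d,\P,\L)$ of combinatorial type $\alpha$. By Lemma \ref{lem1}, each constraint $q\in\P\cup\L$ cuts out an affine hyperplane $\Lambda_q\subset\M_\alpha$ defined over $\ZZ$, and by Lemma \ref{inter constr} these hyperplanes meet transversally at the single point $\{f\}$. The combinatorial multiplicity of $f$ as a tropical solution is therefore $|\det(\Lambda_{p_1},\ldots,\Lambda_{L_{3d-1-k}})|$, computed in the integer coordinates of Lemma \ref{exp dim 1}. On top of this, each constraint admits several local classical lifts: at a point $p\in\P$, each edge $e\in\E(p)$ contributes $w_{f,e}$ preimages, giving the factor $w_p$; at a tangency constraint $L\in\L$, a phase-tropical computation on a neighborhood of the pretangency set $E_L$ produces the factor $w_L$ of \eqref{wL}. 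Multiplying these local factors, dividing by $|\text{Aut}(f)|$ to account for automorphisms on both sides, and summing over $f$ yields the formula of Definition \ref{def trop mult}.

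The hardest part will be the phase-tropical lifting count that produces $w_L$ when $f(E_L)$ is a genuine segment rather than a single vertex of $L$. One has to count the classical tangencies that accumulate on $E_L$: the preimages of $\Ve^0(L)$ contribute the weighted sum $\sum w_{f,e}$, while internal vertices of $C$ inside $E_L$ and ends of $C$ contained in $E_L$ contribute correction terms with opposite signs, giving the $\mu-\lambda$ adjustment in \eqref{wL}. Verifying this cleanly requires analyzing the Newton polytope of the local tangency equations along a tubular neighborhood of $E_L$ and using the phase data of $f_\infty$ to determine which branches of the classical tangency locus actually arise as limits; this is where the machinery of phase-tropical morphisms from Section \ref{sec:phase} is indispensable. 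Once this local count is in hand, the global product $\prod_q w_q$ follows immediately from the transversality of the family $\{\Lambda_q\}$, and the theorem follows.
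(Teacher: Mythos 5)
Your overall architecture is the paper's: degenerate the constraints to families $(\P_t,\L_t)$ whose tropical limit is $(\P,\L)$, use a compactness statement (Proposition \ref{trop-compact}) to show that classical solutions can only accumulate on elements of $\S^\TT(d,\P,\L)$, and then count the classical curves near each tropical solution by counting admissible phase structures constraint by constraint, matching the determinant and the weights $w_q$ of Definition \ref{def trop mult}. Two steps, however, are genuinely missing. First, your ``count of classical lifts'' presupposes that the admissible phase-tropical deformations of $f$ are in bijection with the actual holomorphic curves near the limit for large $t$; this is exactly Theorem \ref{realiz} (the open embedding $\Lambda_t:U\to\mathcal{M}_C$ together with the equivalence of the two notions of convergence), which the paper proves by lifting to a tropical line in $\RR^{s-1}$ (Lemma \ref{lifttoline}) and an induction on dimension (Lemma \ref{proofforline}). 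Without this input the number of classical solutions accumulating on a given $f$ is not controlled in either direction, so neither inequality between the two sides of the theorem is established.

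Second---and this is the step you yourself identify as hardest---the derivation of the weight $w_L$ of \eqref{wL} when the pretangency component $E_L$ is a segment is not carried by your proposed ``Newton polytope of the local tangency equations'' analysis. The paper's argument is of a different nature: it builds auxiliary phase-tropical curves $f_L:C_L\to\RR^2$ and $L_L$ from the closed edges meeting $E_L$, approximates them by families $C_t^{(E_L)}$ and $L_t^{(E_L)}$ via Theorem \ref{realiz}, and counts the tangencies with the multiplicative translates $\tau_\lambda(L_t^{(E_L)})$ as minus the Euler characteristic of a fiber product $A_t$ over $\CC^*$, using the Fubini theorem for Euler-characteristic integration together with a correction term at infinity; the local contributions of the vertices lying on the pretangency line are what produce the $\mu-\lambda$ adjustment in \eqref{wL}. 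Asserting the formula and naming an unexecuted alternative technique leaves a genuine gap at the technical core of the proof. A smaller inaccuracy: your claim that a classical tangency forces the tropicalizations ``to overlap on a nontrivial segment or to meet at a common vertex'' is not the right dichotomy---Definition \ref{defi tang} requires a connected component of $f(C)\cap L$ to contain a vertex of $L$ or the image of a vertex of $C$---and what the paper actually proves is the contrapositive: if the limit is not pretangent to $L$, then every intersection of $f(C)$ and $L$ is a transverse edge-edge crossing, so for large $t$ all intersections of $f_t(C_t)$ with $L_t$ are transverse, a contradiction.
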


\begin{exa}
For the configuration of points and lines considered in
Example~\ref{Exa:conic}, there is only one morphism in
$S^\TT(2,\P,\L)$ and  $N_{2,0}(3;1,1)$ is indeed $4$. Other examples
are given in Section~\ref{Sec:Exa}.

\end{exa}

\begin{rem}
In the proof of Theorem \ref{Corres} (see Section \ref{sec:proof corres}),
we establish not only equality of both numbers, but we also give a
correspondence between \textit{phase-tropical curves} (see Section
\ref{sec:phase} for the definition) and complex curves 
close to the tropical limit
in the sense of Section \ref{sec:trop limit}. 
In
particular, if we choose real phases,
in the sense of Definition \ref{def:phase point} and Remark \ref{rem:real phase morphism},
 for all constraints in
$(\P,\L)$, it is possible to recover all real algebraic curves passing
through a configuration of real points and tangent to a configuration
of real lines when these points and lines are close to the tropical
limit. 
See Section \ref{sec:real enumerative} for a few examples.
\end{rem}

The definition of $\mu_{(\P,\L)}(f)$ we gave so far is not very convenient for
actual computations. 
We will give in Section \ref{practical} a practical way to compute this
tropical multiplicity. We first  clarify the notion of generic
configurations in Section \ref{generic conf}.

\subsection{Generalization to immersed constraints}
In this section we generalize Theorem \ref{Corres} to the case when
the constraints are not necessarily non-singular tropical or complex
curves but any immersed curves. 
 Instead of considering 
curves of degree $d$ in the projective plane we now consider curves with a given Newton fan $\N$ in the two-dimensional torus ${\left(\CC^*\right)}^2$.

We first pose the problem in complex geometry.
Let $s \ge 2$, $k\le s-1, g_1,\ldots, g_{s-1-k}$ be some non negative integer
numbers and $\N$, $\N_1,\ldots, \N_{s-1-k}$ some Newton fans such that
the number of elements of $\N$ is $s$ (recall that a Newton fan is a multiset). Choose a set $\P=\{p_1,\ldots,
p_k \}$ of $k$ points in ${\left(\CC^*\right)}^2$, and a set $\L=\{L_1,\ldots,
L_{s-1-k} \}$ of $s-1-k$ immersed complex curves such that
$L_i$ is of genus $g_i$ and has Newton fan $\N_i$.

  We denote by $\S(\N,\P,\L)$ the set of all rational
complex algebraic maps $f:\CC P^1\setminus\{s\mbox{ points}\} \to
{\left(\CC^*\right)}^2$ with 
Newton fan $\N$, 
passing through all points $p_i\in\P$ and tangent to all curves
$L_i\in\L$.  The cardinal of the set $\S(\N,\P,\L)$ is finite as long
as $\P$ and $\L$ are chosen generically,
and we define
$$N_{\N,\P,\L}(k;\N_1,g_1,\ldots ,\N_{3d-1-k},g_{3d-1-k})=
\sum_{f\in\S(\N,\P,\L)}\frac{1}{|Aut(f)|}.$$

\vspace{2ex}
The problem in the tropical set-up is similar.
Choose now $\P^\TT=\{p_1^\TT,\ldots, p_k^\TT \}$ a set of $k$ points in
$\RR^2$, 
and 
 $\L^\TT=\{L_1^\TT,\ldots, L_{s-1-k}^\TT \}$ a set of $s-1-k$
simple effective tropical cycles
in $\RR^2$ such that $L_i^\TT$ has Newton fan $\N_i$ and genus $g_i$. 
We denote by $\S^\TT(\N,\P^\TT,\L^\TT)$ the set
of minimal  tropical morphisms 
$f:C\to \RR^2$ has Newton fan $\N$, where $C$ is a rational tropical curve with 
$\partial C=\emptyset$,   
passing through all 
points $p_i^\TT$ and pretangent to all curves $L_j^\TT$.
Next proposition is a
 direct consequence of Corollary \ref{finite2}

\begin{prop}
The set $\S^\TT(\N,\P^\TT,\L^\TT)$ is finite and its elements are extreme in the sense of Definition~\ref{def extreme}.
\end{prop}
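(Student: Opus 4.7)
The statement is a direct generalization of Proposition \ref{generic curves} from the non-singular degree $d$ setting to arbitrary Newton fan $\N$ with immersed simple constraints, and the plan is to show that the three ingredients on which Corollary \ref{finite2} rests, namely the dimension formula for $\M_\alpha$, the $\ZZ$-linearity of the conditions imposed by points and by cycles, and a bound on the number of relevant combinatorial types, all carry over verbatim from the non-singular situation treated in Section \ref{generic conf}.

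First I would observe that Lemma \ref{exp dim 1} depends only on the underlying combinatorial type of a rational tropical morphism $f:C\to\RR^2$ (with $\partial C=\emptyset$) and on the cardinality of its Newton fan; thus for a combinatorial type $\alpha$ with Newton fan $\N$ of size $s$,
\[
\dim \M_\alpha \;=\; s + 2 - 3 - \sum_{v\in \Ve^0(\alpha)}(\val(v)-3) \;=\; s-1-\sum_{v\in \Ve^0(\alpha)}(\val(v)-3),
\]
with equality $\dim \M_\alpha = s-1$ if and only if $\alpha$ is $3$-valent, which is precisely the extremeness condition of Definition \ref{def extreme}. Next, by Lemma \ref{lem1}, each point constraint $p_i^\TT\in\P^\TT$ and each simple cycle constraint $L_j^\TT\in\L^\TT$ cuts out, in a neighborhood of any candidate $f\in\M_\alpha$, a $\ZZ$-affine hyperplane $\Lambda_{p_i^\TT}$ or $\Lambda_{L_j^\TT}$; the proof of this hyperplane property uses only $\ZZ$-linearity of the evaluation and forgetful maps on $\M_\alpha$, which is a property of the combinatorial type alone and is insensitive to whether the cycles $L_j^\TT$ are non-singular or merely simple.

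Counting the imposed conditions gives $k+(s-1-k)=s-1$, which matches $\dim\M_\alpha$ precisely in the $3$-valent case. Genericity of $(\P^\TT,\L^\TT)$, as defined in Section \ref{generic conf}, places the hyperplanes $\Lambda_q$ in general position within each $\M_\alpha$, and this has two consequences: for any combinatorial type $\alpha$ which is not $3$-valent, the intersection $\bigcap_{q}\Lambda_q$ is empty, so no element of $\S^\TT(\N,\P^\TT,\L^\TT)$ has type $\alpha$; and for each $3$-valent $\alpha$ this intersection contains at most one point, as in Lemma \ref{inter constr}. Both the extremeness of every $f\in\S^\TT(\N,\P^\TT,\L^\TT)$ and the uniqueness of $f$ within each combinatorial type follow at once.

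The main obstacle is finiteness of the set of contributing combinatorial types, since a priori there are infinitely many trees with $s$ marked leaves. This is handled as in Corollary \ref{finite2}: the balancing condition together with the fixed Newton fan $\N$ forces the image $f(C)$ to lie in a bounded neighborhood of the support of $\P^\TT\cup\L^\TT$, which bounds the lengths of the bounded edges of $C$ and hence leaves only finitely many combinatorial types for which $\bigcap_q\Lambda_q$ can meet the open polyhedron $\M_\alpha$. Combining this with the per-type uniqueness above yields the finiteness of $\S^\TT(\N,\P^\TT,\L^\TT)$ and completes the proof.
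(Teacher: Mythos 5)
There is a genuine gap: you identify ``extreme'' with ``$3$-valent,'' but Definition~\ref{def extreme} has four conditions, and your dimension count only delivers conditions (1) and (2). Conditions (3) and (4) --- that the path between any two points of $f^{-1}(p)$ is mapped to a segment, and that all pretangency components with a given $L$ sit inside a connected subgraph whose image is a segment --- do not ``follow at once'' from the hyperplanes $\Lambda_q$ being in general position. In the paper these require a separate codimension argument: one adds an auxiliary marked point $x_s$, forms the incidence variety $\I_{\alpha',i}$ with the extra condition $f(x_s)=p_s$ (resp.\ $f(x_s)\in L_s$), and shows that the locus where the path from $x_s$ to $x_i$ is not mapped to a segment intersected with $\{p_i=p_s\}$ (resp.\ $\{L_{i-k}=L_s\}$) projects to a positive-codimension subset of the parameter space, which is then removed. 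Without this step your configurations are not known to avoid morphisms violating (3) or (4), and the multiplicity formula (\ref{wL}) built on these properties would not be available. A secondary issue: your finiteness argument for combinatorial types (``bounded image, hence bounded edge lengths, hence finitely many types'') is a non-sequitur --- edge lengths have nothing to do with the number of combinatorial types; the correct reason is purely combinatorial (finitely many trees with $s$ leaves, end directions and weights prescribed by $\N$, bounded-edge slopes then forced by balancing).

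It is also worth pointing out that the paper does not reprove anything here: the proposition is a one-line consequence of Corollary~\ref{finite2}, which is already stated and proved for configurations of \emph{simple} effective tropical cycles and for morphisms with an arbitrary number $s$ of ends (the parameter spaces $Par(k,\beta_1,\ldots,\beta_{s-1-k})$ and the sets $\C(s,\P,\L)$ are defined in that generality), with the reduction from simple cycles to vertical point-constraints and spanned lines handled by the map $\chi$ in its proof. Since $\S^\TT(\N,\P^\TT,\L^\TT)\subset\C(s,\P^\TT,\L^\TT)$ for $s=|\N|$, nothing new is needed. If you prefer to argue from scratch as you do, you must reproduce the second half of the proof of Proposition~\ref{finite} and the end of the proof of Corollary~\ref{finite2} to secure properties (3) and (4).
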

Moreover, Lemmas \ref{lem1} and \ref{inter constr} still hold in this situation.
  If $p^\TT\in\P^\TT$, we define $w_p$
in the same way as in Section \ref{corres ns}. If $L^\TT\in\L^\TT$, 
we define $w_L$ as follows 
$$w_{L^\TT}= \left(\sum_{v\in\Ve^0(L^\TT)}\quad \sum_{e\in\E(v)} w_{f,e}\right)
+ \delta(\mu  -\lambda)$$ 
where $\delta$ is the weight of the edge of $L$ containing 
the tangency set of $f$ and $L^\TT$ (if any, otherwise $\mu=\lambda=0$).
The $(\P^\TT,\L^\TT)$-multiplicity of a tropical morphism $f$ in
$\S^\TT(d,\P^\TT,\L^\TT)$ is given by Definition \ref{def trop mult}.

\begin{thm}[Correspondence Theorem in ${\left(\CC^*\right)}^2$]\label{Corres2}
Let $\N$, $s$, $k$, $g_1,\ldots, g_{s-1-k}$, $\N_1,\ldots, \N_{s-1-k}$, $\L^\TT$ and $\P^\TT$ be as above.

There exists a generic configuration of points $\P=\{p_1,\ldots, p_k
\}$ and immersed complex curves $\L =\{L_1,\ldots, L_{s-1-k} \}$ in
${\left(\CC^*\right)}^2$ having respective Newton fans $\N_1,\ldots,
\N_{s-1-k}$ and genera $g_1,\ldots, g_{s-1-k}$ such that

$$N_{\N,\P,\L}(k;\N_1,g_1,\ldots,\N_{3d-1-k},g_{s-1-k})=
 \sum_{f\in\S^\TT(\N,\P^\TT,\L^\TT)}\mu_{(\P^\TT,\L^\TT)}(f).$$ 
\end{thm}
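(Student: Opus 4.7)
The plan is to follow the same scheme as for Theorem~\ref{Corres}, adapted to the torus ${\left(\CC^*\right)}^2$ and to immersed constraint curves. The three main steps are: to construct a family of complex configurations $(\P,\L)$ whose tropical limit is $(\P^\TT,\L^\TT)$; to identify the tropical limit of every element of $\S(\N,\P,\L)$ as an element of $\S^\TT(\N,\P^\TT,\L^\TT)$; and to count the phase-tropical lifts of each tropical solution, matching them with the tropical multiplicity.

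First I would produce the complex constraints. Given a simple effective tropical cycle $L_i^\TT$ of genus $g_i$ with Newton fan $\N_i$, a patchworking-type construction analogous to the one used in Theorem~\ref{Corres} yields a family of immersed complex algebraic curves $L_{i,t} \subset {\left(\CC^*\right)}^2$ tropicalizing to $L_i^\TT$: each triangle in the subdivision dual to $L_i^\TT$ contributes a smooth local piece, while each parallelogram contributes a transverse self-intersection, so the immersed curve has the prescribed Newton fan and geometric genus $g_i$. Likewise the tropical points $p_j^\TT$ are lifted to families $p_{j,t}$. Genericity of $(\P^\TT,\L^\TT)$ in the sense of Section~\ref{generic conf} ensures that $(\P,\L)$ is generic for all $t$ sufficiently small, so that $\S(\N,\P,\L)$ is finite. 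Then, by the phase-tropical limit machinery of Sections~\ref{sec:phase} and~\ref{sec:proof corres}, any sequence $f_t \in \S(\N,\P,\L)$ admits, up to subsequence, a phase-tropical limit whose underlying tropical morphism $f:C\to\RR^2$ belongs to $\S^\TT(\N,\P^\TT,\L^\TT)$; conversely, for each $f \in \S^\TT(\N,\P^\TT,\L^\TT)$ I would enumerate its phase-tropical lifts to honest elements of $\S(\N,\P,\L)$ and show that their number, divided by $|Aut(f)|$, equals $\mu_{(\P^\TT,\L^\TT)}(f)$. The determinant $|\det(\Lambda_{p_j},\Lambda_{L_i})|$ still computes the transverse cut of the constraint hyperplanes in $\M_\alpha$, so the determinantal part of the multiplicity is unchanged from the non-singular setting.

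The principal obstacle is the new weight factor $\delta$ entering the formula for $w_{L^\TT}$ when the pretangency set lies on an edge of $L_i^\TT$ of weight $\delta > 1$. This factor is invisible in Theorem~\ref{Corres} because every edge of a non-singular tropical cycle of degree $d$ has weight $1$, while a simple cycle may contain edges dual to longer integer segments in the Newton subdivision. The key local analysis runs as follows: after a toric change of coordinates transforming the weight-$\delta$ edge into a weight-$1$ edge, the complex constraint $L_{i,t}$ locally unfolds into $\delta$ parallel branches (one for each integer point on the dual edge), and a single tropical pretangency of $f$ to the edge lifts to $\delta$ distinct complex tangencies to $L_{i,t}$, producing the missing factor. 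Once this local count is in place, the contributions at vertices of $L^\TT$ and of $C$, the automorphism factor, and the determinantal weight are handled verbatim as in Section~\ref{sec:proof corres}, and summing $\mu_{(\P^\TT,\L^\TT)}(f)$ over $f \in \S^\TT(\N,\P^\TT,\L^\TT)$ yields the claimed equality with $N_{\N,\P,\L}(k;\N_1,g_1,\ldots,\N_{s-1-k},g_{s-1-k})$.
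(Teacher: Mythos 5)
Your overall scheme (construct a family of complex constraints tropicalizing to $(\P^\TT,\L^\TT)$, identify the tropical limits of the complex solutions, count phase-tropical lifts) is the same as the paper's, but you have misidentified the principal new difficulty, and the step you dismiss as routine is exactly where the paper has to do work. You claim that ``the contributions at vertices of $L^\TT$ and of $C$ \ldots\ are handled verbatim as in Section~\ref{sec:proof corres}.'' They are not. The tangency analysis at a vertex $v$ of a constraint in the proof of Theorem~\ref{Corres} rests on the constraint being \emph{non-singular} there: the dual triangle is primitive, so the local coamoeba is a linear image of the coamoeba of a line (two triangles sharing three vertices), and the logarithmic Gauss map argument pins down the phase of the tangent branch. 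A \emph{simple} cycle may have trivalent vertices dual to non-primitive triangles (e.g.\ with vertices $(0,0)$, $(2,1)$, $(1,2)$, all of whose sides are primitive, so this is invisible to your edge-weight discussion); there the local coamoeba is not of that form and the argument breaks. The paper's proof supplies precisely the missing ingredient: near such a vertex the immersed constraint is the image of a tropical line under an integer linear map of determinant $m$ (the vertex multiplicity), whose exponential is a degree-$m$ multiplicatively-linear self-map of $\left(\CC^*\right)^2$, and pulling back by this cover reduces the local tangency count to the smooth case already treated. Your proposal contains no substitute for this reduction.

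Two secondary points. First, your patchworking construction of the constraints asserts that ``each triangle in the subdivision contributes a smooth local piece'' of the right genus; for non-primitive triangles this is false as stated (the local piece can acquire genus or break into several branches depending on the coefficients), and producing an immersed family of prescribed genus $g_j>0$ converging to $L_j^\TT$ in the sense of Definition~\ref{wtroplimit} is exactly what the paper delegates to the proof of Theorem~1 of \cite{Mik1} (Propositions~8.14 and~8.23 there); for $g_j=0$ it uses Theorem~\ref{realiz}. Second, your explanation of the factor $\delta$ in $w_{L^\TT}$ via the $\delta$ parallel annuli over a weight-$\delta$ edge points at the right phenomenon, but in the paper this factor emerges from the same Euler-characteristic/fiber-product computation as in Theorem~\ref{Corres} once the vertex problem has been reduced to the smooth case by the pull-back; you would still need to redo that computation for a degree-$\delta$ covering rather than assert the conclusion.
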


Theorem \ref{Corres2} we will be proved in Section~\ref{sec:proof corres}.

\section{Generic configurations of constraints}\label{generic conf}

\subsection{Marked tropical curves}
In order to prove Proposition \ref{generic curves}, 
it is convenient to consider marked
tropical curves as a technical tool.

\begin{defi}
A 
\emph{tropical curve  with $n$ marked points} is a $(n+1)$-tuple
$(C,x_1,\ldots,x_n)$ where $C$ is a 
tropical curve and the $x_i$'s are
$n$  points on $C$.

A \emph{marked  tropical morphism with $n$ marked points} is a $(n+2)$-tuple
$(C,x_1,\ldots,x_n,f)$ where $(C,x_1,\ldots,x_n)$ is a 
tropical curve
with $n$ marked points, and $f :C\to \RR^2$ is a tropical morphism.
\end{defi}

Note that we do not require the marked points to be distinct.
As in the case of unmarked tropical curves, we have the notion of
\textit{isomorphic marked tropical morphisms} and 
\textit{combinatorial  types of marked tropical morphisms}. 
The definition is the same as in Section \ref{defi trop curve}, 
where we
require in addition that the map $\phi:C_1\to C_2$ sends the $ith$
marked point of $C_1$ to the $ith$
marked point of $C_2$.

Lemma \ref{exp dim 1} has a straightforward extension to the case of
marked tropical morphisms.

\begin{lemma}\label{exp dim}
Let $\alpha$ be a combinatorial type of 
 marked tropical morphisms $f:C\to
\RR^2$ with $n_\alpha$ 
marked points, $m_{\alpha}$ of which lie on edges of $C$, and where
$C$ is a rational curve with $\partial C=\emptyset$.
Then the space  $\M_\alpha$ is an open convex polyhedron in
the vector space $\RR^{2+|\Ed^0(\alpha)|+m_{\alpha}}$, and 
$$  \dim \M_\alpha =   |\Ed^\infty(\alpha)|  -1 
 -
\sum_{v\in \Ve^0(\alpha)}(val(v)-3) + m_{\alpha}.$$
\end{lemma}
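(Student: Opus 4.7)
The plan is to reduce to the unmarked case (Lemma~\ref{exp dim 1} with $n=2$) by means of the forgetful map $\pi \colon \M_\alpha \to \M_{\alpha'}$, where $\alpha'$ denotes the combinatorial type of unmarked tropical morphisms obtained from $\alpha$ by dropping the $n_\alpha$ marked points. Since forgetting markings does not alter the underlying graph, weights $w_{f,e}$, directions $u_{f,e}$, or vertex genera, one has $|\Ed^\infty(\alpha')| = |\Ed^\infty(\alpha)|$, $|\Ed^0(\alpha')| = |\Ed^0(\alpha)|$, and the same valency contributions. Lemma~\ref{exp dim 1} then immediately gives that $\M_{\alpha'}$ is an open convex polyhedron in $\RR^{2+|\Ed^0(\alpha)|}$ of dimension $|\Ed^\infty(\alpha)| - 1 - \sum_{v\in\Ve^0(\alpha)}(val(v)-3)$, with canonical coordinates $(x_1,x_2,l_1,\ldots,l_{|\Ed^0(\alpha)|})$.

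Next I would analyze the fibers of $\pi$. The combinatorial type $\alpha$ records, for each marked point $x_i$, either the vertex of $\Ve^0$ or the edge on which it lies. A marked point pinned to a vertex contributes no new parameter. Each of the $m_\alpha$ marked points lying in the interior of some edge $e$ contributes exactly one parameter $t_i$, namely its signed distance to a fixed endpoint of $e$; it ranges over $(0,l_e)$ when $e \in \Ed^0(\alpha)$ and over $(0,+\infty)$ when $e \in \Ed^\infty(\alpha)$. These $m_\alpha$ parameters are independent of each other and of the base coordinates, so the fiber of $\pi$ over any point is an open convex polytope of dimension $m_\alpha$.

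Combining both pieces, I would describe $\M_\alpha$ in the coordinates $(x_1,x_2,l_1,\ldots,l_{|\Ed^0(\alpha)|},t_1,\ldots,t_{m_\alpha}) \in \RR^{2+|\Ed^0(\alpha)|+m_\alpha}$, cut out by the open linear conditions $l_j > 0$ together with $0 < t_i < l_{e(i)}$ (or $0 < t_i$ in the unbounded case). This is visibly an open convex polyhedron, and its dimension equals $\dim\M_{\alpha'} + m_\alpha$, which yields the claimed formula.

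The only point demanding any care is to verify that leaving the stratum $\M_\alpha$ is exactly detected by some $t_i$ hitting $0$ or $l_{e(i)}$, i.e.\ by a marked point colliding with a vertex or with another marked point in a way that changes the combinatorial type. This is immediate from the definition of combinatorial type of marked morphisms, which tracks precisely the incidence pattern of marked points with vertices and edges; so the open inequalities above really do cut out a single stratum, and no further analysis is needed.
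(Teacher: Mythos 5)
Your proof is correct and is exactly the argument the paper has in mind: the paper offers no separate proof of this lemma, declaring it a ``straightforward extension'' of Lemma~\ref{exp dim 1}, whose proof is precisely the coordinate description $(f(v_1),\, \text{edge lengths})$ that you augment with the $m_\alpha$ position parameters of the marked points lying on edges. The only caveat worth recording is that the paper allows marked points to coincide, and a combinatorial type forcing two marked points onto the same point of an edge would contribute one parameter rather than two; this degenerate case is implicitly excluded both in the statement's intended use and in your fiber count, so your argument matches the paper's level of rigor.
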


Let $ \alpha$ be a  combinatorial type of
marked minimal tropical morphisms $f:C\to\RR^2$ where $C$ is a rational  tropical curve
 with $\partial C=\emptyset$, with $s$ (open) ends, and with $n_\alpha$ 
marked points, $m_{\alpha}$ of which lie on edges of $C$.
 We denote by
$ \overline \alpha$ the combinatorial type of unmarked tropical morphisms
underlying $\alpha$.
We define  the evaluation map $ev$   and the forgetful map $ft$ by
$$\begin{array}{cccc}
ev: & \M_{\alpha} &\longrightarrow &(\RR^2)^{n_\alpha}
\\ & (C,x_1,\ldots, x_{n_\alpha},f) & \longmapsto & (f(x_1),\ldots,f(x_{n_\alpha}))
\end{array}
\ \ \ \ \ \
\begin{array}{cccc}
ft: & \M_{ \alpha}  &\longrightarrow & \M_{\overline \alpha}
\\ & (C,x_1,\ldots, x_{n_\alpha},f) & \longmapsto & (C,f)
\end{array}.$$

\begin{lemma}\label{inj}
The maps $ev$ and $ft$ are $\ZZ$-affine linear on $ \M_{ \alpha}$, and 
$$\dim ev(\M_{\alpha})\le s-1+m_{\alpha}. $$
Moreover, if  equality holds then $\alpha$ is 3-valent,
 and the evaluation map is injective
on $\M_{ \alpha}$.
\end{lemma}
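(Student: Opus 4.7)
The plan is to reduce everything to the explicit coordinate description of $\M_\alpha$ given in the proof of Lemma \ref{exp dim 1} (and its extension to marked morphisms underlying Lemma \ref{exp dim}), and then compare dimensions.

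First, I would verify the $\ZZ$-affine linearity of $ev$ and $ft$. Fix a root vertex $v_1 \in \Ve^0(\alpha)$ and an ordering of the bounded edges, so that a marked morphism in $\M_\alpha$ is parametrised by the tuple
$$\bigl(f(v_1),\, l_1, \ldots, l_{|\Ed^0(\alpha)|},\, t_1, \ldots, t_{m_\alpha}\bigr),$$
where $t_j$ is the distance along the corresponding edge of the $j$-th marked point that lies in an edge interior. Since $C$ is rational with $\partial C = \emptyset$, it is a tree, so every marked point $x_i$ is joined to $v_1$ by a unique simple path, along which $f(x_i)$ is obtained from $f(v_1)$ by summing the contributions $\pm w_{f,e} l_e u_{f,e}$ over the traversed bounded edges, plus, if $x_i$ lies on an edge interior, a single final term $\pm w_{f,e} t_j u_{f,e}$. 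This is an explicit $\ZZ$-affine linear formula for $ev$; the forgetful map $ft$ simply discards the parameters $t_j$, so it too is $\ZZ$-affine linear.

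Next I would combine the trivial estimate $\dim ev(\M_\alpha) \le \dim \M_\alpha$ with the identity from Lemma \ref{exp dim},
$$\dim \M_\alpha = s - 1 + m_\alpha - \sum_{v \in \Ve^0(\alpha)} (\val(v) - 3).$$
Rationality of $C$ together with $\partial C = \emptyset$ forces all $g_v$ to vanish and $b_1(C) = 0$, so the condition $g_v \ge 1$ at 2-valent vertices in Definition \ref{def:trop curve} rules them out, giving $\val(v) \ge 3$ for every $v \in \Ve^0(\alpha)$. Each summand above is therefore nonnegative, hence $\dim \M_\alpha \le s - 1 + m_\alpha$ with equality exactly when $\alpha$ is 3-valent, and a fortiori $\dim ev(\M_\alpha) \le s - 1 + m_\alpha$.

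For the equality case, if $\dim ev(\M_\alpha) = s - 1 + m_\alpha$ then both intermediate inequalities are saturated. The equality $\dim \M_\alpha = s - 1 + m_\alpha$ forces $\sum_v (\val(v) - 3) = 0$, so $\alpha$ is 3-valent. The equality $\dim ev(\M_\alpha) = \dim \M_\alpha$ means that the $\ZZ$-affine linear map $ev$, restricted to the open convex polyhedron $\M_\alpha$, has image of the same dimension as its source; equivalently, its underlying linear part has trivial kernel, so $ev$ is injective on $\M_\alpha$. The whole argument is essentially bookkeeping once Lemma \ref{exp dim} is available, and I do not foresee a real obstacle; the only subtlety worth double-checking is the absence of 2-valent vertices in $\Ve^0(\alpha)$, which as noted above is built into Definition \ref{def:trop curve} via the constraint $g_v \ge 1$ for such vertices.
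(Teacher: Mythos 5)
Your proof is correct and follows essentially the same route as the paper, which simply cites \cite[Proposition 4.2]{GM1} for the $\ZZ$-linearity and invokes Lemma \ref{exp dim} for the dimension count; you have merely written out the details (the tree-path formula for $ev$, the exclusion of 2-valent vertices via $g_v\ge 1$, and the chain $\dim ev(\M_\alpha)\le\dim\M_\alpha\le s-1+m_\alpha$ whose saturation gives 3-valence and injectivity). No gaps.
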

\begin{proof}
The proof of the $\ZZ$-linearity is the same as \cite[Proposition
  4.2]{GM1}, and the dimension is given
by Lemma \ref{exp dim}
\end{proof}

\subsection{Generic configurations}
A \textit{parameter space} is a space of the form
$$Par(k,\beta_1,\ldots,\beta_{s-1-k})=\left(\RR^{2}\right)^k\times
\M_{\beta_1}\times\ldots\times\M_{\beta_{s-1-k}}$$
where $s\ge 2$   and $0\le k\le s-1$ 
 are two integer numbers,  
 and 
 $\beta_1,\ldots ,\beta_{s-1-k}$ are  $s-1-k$ 
combinatorial types of 
simple
effective tropical cycles
in $\RR^2$. 
 According to
Lemma \ref{dim comb type ns}, $Par(k,\beta_1,\ldots,\beta_{s-1-k})$
is an open convex polyhedron in some Euclidean  vector space. In
particular, it has a natural topology induced by this  Euclidean
space. 
Note that if $\Ve^0(\beta_i)=\emptyset$ (i.e. $\beta_i$ is just a ray), then
$\M_{\beta_i}=\RR$.

Given an element $(\P,\L)$ of
$Par(k,\beta_1,\ldots,\beta_{s-1-k})$, where 
 $\P=\{p_1,\ldots, p_k \}$ 
and $\L=\{L_1,\ldots, L_{s-1-k} \}$,
we denote by $\C(s,\P,\L)$ the set
of all minimal tropical morphisms $f:C\to \RR^2$ where $C$ is a rational
tropical curve with $s$
ends and $\partial
C=\emptyset$, and $f$   
passes through all 
points $p_i$, and is pretangent to all curves $L_j$.

\begin{defi}\label{def extreme}
A tropical morphism $f:C\to\RR^2$ in $\C(s,\P,\L)$
is said to be \emph{extreme} if it
satisfies the four following properties

\begin{itemize}
\item[(1)] $C$ is a 3-valent curve;
\item[(2)]  $f(\Ve^0(C))\bigcap\Big(\bigcup_{L\in\L} \Ve^{0}(L)\cup\P
  \Big)=\emptyset $, i.e. no vertex of $C$ is mapped to a vertex of
  a curve  in $\L$ nor to a point in $\P$; 

\item[(3)]   given $p\in\P$, if $x$ and $x'$
 are in $f^{-1}(p)$, then the path $\gamma$ in $C$ from
 $x$ to $x'$ is mapped to a segment by $f$;

\item[(4)]    given $L\in\L$, there exists a connected subgraph
  $\Gamma\subset C$ which contains all pretangency components of $f$ with
  $L$, and such that $f(\Gamma)$ is a segment in $\RR^2$.

\end{itemize}
\end{defi}

Note that in the case where $\Ve^0(L)=\emptyset$, property (4) is
equivalent to the fact that $E_L$ is connected.

\begin{prop}\label{finite}
Suppose that $\Ve^0(\beta_i)=\emptyset$ for all $i$. Then 
there exists a dense open subset $\widehat{Par}
(k,\beta_1,\ldots,\beta_{s-1-k})$ in the parameter space 
$Par(k,\beta_1,\ldots,\beta_{s-1-k})=\RR^{s-1+k}$
 such that the set $\C(s,\P,\L)$ is finite and contains only extreme
 tropical morphisms.
\end{prop}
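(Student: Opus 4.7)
The plan is a dimension count using the framework of marked tropical morphisms from the beginning of this section. Given $(\P, \L)$ and $f \in \C(s, \P, \L)$, I mark preimages in $C$ of the points $p_i \in \P$ and of pretangency sets with the curves $L_j \in \L$ to obtain a marked tropical morphism with $s-1$ markings. Because $\Ve^0(\beta_j) = \emptyset$, each pretangency set necessarily contains a vertex of $C$ (which the morphism maps into $L_j$), so I take that vertex as the marking $x_{k+j}$. Let $\alpha$ range over the combinatorial types of marked tropical morphisms arising this way; since the number of ends $s$ is fixed, only finitely many such $\alpha$ occur.

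For each such $\alpha$, I would define the constraint map
\[
\Phi_\alpha : \M_\alpha \longrightarrow Par(k, \beta_1, \ldots, \beta_{s-1-k}) = (\RR^2)^k \times \prod_{j=1}^{s-1-k} \M_{\beta_j},
\]
sending $(C, x_1, \ldots, x_{s-1}, f)$ to $\bigl(f(x_1), \ldots, f(x_k); L^{(1)}, \ldots, L^{(s-1-k)}\bigr)$, where $L^{(j)}$ is the unique effective cycle of combinatorial type $\beta_j$ through $f(x_{k+j})$. Since $\M_{\beta_j} \cong \RR$, each pretangency contributes a $1$-dimensional condition, and $\Phi_\alpha$ is $\ZZ$-affine linear by Lemma \ref{inj}. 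Lemma \ref{exp dim} gives
\[
\dim \M_\alpha = s - 1 - \sum_{v \in \Ve^0(\alpha)}(\val(v) - 3) + m_\alpha,
\]
while $\dim Par(k,\beta_1,\ldots,\beta_{s-1-k}) = s - 1 + k$. In the extreme case, $\alpha$ is $3$-valent and the $k$ incidence markings all sit on edges, yielding $\dim \M_\alpha = s - 1 + k$; the equality case of Lemma \ref{inj}, combined with the fact that projecting $(\RR^2)^{s-1-k}$ along the $L_j$-directions onto $\RR^{s-1-k}$ preserves injectivity in this configuration, then shows $\Phi_\alpha$ is injective, hence a local isomorphism.

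For non-extreme $\alpha$, I would verify case by case that $\dim \Phi_\alpha(\M_\alpha) < s - 1 + k$. A $\ge 4$-valent vertex of $C$ drops $\dim \M_\alpha$ through the term $\val(v) - 3 > 0$; a vertex of $C$ coinciding with some $p_i$ forces the marking $x_i$ to sit at a vertex of $\alpha$, decreasing $m_\alpha$; a path between two preimages of $p_i$ not mapping to a segment, or a pretangency set with $L_j$ failing condition (4) of Definition \ref{def extreme}, each introduce extra linear dependencies among the coordinates of $\Phi_\alpha$, confining its image to a proper affine subspace of the target. The union over the finitely many non-extreme $\alpha$ of $\overline{\Phi_\alpha(\M_\alpha)}$ is then a closed nowhere-dense subset of $Par(k, \beta_1, \ldots, \beta_{s-1-k})$, and I would define $\widehat{Par}$ as its complement. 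For $(\P, \L) \in \widehat{Par}$ only extreme $\alpha$ contribute to $\C(s, \P, \L)$, and each $\Phi_\alpha^{-1}(\P, \L)$ is a single point of the polyhedron $\M_\alpha$, giving finiteness.

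The main obstacle will be the analysis of cases (3) and (4) above: checking that configurations in which an incidence path or a pretangency component fails the segment-image condition always cut the image of $\Phi_\alpha$ down by at least one dimension. Multiple edges of $C$ overlapping a common $L_j$ can share vertices or have the same direction as $L_j$, and the careful bookkeeping showing that the induced linear relations among the parameters of $\Phi_\alpha$ are non-trivial except precisely in the extreme configuration is where the dimension count requires the most care.
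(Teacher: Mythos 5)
Your overall strategy---passing to marked tropical morphisms with $s-1$ markings (one per constraint, with the pretangency markings forced onto vertices of $C$ because $\Ve^0(\beta_j)=\emptyset$), running the dimension count of Lemmas \ref{exp dim} and \ref{inj} on each of the finitely many combinatorial types, and deleting the images of the deficient types---is exactly the paper's argument, and your treatment of finiteness and of properties (1) and (2) of Definition \ref{def extreme} is sound.

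The gap is in properties (3) and (4), and it sits precisely where you flag the "main obstacle." With only one marking per constraint, the map $\Phi_\alpha$ does not record a second preimage of $p_i$ or a second pretangency component with $L_j$, so a failure of the segment condition imposes no linear relation "among the coordinates of $\Phi_\alpha$": the image $\Phi_\alpha(\M_\alpha)$ can perfectly well be full-dimensional in $Par(k,\beta_1,\ldots,\beta_{s-1-k})$ even though the locus of configurations admitting such a degenerate morphism is thin. The bad locus is not the image of any single $(s-1)$-marked constraint map; it is the image of a subset of an \emph{augmented} moduli space cut out by an incidence your marking does not see. The paper's resolution is to add one extra marked point $x_s$ at the second preimage (resp.\ in the second pretangency component), form the incidence variety in $Par(k,\beta_1,\ldots,\beta_{s-1-k})\times\RR^2$ (resp.\ $\times\,\M_{\beta_{i-k}}$), whose image has codimension at least $1$, and intersect it with the diagonal $\{p_i=p_s\}$ of codimension $2$ (resp.\ $\{L_{i-k}=L_s\}$ of codimension $1$). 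The hypothesis that the path from $x_i$ to $x_s$ is not mapped to a segment is used exactly here, to show that neither set contains the other, so the intersection has codimension at least $3$ (resp.\ at least $2$); projecting back to $Par(k,\beta_1,\ldots,\beta_{s-1-k})$ forgets $2$ (resp.\ $1$) dimensions and still leaves a set of codimension at least $1$. Without this auxiliary marking and the transversality step the projection could cover all of the parameter space, so the case analysis cannot be completed purely from the linear algebra of the $(s-1)$-marked map as your proposal suggests.
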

\begin{proof}
 Denote by  
 $\C'(s,\P,\L)$ the set of all
 marked minimal tropical  morphisms $f:(C,x_1,\ldots,x_{s-1})\to
\RR^2$  where $(C,x_1,\ldots,x_{s-1})$ is a rational
tropical curve with $s$
leaves, with $\partial
C=\emptyset$,  and with $s-1$ marked points,
$x_{k+1},\ldots, x_{s-1}$ being on vertices of $C$,
 such that
 $f(x_i)=p_i$ for $1\le i\le k$ and  $f(x_i)\in L_{i-k}$ for $k+1\le
i\le s-1$.
By Definition \ref{defi tang}, the set $\C(s,\P,\L)$
is contained in 
 the set $ft(\C'(s,\P,\L))$. Hence,
it is sufficient to prove that 
there exists a dense open subset of the parameter space such that
the set $\C'(s,\P,\L)$ is finite and contains only
extreme tropical morphisms.

\vspace{1ex}
Let $ \alpha$ be a combinatorial type  of
marked   tropical  morphisms $f:(C,x_1,\ldots,x_{s-1})\to
\RR^2$ where $(C,x_1,\ldots,x_{s-1})$ is a rational
tropical curve with $s$
leaves, with $\partial
C=\emptyset$,  and with $s-1$ marked points,
$x_{k+1},\ldots, x_{s-1}$ being on vertices of $C$.
Let us
define the incidence variety $\I_{ \alpha}\subset
\M_{ \alpha}\times 
\RR^{s-1+k}  $ containing elements 
$(f, p_1,\ldots,p_k,L_{1},\ldots, L_{s-1-k})$  
where $f$ is a  tropical morphisms with
 combinatorial type $ \alpha$ 
such that
\begin{itemize}
\item[(i)] for $j\le k$, $f(x_i)=p_i$,
\item[(ii)] for $k+1\le j\le s-1$,  $f(x_j)\in L_{j-k}$.
\end{itemize}
 
Any of the above conditions (i) (resp. (ii)) demands 2 (resp. 1)
affine conditions on elements
of $\I_{\alpha}$. 
Moreover, all these conditions are independent since any
variable $p_i$ or $L_i$ is contained in exactly one of these
equations. Hence, 
the set $\I_{\alpha}$ is an open polyhedral complex of
dimension at most $\dim  \M_{\alpha}$. 
Let us consider the two natural projections 
$\pi_1:\I_{ \alpha}\to \M_{ \alpha}$ and 
 $\pi_2:\I_{ \alpha} \to Par(k,\beta_1,\ldots,\beta_{s-1-k})$.
For each  $1\le i\le s-1-k$, there is a natural linear isomorphism
 between
$\M_{\beta_i}$ and the quotient of $\RR^2$ by the linear direction of
elements of $\beta_i$. This provides a natural map 
$\psi:(\RR^2)^{s-1}\to   Par(k,\beta_1,\ldots,\beta_{s-1-k})$ by
taking the quotient of the $s-1-k$ lasts copies of $\RR^2$ by the direction of
the corresponding $\beta_i$. We have $\pi_2=\psi\circ ev
\circ \pi_1$. Hence according to Lemma \ref{inj}, if $\dim
\pi_2(\I_\alpha)=s+k-1$, 
 then 
$\alpha$ is necessarily trivalent, the evaluation map is injective
on $\M_{\alpha}$, and  $x_i\notin \Ve^0(C)$ for $i\le k$.

We define $\widetilde Par(k,\beta_1,\ldots,\beta_{s-1-k})$ as the
complement in $Par(k,\beta_1,\ldots,\beta_{s-1-k})$
 of the union of the sets $\pi_2(\I_\alpha)$ where $\alpha$
ranges over all  combinatorial types such that
 $\dim \pi_2(\M_\alpha)<s-1+k$.
This is an open and dense subset of
$Par(k,\beta_1,\ldots,\beta_{s-1-k})$.
Moreover, by injectivity of the evaluation map, 
if 
$(\P,\L)$
is in 
$\widetilde Par(s,k,\beta_1,\ldots,\beta_{s-1-k})$, 
then the fiber
$\pi_2^{-1}(\P,\L)$
consists of at most 1 point for any 
combinatorial type.
The number of possible combinatorial types 
$\alpha$
is finite, so is the set 
$\C'(s,\P,\L)$.
Moreover, all its elements satisfy properties (1) and (2) of
Definition \ref{def extreme}.

\vspace{1ex}
Let us now find an open dense subset of $\widetilde
Par(k,\beta_1,\ldots,\beta_{s-1-k})$ which will ensure
properties (3) and (4).
Let $\alpha$ be a combinatorial type considered above of 
marked tropical morphisms, and let
$ \alpha'$ be a combinatorial type of marked tropical
morphisms  consisting of $\alpha$ with an additional marked point
$x_{s}$. We also fix $1\le i\le s-1$, and we suppose that the path
$\gamma$ in $C$ joining $x_s$ to $x_i$  is not mapped to a
segment by $f$.

If $i\le k$, we define $\I_{ \alpha',i}\subset \M_{ \alpha'}\times
\RR^{s-1+k}\times \RR^2$ by the conditions (i) and (ii) above and
the condition $f(x_{s})=p_{s}$ ($p_{s}$ is the coordinate
corresponding to the copy of $\RR^2$ we added
 to $Par(k,\beta_1,\ldots,\beta_{s-1-k})$).
We still have the two projections $\pi_1:\I_{ \alpha',i}\to
\M_{\alpha'}$ and  
 $\pi_2:\I_{ \alpha',i} \to
Par(k,\beta_1,\ldots,\beta_{s-1-k})\times\RR^2$, in addition to
the projection 
$\pi: Par(k,\beta_1,\ldots,\beta_{s-1-k})\times\RR^2\to
Par(k,\beta_1,\ldots,\beta_{s-1-k})$.
According to the previous study,  the set
$\pi_2(\I_{ \alpha',i})$ has codimension at least 1 in 
$Par(k,\beta_1,\ldots,\beta_{s-1-k})\times\RR^2$, and
 none of  the two sets
 $\pi_2(\I_{ \alpha',i})$ 
and  $\{p_i=p_{s}\}$ contains the
other. Since the latter has codimension 2 in 
$Par(k,\beta_1,\ldots,\beta_{s-1-k})\times\RR^2$,
 the intersection  $X_{ \alpha',i}$  of
 $\pi_2(\I_{ \alpha',i})$ 
and  $\{p_i=p_{s}\}$ has codimension at least 3, and
$\pi(X_{ \alpha',i})$ has codimension at least 1 in 
$Par(k,\beta_1,\ldots,\beta_{s-1-k})$.

\vspace{1ex}
If $k+1\le i\le s-1$, we define $\I_{\alpha',i}\subset \M_{ \alpha'}\times
\RR^{s-1+k}\times \M_{\beta_{i-k}}$ by the conditions (i) and (ii) above and
the condition $f(x_{s})\in L_{s}$ ($L_{s}$ is the coordinate
corresponding to the copy of $ \M_{\beta_{i-k}}$ we added
 to $Par(k,\beta_1,\ldots,\beta_{s-1-k})$). 
We still have the two projections $\pi_1:\I_{\alpha',i}\to
\M_{\alpha'}$ and  
 $\pi_2:\I_{\alpha',i} \to
Par(k,\beta_1,\ldots,\beta_{s-1-k})\times\M_{\beta_{i-k}} $, in addition to
the projection 
$\pi: Par(k,\beta_1,\ldots,\beta_{s-1-k})\times \M_{\beta_{i-k}} \to
Par(k,\beta_1,\ldots,\beta_{s-1-k})$.
According to the previous study,  the set
$\pi_2(\I_{\alpha',i})$ has codimension at least 1 in 
$Par(k,\beta_1,\ldots,\beta_{s-1-k})\times\M_{\beta_{i-k}} $. Moreover,
since the path
$\gamma$ in $C$ joining $x_s$ to $x_i$  is not mapped to a
segment by $f$,
 none of  the two sets
 $\pi_2(\I_{ \alpha',i})$ 
and  $\{L_{i-k}=L_{s}\}$ contains the
other. Since the latter has codimension 1 in 
$Par(k,\beta_1,\ldots,\beta_{s-1-k})\times \M_{\beta_{i-k}} $,
 the intersection  $X_{ \alpha',i}$  of
 $\pi_2(\I_{ \alpha',i})$ 
and  $\{L_{i-k}=L_{s}\}$ has codimension at least 2, and
$\pi(X_{ \alpha',i})$ has codimension at least 1 in 
$Par(k,\beta_1,\ldots,\beta_{s-1-k})$.

\vspace{1ex}
We define
$$\widehat{Par}(k,\beta_1,\ldots,\beta_{s-1-k})=
\widetilde Par(k,\beta_1,\ldots,\beta_{s-1-k})\setminus
\left(\bigcup_{\alpha', i}\pi(X_{ \alpha',i})\right) $$
where $ \alpha'$ and  $i$ range over all possible choices in the
preceding construction.
Since the number of choices is finite, 
$\widehat{Par}(k,\beta_1,\ldots,\beta_{s-1-k})$ is a dense open
subset of $Par(k,\beta_1,\ldots,\beta_{s-1-k})$, and
for $\big( \P,\L \big)$ in this set, 
 the set 
$\C'(s,\P,\L)$ is finite and all its
 elements are extreme.
\end{proof}

\begin{cor}\label{finite2}
There exists a dense open subset $\widehat{Par}
(k,\beta_1,\ldots,\beta_{s-1-k})$ in the  parameter space
$Par(k,\beta_1,\ldots,\beta_{s-1-k})$
 such that the set $\C(s,\P,\L)$ is finite and only contains extreme
 tropical morphisms.
\end{cor}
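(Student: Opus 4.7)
My plan is to adapt the argument of Proposition~\ref{finite} by enlarging the set of marked combinatorial types to account for the two possible ways a pretangency can occur when $L_i$ has vertices: (a) the pretangency component contains the image of a vertex of $C$, or (b) the pretangency set contains a vertex of $L_i$ (these may coincide, but handling them separately covers all cases). As in the proof of Proposition~\ref{finite}, I would introduce a set $\C'(s,\P,\L)$ of marked minimal rational tropical morphisms $f:(C,x_1,\ldots,x_{s-1})\to\RR^2$ with $\partial C=\emptyset$ and $s$ leaves, where $f(x_i)=p_i$ for $i\le k$, and for each $i\in\{1,\ldots,s-1-k\}$ the marked point $x_{k+i}$ records a pretangency of $f$ with $L_i$, together with a discrete label indicating whether it is of type (a) or (b). Every element of $\C(s,\P,\L)$ arises as the image under the forgetful map of at least one element of $\C'(s,\P,\L)$, so it suffices to control the latter.

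Next, for each combinatorial type $\alpha$ of such marked morphisms (including the type labels and, for type~(a), the requirement $x_{k+i}\in\Ve^0(C)$; for type~(b), a choice of the edge of $C$ containing $x_{k+i}$ and a choice of vertex of $\beta_i$), I would form the incidence variety
$$\I_\alpha\subset \M_\alpha\times Par(k,\beta_1,\ldots,\beta_{s-1-k})$$
cut out by the conditions $f(x_i)=p_i$ for $i\le k$ and the appropriate pretangency condition at $x_{k+i}$. In case~(a) the condition $f(x_{k+i})\in L_i$ is one affine equation and $x_{k+i}$ contributes $0$ to the parameter $m_\alpha$ of Lemma~\ref{exp dim}; in case~(b) the condition $f(x_{k+i})=v$ for the chosen vertex $v$ of $L_i$ is two affine equations and $x_{k+i}$ contributes $1$ to $m_\alpha$. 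In both cases the net codimension contributed per $L_i$ is $1$, so the analogue of the dimension bookkeeping in Proposition~\ref{finite} yields $\dim\I_\alpha\le\dim\M_\alpha$; furthermore these conditions are independent because each parameter $p_i$ (resp. vertex of each $L_j$) appears in only one equation.

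Then I would define $\psi:(\RR^2)^{s-1}\to Par(k,\beta_1,\ldots,\beta_{s-1-k})$ as in Proposition~\ref{finite}, now generalized to use for each $\beta_i$ the projection from $\RR^2$ to $\M_{\beta_i}$ given by the affine hull of the chosen edge or the point condition, so that $\pi_2=\psi\circ ev\circ\pi_1$. Lemma~\ref{inj} then forces, on every type $\alpha$ with $\dim\pi_2(\I_\alpha)=\dim Par$, that $\alpha$ is trivalent, that $ev$ is injective on $\M_\alpha$, and that no $x_i$ with $i\le k$ lies on $\Ve^0(C)$ — giving conditions (1) and (2) of Definition~\ref{def extreme}. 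Removing from $Par$ the projections $\pi_2(\I_\alpha)$ for all types of smaller image dimension produces a dense open $\widetilde{Par}$ over which $\C'(s,\P,\L)$ is finite and satisfies (1) and (2); finiteness uses that only finitely many combinatorial types can appear, a consequence of the fixed Newton fans and of $C$ being trivalent with $s$ ends.

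Finally, to obtain conditions (3) and (4), I would repeat verbatim the additional marked point argument from the second half of the proof of Proposition~\ref{finite}: add a marked point $x_s$, split according to whether it is constrained by a $p_i$ ($i\le k$) or a $L_{i-k}$ (with pretangency type (a) or (b)), and consider combinatorial types $\alpha'$ in which the path from $x_s$ to $x_i$ is not mapped to a segment by $f$. In each case the ``collapsing'' subset $\{p_i=p_s\}$ or $\{L_{i-k}=L_s\}$ has codimension strictly less than the codimension of $\pi_2(\I_{\alpha',i})$ inside its respective ambient parameter space, so neither contains the other and the intersection $X_{\alpha',i}$ has codimension at least one more than $\pi_2(\I_{\alpha',i})$; hence $\pi(X_{\alpha',i})$ has positive codimension in $Par$. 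Removing the finite union of these projections from $\widetilde{Par}$ gives the desired dense open $\widehat{Par}$. The main obstacle is the case analysis for the type~(b) pretangencies — checking that the extra marked-point degree of freedom in $\M_\alpha$ exactly compensates the extra affine constraint from $f(x_{k+i})=v$, and that the independence-of-parameters argument still holds when vertices of the $L_i$ vary — but all dimension counts carry over unchanged from Proposition~\ref{finite}.
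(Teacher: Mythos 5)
Your route is genuinely different from the paper's. The paper does not redo the incidence-variety analysis at all: for every choice of a subset of the cycles, of a vertex $v_{i_n}$ on each cycle in that subset, and of an edge $e_{j_n}$ on each remaining cycle, it defines a surjective linear map $\chi$ from $Par(k,\beta_1,\ldots,\beta_{s-1-k})$ to a parameter space of the type already covered by Proposition \ref{finite} (points and vertex-free cycles), replacing each $L_{i_n}$ by the point $v_{i_n}$ and each $L_{j_n}$ by the classical line spanned by $e_{j_n}$. By Definition \ref{defi tang}, every $f\in\C(s,\P,\L)$ lies in $\C$ of at least one such reduced configuration, so pulling back the sets $\widehat{Par}$ of Proposition \ref{finite} along all the finitely many maps $\chi$ and intersecting yields finiteness and properties (1)--(3) with no new dimension count. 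Your direct generalization of the incidence varieties, with marked points labelled (a) or (b), is viable in principle, and the per-constraint codimension bookkeeping ($1+0$ versus $2-1$) is correct; it just costs more case analysis than the paper's reduction.

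The gap is in property (2). Your types record only one pretangency witness per constraint, and the (a)/(b) dichotomy as you set it up never imposes the two-equation condition $f(x)=v(L_j)$ at a point $x\in\Ve^0(C)$: type (a) imposes only the one-equation condition $f(x_{k+j})\in L_j$ at a vertex of $C$, while type (b) forces $x_{k+j}$ onto an edge of $C$. Consequently a morphism having a vertex of $C$ mapped onto a vertex of some $L_j$ --- whether at the marked witness or at an entirely unmarked coincidence elsewhere on $C$ --- sits in an incidence variety whose image has full dimension and is never removed, so the part of condition (2) asserting $f(\Ve^0(C))\cap\bigcup_{L\in\L}\Ve^{0}(L)=\emptyset$ is not established. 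This is not cosmetic: the derivation of property (4) (both the paper's and yours, if you ``repeat verbatim'') first uses property (2) to conclude that each pretangency component maps into a single classical line before the two-marked-point segment argument can be run. The fix stays inside your framework --- add combinatorial types with a marked vertex of $C$ constrained to hit a chosen vertex of a chosen $L_j$, giving net codimension $2$ so the image can be removed --- but as written the argument does not deliver the statement.
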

\begin{proof}
Choose $0\le k'\le s-1-k$, and $1\le i_1<\ldots < i_{k'}\le k$. We
denote by $\{j_1,\ldots,j_{s-1-k-k'}\}$ the complement of $\{i_1,
\ldots,  i_{k'}\}$ in $\{1,\ldots, s-1-k\}$.
 Choose a vertex $v_{i_n}$ on each $\beta_{i_n}$, and an edge $e_{j_n}$ on
  each  $\beta_{j_n}$.
Given an element $L_{j_n}$ in  $\M_{\beta_{j_n}}$,
 we denote by $L_{j_n}'$ the  
effective tropical cycle in $\RR^2$ spanned by $e_{j_n}$ (this is just
the classical 
line of $\RR^2$ spanned by $e_{j_n}$), and
 denote by $\delta_{j,n}$ its
 combinatorial type.
 Then we have
  a natural surjective linear map
$$ \begin{array}{cccc}
\chi:& Par(k,\beta_1,\ldots,\beta_{s-1-k})&
\longrightarrow&
Par(k+k',\delta_{j_1},\ldots,\delta_{j_{s-1-k-k'}})
\\ & \big( p_1,\ldots,p_k,L_1\ldots 
,L_{s-1-k}\big)& \longmapsto& \big(
p_1,\ldots,p_k,v_{i_1},\ldots,v_{i_{k'}},
L'_{j_1}\ldots 
,L'_{j_{s-1-k-k'}}\big)
\end{array}.  $$
We define
$$ \widetilde Par(k,\beta_1,\ldots,\beta_{s-1-k}) =
\bigcap \chi^{-1}(\widehat{Par}(k+k',\delta_{j_1},\ldots,\delta_{j_{s-1-k-k'}})$$
 where 
$k', i_1,\ldots,i_{k'}$, $v_{i_1},\ldots
,v_{i_{k'}},e_{j_1},\ldots , e_{j_{s-1-k-k'}}$
 run over all possible
choices. Note that this number of choices is finite, and that 
$\widetilde Par(k,\beta_1,\ldots,\beta_{s-1-k})$ is open and
dense in $Par(k,\beta_1,\ldots,\beta_{s-1-k})$. 

\vspace{1ex}
Let $(\P,\L)\in \widetilde Par(k,\beta_1,\ldots,\beta_{s-1-k})$, and
$f:C\to\RR^2$ an element of $\C(s,\P,\L)$. 
By Definition \ref{defi tang}, the tropical morphism 
$f:C\to\RR^2$ is also an element of some set

$$\C(s,\{p_1,\ldots,p_k,v_{i_1},\ldots,v_{i_{k'}}\},\{L'_{j_1}\ldots 
,L'_{j_{s-1-k-k'}}\})$$
 constructed as above.
Hence, according to
 Proposition \ref{finite}, 
 the set $\C(s,\P,\L)$ is finite
 and all its elements satisfy properties (1), (2) and (3) of
 Definition \ref{def extreme}.

Moreover, for any tangency
component $E$ of $f$ with an element $L$ of $\L$, the set $f(E)$ 
is contained in a classical line of $\RR^2$. Indeed, 
otherwise there would exist $v\in\Ve^0(C)$ such that
$f(v)\in\Ve^0(L)$,
 in contradiction with the fact that $f$ satisfies
property (2) of Definition \ref{def extreme}.
Hence,
we can use  the same technique we used at the end of the proof of
Proposition \ref{finite} to construct an open dense subset
$\widehat{Par}
(k,\beta_1,\ldots,\beta_{s-1-k})$ of $\widetilde Par
(k,\beta_1,\ldots,\beta_{s-1-k})$ such that if $(\P,\L)\in\widehat{Par}
(k,\beta_1,\ldots,\beta_{s-1-k})$, then all elements of  $\C(s,\P,\L)$
satisfy property (4) of
 Definition \ref{def extreme}.
That is to say, all elements of  $\C(s,\P,\L)$ are extreme.
\end{proof}

\begin{defi}\label{def generic}
Let $p_1,\ldots,p_k$ be $k$ points in $\RR^2$ and $L_1,\ldots,L_l$
be $l$ 
simple
effective tropical cycles in $\RR^2$ of combinatorial types
$\beta_1,\ldots,\beta_l$.  
The $(k+l)$-tuple 
$(p_1,\ldots,p_k,L_1,\ldots,L_{l})$ is called  \emph{weakly generic}
 if it is an element of 
$\widehat{Par}(k,\beta_1,\ldots,\beta_{l})$.

The  $(k+l)$-tuple
$(p_1,\ldots,p_k,L_1,\ldots,L_{l})$ is called  \emph{generic}
 if any of its sub-tuple is weakly
generic.
\end{defi}

\vspace{1ex}
Automorphisms of elements of $\C(s,\P,\L)$ are pretty simple when
dealing with generic configurations.
\begin{lemma}\label{automorphism}
Let $(\P,\L)$ be a generic configuration of $s$ points and simple
effective 
tropical cycles, and
let $f:C\to \RR^2$ be an element of $\C(s,\P,\L)$. Denote by
$(e_1^1,e_2^1),\ldots , (e_1^l,e_2^l)$ all pairs of open ends of $C$
with $u_{f,e_1^i}=u_{f,e_2^i} $ and
adjacent to a common vertex in $\Ve^0(C)$. Denote also by  $\phi_{e_1^i,e_2^i}$
the automorphism of $C$ such that $\phi_{e_1^i,e_2^i}(e_1^i)=e_2^i$
and $\phi_{e_1^i,e_2^i|C\setminus\{e_1^i,e_2^i\}}=Id$.
 Then
$Aut(f)$ is the abelian group generated by the automorphisms
$\phi_{e_1^i,e_2^i}$, i.e.
$$Aut(f)=<\phi_{e_1^i,e_2^i}, \ i=1,\ldots, l> \simeq \left(\ZZ/2\ZZ\right)^l. $$
\end{lemma}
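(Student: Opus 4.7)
The inclusion $Aut(f)\supseteq \langle \phi_{e_1^i,e_2^i}\rangle$ is direct: each $\phi_{e_1^i,e_2^i}$ is the identity outside $e_1^i\cup e_2^i$, so verifying $f\circ\phi_{e_1^i,e_2^i}=f$ reduces to comparing $f$ on $e_1^i$ and $e_2^i$. Since these two ends share a common vertex $v\in\Ve^0(C)$ and have equal primitive directions, the balancing condition at $v$ forces their weights to agree as well, so the swap preserves $f$. The involutions $\phi_{e_1^i,e_2^i}$ act on pairwise disjoint pairs of edges, hence commute, producing the claimed $(\ZZ/2\ZZ)^l$ subgroup.

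For the reverse inclusion, fix $\phi\in Aut(f)$. By Corollary \ref{finite2}, $f$ is extreme, so $C$ is a $3$-valent rational tropical curve with $\partial C=\emptyset$; topologically $C$ is a tree with $s$ leaves at infinity. The identity $f\circ\phi=f$ forces $\phi$ to permute the open ends of $C$ while preserving both the primitive direction $u_{f,e}$ and the weight $w_{f,e}$, and because $C$ is a tree, $\phi$ is entirely determined by this permutation.

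The main step is to show that $\phi$ fixes every bounded edge setwise. The plan is to argue by contradiction: if $\phi$ moves a bounded edge $e$, then it exchanges two rooted subtrees $T_1,T_2\subset C$ meeting along $e$. Mark $C$ with every preimage of a point in $\P$ and with every pretangency component with a curve in $\L$, tagged by which constraint produced it; then $\phi$ induces an automorphism of this marked tree. Since every sub-tuple of $(\P,\L)$ is weakly generic by Definition \ref{def generic}, the evaluation map on the relevant marked combinatorial types is injective by Lemma \ref{inj}, and the extreme properties (2)--(4) ensure that the list of constraints met by a subtree and the way they are met uniquely recover the subtree's combinatorial data. This rules out the exchange of $T_1$ and $T_2$ unless they carry identical marks and are themselves single ends. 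Hence every bounded edge is $\phi$-invariant, and since $u_{f,e}$ is preserved, $\phi$ restricts to the identity on every bounded edge.

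With all bounded edges pointwise fixed, $\phi$ can only act on open ends by swapping pairs sharing a common vertex. At a vertex $v\in\Ve^0(C)$ adjacent to two open ends $e_1,e_2$, the third edge at $v$ is bounded and fixed by $\phi$, so the balancing condition allows the swap $e_1\leftrightarrow e_2$ if and only if $u_{f,e_1}=u_{f,e_2}$, which is exactly one of the generators $\phi_{e_1^i,e_2^i}$; at any other vertex no nontrivial local permutation is possible. Combining these contributions yields $Aut(f)=\langle \phi_{e_1^i,e_2^i}\mid i=1,\dots,l\rangle\simeq(\ZZ/2\ZZ)^l$. The delicate point is the rigidity claim for bounded edges, which is precisely where the genericity of $(\P,\L)$ is used; everything else is combinatorial bookkeeping on the tree $C$ guided by the balancing condition.
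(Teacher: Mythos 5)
Your overall plan (show the $\phi_{e_1^i,e_2^i}$ are commuting involutions, then show every automorphism is a product of such swaps by proving that bounded edges cannot be moved) is reasonable, but two steps do not hold up. First, a small but genuine error: the balancing condition at a trivalent vertex $v$ with two parallel adjacent ends $e_1,e_2$ only gives $(w_{f,e_1}+w_{f,e_2})u_{f,e_1}+w_{f,e_3}u_{f,e_3}=0$; it does \emph{not} force $w_{f,e_1}=w_{f,e_2}$. So your justification that the swap preserves $f$ is wrong as stated (equality of the weights is what is actually needed for $f\circ\phi_{e_1^i,e_2^i}=f$, and it has to come from elsewhere, e.g.\ from all ends having weight $1$ in the situation of Proposition \ref{generic curves}).

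Second, and more seriously, the central step --- that $\phi$ fixes every bounded edge --- is not proved. You propose to mark $C$ by the constraints it meets and to invoke weak genericity and Lemma \ref{inj} to conclude that ``the exchange of $T_1$ and $T_2$ is ruled out unless they carry identical marks and are themselves single ends.'' But if $T_1$ and $T_2$ are exchanged by an automorphism of $f$ then $f(T_1)=f(T_2)$, so they automatically carry identical marks; your ``unless'' clause is therefore vacuous and nothing has been excluded. The missing ingredient is the fact (used by the paper) that for a generic configuration no two distinct vertices of $\Ve^0(C)$ have the same image under $f$. Granting that, the argument collapses to three lines: if $\phi(e_1)=e_2\ne e_1$, then the $\Ve^0$-endpoints of $e_1$ are fixed by $\phi$ (their images determine them), so $e_1$ and $e_2$ are adjacent to the same vertices of $\Ve^0(C)$; since $C$ is rational, two distinct edges cannot share two vertices, so $e_1,e_2$ share exactly one vertex and are therefore open ends adjacent to a common trivalent vertex --- i.e.\ one of the pairs $(e_1^i,e_2^i)$. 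You should either prove the ``distinct vertices have distinct images'' statement from the genericity framework of Section \ref{generic conf} or cite it, and then run this direct argument instead of the marked-subtree one.
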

\begin{proof}
It is clear that the automorphisms $\phi_{e_1^i,e_2^i}$ commute and
are of order 2. We just have to prove that they generate
$Aut(f)$. Suppose that this group is non-trivial and let
$\phi\ne Id$ be an element of $Aut(f)$. Since $\phi$ is non-trivial,
there exist two distinct edges $e_1$ and $e_2$ in $\Ed(C)$ such that
$\phi(e_1)=e_2$.  Since two vertices of $C$ cannot have the same image
by $f$, the edges $e_1$ and $e_2$ must be adjacent to the same
vertices. The tropical curve $C$ is rational, so $e_1$ and $e_2$ must
be adjacent to exactly 1 vertex, which means that they are open ends
of $C$. 
\end{proof}

\section{Practical computation of tropical multiplicities}\label{practical}

\subsection{Combinatorial multiplicity}
Here we give a practical way to compute the determinant in Definition
\ref{def trop mult} by a standard cutting procedure (see for
example \cite{GM3}).
Let us fix a generic configuration 
$(\P,\L)\in Par(k,\beta_1,\ldots,\beta_{3d-1-k})$ and an element
$f:C\to\RR^2$ of  $\S^\TT(d,\P,\L)$ tangent to $\L$. We choose a marking of
$C$ such that $f:(C,x_1,\ldots,x_{3d-1})\to\RR^2$ 
is an element of one of the sets $\C'(3d,\P',\L')$ defined in
the proof of 
Proposition \ref{finite}, and we define 
$\oC=C\setminus\{x_1,\ldots,x_{3d-1}\}$.

First, we  define an orientation on $\oC$. Let $x$ be a point on an edge of 
$\oC$.
 Since $C$ is rational,  $C\setminus\{x\}$ has 2 connected components
 $C_1$ and $C_2$
containing respectively $s_1$ and $s_2$ ends, and 
$s_1+s_2=3d+2$. Moreover, since $(\P,\L)$ is generic,
 $C_1$ (resp. $C_2$) contains  $k_1\le s_1 -1$ (resp. $k_2\le
s_2-1$) marked points. Since $k_1+k_2=3d-1=s_1+s_2-3$, up to exchanging
$C_1$ and $C_2$, we have 
$k_1=s_1-1$ and $k_1=s_2-2$. We orient $\oC$ at $x$ from $C_1$ to $C_2$.
Note that $\oC$ and its orientation  depends  on the choice of
the marking of $C$ we have chosen,
 but this won't play a role in what follows.

\vspace{1ex}
Now we define a multiplicity $\mu_{(\P,\L)}(v)$ for each vertex $v$ in
$\Ve^0(C)$. 
If $f(v)\notin\bigcup_{L\in\L} L$, then the genericity of $(\P,\L)$ implies
that there 
exist
two edges $e_1,e_2\in\Ed(C)$ adjacent to $v$ and oriented toward $v$. We define
$$\mu_{(\P,\L)}(v)=|\det(u_{f,e_1},u_{f,e_2})|.$$
If $f(v)\in L_i$, we denote by $u_{L_i}$ the
primitive integer direction of the edge of $L_i$ containing $f(v)$.
If $f(v)\in L_i\setminus \bigcup_{L\ne L_i}  L$,
then the genericity of $(\P,\L)$ implies that there exists
exactly one edge $e\in\Ed(C)$ oriented toward $v$ with $u_{f,e}\ne u_{L_i}$,
and we define 
$$\mu_{(\P,\L)}(v)=|\det(u_{f,e_1},u_{L_i})|.$$
If $f(v)\in L_i\cap L_j$,
we define
$$\mu_{(\P,\L)}(v)=|\det(u_{L_i},u_{L_j})|.
$$

\begin{prop}\label{practical comp}
For any tropical morphism $f:C\to\RR^2$ in $\S^\TT(d,\P,\L)$, we have
$$\mu_{(\P,\L)}(f)=\frac{1}{|Aut(f)|}\prod_{q\in\P\cup\L} w_q   
\prod_{e\in\Ed^0(C)}w_{f,e} \prod_{v\in\Ve^0(C)}\mu_{(\P,\L)}(v).$$
\end{prop}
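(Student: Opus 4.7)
The plan is to compute the determinant $|\det(\Lambda_{p_1},\dots,\Lambda_{L_{3d-1-k}})|$ appearing in Definition~\ref{def trop mult} directly in the coordinate system on $\M_\alpha$ provided by Lemma~\ref{exp dim 1}, namely $(x_1,x_2,l_1,\dots,l_{|\Ed^0(\alpha)|})$, and to show that it equals $\prod_{e\in\Ed^0(C)}w_{f,e}\prod_{v\in\Ve^0(C)}\mu_{(\P,\L)}(v)$. Once this is done, the statement follows from Definition~\ref{def trop mult} by dividing by $|\mathrm{Aut}(f)|$ and multiplying by $\prod_{q\in\P\cup\L}w_q$, which appear unchanged in both expressions.

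First I would write down an explicit equation for each hyperplane $\Lambda_q$. Choose the root $v_1$ as in Lemma~\ref{exp dim 1}. For any marked point $x_i$ on $C$, if $\gamma$ denotes the (unique) oriented path in $C$ from $v_1$ to $x_i$, then
\[
 f(x_i)=f(v_1)+\sum_{e\in\gamma}\varepsilon_e\, w_{f,e}\, l_e\, u_{f,e}
\]
where $\varepsilon_e=\pm 1$ depends on the orientation. A point constraint $\Lambda_{p_i}$ cuts out the affine space $f(x_i)=p_i$ and a tangency constraint $\Lambda_{L_j}$ cuts out the affine hyperplane $\langle u_{L_j}^\perp, f(x_j)\rangle=\mathrm{const}$ (using the pretangency segment provided by Proposition~\ref{generic curves}); by genericity these are honest hyperplanes defined over $\ZZ$. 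The columns of the matrix $M=(\Lambda_{p_1},\dots,\Lambda_{L_{3d-1-k}})$ are therefore obtained by reading off the coefficients of $x_1,x_2$ and of each edge-length $l_e$, each contribution coming weighted by $w_{f,e}$ times the primitive direction $u_{f,e}$ (or $u_L$ in the tangency case).

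The core of the argument is then a cutting/peeling procedure along the orientation of $\oC$ introduced just before the statement, following the standard method of \cite{GM3}. Because $(\P,\L)$ is generic, the oriented graph $\oC$ is a tree with a unique sink, and each $3$-valent vertex $v\in\Ve^0(C)$ has exactly two incoming edges $e_1(v),e_2(v)$ and one outgoing edge $e_{\mathrm{out}}(v)$. I would order the constraints so that the two constraints ``closest'' to a source vertex come first, and order the length coordinates $l_e$ so that edges closer to the sink come first. Peeling off a source vertex $v$: the two constraints immediately upstream of $v$ involve the $l_{e_i(v)}$-coordinates only through the column block whose $2\times 2$ ``leading'' minor is
\[
\bigl|\det(w_{f,e_1(v)}u_{f,e_1(v)},\;w_{f,e_2(v)}u_{f,e_2(v)})\bigr|
= w_{f,e_1(v)}\,w_{f,e_2(v)}\,\mu_{(\P,\L)}(v),
\]
after which the remaining columns are, by the balancing condition at $v$, the same as those of the tropical morphism obtained by erasing $e_1(v),e_2(v)$ and contracting $v$ into the edge $e_{\mathrm{out}}(v)$. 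When $f(v)$ lies on some $L_j$ or at an intersection of two $L_j$'s, exactly the same computation goes through with one (resp.\ both) of the $u_{f,e_i(v)}$ replaced by the primitive tangency direction $u_L$, which is precisely how $\mu_{(\P,\L)}(v)$ was defined. Iterating and keeping track of which edge weights $w_{f,e}$ are consumed at each step yields, telescopically,
\[
 |\det M|=\prod_{v\in\Ve^0(C)}\mu_{(\P,\L)}(v)\prod_{e\in\Ed^0(C)}w_{f,e},
\]
which together with the definition of $\mu_{(\P,\L)}(f)$ proves the proposition.

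The main obstacle is the bookkeeping for constraints whose pretangency set $E_L$ is larger than a single point, i.e.\ contains several edges or vertices of $C$. In that situation the hyperplane $\Lambda_L$ only depends on the $\mathbb Z$-direction $u_L^\perp$ of $L$, so the corresponding column is ``degenerate'' in several $l_e$-coordinates simultaneously. I expect this to be handled by the fact that, by Proposition~\ref{generic curves}(iv), $f(E_L)$ is a single segment parallel to $u_L$: all the affected edge-length coordinates appear in $\Lambda_L$ through the same inner product with $u_L^\perp$, so the peeling procedure still reduces $\Lambda_L$ at the expected step and the extra contributions of $E_L$ are precisely absorbed by the correction $\mu-\lambda$ (equivalently $\kappa-2b_0(E_L)$) built into the definition of $w_L$ in \eqref{wL}. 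Once this compatibility is checked, the formula of the proposition follows.
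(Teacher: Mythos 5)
Your overall strategy --- writing the linear parts of the $\Lambda_q$ explicitly in the coordinates $(x_1,x_2,l_1,\dots,l_{|\Ed^0(\alpha)|})$ of Lemma~\ref{exp dim 1} and then evaluating the determinant by an inductive cutting along the tree, as in \cite{GM3} --- is the same as the paper's, which also reduces to the case $|\Ve^0(C)|\le 1$ by a cutting procedure. The organization differs only cosmetically: instead of peeling one vertex at a time, the paper cuts $C$ at a point of a bounded edge $e$ whose image avoids all constraints, obtaining two complete tropical morphisms $f_1,f_2$ with their own constraint sets; the matrix is then block-triangular and $\det M(f)=w_{f,e}\det M(f_1)\det M(f_2)$ (Lemma~\ref{cut1}), so the edge weight consumed at each step is that of the cut edge rather than of the incoming edges (the two bookkeepings agree here only because all ends have weight $1$ by Proposition~\ref{generic curves}). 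For point constraints and for vertices not mapped to any $L\in\L$ your $2\times 2$-minor computation is the same calculation. (A small imprecision: $\Lambda_{p_i}$ is the single hyperplane $|\det(f(v),u_{f,e})|=0$, not the codimension-two locus $f(x_i)=p_i$; you implicitly use this when you allot one column per constraint, but it should be stated.)

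The genuine gap is in your last paragraph, on tangency constraints whose pretangency component $E_L$ is larger than a point. You propose that the ``extra contributions of $E_L$ are absorbed by the correction $\mu-\lambda$ built into the definition of $w_L$''. This cannot be the mechanism: after cancelling $\frac{1}{|Aut(f)|}\prod_{q}w_q$ from both sides of the statement, what remains to prove is $|\det M|=\prod_{e}w_{f,e}\prod_{v}\mu_{(\P,\L)}(v)$, so the weight $w_L$ --- and in particular its correction term $\mu-\lambda$ --- never enters the determinant computation at all; it is a prefactor appearing identically on both sides. The missing ingredient is the paper's Lemma~\ref{cut3}: the hyperplanes $\Lambda_L$ and $\Lambda_p$ have identical linear parts for a suitable point $p$ on $L$ (either $p\in f(e_1)\cap L$ for an edge $e_1$ of $C$ parallel to $L$, or $p=f(x)$ a vertex of $L$), so one may replace the tangency constraint $L$ by the point constraint $p$ without changing the determinant, at the cost only of the ratio $w_L/w_p$ of prefactors. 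One then checks that $\mu_{(\P,\L)}(v)=\mu_{(\P',\L')}(v)$ for every $v\in E_L$ (both equal $|\det(u_{f,e},u_L)|$ for the unique incoming edge $e$ at $v$ not parallel to $L$), after which Lemma~\ref{cut1} can cut inside $E_L$ and separate its vertices into different pieces. Without this replacement your peeling must extract $|E_L\cap\Ve^0(C)|$ vertex factors while only one constraint row $\Lambda_L$ is attached to the whole component, and the telescoping as described does not close.
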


We prove Proposition \ref{practical comp} by writing down explicitly the
linear part of the equations of the hyperplanes $\Lambda_q$.
Before going deeper into the details, we remark
 that the definition of the multiplicity of a
tropical morphism in $\S^\TT(d,\P,\L)$ and of the orientation of $\oC$
are
 based only on the fact that all
elements of $\C(3d,\P,\L)$ are extreme. Hence, we can extend Definition
\ref{def trop mult} and the orientation of $\oC$
 to any tropical morphism in $\C(s,\P,\L)$ for any
$s$, as long as $(\P,\L)$ is generic. 

\vspace{1ex}
For the rest of this section, we fix some positive integer $s$,
 a generic configuration $(\P,\L)$ of constraints containing $s$
elements, and an element $f:C\to\RR^2$ of $\C(s,\P,\L)$ tangent to
$\L$. In particular, $\mu_{(\P,\L)}(f)\ne 0$.
Let us choose a vertex $v_1\in\Ve^0(C)$, and some ordering of edges in
$\Ed^0(C)$. If $v\in\Ve^0(C)$, we denote by $(v_1v)$ the path
joining $v_1$ to $v$ in $C$. In particular we have
\begin{equation}\label{equ f}
f(v)=f(v_1) + \sum_{e\in(v_1v)}l_ew_{f,e}u_{f,e}
\end{equation}
where the vectors $u_{f,e}$ are oriented toward $v_1$ (recall that
$w_{f,e}$  and $l_e$ are respectively  the weight and the length of $e$).

Given $p\in\P$, we choose $v\in\Ve^0(C)$ adjacent to an edge $e$ of
$C$ such that $e\cap f^{-1}(p_i)\ne\emptyset$. Then, in the coordinate
used in the proof of Lemma \ref{exp dim 1}, the linear part of
the equation of
$\Lambda_p$ is given by
\begin{equation}\label{equ pt}
|\det\left(f(v),u_{f,e} \right)|=0.
\end{equation}
Let $L$ be an element of $\L$.
 If there exist $v_0\in\Ve^0(L)$, 
 $e\in \Ed(C)$ adjacent to  $v\in\Ve^0(C) $, such that  
$e\cap f^{-1}(v_0)\ne\emptyset$, then the linear part of the equation of
$\Lambda_L$ is given by
\begin{equation}\label{equ dte1}
|\det\left(f(v),u_{f,e} \right)|=0.
\end{equation}
If there exists a vertex
 $v\in\Ve^0(C)$ such that $f(v)\in L$, 
then the linear part of the equation of
$\Lambda_L$ is given by
\begin{equation}\label{equ dte2}
|\det\left(f(v),u_L \right)|=0
\end{equation}
where $u_L$ is the primitive integer direction of the edges of $L$
containing $f(v)$.

Equations (\ref{equ pt}), (\ref{equ dte1}), and (\ref{equ
  dte2})
 do not depend on the choice of $e$ and $v$
thanks to properties  (3) and (4) in Definition \ref{def extreme}.

To sum up, if $q\in\P\cup\L$, the linear part of the equation of
$\Lambda_q$ 
is
always of the form 
$|\det\left(f(v_{q}),u_q \right)|=0$, for some  $v_{q}\in\Ve^0(C)$ and
$u_q=(u_{q,1},u_{q,2})\in\RR^2$.
Thus, the coefficients  of 
 the matrix $M(f)$ of intersection of all the $\Lambda_q$
 are given by equation
 (\ref{equ f}): 
 \begin{itemize}
\item the
coefficient corresponding to the hyperplane $\Lambda_q$ and $x_{v_1}$ is $u_{q,2}$;
\item the
coefficient corresponding to the hyperplane $\Lambda_q$ and $y_{v_1}$ is $-u_{q,1}$;
\item the
coefficient corresponding to the hyperplane $\Lambda_q$ and 
$e\in\Ed^0(C)$ is 
\begin{itemize}
\item $w_{f,e}\det(u_{f,e},u_q)$ if $e\in (v_1v_{q})$;
\item 0 otherwise.
\end{itemize}

\end{itemize}

Clearly, the matrix $M(f)$ depends on the choice of the coordinates
we choose on  the deformation space of $f$. However, $M(f)$ is well
defined up to a multiplication by a matrix in $GL_{s-1}(\ZZ)$, hence the
absolute value of its determinant does not depend on this choice.

\begin{lemma}\label{cut3}
Suppose  that there exists $L\in\L$ such that we are in one of the two
following situations
\begin{itemize}
\item there exists a vertex $v\in
\Ve^0(C)$ adjacent to two edges $e_1,e_2\in\Ed^0(C)$ with
$u_{f,e_1}=u_L$ and $u_{f,e_2}\ne u_L$, and such that 
$f(v)\in L$; in this case, choose $p$ in $f(e_1)\cap L$;

\item there exists a point $x\in C$ 
 such that $f(x)\in\Ve^0(L)$; in this case,  put $p=f(x)$.
\end{itemize} 
Define $\P'=\P\cup\{p\}$
and $\L'=\L\setminus\{L\}$. Then
$$\mu_{(\P,\L)}(f)= \frac{w_{L}}{w_p}\mu_{(\P',\L')}(f).$$
In particular,   Proposition \ref{practical
  comp} for $\mu_{(\P,\L)}(f)$ follows from Proposition \ref{practical
  comp} applied to $\mu_{(\P',\L')}(f)$.
\end{lemma}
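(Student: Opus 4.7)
\smallskip

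\noindent\textbf{Plan of proof.} The key idea is that, in both situations considered, the hyperplane $\Lambda_L$ corresponding to the tangency condition and the hyperplane $\Lambda_p$ corresponding to the new point condition have identical linear parts. Once this is established, the lemma reduces to bookkeeping of the weight factors.

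First, I will unwind the linear equations of $\Lambda_L$ and $\Lambda_p$ in each of the two cases, using equations (\ref{equ pt}), (\ref{equ dte1}), and (\ref{equ dte2}). In Case~1, the vertex $v$ satisfies $f(v) \in L$, so the linear part of $\Lambda_L$ is $|\det(f(v), u_L)|=0$ by (\ref{equ dte2}). The point $p\in f(e_1)\cap L$ lies on the edge $e_1$, which is adjacent to $v$, so by (\ref{equ pt}) the linear part of $\Lambda_p$ is $|\det(f(v), u_{f,e_1})|=0$; since $u_{f,e_1}=u_L$ by hypothesis, this is the same equation as for $\Lambda_L$. In Case~2, pick the edge $e\ni x$ and one of its endpoints $v\in\Ve^0(C)$; then by (\ref{equ dte1}) the linear part of $\Lambda_L$ is $|\det(f(v),u_{f,e})|=0$, and by (\ref{equ pt}) applied to $p=f(x)$ the linear part of $\Lambda_p$ is also $|\det(f(v),u_{f,e})|=0$. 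Independence of the equations from the various auxiliary choices is guaranteed by properties (3) and (4) of extremity (Definition~\ref{def extreme}).

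The second step is the algebraic consolidation. Since $\Lambda_L$ and $\Lambda_p$ have the same linear equation, replacing one by the other in the matrix $M(f)$ does not alter the absolute value of its determinant; thus
\[
\bigl|\det(\Lambda_{p_1},\ldots,\Lambda_{L_{3d-1-k}})\bigr|
= \bigl|\det(\Lambda_{p_1},\ldots,\Lambda_p,\ldots,\Lambda_{L_{3d-1-k}}\text{ with }\Lambda_L\text{ removed})\bigr|.
\]
The automorphism group $\mathrm{Aut}(f)$ depends only on $f$ and not on the constraints, so it is the same in both configurations. Finally, the only differences between the weight products are the factor $w_L$ (present in $\P\cup\L$ but absent in $\P'\cup\L'$) and the factor $w_p$ (vice versa), giving
\[
\prod_{q\in\P\cup\L} w_q \;=\; \frac{w_L}{w_p}\,\prod_{q\in\P'\cup\L'} w_q.
\]
Combining these three observations in the formula of Definition~\ref{def trop mult} yields the desired identity $\mu_{(\P,\L)}(f)=\frac{w_L}{w_p}\mu_{(\P',\L')}(f)$.

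The only genuinely delicate point, and the one I expect to require the most care, is the verification that the new configuration $(\P',\L')$ is still admissible for the extended multiplicity: one must check that $f$ remains in (an extension of) $\C(s,\P',\L')$ with the same combinatorial type and extremity properties, so that Definition~\ref{def trop mult} still applies and yields a well-defined $\mu_{(\P',\L')}(f)$. This is where the precise choice of $p$ in the statement matters---$p$ is constrained to lie on a specific edge in Case~1, or at a specific preimage in Case~2---ensuring that the new point constraint ``inherits'' the role played by the tangency condition in the matrix $M(f)$ and does not create any accidental coincidence with another constraint. With this in place, the second assertion of the lemma, that Proposition~\ref{practical comp} for $\mu_{(\P,\L)}(f)$ follows from the same for $\mu_{(\P',\L')}(f)$, is immediate because the right-hand sides of both instances of Proposition~\ref{practical comp} differ only by the factor $w_L/w_p$, the rest (weights of edges, vertex multiplicities, and $|\mathrm{Aut}(f)|$) being intrinsic to $f$.
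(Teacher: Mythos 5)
Your proposal is correct and follows essentially the same route as the paper: the paper's entire proof of this lemma is the one-line observation that the linear parts of the equations of $\Lambda_L$ and $\Lambda_p$ coincide, and your case-by-case verification via equations (\ref{equ pt}), (\ref{equ dte1}), (\ref{equ dte2}) together with the weight bookkeeping $\prod_{q\in\P\cup\L}w_q=\frac{w_L}{w_p}\prod_{q\in\P'\cup\L'}w_q$ is exactly the elaboration the authors leave implicit. The extra care you devote to admissibility of $(\P',\L')$ is reasonable but not something the paper addresses.
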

\begin{proof}
The linear part of the equations of $\Lambda_L$ and $\Lambda_p$ are the same. 
\end{proof}

Let us now explain the cutting procedure to compute $\mu_{(\P,\L)}(f)$
in general.
Suppose that there exist $e\in\Ed^0(C)$ and $x\in e$ 
such that $f(x)\notin\P\bigcup_{L\in\L} L$.
Recall that we have defined an orientation on $C$ at $x$. 
The space $C\setminus\{x\}$ has two connected components, 
$C_1$ and $C_2$,
containing respectively $s_1$ and $s_2$ ends.
 We choose $C_1$ and $C_2$  so that $C$ is
oriented from $C_1$ to $C_2$ at $x$. It is clear that 
$s_1+s_2=s+2$.
The graph $C_i$ inherits a tropical structure from the tropical curve
$C$, and there is a unique way to extend $C_i$ to a rational tropical
curve $\overline C_i$ without boundary components.
Note that the tropical morphism $f:C\to\RR^2$ 
 induces tropical morphisms $f_i:\overline C_i\to \RR^2$. 

Let $\P_i\subset\P$ be the points of $\P$ through which $f_i$ passes,
and $\L_i\subset\L$ be the curves of $\L$ which are
pretangent to $f_i$. The configuration $(\P,\L)$ is generic, so we have
$|\P_i|+|\L_i|\le s_i-1$. Since we also have $s_1+s_2=s+2$ and 
$|\P|+|\L|= s-1$, we deduce that 
$\left|\left(\P_1\cup\L_1 \right)\cap\left(\P_2\cup\L_2 \right) \right|\le 1$.

If $\left(\P_1\cup\L_1 \right)\cap\left(\P_2\cup\L_2 \right)
=\emptyset$, then $|\P_2|+|\L_2|= s_2-2$ because of the orientation of
$C$ at $x$. In this case we define $\P_2'=\P_2\cup\{f(x)\}$ and $\nu=1$.

If $\left|\left(\P_1\cup\L_1 \right)\cap\left(\P_2\cup\L_2 \right)
\right|= \{q_0\}$, then we define $\P_2'=\P_2$ and
$$\nu=\frac{w_{q_0}(f)w_{f,e}}{w_{q_0}(f_2)w_{q_0}(f_1)}.$$
Since the 3 distinct tropical morphisms $f$, $f_1$, and $f_2$ pass
through or are
tangent to $q_0$, we precised to which morphism the quantity $w_{q_0}$
refers in the previous formula.

\begin{lemma}\label{cut1}
We have
$$\mu_{(\P,\L)}(f)=\nu \mu_{(\P_1,\L_1)}(f_1)\mu_{(\P'_2,\L_2)}(f_2).$$
In particular,   Proposition \ref{practical
  comp} for $\mu_{(\P,\L)}(f)$ follows from Proposition \ref{practical
  comp} applied to $\mu_{(\P_1,\L_1)}(f_1)$ and $\mu_{(\P'_2,\L_2)}(f_2)$.
\end{lemma}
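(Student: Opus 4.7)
The plan is to compute $\mu_{(\P,\L)}(f)$ directly via the matrix $M(f)$ introduced before Lemma~\ref{cut3}, choosing coordinates on the deformation space $\M_\alpha$ tailored to the cut so that $M(f)$ becomes block triangular and the determinant factors as $|\det M(f_1)|\cdot|\det M(f_2)|$ (up to an explicit combinatorial correction). The remaining task is then to trace the weight factors $w_q$ through the identity, which is where the constant $\nu$ enters.

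First, I would set up coordinates. Pick the root vertex $v_1\in\Ve^0(C_1)$ (away from the cut), and order the edges of $\Ed^0(C)$ so that the edges of $C_1$ come first, then the edge $e$ where the cut occurs, then the edges of $C_2$. With respect to these coordinates, the explicit description of the rows of $M(f)$ given before Lemma~\ref{cut3} shows: for $q\in\P_1\cup\L_1$, the path $(v_1 v_q)$ lies entirely in $C_1$, so the row of $\Lambda_q$ has zero entries on all columns associated to $e$ and to edges of $C_2$; for $q\in\P_2\cup\L_2$, the path $(v_1 v_q)$ crosses $e$, so the row of $\Lambda_q$ has potentially nonzero entries on the columns $x_{v_1},y_{v_1}$, on the column of $e$, and on the columns of the edges of $C_2$ lying on $(v_1 v_q)$. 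Now pick a vertex $v_1'\in\Ve^0(\overline C_2)$ adjacent to the extension of $e$, and perform an invertible $\ZZ$-linear change of variables replacing $(x_{v_1},y_{v_1})$ with $(x_{v_1'},y_{v_1'})=(x_{v_1},y_{v_1})+\ell_e w_{f,e}u_{f,e}+\ldots$; this kills the $x_{v_1},y_{v_1}$-columns on the $\P_2\cup\L_2$-rows and leaves a block triangular matrix.

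Next, I would read off the determinant. Assume first that $(\P_1\cup\L_1)\cap(\P_2\cup\L_2)=\emptyset$. In this case the $\P_1\cup\L_1$-block is exactly $M(f_1)$ (expressed in the $v_1$-coordinates on $\M_{\alpha_1}$), while the $\P_2\cup\L_2$-block, after the change of variables, is almost $M(f_2)$: it is the matrix for the configuration $\P_2\cup\L_2$ augmented by the auxiliary point constraint $f_2(x)=f(x)$ at the new leaf of $\overline C_2$ created by the cut, taken in $v_1'$-coordinates. By the orientation convention on $\oC$ at $x$, this is exactly $M$ of $f_2$ against $\P_2'=\P_2\cup\{f(x)\}$ and $\L_2$. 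Since $w_{f(x)}=w_{f,e}$ (the only edge of $\overline C_2$ above the puncture), one obtains
\[
|\det M(f)|=w_{f,e}\cdot|\det M(f_1)|\cdot|\det M(f_2)|.
\]
Multiplying through by the weight products $\prod_q w_q$ appearing in Definition~\ref{def trop mult}, and noting that $|Aut(f)|=|Aut(f_1)||Aut(f_2)|$ (by Lemma~\ref{automorphism}, since the only non-trivial automorphisms permute pairs of parallel ends sharing a vertex, and the cut does not affect this structure), one recovers exactly $\nu=1$ times $\mu_{(\P_1,\L_1)}(f_1)\mu_{(\P_2',\L_2)}(f_2)$.

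Finally, in the case $(\P_1\cup\L_1)\cap(\P_2\cup\L_2)=\{q_0\}$, the same block-triangular argument applies, except that the auxiliary constraint $f(x)$ is no longer needed, being replaced by $q_0$, which appears on both sides of the cut. One then just has to compare the weights: $q_0$ contributes $w_{q_0}(f)$ to $\mu_{(\P,\L)}(f)$, while it contributes $w_{q_0}(f_1)\cdot w_{q_0}(f_2)$ to $\mu_{(\P_1,\L_1)}(f_1)\mu_{(\P_2,\L_2)}(f_2)$, and the factor $w_{f,e}$ from the determinant identity is carried along. This reproduces exactly the claimed factor $\nu=w_{q_0}(f)w_{f,e}/\bigl(w_{q_0}(f_1)w_{q_0}(f_2)\bigr)$. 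The hardest step is the first one: verifying that the change of variables is indeed $\ZZ$-invertible and that the resulting $(2,2)$-block coincides with $M(f_2)$; this is a careful but routine bookkeeping exercise using the formulas for the linear parts of $\Lambda_q$ and the balancing condition at the new leaf of $\overline C_2$.
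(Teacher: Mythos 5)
Your proposal follows essentially the same route as the paper's proof: cut $C$ at $x$, choose coordinates adapted to the cut so that the intersection matrix becomes block triangular with $|\det M(f)|=w_{f,e}\,|\det M(f_1)|\,|\det M(f_2)|$, and then trace the weights $w_q$ and the factorization $Aut(f)=Aut(f_1)\times Aut(f_2)$ (Lemma \ref{automorphism}) to recover $\nu$ in both the disjoint case and the $q_0$ case; your accounting of $\nu$ agrees with the paper's. One step does not work as you literally describe it: the block-triangular structure is already present in the $v_1$-rooted coordinates, where the rows of $q\in\P_1\cup\L_1$ vanish on the columns of $e$ and of $\Ed^0(C_2)$ and the complementary diagonal block (rows $\P_2\cup\L_2$ against columns $\{e\}\cup\Ed^0(C_2)$) is square of size $s_2-2$; if instead you re-root the whole matrix at $v_1'$ as proposed, then in the case $\left(\P_1\cup\L_1\right)\cap\left(\P_2\cup\L_2\right)=\emptyset$ the $s_2-2$ rows of $\P_2\cup\L_2$ are supported on the $s_2-1$ columns $\{x_{v_1'},y_{v_1'}\}\cup\Ed^0(C_2)$, the diagonal blocks are not square, and the determinant no longer factors as a plain product (one would need a generalized Laplace expansion). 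The paper uses the $v_1'$-coordinates only inside $M(f_2)$: its first row, given by $\Lambda_{f(x)}$ or $\Lambda_{q_0}$, is $(u_{e,2},-u_{e,1},0,\dots,0)$, and eliminating the two position columns against this row identifies $\det M(f_2)$ with $\pm\det\left(U_3\,|\,A\right)$, i.e.\ with $w_{f,e}^{-1}$ times the lower-right block of $M(f)$. With that adjustment the rest of your argument, including the weight comparison giving $\nu=1$ via $w_{f(x)}=w_{f,e}$ and the ratio $w_{q_0}(f)w_{f,e}/\bigl(w_{q_0}(f_1)w_{q_0}(f_2)\bigr)$ in the second case, is exactly the paper's.
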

 \begin{proof}
For the coordinates of the 
deformation space of $f_1$, we choose the root vertex 
 to be the vertex  of $C_1$ 
adjacent
to $e$.
For the deformation space of $f_2$, we choose the root vertex
 to be the vertex  of 
 $C_2$ adjacent
to $e$. We also suppose that the first line of $M(f_2)$ is given
by $\Lambda_{f(x)}$ or $\Lambda_{q_0}$. Choose an order
$q_2,\ldots,q_{s_2-1}$ on the other elements of $\P_2\cup\L_1$. 
Then we have
$$M(f_2)=\left(\begin{array}{ccc}
u_{e,2} & -u_{e,1}&0
\\ U_1& U_2&A
\end{array}\right)  $$
where  $A$ is a $(s_2-2)\times(s_2-3)$ matrix and
$$U_1=\left(\begin{array}{c} u_{q_2,2}\\ \vdots\\  u_{q_{s_2-1},2}\end{array}
\right) \quad
\textrm{and} \quad U_2=\left(\begin{array}{c}
  -u_{q_2,1}\\ \vdots\\  -u_{q_{s_2-1},1}\end{array} 
\right).$$
Hence, eliminating the coefficient $-u_{e,1}$ by elementary operations
on the column of $M(f_2)$ we get 
$$\det(M(f_2))= \det\left(\left(\begin{array}{cc}
 U_3&A
\end{array}\right)\right)$$
where 
$$U_3=\left(\begin{array}{c} \det(u_{f,e},u_{q_2})
  \\ \vdots\\ \det(u_{f,e},u_{q_{s_2-1}})\end{array} 
\right).$$

We choose coordinates on the deformation space of $f$ 
correspondingly to the one we chose for $f_1$ and $f_2$: the root
vertex is the root vertex we chose for $f_1$, and the order on the
edges in $\Ed^0(C)$ is given by first the edges in  $\Ed^0(C_1)$, then
$e$, and then  the edges in  $\Ed^0(C_2)$.
Then, 
we have
$$M(f)=\left(\begin{array}{ccc}
M(f_1)&0&0
\\ *&w_{f,e}U_3  &A
\end{array}\right).  $$
Hence $\det(M(f))= w_{f,e}\det(M(f_1))\det(M(f_2))$. To conclude, we
remark that according to
Lemma \ref{automorphism}, we have $Aut(f)=Aut(f_1)\times Aut(f_2)$.
\end{proof}

\vspace{2ex}
\begin{proof}[Proof of Proposition \ref{practical comp}]
Applying recursively Lemmas \ref{cut1} and \ref{cut3}, we reduce to
 cases when $\Ve^0(C)$ has at most 1 element, for which we can
easily check by hand that Proposition \ref{practical comp} is true.
\end{proof}

\subsection{Examples of computations}\label{Sec:Exa}

Let us first compute again the multiplicity of the conic of
Example~\ref{Exa:conic} pictured in
Figure~\ref{Fig:conic-with-constraints} using the combinatorial
procedure described above.

The automorphism group of the morphism is isomorphic to $\ZZ/2\ZZ$,
the multiplicities $\mu_q$ of the constraints is $1$ except for the
point sitting on an edge of weight $2$ and for the vertical line in
which case it is $2$. There is
 an edge of weight $2$
contributing for $2$ in the multiplicity and the vertices all have
multiplicity $1$ as can be seen using the formula given at the
beginning of previous section and the orientations described in
Figure~\ref{Fig:oriented-conic}. The multiplicity of this morphism is
thus $\mu_{(\P,\L)}(f) = \frac{1}{2}\times 2^2 \times 2 \times 1 = 4$
which is indeed the number of conics tangent to two lines and passing through
three points provided that the configuration is generic.

\begin{figure}[h]
\centering
\includegraphics[height=9cm, angle=0]{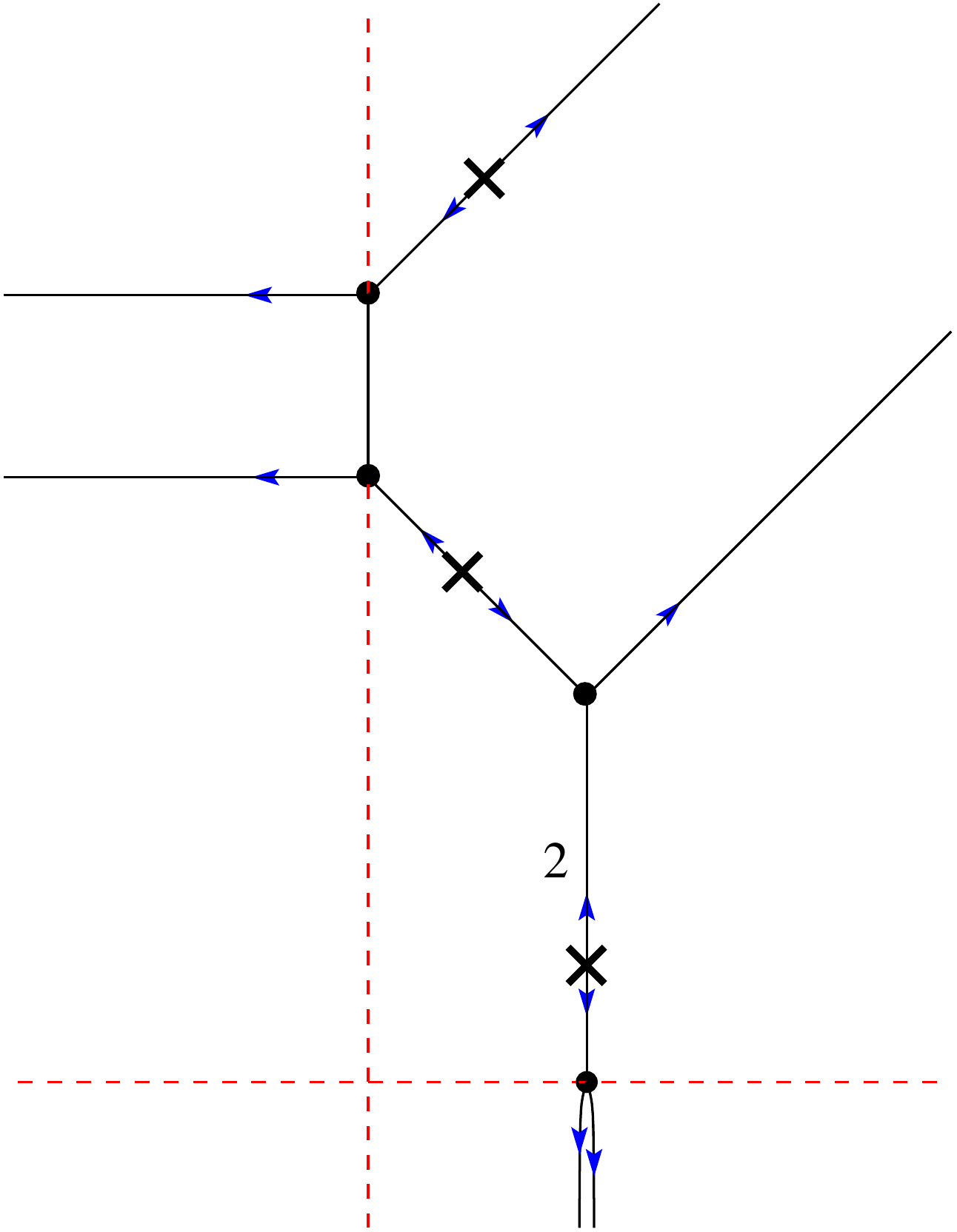}
\caption{Conic (with orientations) tangent to two lines}
\label{Fig:oriented-conic}
\end{figure}

One can check using the same techniques and
Figure~\ref{Fig:all-conics} the number of conics tangent to $5-k$
lines when $k$ varies from $0$ to $5$. In each case there is only one
tropical curve satisfying the constraints on which all the classical
conics degenerate. Their multiplicities are $\min\left( 2^k , 2^{5-k}\right)$.
\begin{figure}[h]
\centering
\begin{tabular}{ccccc}
\includegraphics[width=5cm, angle=0]{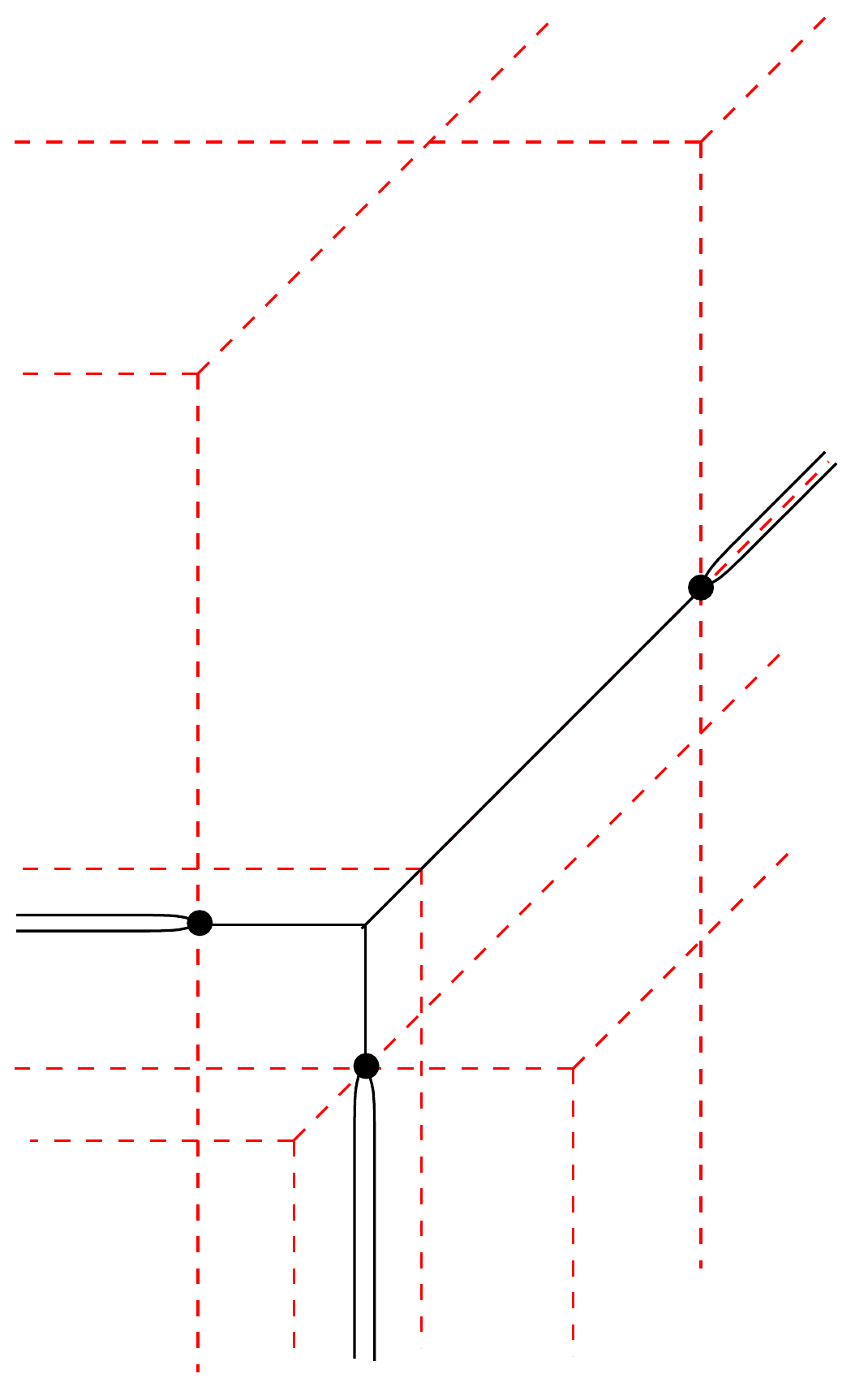}&
\includegraphics[width=5cm, angle=0]{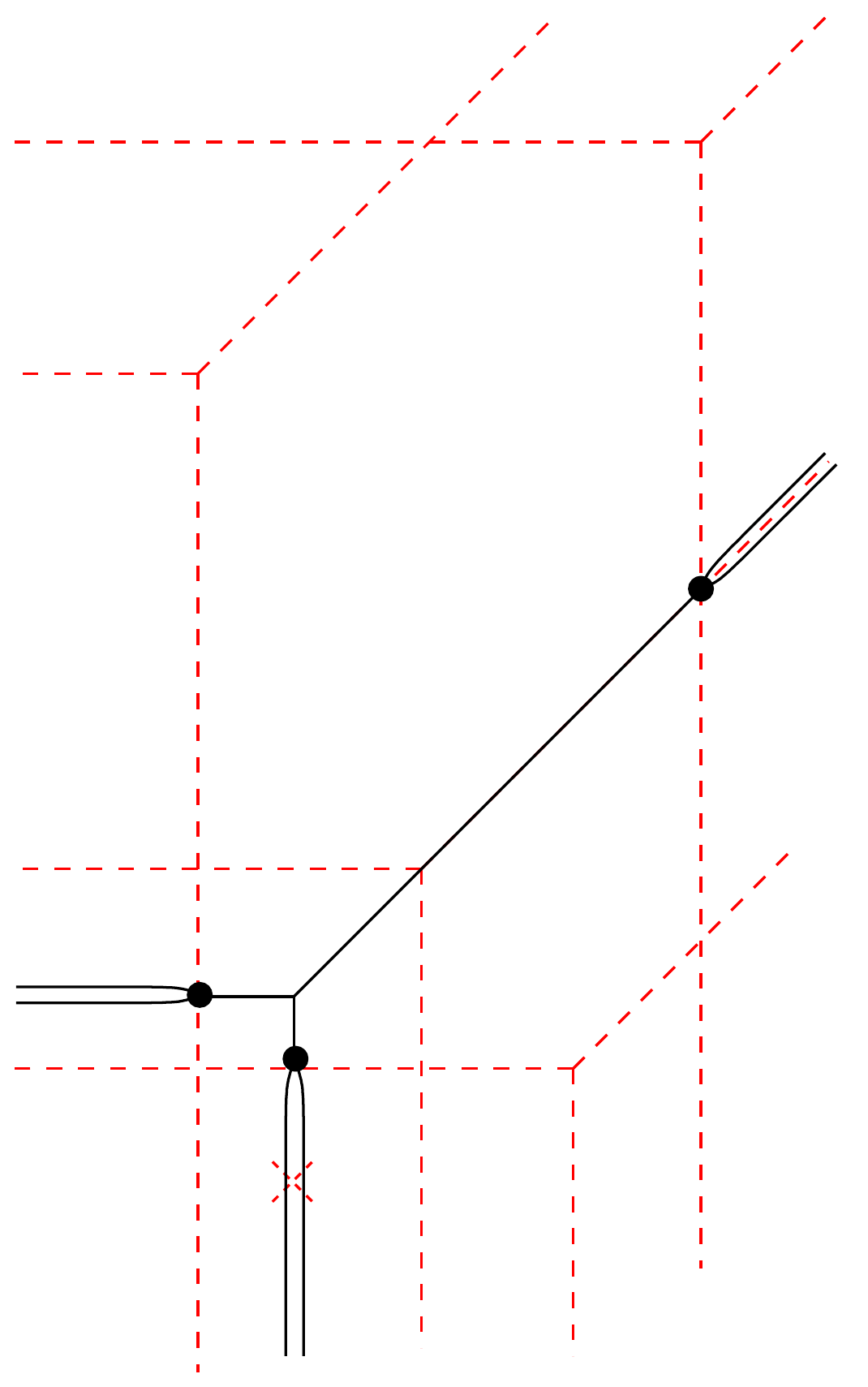}&
\includegraphics[width=5cm, angle=0]{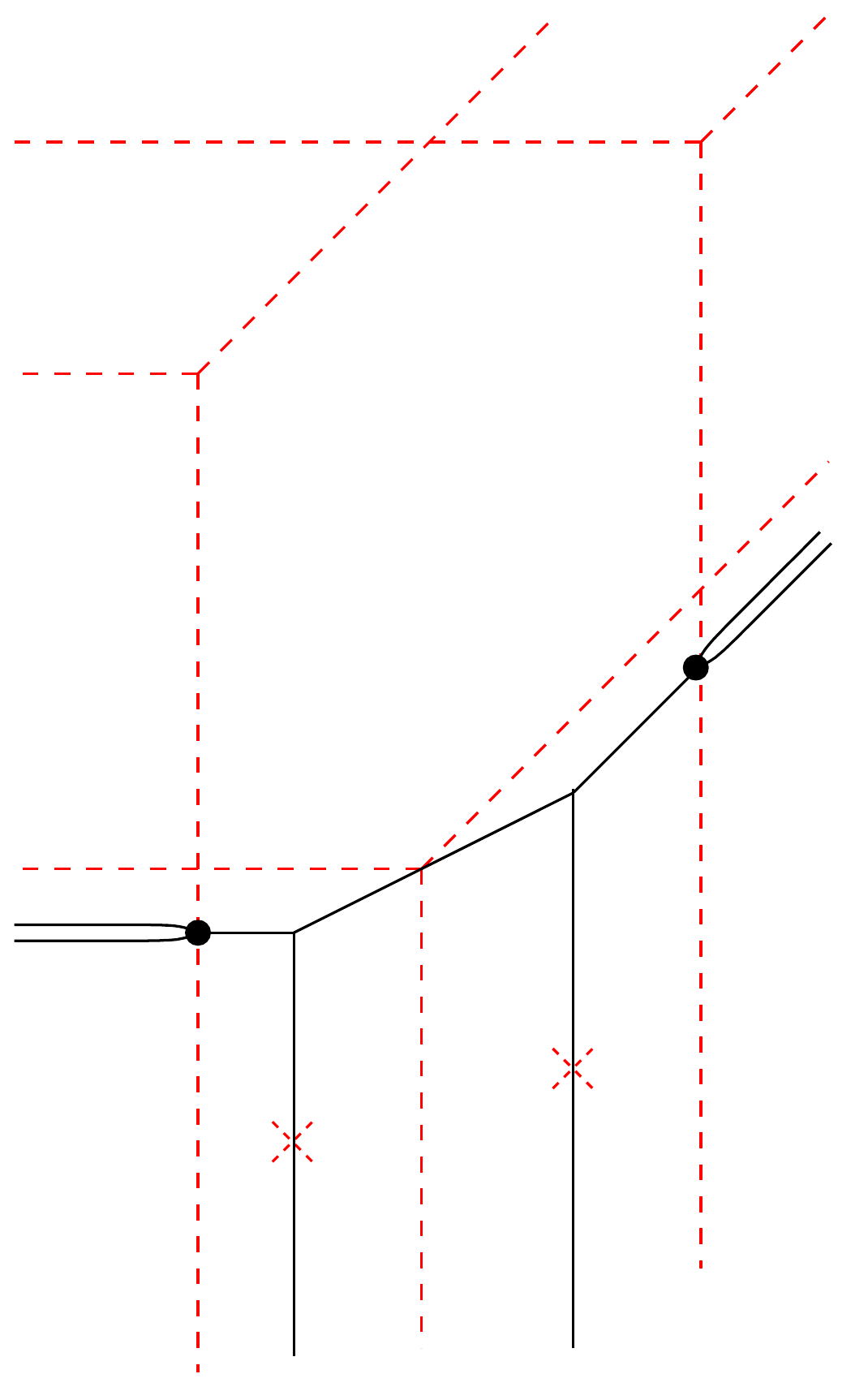}\\
\includegraphics[width=5cm, angle=0]{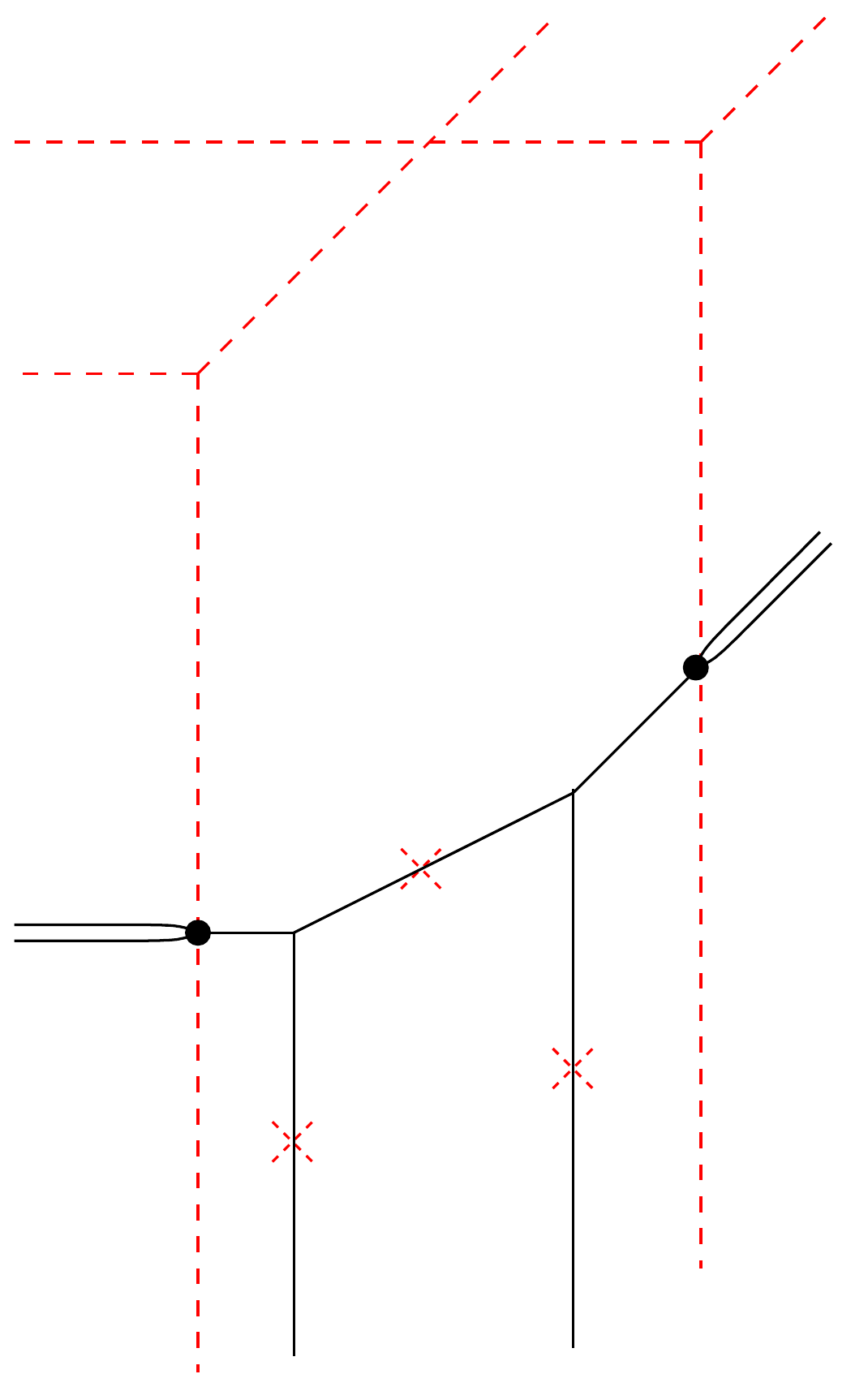}&
\includegraphics[width=5cm, angle=0]{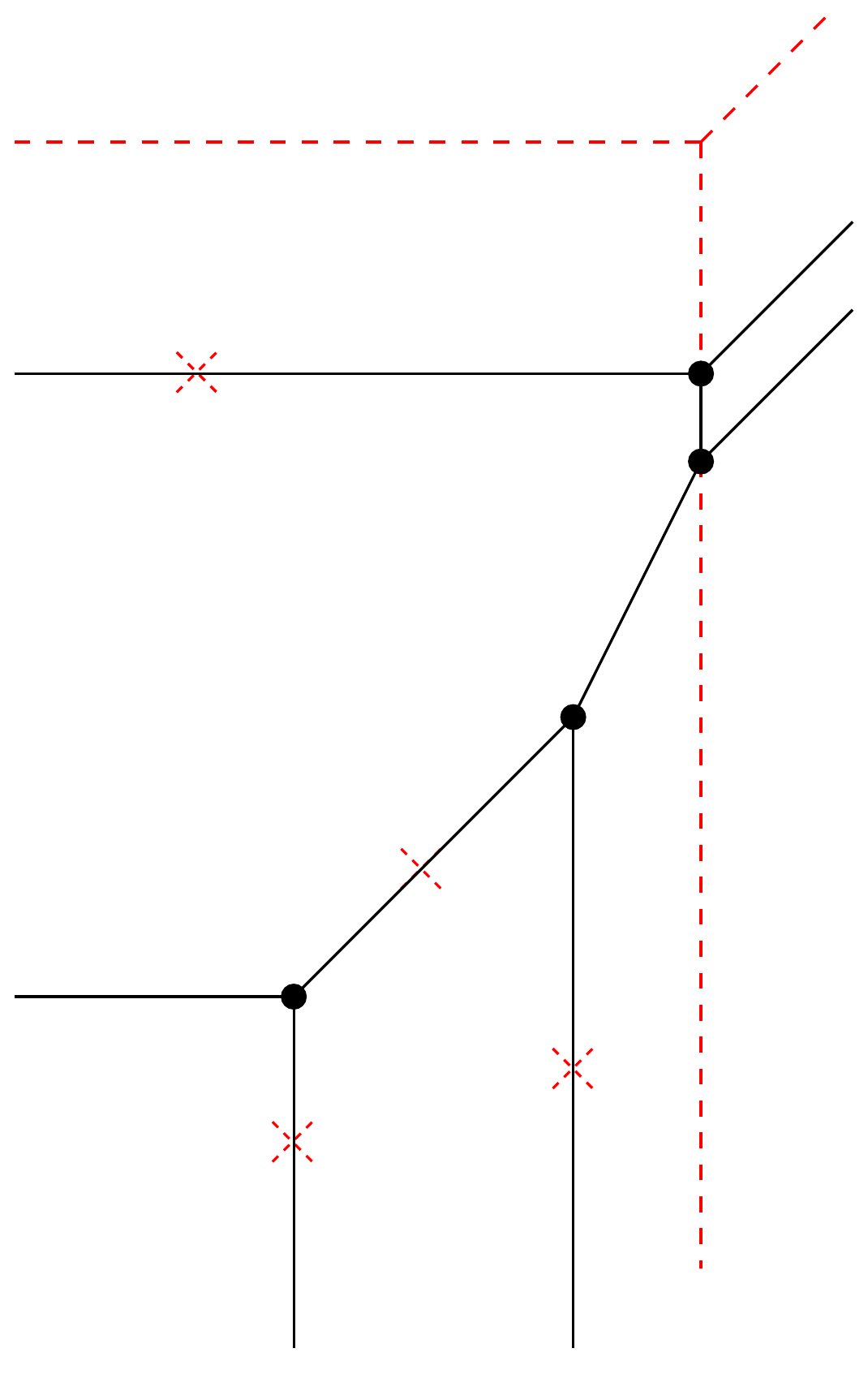}&
\includegraphics[width=5cm, angle=0]{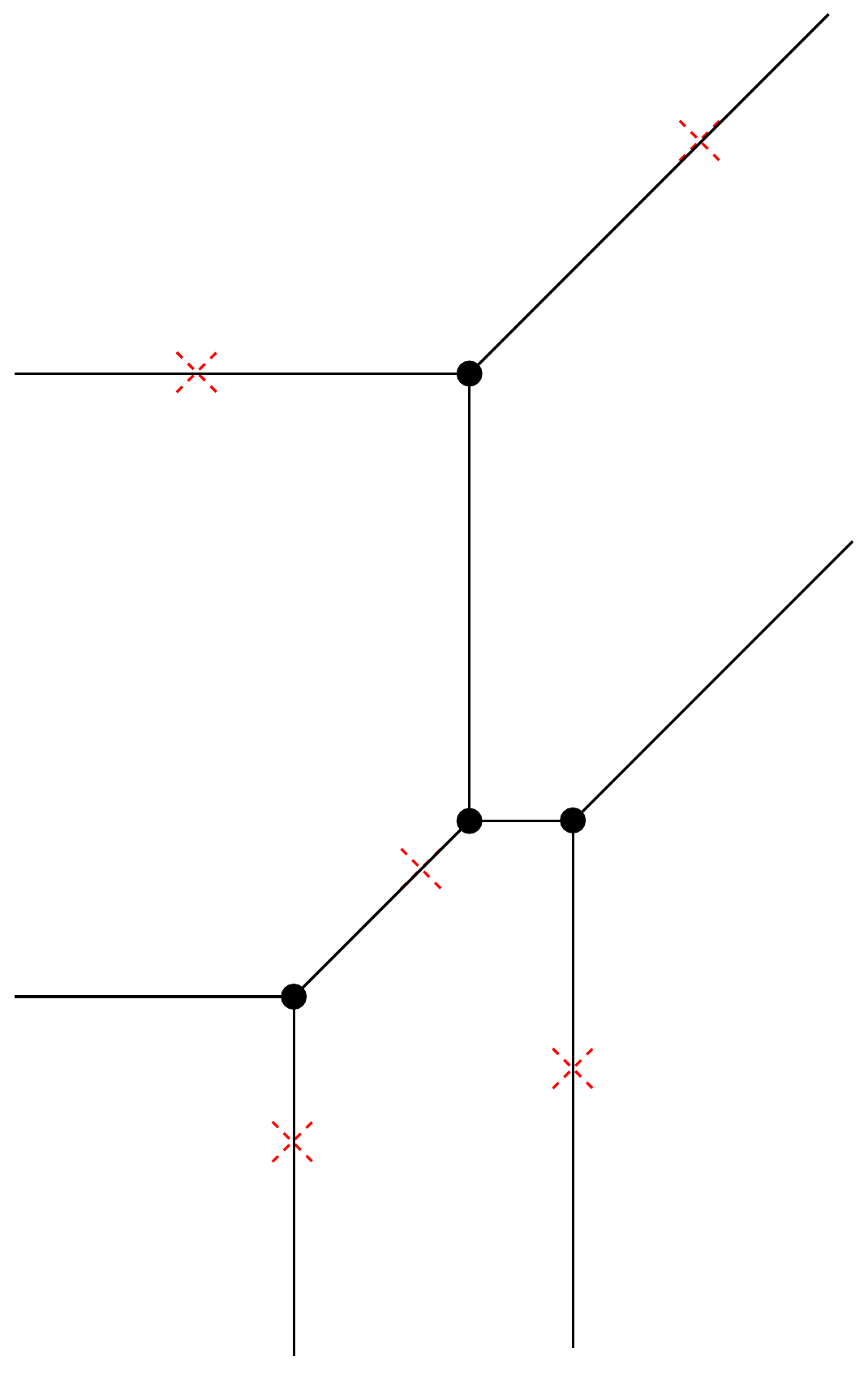}
\end{tabular}

\caption{Conics tangent to $5-k$ lines, $k = 0,\, \dots,\,5$}
\label{Fig:all-conics}
\end{figure}

Finally, we draw on Figure~\ref{Fig:cubic-oriented} one of the rational cubics tangent to seven lines and passing through one point.  

\begin{figure}[h]
\centering
\includegraphics[height=9cm, angle=0]{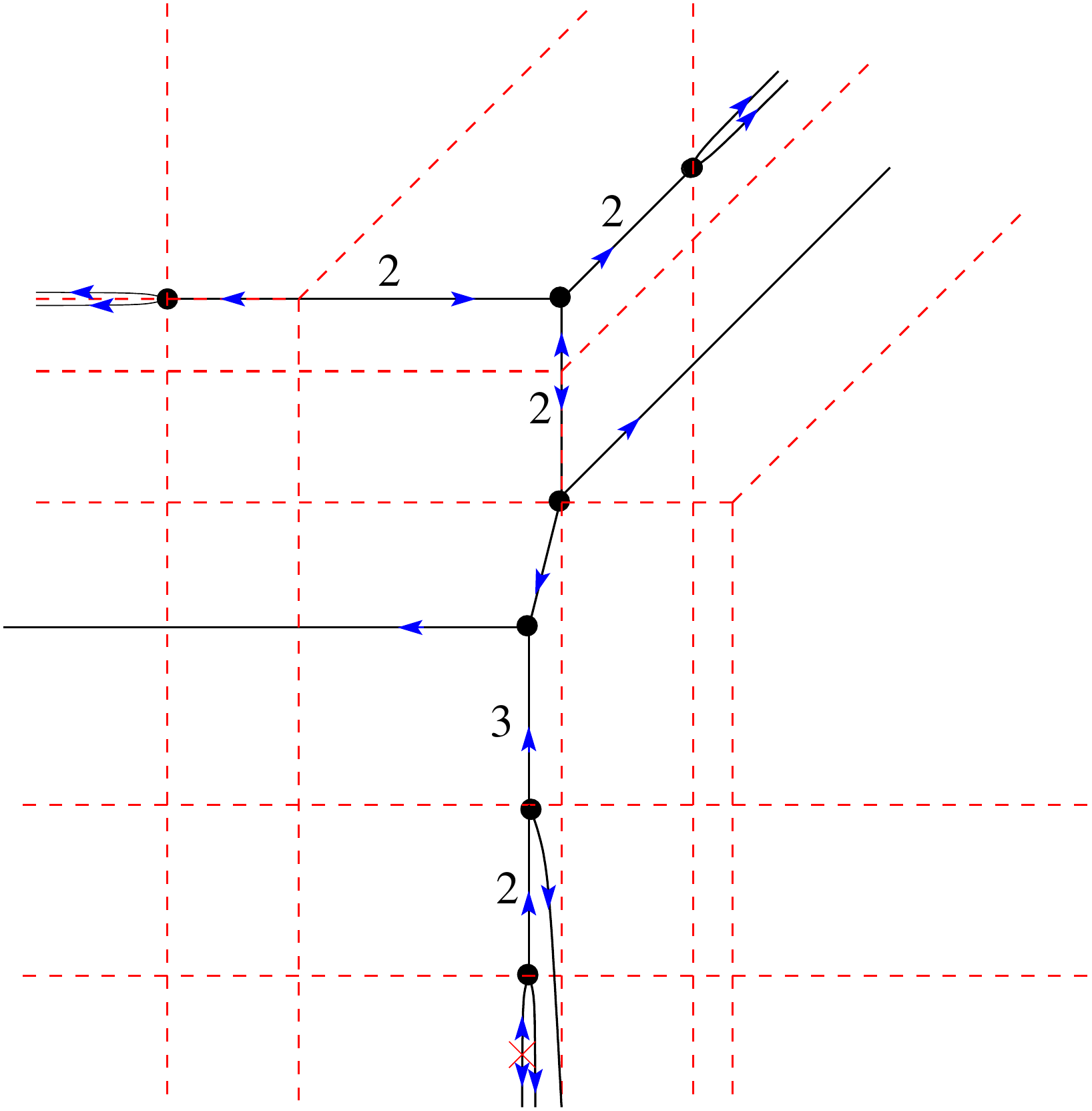}
\caption{Cubic (with orientations) tangent to seven lines}
\label{Fig:cubic-oriented}
\end{figure}

In this case the automorphism group is isomorphic to
${\left(\ZZ/2\ZZ\right)}^3$, all multiplicities of constraints are $1$
except that of the point and of the line having its vertex on a
vertical edge of weight $2$ which are $3$. The product of the weights
of interior edges is $2^4\times 3$ and all vertices have multiplicity
$1$.  This morphism thus contributes for $\mu_{(\P,\L)}(f)=54$ to the
seven hundreds classical rational cubics tangent to seven lines.

\section{Phases and tropical limits}\label{sec:phase}
Our strategy is to approximate our tropical ambient space with constraints
$(\RR^2,\P,\L)$ with a family of corresponding classical ambient
spaces.  In order to prove Correspondence Theorems with tangency
conditions as some of the constraints, we introduce the notion of
{\em phase-tropical structure} on a tropical variety and that of {\em
tropical limit}. For simplicity in this paper we define both concepts
only in the special case we need, namely for points and curves in
toric surfaces. See \cite{Mik08} for a more general case. Such an approach allows us to generalize correspondence statements started in 
\cite{Mik1} and followed in \cite{NishinouSiebert}, \cite{Nishinou},
\cite{Tyomkin} to curves tangent to given curves.

\newcommand{\ctor}{(\CC^*)^2}
\newcommand{\Arg}{\operatorname{Arg}}
\renewcommand{\C}{\CC}
\newcommand{\R}{\RR}
\newcommand{\Z}{\mathbb Z}
\newcommand{\MM}{\mathcal M}
\newcommand{\tp}{{\mathbb T}{\mathbb P}}
\newcommand{\cp}{{\mathbb C}{\mathbb P}}
\newcommand{\ctorn}{(\C^*)^n} \newcommand{\cto}{(\C^*)}
\newcommand{\U}{\mathcal U}
\newcommand{\G}{\mathcal G}
\newcommand{\T}{\mathbb T}
\newcommand{\sonen}{(S^1)^n}

All tropical curves $C$ considered in this section have no boundary
component, i.e. $\partial C=\emptyset$ (see Section \ref{defi trop
curve}). We do not make any assumption on the genus of the curves
 in
Section \ref{sec:trop limit}, the hypothesis of being rational will
be necessary only 
starting from Section \ref{sec:inv trop limit}.
\subsection{Phase-tropical structures and tropical
limits}\label{sec:trop limit}
Let $p\in\RR^n$ be a point.
\begin{defn}\label{def:phase point}
The \emph{phase of $p$} is a choice of a point $\phi(p)\in\sonen$.
Alternatively, we may think of it as a choice of point in $\ctorn$
as long as we identify two phases $\phi(p),\phi(p')\in\ctorn$ whenever they
have the same argument $$\Arg(\phi(p))=\Arg(\phi'(p))\in \sonen.$$

The phase of $p$ is \emph{real} if $\phi(p)\in \Arg((\RR^*)^n)=\{0,\pi\}^n $.
The phase of the collection $\P$ of points is a choice of phase for each point of $\P$.
\end{defn}
Clearly, in the case of points the phase is nothing else, but prescription of arguments to the coordinates.
In the case of curves we need to prescribe a phase for each vertex of the curve so that this prescription
is compatible at each edge.

Before defining the phase-tropical structure for curves we consider some motivations for it.
We refer to the upcoming paper \cite{Mik08} for a more thoroughful
treatment, but as
this paper has not appeared yet we give  below a preview relevant to
our purposes here. 
A tropical curve can be thought of as a certain degeneration of a sequence (or a family, which
can be thought of as a generalized sequence) of complex curves, i.e. Riemann surfaces $S_{t_j}$
whose genus $g$ and number of punctures $k$ do not depend on the parameter $t_j$.

From the hyperbolic geometry viewpoint each $S_{t_j}$ is a hyperbolic surface and is completely
determined by the length of any collection of $3g-3+k$ disjoint closed
embedded geodesics (see \cite{Thurston97}).
Such a collection defines a decomposition of $S_{t_j}$ into pair-of-pants. This means that every connected
component of the complement of this collection is homeomorphic to a sphere punctured three times.
Some of these punctures correspond to the punctures of $S$ while others correspond to a geodesic
from our collection. Note that each geodesic corresponds to two punctures of pairs-of-pants, cf. Figure~\ref{pair-of-pants}.
Conversely, each decomposition into pairs-of-pants gives a collection of $3g-3+k$ disjoint closed embedded geodesics
once we represent each cutting circle by a geodesic in the hyperbolic metric of $S_{t_j}$. 
Once all surfaces $S_{t_j}$ are marked (i.e. a homotopy equivalence with a ``standard surface" $S$ of genus $g$ with $k$ punctures
is fixed) the pairs-of-pants decomposition in $S_{t_j}$ with different $t_j$ can be chosen in a compatible way.

\begin{figure}[h]
\centering
\begin{tabular}{ccc}
\includegraphics[width=5cm,angle=0]{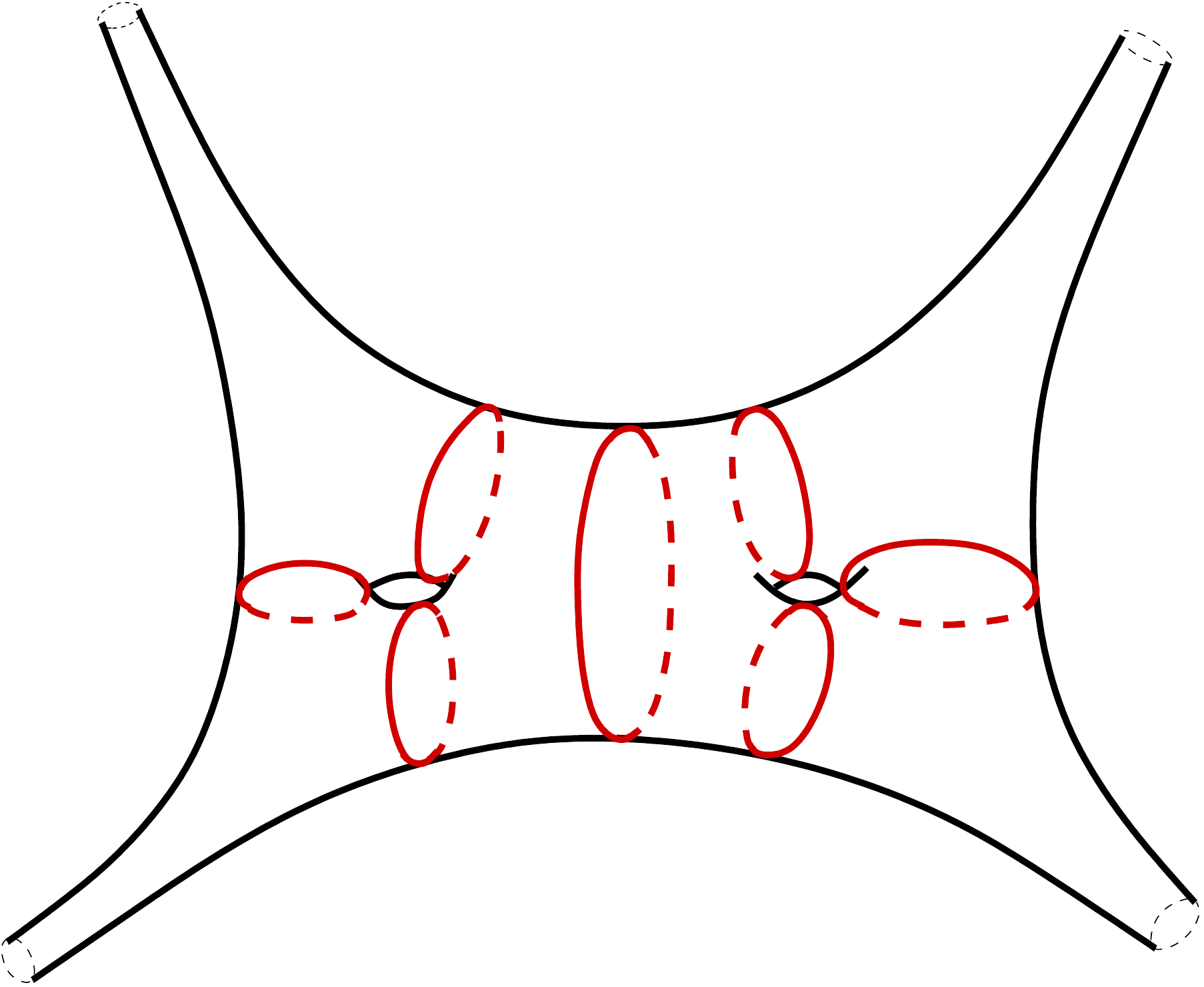}
& \hspace{8ex} &
\includegraphics[width=4cm, angle=0]{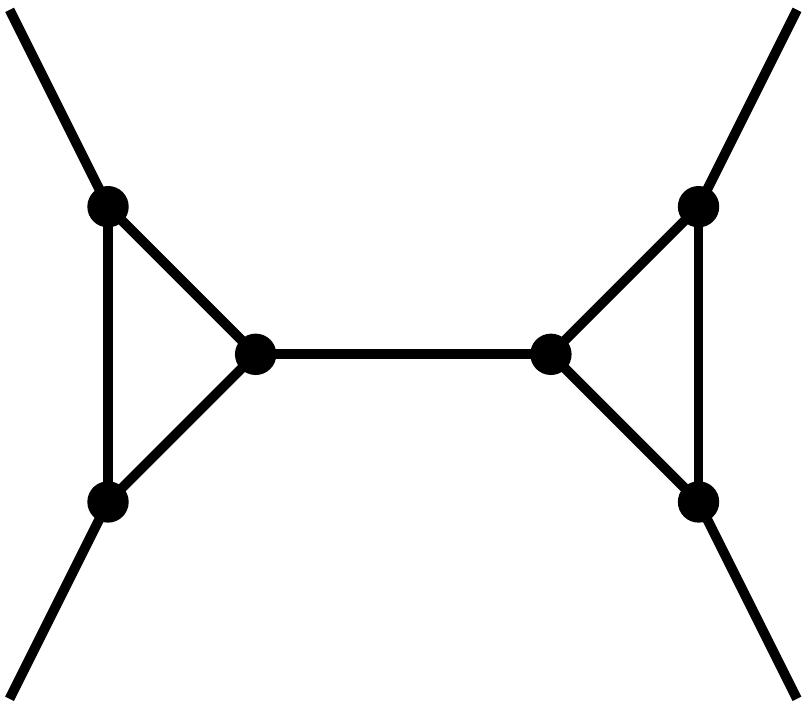}
\end{tabular}

\caption{Pairs of pants decomposition of a punctured Riemann surface and a tropical curve realizing the dual graph}
\label{pair-of-pants}
\end{figure}

There may not be more than $3g-3+k$ disjoint closed embedded geodesics,
but we may also consider collection consisting less than that number.
The result can be thought of as a generalized (or partial) pair-of-pants decomposition. 
Some component of the complement of such a collection are spheres punctures three times
while the others have a greater number of punctured or even a genus.
Thus the 
 conformal 
structure of some component is no longer determined by the lengths
of the boundary geodesic and we need to specify it separately. As we are
preparing the framework for the tropical limit where the hyperbolic lengths
of all these boundary geodesics vanish we need only to consider conformal
structures of finite type (i.e. such that all the end components correspond
to punctures conformally).

Any non-compact Riemann surface $S$ of finite type is obtained from a closed Riemann surface $\tilde S$
by puncturing it in finitely many points. It is convenient to define a compact surface $\bar S$ obtained
by an oriented real blowup of $\tilde S$ at the points of puncture as in  {\cite[Section 6]{MikhalkinOkounkov}}.
\newcommand{\dd}{\partial}
Then each puncture $\epsilon$ of $S$ gets transformed to a boundary component $b_\epsilon\subset\dd\bar S$ that is naturally oriented
as the boundary of $\bar S$.
We refer to the resulting $b_\epsilon\approx S^1$ as the {\em boundary circles} of $\epsilon$.

Similar considerations work in the case of so-called
{\em nodal} surfaces $S$. Topologically, such a surface $S$ is obtained from a (possibly disconnected)
punctured Riemann surface by choosing some number of distinct pairs of points and identifying the points in these pairs.
The points resulting in this procedure are called the {\em nodes}. The surface $S^\circ$ is defined from $S$ by
removing all nodes. Thus in addition to the punctures $S^\circ$ gets new punctures.
A nodal Riemann surface consists of $S$ and the choice of a conformal structure of finite type on $S^\circ$.

We set $\bar S=\bar S^\circ$ by oriented blowup of the closed (possibly disconnected) Riemann surface $\tilde S$
obtained by attaching back a point to each puncture. Each node $\delta$ contributes to two boundary
components $b_\delta',b_\delta''\subset\dd\bar S$ which we call the {\em vanishing boundary circles} of $\delta$.

We call a map $\Phi:S^\circ\to\sonen$ {\em pluriharmonic} if it can be obtained from a holomorphic map $\tilde\Phi:S^\circ\to\ctorn$
by composing it with the argument map $\Arg:\ctorn\to\sonen$.
It is easy to see (cf. e.g.  
{\cite[Section 6.2]{MikhalkinOkounkov}} and  {\cite[Section 2]{Mik08}}) that
any 
pluriharmonic map  
 $\Phi:S^\circ\to\sonen$ 
which is proper in a neighborhood of a  boundary circle $b$
induces a map 
\begin{equation}\label{Phi-epsilon}
\Phi^b:b\to \sonen.
\end{equation}
Its image is a geodesic on the flat torus $\sonen=(\RR/2\pi i)^n$.
The map $\Phi^b$ comes as a limit of the map $\Phi$ restricted to a small simple loop
around the corresponding puncture or node when this loop tends to $b$ (in the Hausdorff topology on closed subsets of $\bar S$).
This map allows us to define a natural translation-invariant metric of circumference $2\pi$ on each such $b$
(cf. \cite{Mik08}).

If a pluriharmonic map $\Phi:S^\circ\to\sonen$ 
has a removable singularity at some
puncture 
with corresponding boundary circle $b$,
then $\Phi^b(b)$ is a point of $\sonen$.
We can treat such point as a degenerate geodesic of slope $0$.
Otherwise we say that $b$ is an \emph{essential} boundary circle for $\Phi$.

Recall that
a tropical morphism $f:C\to\RR^n$ is called minimal if no edge of $C$ is contracted to a point by $f$.
From now on we assume that $f$ is minimal to simplify our definitions (we will only use minimal morphisms
in the applications of this paper, we refer to \cite{Mik08} for general
case). 
\begin{defn}\label{phase-curve}
The \emph{phase $\phi$ of $f$} is the following data
\begin{itemize}
\item a choice of nodal
Riemann surface $\Gamma_v$ 
of genus $g_v$
with $k$ punctures
for each inner vertex $v\in\Ve^0(C)$
of valence $k$;
\item a one-to-one correspondence between the punctures of $\Gamma_v$ and the edges of $C$ adjacent to $v$;
\item an orientation-reversing isometry 
\begin{equation}\label{rho-e}
\rho_e:b_\epsilon^v\approx b^{v'}_\epsilon
\end{equation}
between the boundary circles
of the punctures corresponding to $e$ for any edge $e$ connecting vertices $v,v'\in\Ve^0$;
\item
a pluriharmonic map $$\Phi_{v}:\Gamma^\circ_{v}\to\sonen$$
for each $v\in\Ve^0(C)$, where $\Gamma^\circ_v$ is obtained from $\Gamma_v$
by removing all nodes;
\item 
an orientation-reversing isomorphism 
\begin{equation}\label{rho-delta}
\rho_\delta:b'_\delta\approx b''_\delta
\end{equation}
between the vanishing boundary circles for each node $\delta$ of $\Gamma_v$; 
\end{itemize}
subject to the following properties.
\begin{enumerate}
\item For any edge $e$ adjacent to $v\in\Ve^0(C)$ and  the puncture $\epsilon$ corresponding to $e$ 
 we have the following identity for the homology class $[\Phi_v^{b_\epsilon}(b_\epsilon)]\in H_1(\sonen)=\ZZ^n$
$$[\Phi_v^{b_\epsilon}(b_\epsilon)]=w_{f,e}u_{f,e}\in\ZZ^n.$$
Here $u_{f,e}\in\ZZ^n$ is the outgoing (from $v$) primitive integer vector parallel to $f(e)\subset\R^n$.
\item For any edge $e$ connecting $v,v'\in\Ve^0(C)$ we have 
$$\Phi^{b_\epsilon}_v=\Phi^{b_\epsilon}_{v'}\circ\rho_e:b_\epsilon^v\to \sonen,$$
where $\epsilon$ is the puncture corresponding to $e$.
\item For any node $\delta$ of $\Gamma_v$ 
we have 
$$\Phi^{b_\delta'}_v=\Phi^{b_\delta''}_v\circ\rho_\delta:b'_\delta\to \sonen,$$
where the node $\delta$ is considered as puncture for the components of $\Gamma^\circ_v$
whose closures intersect at $\delta$.
\item Any node $\delta$ of $\Gamma_v$ 
with essential boundary circle 
is adjacent to two distinct connected components of $\Gamma_v^\circ$.
\end{enumerate}
A tropical morphism $f$ equipped with a phase $\phi$ is called the {\em phase-tropical morphism} $(f,\phi)$.
\end{defn}

Note that by the last property  the dual graph of $\Gamma_v$
does not have edges adjacent to the same vertex (loop-edges)
corresponding to a node with essential boundary circle. 
Such case may be equivalently treated via perturbing $v$
into several vertices via inserting a corresponding length 0 edge for
each node of $\Gamma_v$.
The slopes of the new edges are determined by the homology classes of the geodesics $\Phi^{b_\delta'}_v(b_\delta)$
and are allowed to be zero. In the simplification considered in this paper (restricting to minimal tropical morphisms
and coarse phase-tropical limits) we always treat such edges as having zero length, though in a refined version
a length of such edge may be positive if it has a zero slope.

\begin{rem}\label{rem:real phase morphism}
As in the case of points, one has the notion of  \emph{real phase} of
a tropical morphism: a phase is real if there exists a continuous
involution $\sigma:C\to C $ such that $\Phi_v$ has a real algebraic lift
if $\sigma(v)=v$, and $\Phi_v=-\Phi_{\sigma(v)}$ if $v\ne \sigma(v)$.
\end{rem}

\begin{exa}\label{ex:phase}
Consider a tropical curve $C$ that consists of 4 lines emanating from the same point
and a tropical morphism $f:C\to\R^2$ that maps this curve onto the union of the $x$- and $y$-coordinate
axes, see Figure~\ref{cross}. To specify the phase of $f$ we need to choose a conformal structure
on $\Gamma_v$, a sphere punctured 4 times corresponding to the only vertex $v\in C$
and a pluriharmonic map $\Phi_v:\Gamma_v^\circ\to S^1\times S^1$.
If $\Gamma_v$ is irreducible then this defines the phase $\phi$ completely.

\begin{figure}[h]
\centering
\includegraphics[width=8cm,angle=0]{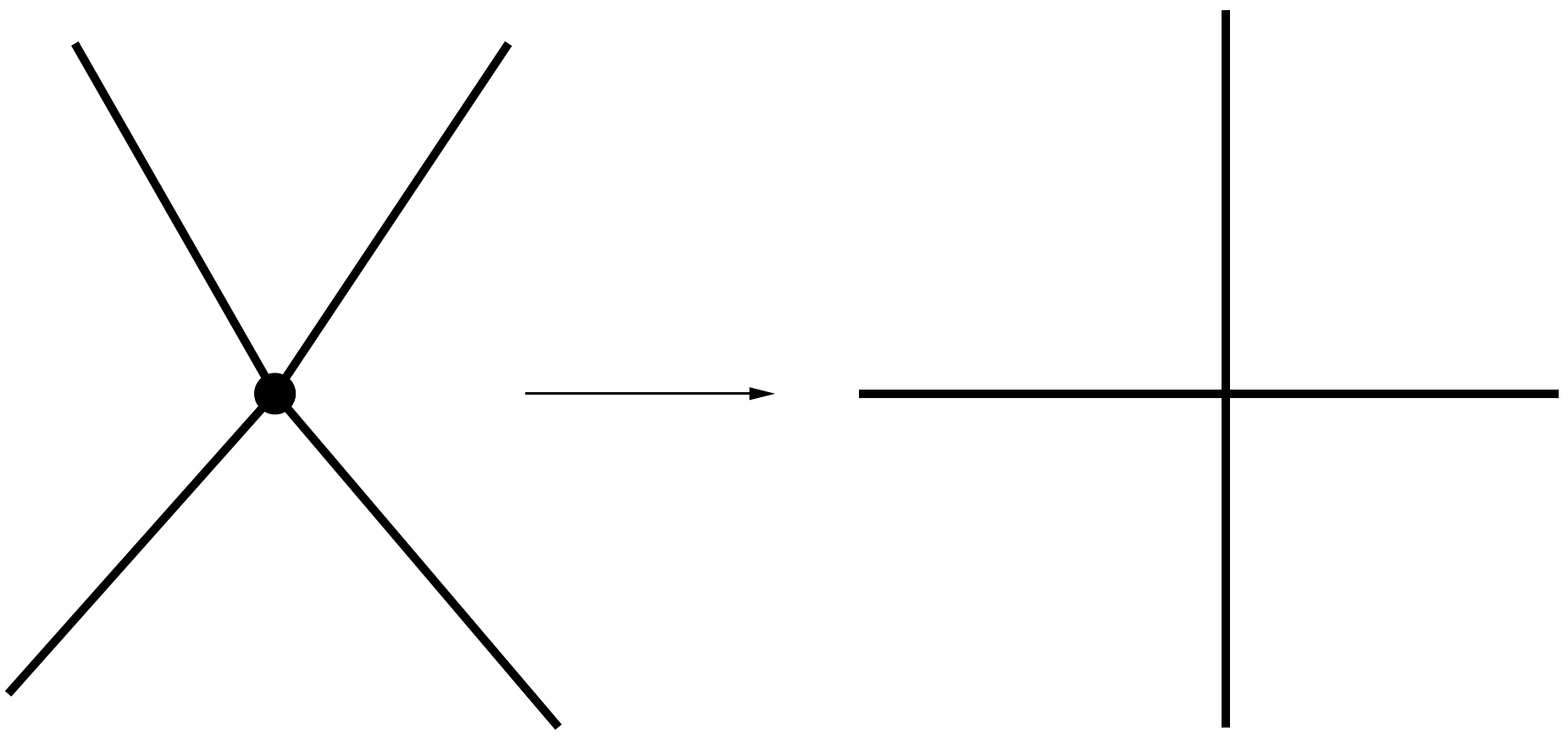}
\caption{Parameterization of a 4-valent node}
\label{cross}
\end{figure}

The choice of a conformal structure on $\Gamma_v$ is the choice of
 an element in $\mathcal M_{0,4}\simeq \mathbb CP^1$, the compactified moduli
 space of rational curves with 4 marked points. Exactly
three of these structures correspond to a reducible surface $\Gamma_v=\Gamma'\cup\Gamma''$ made
of two components intersecting at a node $\delta$. If $\Gamma'$ contains two punctures corresponding
to the horizontal rays of $C$ then the boundary circles $b'_\delta$ and $b''_\delta$ are contracted
by ${\Phi}_v^{b_\delta'}$ and ${\Phi}_v^{b_\delta''}$
and define a point in $S^1\times S^1$. The orientation-reversing isometry $\rho_\delta$ can be chosen arbitrarily.
For all other reducible cases these boundary circles
define closed geodesics on $S^1\times S^1$ which should coincide and thus induce an orientation-reversing
isomorphism $\rho_\delta$.
\end{exa}

Now we are ready to define tropical limits. As usual, it is especially easy to do
for the case of points. For $t>0$ we define the renormalization isomorphism $H_t:\ctorn\approx\ctorn$
(cf. \cite{Mik12}, \cite{Mik1}) by
\begin{equation}\label{Ht}
H_t(z_1,\dots,z_n)=(|z_1|^{\frac{1}{\log t}}\frac{z_1}{|z_1|},\dots,|z_n|^{\frac{1}{\log t}}\frac{z_n}{|z_n|})
\end{equation}
\newcommand{\tj}{t_j}
\newcommand{\Nn}{{\mathbb N}}
Let $p_{\tj}\in \ctorn$, $\tj>0$, $j\in\Nn$ be a sequence of points indexed by a sequence
of real numbers $t_j\to +\infty$.
\newcommand{\Log}{\operatorname{Log}}
\begin{defn}\label{troplim-points}
We say that 
a sequence $p_{\tj}$ {\em converges tropically} if the limit
$\lim\limits_{j\to+\infty} H_{\tj}(p_{\tj})$ exists as a point of $\ctorn$.
The \emph{phase-tropical limit} of this sequence is the point $p$ enhanced with the phase $\phi(p)$, where
$$p=\Log(\lim\limits_{j\to+\infty} H_{\tj}(p_{\tj})),\ \ \ \phi(p)=\lim\limits_{j\to+\infty} \Arg(H_{\tj}(p_{\tj}))
=\lim\limits_{j\to+\infty} \Arg(p_{\tj}).$$
Also we say that the point $p$ itself is the tropical limit of $p_{\tj}$.
\end{defn}
It is easy to see that in this case $p=\lim\limits_{j\to+\infty}\Log_{\tj}p_{\tj},$
where $\Log_t(z_1,\dots,z_n)=(\log_t|z_1|,\dots,\log_t|z_n|),$
since $$\Log_t=\Log\circ H_t.$$
Note that the limit depends not only on the sequence of points $p_{\tj}\in\ctorn$,
but also on the parameterizing sequence $\tj\in\RR$.

Let $f:C\to\R^n$ be a minimal tropical morphism enhanced with a phase $\phi$.
 Let $U\subset\R^n$ be a convex bounded open set with connected intersection $f(C)\cap U$;
 containing not more than a single vertex of $f(C)$
 (this vertex may be an image of more than one vertex of $C$) 
 and such that
its boundary does not contain
any such  vertex of $f(C)$.
Each connected component $W$ of $f^{-1}(U)$ contains not more than a single vertex $v$.
For any such vertex the phase
$\phi$ of $f$ associates to $v$ a Riemann surface $\Gamma_v$ and a
pluriharmonic map $\Phi_v:\Gamma_v^\circ\to\sonen$.
In the case $\Gamma_v$ is smooth,
we let $\Gamma_W=\Gamma_v$ and $\Phi_W=\Phi_v$.

If $\Gamma_v$ is not smooth but has nodal points then we prepare a new smooth surface $\hat\Gamma_v$
by resolving each node, i.e. replacing each node $\delta$ with the boundary circle $b_\delta$ corresponding to
this node, i.e. either side of the isometry \eqref{rho-delta}.
In other words $\hat\Gamma_v$ is obtained from $\bar\Gamma_v$ by identifying all pairs of vanishing boundary circles
with \eqref{rho-delta}.
Naturally we have an inclusion $\Gamma_v^\circ\subset\hat\Gamma_v$
and a surjective map
$$\hat\Gamma_v\to\Gamma_v.$$
that collapses each boundary circle to a point.
Also we have a map $$\hat\Phi_v:\hat\Gamma_v\to\sonen$$
that extends the map $\Phi_v$ from $\Gamma_v^\circ$ to $\hat\Gamma_v$ with the help of the boundary circle isomorphism \eqref{rho-delta}.
In this case we let $\Gamma_W=\hat\Gamma_v$ and $\Phi_W=\hat\Phi_v$.
Note that in this case $\Gamma_W$ is not a Riemann surface in the conventional sense, however 
$\Gamma_W^\circ=\Gamma_v^\circ$ is.

If the connected component $W$ does not contain a vertex then it is an open arc of an edge $e\subset C$.
Let $v$ be an endpoint vertex of $e$.
Consider the boundary circle $b_\epsilon\subset\bar\Gamma_v$ corresponding to the edge $e$.
The map $\Phi_\epsilon:b_\epsilon\to \sonen$ induces a holomorphic map 
$\CC^*\approx b_\epsilon\times\RR\to\ctorn$
by identifying 
 $\CC^*$
 with the tangent space of $b_\epsilon$ and 
 $\ctorn$
 with the tangent space
 of $\sonen$. 
Composing this map with $\Arg$, we obtain  a pluriharmonic map
$\Phi_e:\CC^*\to \sonen$. Note that $\Phi_e(\CC^*)=\Phi_\epsilon(b_\epsilon)$.
We denote $b_\epsilon\times\RR$ enhanced with this complex structure
(of $\CC^*$) with $\Gamma_e$. Note that also we have an embedding of 
 $b_\epsilon$ to $\Gamma_e$
as well as to $\Gamma_v$ for any endpoint $v$ of $e$.
In this case we let  $\Gamma_W=\Gamma_W^\circ=\Gamma_e$ and $\Phi_W=\Phi_e$.

Let $f_{\tj}:C_{\tj}\to\ctorn$ be a sequence of holomorphic maps from Riemann surfaces $C_{\tj}$
of finite type. Note that the inverse image $f_{\tj}^{-1}(\Log_{\tj}^{-1}(U))$ is a Riemann surface with finitely many ends
(as we can enlarge $U$ and extend the map $f|_{f_{\tj}^{-1}(\Log_{\tj}^{-1}(U))}$).

A map $\tau:\ctorn\to\ctorn$ defined by 
$(z_1,\dots,z_n)\mapsto(a_1z_1,\dots,a_nz_n)$ for some $a_1,\dots,a_n>0$,
is called a {\em positive multiplicative translation} in $\ctorn$.

\begin{defn}\label{wtroplimit}
We say that $f:C\to\RR^n$ enhanced with the phase $\phi$ is the
\emph{coarse phase-tropical limit} of $f_{\tj}$
if for any choice of open set $U\subset\R^n$ as above and
all sufficiently large $\tj$ there is a 1-1 correspondence between
connected components
$W_{\tj}$ of $f_{\tj}^{-1}(\Log_{\tj}^{-1}(U))$ 
and connected components $W$ of $f^{-1}(U)$
with the following properties
of the corresponding components.
\begin{itemize}
\item
There exists an open embedding $\Xi^W_{\tj}:W_{\tj}\to\Gamma_W$
and, for each connected component $\Gamma^\circ$ of $\Gamma^\circ_W$, a holomorphic map
$$\tilde\Phi_{\Gamma^\circ}:\Gamma^\circ\to\ctorn$$
such that $\Arg\circ\tilde\Phi_{\Gamma^\circ}=\Phi_W|_{\Gamma^\circ}$
and a sequence of positive multiplicative translations $\tau_{\tj}:\ctorn\to\ctorn$ such that
for any $z\in \Gamma^\circ$ 
$$\lim\limits_{t_j\to +\infty}\tau_{\tj}\circ f_{\tj}\circ (\Xi^W_{\tj})^{-1}(z) = \tilde\Phi_{\Gamma^\circ}(z).$$
In particular, we require that $z\in\Xi^W_{\tj}(W_{\tj})$ for large $\tj$.
\item
For any pair of boundary circles $b$ and $b'$ identified by \eqref{rho-e} (in the case when these
pair correspond to an edge of $C$, i.e. $b$ and $b'$ are boundary circles for $\Gamma=\Gamma_v$ and $\Gamma'=\Gamma_{v'}$
for distinct vertices $v,v'$) or \eqref{rho-delta} (in the case when they are vanishing boundary circles for components
$\Gamma$ and $\Gamma'$ of $\Gamma^\circ_v$ for the same vertex $v=v'$),
any point $z\in b$, any $\eta>0$
and a sufficiently large $\tj$
there exist 
\begin{itemize}
\item a point $z_\eta\in\Gamma$ and a point $z'_\eta\in\Gamma'$;
\item a path $\gamma_\eta\subset\bar\Gamma$ connecting $z_\eta$ and $z$
and a path $\gamma'_\eta\subset\bar\Gamma'$ connecting $z'_\eta$ and $z'=\rho_e(z)$ (see \eqref{rho-e})
such that the diameter of 
$\hat\Phi_v(\gamma_\eta)\subset \sonen$ 
and that of
$\hat\Phi_{v'}(\gamma'_\eta)\subset\sonen$ 
are less than $\eta$
(in the standard product metric on $\sonen$);
\item 
a path $\gamma_{\tj}\subset C_{\tj}$ connecting $(\Xi^v_{\tj})^{-1}(z_\eta)$ and $(\Xi^{v'}_{\tj})^{-1}(z'_\eta)$
such that the diameter of $\Arg(f_{\tj}(\gamma_{\tj}))\subset \sonen$ is less than $\eta$ and
the path $\Log_{\tj}\circ f_{\tj}\circ \gamma_{\tj}$ is contained in a small neighborhood of the interval connecting
$\Log_{\tj}(f_{\tj}(v))$ and $\Log_{\tj}(f_{\tj}(v'))$ in $\RR^n$.
\end{itemize}

\end{itemize}

\end{defn}

In plain words, if our open set  $U\subset\R^n$ is disjoint from $f(C)$ then it should
not intersect $\Log_{t_j}(f_{t_j}(C_{t_j}))$ for large $t_j$. If it intersects $f(C)$ along an interval
that is simply covered by an edge $e\subset C$ 
then $\Log_{\tj}^{-1}(U)\cap f_{t_j}(C_{t_j})$ should be an annulus 
whose image by $\Arg$ is
close to $\Phi_e(\Gamma_e)$.
Similarly, if this interval is covered by several edges then we should see an annulus for each such edge.
Finally, if $U$ contains a neighborhood of a vertex $v\in C$ then 
there should exist holomorphic maps $\tilde\Phi_{\Gamma^\circ}:\Gamma^\circ\to\ctorn$ whose argument
agrees with 
the argument of
the map to which $\Log_{\tj}^{-1}(U)\cap f_{t_j}(C_{t_j})$ accumulates.
All these components should glue in accordance with \eqref{rho-e} and \eqref{rho-delta}.

Altogether we may think of $C_{t_j}$ as decomposed into generalized pair-of-pants 
(cf. the discussion preceeding Definition \ref{phase-curve}) made from pair-of-pants
(or more general Riemann surfaces) close to $\Gamma_v$ that are glued along
embedded circles dual to the edges of $C$. Existence of paths $\gamma_\delta$ ensures
that the gluing of these pairs-of-pants agrees with the orientation reversing isometries specified by the phase $\phi$
in the case when these geodesics represent a divisible class in $H_1(\sonen)$, i.e.
when the weight of the corresponding edge is greater than 1. If this class is primitive then 
this condition holds automatically.

\begin{rem}
In Definition \ref{wtroplimit} we use the term {\em coarse} tropical limit as for simplicity
we ignore the issue of non-minimal morphisms by implicit identification of the limit
with the corresponding minimal morphism.
This procedure can actually be refined (although we do not need it for this paper)
so that the lengths and the phases of the contracted edges will also be well-defined in the limit,
see \cite{Mik08}.
\end{rem}

\begin{exa}
Let us consider the tropical morphism $f:C\to\RR^2$ from Example
\ref{ex:phase}, equipped with a phase
$\Phi_v:\Gamma_v^\circ\to S^1\times S^1$ at the only vertex $v$ of $C$.

We  may present this phase-tropical morphism 
 as a phase-tropical limit of a generalized sequence of holomorphic maps
 $f_t:S_t\to\ctor$ with $S_t$ a Riemann surface of genus $0$ with 4 punctures.
For simplicity we  rather present $(f,\phi)$ as the limit
of  a family of embedded curves in $\ctor$
defined by an implicit equation,  the passage from one point of
view to the other being straightforward. 
Note that by definition of $\Phi_v$, we can
 see
  $\Gamma_v^\circ$ as embedded in $\ctor$ and
$\Phi_v=\Arg_{|\Gamma_v^\circ}$.

If $\Gamma_v$ is irreducible, then 
  $\Gamma_v=\Gamma_v^\circ$ and  we choose $S_t=\Gamma_v$. 

Suppose that now $\Gamma_v$ is reducible and made of two components
$\Gamma'$ and $\Gamma''$ intersecting at a node $\delta$. Both
$\Gamma'$ and $\Gamma''$ are spheres with two punctures, and we may
assume without loss of generality that $\Gamma'$ (resp. $\Gamma''$)
has one puncture corresponding to the left-horizontal (resp. a vertical) ray
of
$C$.

If the boundary circle of $\delta$ has slope $0$, then the
pluriharmonic map $\Phi_v:\Gamma_v^\circ\to  S^1\times S^1$  extends to the
whole curve $\Gamma_v=\Gamma'\cup\Gamma''$.
In this case we can still
assume that  $\Gamma_v\subset \ctor$, and that it is given by the
equation  $(x-x_0)(y-y_0)=0$ with $(x_0,y_0)\in\ctor$. 
 We then
choose
$S_t$ to be the algebraic curve given by the
equation $(x-x_0)(y-y_0)+t^{-1}=0$.

If the boundary circle of $\delta$ has a non-zero slope, then one
cannot extend $\Phi_v$ at the node $\delta$. If the other
puncture of $\Gamma'$ corresponds to the down-vertical ray of $C$,
then $\Gamma'\setminus\delta$ (resp.  $\Gamma''\setminus\delta$)
has an equation of the form $1+ c_1x
+c_2y=0$ (resp. $ c_1x+c_2y +c_3xy=0$) with $c_1,c_2,c_3\in\CC^*$.
In this case we choose $S_t$ to be the algebraic curve given by the
equation $1+ c_1x+c_2y + e^{-t}c_3xy=0$.

In the other case
 $\Gamma'\setminus\delta$ (resp.  $\Gamma''\setminus\delta$)
has an equation of the form $1+ c_1y
+c_2xy=0$ (resp. $ 1+c_2xy +c_3x=0$) with $c_1,c_2,c_3\in\CC^*$, and
 we choose $S_t$ to be the algebraic curve given by the
equation $1+ c_1y+c_2xy + e^{-t}c_3x=0$.

\medskip
In all cases, the phase-tropical morphism $(f,\phi)$ is the
phase-tropical limit of the curves $S_t\subset\ctor$.
\end{exa}

Also it is worth noting that the tropical limit we define here is defined for convergence in compact sets
in $\ctorn$. Thus the limit may have strictly smaller degree than the terms in the sequence.
It is possible to compactify  $\ctorn$ to a toric variety and define convergence there so that the total
degree will be preserved under the limit (some components of the limit will be contained in the toric divisors).
Nevertheless we have the following compactness property once we allow the limit to have smaller degree.

\begin{prop}\label{trop-compact}
Let $f_{t_j}:C_t\to\ctorn$ be a (generalized) sequence of holomorphic curves with $t_j\to +\infty$, $j\to+\infty$.
Then there exists a tropical morphism $f:C\to\RR^n$ and a subsequence of $t_j$ such that
$f$
is the coarse phase-tropical limit of the subsequence.
\end{prop}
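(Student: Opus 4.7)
The plan is to extract the tropical limit in two passes: first build the underlying polyhedral graph in $\RR^n$ by a Hausdorff limit of rescaled amoebas, then lift this to an abstract tropical curve $C$ with a phase $\phi$ by extracting convergent subsequences of the holomorphic maps on a vertex-by-vertex basis.

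For the first pass, consider the rescaled amoebas $A_{t_j}=\Log_{t_j}(f_{t_j}(C_{t_j}))\subset\RR^n$. For each compact set $K\subset\RR^n$, the pieces $A_{t_j}\cap K$ form a precompact family in the Hausdorff topology: locally they are amoebas of holomorphic curves, so their local combinatorial complexity is controlled by the classical bound on the number of edges of an amoeba in a ball (via Passare--Rullg\aa{}rd or the Newton polygon of a local defining equation). A diagonal argument over an exhaustion of $\RR^n$ by compact sets then produces a subsequence (which we continue to call $t_j$) such that $A_{t_j}\to A$ on compact sets. By the same arguments that gave the original correspondence theorem in \cite{Mik1} (applied locally), $A$ is a rational balanced polyhedral graph in $\RR^n$, and in particular the underlying support of a tropical $1$-cycle.

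For the second pass, I would decompose $A$ into open edges and isolated vertices; above the interior of each edge $e$ of $A$, for $t_j$ large, the preimage $f_{t_j}^{-1}(\Log_{t_j}^{-1}(U))$ for a small transverse tube $U$ is a finite disjoint union of annuli, and after passing to a further subsequence their number and individual degrees over $U$ stabilize. This data gives the edges of $C$ above $e$ together with their weights $w_{f,e}$ and primitive directions $u_{f,e}$, the latter agreeing with the primitive direction of $e$ by the very definition of $\Log_{t_j}$. Above a neighborhood $U$ of each vertex $v$ of $A$ the surface $f_{t_j}^{-1}(\Log_{t_j}^{-1}(U))$ is a finite-type Riemann surface whose topology stabilizes on a subsequence; its connected components label the vertices of $C$ over $v$, and the edges of $C$ adjacent to such a component correspond bijectively to its ends. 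A further diagonal subsequence plus the Deligne--Mumford compactness of the moduli of pointed Riemann surfaces extracts a limit $\Gamma_v$ (possibly nodal) with marked points in bijection with the adjacent edges of $C$.

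Now comes the phase. For each component $W$ of $f^{-1}(U)$ containing a vertex $v$, choose positive multiplicative translations $\tau_{t_j}$ that renormalize $f_{t_j}$ so that the image no longer escapes to infinity in any direction; concretely, $\tau_{t_j}$ moves $f_{t_j}(v_{t_j})$ back to a bounded region of $(\CC^*)^n$ for a base point $v_{t_j}$ in the component. The translated maps $\tau_{t_j}\circ f_{t_j}$ are then holomorphic maps into a fixed compact subset of $(\CC^*)^n$ with bounded combinatorial complexity, and Montel's theorem (applied componentwise on the plumbing decomposition of $\Gamma_v$) produces holomorphic limits $\tilde\Phi_{\Gamma^\circ}:\Gamma^\circ\to(\CC^*)^n$ on each smooth component $\Gamma^\circ$ of $\Gamma_v^\circ$, whence the pluriharmonic maps $\Phi_v=\Arg\circ\tilde\Phi_v$. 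The boundary-circle identifications $\rho_e$ and $\rho_\delta$ arise as the continuous gluing data that records how the pair-of-pants pieces of $C_{t_j}$ adjacent along a vanishing cycle are matched; the existence of the connecting paths $\gamma_{t_j}$ in Definition~\ref{wtroplimit} is just a quantitative version of this matching, which holds by construction on the chosen subsequence. The homology identity $[\Phi_v^{b_\epsilon}(b_\epsilon)]=w_{f,e}u_{f,e}$ comes from matching the number of sheets of the annulus in the first pass with its slope.

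The main obstacle will be the phase extraction, specifically two points: (i) choosing the translations $\tau_{t_j}$ compatibly enough that all the component limits $\tilde\Phi_{\Gamma^\circ}$ exist simultaneously (handled by fixing a spanning tree of $C$ and choosing $\tau_{t_j}$ inductively along it), and (ii) controlling the possible further nodal degenerations of $\Gamma_v$ so that property (4) of Definition~\ref{phase-curve} is retained; this forces us to absorb any loop-type node with essential boundary into an extra length-$0$ edge of $C$ at $v$, which is legitimate in the coarse phase-tropical setting and is the reason the statement is phrased in terms of coarse limits.
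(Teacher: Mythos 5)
Your overall strategy is the same as the paper's: first extract a Hausdorff limit $A$ of the rescaled amoebas on compact sets, then reconstruct the graph $C$ and the phase locally by renormalizing with multiplicative translations and passing to Deligne--Mumford limits of the pieces of $C_{t_j}$. However, there is one genuine gap at the heart of the reconstruction. You assert that over the interior of every edge of $A$ the preimage $f_{t_j}^{-1}(\Log_{t_j}^{-1}(U))$ is a disjoint union of annuli, so that all vertices of $C$ sit over vertices of $A$. This is false in general: a vertex of the limiting tropical curve may map to the interior of an edge of $A$ (e.g.\ a trivalent vertex all of whose adjacent edges have collinear images, or a component of higher topology whose image collapses into a segment), and nothing in your argument rules this out or even shows that such ``hyperbolic'' components occur over only \emph{finitely} many points of $A$ --- which is exactly what is needed for $C$ to be a finite graph. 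The paper's key step here is the maximum-principle argument: a non-annulus component has negative Euler characteristic, hence its image under a character $\chi_{a_1,\dots,a_n}:\ctorn\to\CC^*$ transverse to the edges of $A$ must carry a critical point; since the algebraic map $\chi_{a_1,\dots,a_n}\circ f_{t_j}$ has only finitely many critical points, a diagonal subsequence argument shows hyperbolic components appear over only finitely many $p\in A$, and the vertices of $C$ are then defined as one per hyperbolic component (wherever on $A$ they occur). You need this, or an equivalent finiteness statement, before the ``second pass'' can produce a finite graph.

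A secondary, more repairable issue is the appeal to Montel's theorem for the phase: the renormalized maps $\tau_{t_j}\circ f_{t_j}$ are not maps from a fixed domain into a fixed compact subset of $\ctorn$ (the domains are varying Riemann surfaces and parts of the image still escape towards the coordinate hyperplanes), so Montel does not apply as stated. The paper instead takes limits of the projective compactifications of the image curves in $\cp^n$, discards the components landing in the boundary divisors, and restricts back to $\ctorn$; it also uses a maximum-principle/intersection argument (rather than your sheet-count) to pin down that the boundary-circle homology classes are \emph{positively} proportional to the outgoing edge slopes. Your proposed fix for property (4) of Definition~\ref{phase-curve} (absorbing loop-type essential nodes into length-$0$ edges) differs from the paper, which observes that internal nodes of a component necessarily have slope-$0$ boundary circles and are therefore not essential; either route is acceptable in the coarse setting.
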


\begin{proof}
The proposition is essentially contained in {\cite[Proposition 8.7]{Mik1}}. Indeed, by  {\cite[Proposition 3.9
and Corollary 8.6]{Mik1}} we can extract
from the family $f_t$ a sequence $f_{\tj}$ such that $\Log_{t_j}(f_{\tj}(C_{\tj}))$ converges in the Hausdorff metric
on compact sets in $\R^n$ to a set $A$ that is an image of a tropical curve.

However we need convergence not only to a set in $\RR^n$ but to a tropical morphism $f:C\to\R^n$ whose image is $A$.
Thus we need to 
construct such a morphism.
For that we need to know the vertices and the edges of $C$. 

Let $p=(p_1,\dots,p_n)\in A\subset\R^n$ be a point and $U\ni p$ be 
a convex open set such that the closure $\bar U$ is compact, does not
contain vertices of $A$ 
other than $p$ and its intersection with $A$ is connected.
\newcommand{\ppj}{{p}^{(t_j)}}
Let $\ppj\in \R_{>0}^n$ be a sequence whose tropical limit is $p$, i.e.
such that $\lim\limits_{j\to\infty}\Log_{\tj}{\ppj}=p$.

For each $j$ we have a positive multiplicative translation $$\tau_{\tj}:(z_1,\dots,z_n)\to (\frac{z_1}{\ppj_1},\dots,\frac{z_n}{\ppj_n})$$
that is a biholomorphic  automorphism of the torus $\ctorn$. The projective compactifications of the curves
$$\tau_{\tj}\circ f_{\tj}:C_{\tj}\to\cp^n$$
must contain a converging subsequence. Its limit may be reducible and some of the components
may be contained in the boundary divisors $\cp^n\setminus\ctorn$. 
The restriction of the limit to $\ctorn$ produces a holomorphic map
$\Gamma_{\{\ppj\}}\to\ctorn$
which we can use for $\tilde\Phi_{\Gamma^\circ}$ for every component $\Gamma^\circ$ of $\Gamma_{\{\ppj\}}^\circ$.
The boundary circles of $\Phi_{\Gamma^\circ}$ must be geodesics in $\sonen$ whose homology classes
are proportional to the slope vectors of the
edges of $A$ adjacent to $p$ with positive proportionality coefficients as any non-proportional
class would contradict to the maximum principle.
Indeed, otherwise we choose a codimension 1 subtorus of $\sonen$ parallel to the boundary circle,
but transverse to the corresponding edge adjacent to $p$ in $\R^n$.
Then some multiplicative translate of the corresponding codimension 1 complex subgroup of $\ctorn$ would
have to have a negative intersection point with our curve.

Thus we just need to find sequences ${\{\ppj\}}$ that produce meaningful (in particular, non-empty)
curves $\Log_{\tj}^{-1}(U)$
Note that for all but finitely many $p$ the Riemann surface $\Gamma_{\{\ppj\}}$ is a collection of annuli
if $U$ is sufficiently small, no matter what is the approximation ${\{\ppj\}}$ of $p$.

Indeed, if $\Gamma_{\{\ppj\}}$ has a non-annulus component then by the maximum
principle its Euler characteristic is negative and its image under a character $\chi_{a_1,\dots,a_n}:\ctorn\to\CC^*$,
$(z_1,\dots,z_n)\mapsto z_1^{a_1}\dots z_n^{a_n}$, must have a critical point whenever $(a_1,\dots,a_n)$ is transverse
to the edges of $A$. But the algebraic map $\chi_{a_1,\dots,a_n}\circ f_{\tj}$ may only have finitely many
critical points, so after passing to a (diagonal) subsequence of $\tj$ so that we get a convergence to
infinitely many non-annulus surfaces $\Gamma_{\{\ppj\}}$ we get a contradiction. Since non-annulus irreducible
components of surfaces in $\ctorn$ have negative Euler characteristic we call them
{\em hyperbolic components}.

By passing to even smaller open neighborhoods $U\ni p$ if needed we may ensure that each component of
$\Log_{\tj}^{-1}(U)$ is an annulus except for finitely many $p$ for sufficiently large $\tj$.
Furthermore, once $\tj$ is sufficiently large and $U\ni p$ is sufficiently small we have a
natural one-to-one correspondence between the annuli components of $\Log_{\tj}^{-1}(U)$
and the annuli components of $\Gamma_{\{\ppj\}}$.
In the same time a hyperbolic component of $\Log_{\tj}^{-1}(U)$ may correspond to more than
one hyperbolic component of $\Gamma_{\{\ppj\}}$ as it may converge to a reducible curve.

To reconstruct the limiting curve $C$ and the tropical map $f:C\to A\subset\RR^n$
we take a vertex $v$ for each hyperbolic component 
of $\Log_{\tj}^{-1}(U)$, and we define $f(v)=p$ and $g_v$ as the genus
of this hyperbolic component.
The edges of $C$ are obtained by gluing the corresponding annulus components of $\Log_{\tj}^{-1}(U)$
along paths in $A$. The tropical length on the edges of $C$ comes from the length of the corresponding
edges of $A$ divided by the proportionality coefficient between the homology class of the boundary
circle and the slope vector of the edge of $A$ (a positive integer number).

This procedure gives the limiting tropical morphism $f:C\to\R^n$ for a subsequence $f_{\tj}:C_{\tj}\to\ctorn$.
The limits of hyperbolic components of  $\Log_{\tj}^{-1}(U)$ in the Deligne-Mumford 
compactifications of the corresponding moduli spaces give us (possibly nodal and reducible) curves $\Gamma_v$.
Consider a component $\Gamma^\circ$ of $\Gamma^\circ_v$. Choosing a point $q\in\Gamma^\circ$
and a family of approximating points $q^{(\tj)}$ in the corresponding hyperbolic component of $\Log_{\tj}^{-1}(U)$
we ensure that the curve $\Gamma^\circ_{\{  f_{\tj}(q^{(\tj)}) \}}$ contains $\Gamma^\circ$ as a component.
This defines the map $\tilde\Phi_{\Gamma^\circ}$ for the limiting phase-tropical curve. 
Note that punctures of $\Gamma^\circ$ with
 boundary circle of slope 0 are precisely the removable singularities
 of the map  $\tilde\Phi_{\Gamma^\circ}$. In other words
the boundary circles of a node of $\Gamma$ has
 slope 0, and condition $(4)$ of Definition \ref{phase-curve} is satisfied.
\end{proof}

\subsection{Rational tropical morphisms as tropical
limits}\label{sec:inv trop limit}
Proposition \ref{trop-compact} can be reversed 
in the rational case
-- any phase-tropical rational 
morphism
can be presented as a tropical
limit of a family of holomorphic curves parameterized by a real positive parameter.
Furthermore, we can do this procedure consistently for all phase-tropical curve in
a neighborhood of $(f,\phi)$.
For the purpose of this paper it suffices to consider tropical curves supported on graphs
with no vertices of valence higher than 3. With a slight abuse of terminology
(ignoring the ever-present 1-valent leaves) we call
such curves 3-valent. For the sake of shortness in definitions
we restrict to this case.

Let $f:C\to\R^n$ be a 3-valent rational tropical curve and $\phi$ be its phase.
Choose a reference vertex $v\in C$.
Recall that a {neighborhood} of $f$ is obtained by varying the image $f(v)\in\R^n$
as well as the lengths
of the bounded edges of $C$ while keeping the slope vectors of all edges of $f(C)$ unchanged.
Since $C$ is a 3-valent tree, the image $f(v)$ and the length of all edges define $f$ and so
the space of deformation of $f$ is locally $\R^{b+n}$, where $b$ is the number of the bounded
edges.

To define a {\em neighborhood of $(f,\phi)$ in the space of phase-tropical curves} we take
a neighborhood of $f$ in the space of tropical curves and add to it all phases
obtained by sufficiently small multiplicative translations of $\Phi_v:\Gamma_v\to \sonen$
by $(a_1,\dots,a_n)\in\ctorn$, $|a_1|=\dots=|a_n|=1$ as well as small perturbations of the isometry \eqref{rho-e}
for all bounded edges $e$.
Clearly we get another $b+n$ real parameters.

Let us note that the length of each bounded edge $e$ connecting vertices $v$ and $v'$
has a preferred direction for deformation, say increasing
of its length. Similarly, the orientation-reversing isometry $\rho_\epsilon:b_\epsilon^v\approx b_\epsilon^{v'}$
given by the phase structure $\phi$ also has a preferred direction for deformation. 
Namely, we may compose $\rho_\epsilon$ with a small translation of the circle $b_\epsilon^{v'}$
in the direction coherent with the orientation of $b_\epsilon^{v'}$ (induced from the complex orientation of $\Gamma_{v'}$).
Note that this direction of deformation of $\rho_\epsilon$ stays the same if we exchange the roles of $v$ and $v'$.
We call it {\em the positive twist}.

We can couple translation of $f(v)$ 
in $\R^n$ with $(\arg a_1,\dots,\arg a_n)$ and varying the lengths of $e$ with varying the isometry \eqref{rho-e}
so that the positive twist corresponds to increase of the length. Thus a neighborhood of $(f,\phi)$
can be locally identified with $\R^{b+n}\oplus i\R^{b+n}=\CC^{b+n}$. Recall that $b$ is the number of
bounded edges. For a 3-valent tree $C$ it is equal to the number of ends minus two.

Let us revisit the notion of the degree for a curve and generalize
Definition \ref{degree-dim2} to arbitrary dimension $n$. 
Recall (cf. \cite{Mik-whatis}, \cite{FM}, \cite{Mik08}) that the degree of $f$ can also be defined by the following formula
\begin{equation}\label{d-infty}
d=\sum\limits_{e\in\Ve^\infty(C)}w_{f,e}\ {\max}_{j=1}^n\{0,s_j(e)\},
\end{equation}
where 
$s(e)=(s_j(e))_{s=1}^n$ is the slope vector of the end $e$ in the
outgoing direction.

As the degree is invariant with respect to permutations of the basic directions $-E_1,\dots,-E_n, \sum\limits_{j=1}^nE_j$,
where $(E_j)_{j=1}^n$ is the standard basis for $\R^n$ 
 that are used for the compactification $\tp^n$ we have an alternative formula for computing the degree
\begin{equation}\label{d-k}
d=\sum\limits_{e\in\Ve^\infty(C)}w_{f,e}\ {\max}_{j=1}^n\{s_j(e)-s_k(e),-s_k(e)\}
\end{equation}
that holds for any $k=1,\dots,n$.
It is easy to see that \eqref{d-infty} and \eqref{d-k} are consistent for any curve in $\R^n$ that satisfies 
the balancing condition and that for the case $n=2$ these formulas give the same number as Definition \ref{degree-dim2}.

Note that each end of $C$ has to contribute to at least one of these $n+1$ formulas
for degree. Therefore the maximal number of ends for a minimal curve of degree $d$
is $(n+1)d$ and that in the case when we have $(n+1)d$ ends the slope vectors of the ends are exactly
$-E_1,\dots,-E_n, \sum\limits_{j=1}^nE_j$. Such curves are called {\em generic at $\infty$} in $\R^n$.

Let $U$ be a neighborhood of $(f:C\to\R^n,\phi)$ in the space of phase-tropical curves.
 Denote with $\MM_C$ the space of all rational curves in  $\ctorn$ whose collection of boundary circles realizes the same classes
 in $H_1(\sonen)$ as the
ends of the curve $f$. This means that the homology class of each boundary circle agrees
with the slope vector and the weight of the corresponding end, cf. the first property imposed by Definition \ref{phase-curve}.
\begin{thm}\label{realiz}
For all sufficiently large values $t>>1$ there exists an open embedding $\Lambda_t:U\to \MM_C$
such that for any sequence $f_{\tj}\in\MM_C$, $t_j\to+\infty$, the following conditions are equivalent.
\begin{enumerate}
\item The sequence $f_{\tj}$ converges to a phase-tropical curve $(f',\phi')\in U$ in the sense
of Definition \ref{wtroplimit}.
\item For all sufficiently large ${\tj}$ we have $f_{\tj}\in\Lambda_{\tj}(U)$ and 
$$\lim\limits_{\tj\to+\infty}\Lambda_{\tj}^{-1}(f_{\tj})=(f',\phi')\in U.$$
(Recall that $U$ may be topologically viewed as an open set in $\C^N$ for some $N$.)
\end{enumerate}
\end{thm}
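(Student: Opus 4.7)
The plan is to construct $\Lambda_t$ by a plumbing/gluing recipe from the phase data, then verify that it is an open embedding, and finally check the two-way equivalence of convergence. Roughly, a phase-tropical curve $(f',\phi')\in U$ should be viewed as a nodal holomorphic model equipped with matching data on the vanishing cycles, and $\Lambda_t$ smooths out the nodes with parameters depending on the tropical lengths and twists.

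In more detail, for each 3-valent vertex $v\in C$ the phase $\phi$ provides a holomorphic lift $\tilde\Phi_v:\Gamma_v^{\circ}\to\ctorn$ of $\Phi_v$, unique up to translation by $\sonen$. Rescaling via $H_t^{-1}$ and multiplicatively translating by $e^{t f'(v)}\in\R_{>0}^n$ produces a family $\tilde\Phi_v^{(t)}$ whose image, under $\Log_t$, accumulates onto the union of the edges of $f'(C)$ meeting $f'(v)$, and whose argument stays close to $\Phi_v$. For each bounded edge $e$ of length $\ell_e$ joining $v$ and $v'$, I would plumb the two pieces $\tilde\Phi_v^{(t)}$ and $\tilde\Phi_{v'}^{(t)}$ across the punctures corresponding to $e$ using a gluing parameter whose modulus decreases like $e^{-t\ell_e}$ and whose argument is read off from the orientation-reversing isometry $\rho_e$. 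This is the standard plumbing fixture for degenerations of Riemann surfaces, adapted here so that the resulting surface carries a genuine holomorphic map to $\ctorn$ (the slope condition in Definition~\ref{phase-curve}(1) guarantees that the local models of $\tilde\Phi_v$ and $\tilde\Phi_{v'}$ at the punctures match up to a rotation, so after plumbing with the prescribed phase one obtains a well-defined $\ctorn$-valued annular continuation). Varying $f'$ and $\phi'$ in $U$ gives $\Lambda_t:U\to\MM_C$.

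Next I would check that $\Lambda_t$ is an open embedding for $t\gg 1$. On the tropical side the local dimension of $U$ is $2(b+n)$ real (with $b=k-3$ bounded edges), and on the holomorphic side rational curves in $\ctorn$ with the prescribed Newton fan form a complex moduli of dimension $n+k-3=n+b$; the dimensions match. Injectivity of the tangent map is essentially by construction: the $n$ complex parameters translating $f'(v)$ together with the $b$ complex parameters pairing $(\ell_e,\text{twist of }\rho_e)$ give independent deformation directions, since translating by $e^{t\cdot}$ and plumbing with $e^{-t\ell_e+i\theta_e}$ produce transverse deformations in $\MM_C$ up to $O(e^{-ct})$ errors. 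Openness then follows from the inverse function theorem.

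Finally, (1)$\Rightarrow$(2): given a tropically converging sequence $f_{\tj}\to(f',\phi')\in U$, Proposition~\ref{trop-compact} and the uniqueness of the coarse tropical limit force $f_{\tj}\in\Lambda_{\tj}(U)$ for large $\tj$, with $\Lambda_{\tj}^{-1}(f_{\tj})\to(f',\phi')$, because both $f_{\tj}$ and $\Lambda_{\tj}(\Lambda_{\tj}^{-1}(f_{\tj}))$ tropicalize to the same phase-tropical curve and $\Lambda_{\tj}$ is a local diffeomorphism onto its image. The converse (2)$\Rightarrow$(1) is the content of the construction: by design, a sequence $\Lambda_{\tj}(f'_j,\phi'_j)$ with $(f'_j,\phi'_j)\to(f',\phi')$ satisfies the annular and pair-of-pants approximation conditions of Definition~\ref{wtroplimit}. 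The main obstacle I foresee is the uniform analytic control of the plumbing: one must ensure that the glued family stays genuinely holomorphic, that the error terms between the local models $\tilde\Phi_v^{(t)}$ and the actual curve decay uniformly on compact subsets of each $\Gamma_v^{\circ}$, and that no extra hyperbolic components appear in the limit (which would violate minimality of $f$). This is precisely where the 3-valency and rationality hypotheses simplify matters, reducing the plumbing to the classical construction on $\overline{\MM}_{0,k}$ decorated with the phase twists.
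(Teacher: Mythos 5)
Your route is genuinely different from the paper's. The paper does not glue local models analytically: it first reduces the general rational curve to a \emph{line} by observing that any rational phase-tropical curve with $s$ ends is the image of a phase-tropical line in $\RR^{s-1}$, generic at infinity, under a multiplicatively affine projection $\pi:(\CC^*)^{s-1}\to(\CC^*)^n$ (Lemma~\ref{lifttoline}), and then proves the statement for lines by induction on $n$ (Lemma~\ref{proofforline}), writing the approximating family explicitly as $(f^\pi_t,\ \alpha t^a(f^{\pi,j}_t-\beta t^c))$ so that the curve sits inside an explicit hyperplane $P_t=\{z_n=\alpha t^a(z_j-\beta t^c)\}$ whose amoeba converges to a tropical hyperplane containing $f(C)$; the unit complex numbers $\alpha,\beta$ are read off from the phase at the relevant $3$-valent vertex. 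Everything is algebraic and explicit, which is precisely what the rationality and $3$-valency hypotheses buy; no gluing estimates are needed.

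The gap in your proposal is the step you yourself flag and then do not close: holomorphicity of the plumbed family. Near the puncture of $\Gamma_v$ corresponding to an edge $e$, the lift $\tilde\Phi_v$ is only \emph{asymptotic} to the monomial cylinder determined by $w_{f,e}u_{f,e}$; it equals that cylinder plus lower-order holomorphic corrections (e.g.\ for a pair of pants $\{1+c_1z+c_2w=0\}$ the branch near a puncture is not exactly $\{c_1z+c_2w=0\}$). Hence the two local models $\tilde\Phi_v^{(t)}$ and $\tilde\Phi_{v'}^{(t)}$ agree on the plumbing annulus only up to errors, and "matching up to a rotation'' holds only to leading order. A naive union or cut-off interpolation of two holomorphic maps is not holomorphic, so the image of $\Lambda_t$ as you define it does not a priori lie in $\MM_C$; one must either run a Newton/implicit-function-theorem correction with uniform estimates in $t$ (the SFT-style gluing analysis), or replace the local models by a global algebraic construction. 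The same missing estimates are what would underwrite your claims that the $n+b$ deformation directions are independent "up to $O(e^{-ct})$'' and that no extra hyperbolic components appear in the limit. Separately, in (1)$\Rightarrow$(2) the inclusion $f_{t_j}\in\Lambda_{t_j}(U)$ requires that $\Lambda_{t_j}$ capture \emph{every} nearby holomorphic curve, which does not follow from its being a local diffeomorphism onto its image; some surjectivity or degree argument is needed there as well. With a genuine gluing theorem in hand your plan would work, but as written the central analytic content of Theorem~\ref{realiz} is asserted rather than proved.
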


In particular this proposition allows to present any holomorphic curve sufficiently close to $(f,\phi)$
in the $\tj$-framework 
as $\Lambda_{\tj}(f',\phi')$ for a phase-tropical curve $(f',\phi')$ close to $(f,\phi)$.
Proposition \ref{realiz} is true also for curves in higher genus mapping to realizable tropical varieties
under the condition of regularity (which means that the dimension of the deformation space 
of the curve is of expected dimension)
and is proved in a more general case in \cite{Mik08} with some intermediate generalizations proved in \cite{Mik1},
\cite{NishinouSiebert}, \cite{Nishinou}, \cite{Tyomkin}.
However the assumptions we make in this paper imply that the curve is rational and the target variety is a projective space
(even more specifically in dimension 2 but it does not make much difference). This is an especially easy case and
to prove it under these assumptions it suffices to consider the lines in $\PP^n$.

We say that $\pi:\cto^{s-1}\to\ctorn$ is a 
\emph{multiplicatively affine map} if it is obtained by composition
of a multiplicatively linear
map $\cto^{s-1}\to\ctorn$ (given by a $(s-1)\times n$ matrix with integer entries) and an arbitrary multiplicative
translation in $\ctorn$. Note that $\pi$ induces a map
$\pi_{\R}:\R^{s-1}\to\R^n$ such that $\Log\circ\pi=\pi_{\R}\circ\Log$,
and a map $\pi_{\Arg}:(S^1)^{s-1}\to\sonen$ such that 
$\Arg\circ\pi=\pi_{\Arg}\circ\Arg$.
Given a phase-tropical curve $(\tilde f:C\to\R^{s-1},\tilde\phi)$ we may compose it with $\pi$ to obtain
a phase-tropical curve $(f:C\to\R^n,\phi)$ with $f=\pi_{\R}\circ\tilde f$ by setting the phase maps $\Phi_v$
at each vertex $v$ to be the composition maps $\pi_{\Arg}\circ\tilde\Phi_v$.
Note that we may assume that $n<s$, otherwise all the ends of $f(C)$ must be parallel to a 
 $(s-1)$-dimensional 
affine space in $\R^n$, so the whole curve $f(C)$ is contained in such space and we may replace the target with this
smaller-dimensional space 
 $\R^{s-1}$.

We say that a tropical curve $f:C\to\R^n$ is a \emph{line} if it has degree 1 in the natural compactification $\tp^n\supset\R^n$.
A \emph{phase-tropical line} is a phase-tropical curve of degree 1 (i.e.
a tropical line enhanced with any phase structure).

\begin{lemma}\label{lifttoline}
Let $(f:C\to\R^n,\phi)$ be a phase-tropical curve where $C$ is a rational tropical curve with $s$ leaves.
Then there exists a phase-tropical line $(\tilde f:C\to\R^{s-1},\tilde\phi)$ generic at $\infty$ and a multiplicatively affine map
$\pi:\cto^{s-1}\to\ctorn$ such that $\pi\circ(\tilde f, \tilde \phi)=(f,\phi)$.

If $\tilde f_{\tj}:C_{\tj}\to\cto^{s-1}$ is a family of holomorphic curves coarsely converging to $(\tilde f,\tilde\phi)$
then $f_{\tj}=\pi\circ\tilde f_{\tj}$ coarsely converges to $(f,\phi)$.

Furthermore, if $U$ is a small neighborhood of $(f,\phi)$ in the space of phase-tropical curves to $\R^n$
then we can find a locus $\tilde U\ni (\tilde f,\tilde \phi)$ inside the space of deformations of $(\tilde f,\tilde\phi)$
and an isomorphism $U\approx\tilde U$ so that for any $(f',\phi')\in U$ the corresponding point of $\tilde U$
is its lift in the above sense.
\end{lemma}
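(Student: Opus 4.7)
The plan is to straighten the $s$ ends of $f$ into independent coordinate directions of a higher-dimensional torus. Label the ends of $C$ as $e_1,\dots,e_s$ and let $(E_i)_{i=1}^{s-1}$ be the standard basis of $\ZZ^{s-1}$. Define the $\ZZ$-linear map $A\colon\ZZ^{s-1}\to\ZZ^n$ by $A(E_i)=-w_{f,e_i}u_{f,e_i}$ for $i<s$. Summing the balancing condition over all inner vertices of $C$ gives the global identity $\sum_{i=1}^{s} w_{f,e_i}u_{f,e_i}=0$, so $A(E_1+\dots+E_{s-1})=w_{f,e_s}u_{f,e_s}$. Let $\pi_L\colon (\CC^*)^{s-1}\to(\CC^*)^n$ be the multiplicative homomorphism with integer exponent matrix $A$; the final map will be $\pi=\tau_0\circ\pi_L$ for a positive multiplicative translation $\tau_0$ of $(\CC^*)^n$ fixed below.

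I would then build $\tilde f\colon C\to\RR^{s-1}$ as the tropical map of the same combinatorial type as $C$ whose outgoing slope on $e_i$ is $-E_i$ for $i<s$, $E_1+\dots+E_{s-1}$ on $e_s$, all ends of weight $1$, and with the same internal edge lengths as $f$. Since $C$ is a rational 3-valent tree, balancing at each inner vertex recursively determines an integer slope on every bounded edge, so $\tilde f$ is a bona fide tropical line generic at $\infty$ in the sense of formulas \eqref{d-infty}--\eqref{d-k}. By construction $A$ sends each slope vector of $\tilde f$ to the slope vector of the corresponding edge of $f$, hence $\pi_\RR\circ\tilde f$ differs from $f$ by a constant translation of $\RR^n$, which we absorb into $\tau_0$.

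For the phase I would work vertex by vertex. At every $v\in\Ve^0(C)$ the surface $\Gamma_v$ is a thrice-punctured sphere (here the 3-valence and rationality of $C$ are used). A pluriharmonic map $\tilde\Phi_v\colon\Gamma_v^\circ\to(S^1)^{s-1}$ with the three prescribed homology classes on its boundary circles lifts to a holomorphic map to $(\CC^*)^{s-1}$ given coordinatewise by a monomial in the two finite punctures with exponents fixed by homology, hence is unique up to a single multiplicative constant in $(\CC^*)^{s-1}$. Postcomposing by $\pi_{\Arg}$ yields a full $(S^1)^{s-1}$-torsor mapping onto the $(S^1)^n$-torsor of candidate $\Phi_v$'s, so --- provided the ends of $f$ span $\RR^n$, which is the hypothesis $n<s$ of the preceding paragraph of the paper --- a lift $\tilde\Phi_v$ of $\Phi_v$ exists and is unique modulo $\ker\pi_{\Arg}$. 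The remaining $\ker\pi_{\Arg}$-freedom at each vertex must be used to fit the orientation-reversing isometries: for each bounded edge $e$ between $v,v'$, we need $\tilde\rho_e$ compatible with $\rho_e$ under $\pi_{\Arg}$, which is a tower of equations on the tree $C$ that we solve inductively from a chosen root vertex outward along bounded edges.

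The main technical step is the final, parametric statement, where the whole neighborhood $U$ of $(f,\phi)$ must be matched with a locus $\tilde U$. The construction above is functorial in the data $(f,\phi)$: a small deformation amounts to moving $f(v)$, perturbing bounded-edge lengths, rotating each $\Phi_v$ by an element of $(S^1)^n$, and twisting each $\rho_e$, and each of these operations lifts through $\pi$ by the same vertex-by-vertex and tree-induction argument applied continuously. This yields a real-analytic embedding $\Lambda\colon U\hookrightarrow\{\text{phase-tropical lines}\}$ of the same dimension as $U$, whose image is the desired $\tilde U$. Since $\pi$ is a \emph{fixed} multiplicatively affine map, $H_t$ commutes with $\pi_L$ and positive multiplicative translations in $(\CC^*)^{s-1}$ push forward to positive multiplicative translations in $(\CC^*)^n$; therefore, if $\tilde f_{t_j}$ coarsely converges to $(\tilde f,\tilde\phi)$ in the sense of Definition~\ref{wtroplimit}, composing with $\pi$ preserves every clause of that definition, so $f_{t_j}=\pi\circ\tilde f_{t_j}$ coarsely converges to $(f,\phi)$, completing the proof.
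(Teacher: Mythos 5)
Your proposal follows essentially the same route as the paper: identify the $s$ ends with the preferred directions of a degree-one curve in $\RR^{s-1}$, let the tree structure and balancing propagate the slopes to the bounded edges, define the monomial projection $\pi$ from the weighted end data, and lift the phase by choosing a lift of $\Phi_v$ at a root vertex and propagating along the tree via the boundary-circle isometries; your treatment of the convergence and parametric statements via the compatibility of $H_t$ and multiplicative translations with the fixed monomial map is the intended (and in the paper largely implicit) argument. The only caveat is your parenthetical that the spanning of $\RR^n$ by the end directions follows from $n<s$ — it is really the separate reduction (replacing the target by the affine span of $f(C)$) made in the paragraph of the paper you cite, not a consequence of the inequality itself.
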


\begin{proof}
We start by lifting the tropical curve $f:C\to\R^n$ to $\R^{s-1}$. To
do this we 
choose a vertex $v$ in $\Ve^0(C)$ and we
  arbitrarily
associate the outgoing unit tangent vectors
to the $s$ leaves
of $C$ with the $s$ preferred vectors in $\R^{s-1}$: $E_1=(-1,0,\dots,0),\dots,E_{s-1}=(0,\dots,0,-1),E_s=(1,\dots,1)$.
This identification defines slope vectors for the remaining (bounded) edges.
Namely, the tangent vector to a point inside an edge of $C$ can be associated to the sum of the vectors
associated to the leaves of $C$ in the direction of the edge (recall that $C$ is a tree).

Let us choose a multiplicatively affine map $\pi:\cto^{s-1}\to\ctorn$
such that $\pi_{\R}$ sends the vector $E_j$ to the corresponding 
outgoing unit tangent vector
multiplied by its weight.
Then we choose $\tilde f(v)$ to be an arbitrary point of $\pi^{-1}(f(v))$.
The slope vectors of the edges along with the tropical structure on $C$ define $\tilde f:C\to\R^{s-1}$.
To lift the phase we choose an arbitrary pluriharmonic
map $\tilde\Phi_v:\Gamma_v\to (S^1)^{s-1}$ that lifts $\Phi_v:\Gamma_v\to\sonen$.
The isometry \eqref{rho-e} defines the lifts of the phases on the vertices connected to $v$ with a single edge. Inductively
we get the lift of the phase.

Note that the only ambiguities in the choice of $(\tilde f,\tilde\phi)$ is the choice of a point in $\pi^{-1}(f(v))\approx\R^{s-1-n}$
and the translation of the phase $\tilde\Phi_v$ by the corresponding $(s-1-n)$-dimensional subgroup of the torus
$(S^1)^{s-1}$. Taken together these ambiguities form $\cto^{s-1-n}$.
\end{proof}

\begin{lemma}\label{proofforline}
 Theorem~\ref{realiz} holds if $U$ is a small neighborhood of $(f:C\to\R^n,\phi)$,
where $f:C\to\R^n$ is a 3-valent line generic at $\infty$.
\end{lemma}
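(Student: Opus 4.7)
The plan is to prove the lemma by directly parametrizing both sides and constructing $\Lambda_t$ explicitly. First I will set up the dimension count: since $C$ is a $3$-valent rational tree with $n+1$ leaves, it has exactly $b = n-2$ bounded edges, so the tropical deformation space has real dimension $b+n = 2n-2$ and the phase-tropical neighborhood $U$ has real dimension $2(b+n) = 4n-4$. On the other hand, $\MM_C$ consists of those complex rational curves in $\ctorn$ whose boundary circles realize the classes $-E_1,\dots,-E_n,\sum E_j \in H_1(\sonen)$; such a curve extends across the punctures to a line in $\cp^n$ transverse to each coordinate hyperplane. Hence $\MM_C$ is an open subset of $G(2,n+1)$, of complex dimension $2(n-1)=2n-2$, matching $\dim_{\RR} U$.

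Next I will construct $\Lambda_t$ explicitly. Label the ends $e_0,\dots,e_n$ of $C$ and write a line in $\cp^n$ parametrically as
\[
  z \longmapsto [a_0 + b_0 z : a_1 + b_1 z : \cdots : a_n + b_n z], \qquad z \in \cp^1,
\]
with $[a_i{:}b_i]\in\cp^1$ encoding the intersection point of the line with the $i$-th coordinate hyperplane. Modulo the $PGL_2$-action on the source $\cp^1$ and the $\ctor$-action by rescaling the target coordinates, we obtain a chart on $G(2,n+1)$ of the correct dimension. Given $(f',\phi')\in U$, I would choose $a_i(t),b_i(t)\in\cto$ so that $\log_t|a_i(t)|,\log_t|b_i(t)|$ reproduce the tropical position of the vertices of $f'$ adjacent to $e_i$, and so that $\arg a_i(t),\arg b_i(t)$ reproduce the end-phase data prescribed by $\phi'$ at the corresponding boundary circles. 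This assignment is well-defined thanks to Definition~\ref{phase-curve}(1), which forces the slopes at infinity to match the homology classes of the boundary circles. Setting $\Lambda_t(f',\phi')$ to be the resulting line, I obtain a holomorphic map $\Lambda_t: U \to \MM_C$ for each $t$.

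The two implications of the equivalence should then follow. For $(2)\Rightarrow(1)$, once the coefficients $a_i(t_j),b_i(t_j)$ have the prescribed $\log_{t_j}$ and $\arg$-limits, the amoeba $\Log_{t_j}(\Lambda_{t_j}(f',\phi')(\cp^1))$ Hausdorff-converges to $f'(C)$ on compact sets and the argument of the image, restricted to inverse images of small convex sets around each vertex or along each edge, converges to the prescribed pluriharmonic maps $\Phi_v$ and to $\Phi_e$ on the annuli dual to bounded edges. For $(1)\Rightarrow(2)$, given a sequence $f_{t_j}$ converging coarsely phase-tropically to some $(f'',\phi'')\in U$, I would extract from each $f_{t_j}$ its parameters $[a_i(t_j){:}b_i(t_j)]$ (i.e.\ its intersection points with the coordinate hyperplanes) and apply Proposition~\ref{trop-compact} together with Definition~\ref{wtroplimit} to deduce that these parameters must converge to the ones defining $\Lambda_{t_j}^{-1}(f_{t_j})$, forcing $\Lambda_{t_j}^{-1}(f_{t_j})\to(f'',\phi'')$.

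The main obstacle, and the only place where the argument is not purely bookkeeping, will be checking that the pluriharmonic maps $\Phi_v$ at the \emph{interior} trivalent vertices and the gluing isometries $\rho_e$ along the bounded edges are correctly reproduced by this construction. For a $3$-valent line this amounts to verifying that once the $n+1$ end phases are fixed, the induced pluriharmonic maps on each pair-of-pants component of $\cp^1$ minus $\{z_0,\dots,z_n\}$ are determined by the positions of the two zeroes $z_i,z_j$ bounding the corresponding bounded edge, and that the cross-ratios implicit in these positions match the lengths and twists of the bounded edges of $(f',\phi')$ in the large-$t$ limit. Because a rational line is entirely rigid once its intersections with the toric boundary divisors are prescribed, this matching holds automatically, and the resulting map $\Lambda_t$ is an open embedding for $t \gg 1$.
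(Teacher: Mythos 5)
Your route is genuinely different from the paper's: you try to build $\Lambda_t$ in one shot by parametrizing lines in $\cp^n$ as $z\mapsto[a_0+b_0z:\cdots:a_n+b_nz]$ and dialling the moduli and arguments of the coefficients, whereas the paper argues by induction on $n$, projecting to $(\CC^*)^{n-1}$, realizing the projected line, and then lifting the last coordinate as an affine function $\alpha t^a(f^{\pi,j}_t-\beta t^c)$ of one of the already-constructed coordinates, with $a$, $\alpha$, $\beta$ read off from the contracted edge and the pair-of-pants phase at the new vertex. Your dimension count ($b=n-2$, $\dim_{\RR}U=4n-4=\dim_{\RR}G(2,n+1)$) is right, and the global parametrization could in principle be pushed through, but as written the proof has a genuine gap at its central step.

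The gap is in the sentence ``choose $a_i(t),b_i(t)$ so that $\log_t|a_i(t)|,\log_t|b_i(t)|$ reproduce the tropical position of the vertices of $f'$ adjacent to $e_i$.'' This is not a well-defined recipe (a vertex position is a point of $\R^n$, while you are prescribing two real numbers per end), and more importantly the tropical line is \emph{not} controlled by the valuations of the individual coefficients: its combinatorial type and the lengths of its $n-2$ bounded edges are governed by the valuations of the Pl\"ucker coordinates $a_ib_j-a_jb_i$, equivalently by the mutual tropical distances of the roots $z_i=-a_i/b_i$ on $\cp^1$, and these can drop arbitrarily below $\max(\log_t|a_ib_j|,\log_t|a_jb_i|)$ through cancellation. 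The same problem recurs at the phase level: the interior-vertex phases $\Phi_v$ and the twists $\rho_e$ along bounded edges are exactly the arguments/valuations of cross-ratios of the roots, i.e.\ the half of the moduli that your prescription does not directly touch. You correctly identify this as ``the main obstacle'' and then dismiss it with ``because a rational line is entirely rigid once its intersections with the toric boundary divisors are prescribed, this matching holds automatically'' --- but rigidity of the line says nothing about whether the \emph{limits} of the restricted holomorphic maps on each pair-of-pants agree with the prescribed $\Phi_v$ and $\rho_e$; that is precisely what has to be computed. (The implication $(1)\Rightarrow(2)$ is likewise only asserted: Proposition~\ref{trop-compact} gives subsequential convergence, not that the extracted parameters converge to those of $\Lambda_{t_j}^{-1}(f_{t_j})$.) The paper's induction avoids all of this because at each step only one new trivalent vertex appears, its position is pinned down by a transverse intersection with a hyperplane $\{x_j=c\}$ in the already-realized projection, and its phase is a genuine pair-of-pants whose two free arguments $\alpha,\beta$ are matched explicitly to the two relevant boundary circles. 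To repair your argument you would need to parametrize by the roots $z_i$ (mod $PGL_2$) and the torus translation rather than by the raw coefficients, and verify the vertex-by-vertex phase matching on each pair-of-pants component of $\cp^1\setminus\{z_0,\dots,z_n\}$ --- which is essentially reconstructing the paper's inductive computation.
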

\begin{proof}
We prove the Lemma 
 by induction on $n$. Note that for $n\le 1$ the lemma holds trivially.
Consider the multiplicative affine map $\pi:\cto^n\to\cto^{n-1}$
$(z_1,\dots,z_n)\mapsto(z_1,\dots,z_{n-1})$. By the induction hypothesis $\pi(f,\phi)$ is realizable.
The map $\pi_{\R}|_{f(C)}$ contracts an end $E\subset f(C)$ adjacent to a leaf (1-valent vertex) $u\in C$.
By the 3-valency assumption the point $\pi(f(u))\in\pi(f(C))\subset\R^{n-1}$ is inside an edge of $\pi(f(C))$.
Thus it can be obtained as a transverse intersection point of $\pi(f(C))$ and a hyperplane $\{x_j=c\}\subset\R^{n-1}$ for some
$j=1,\dots,n-1$. Since $f(C)\subset\RR^n$ has degree 1 so 
 has 
 $\pi(f(C))\in \RR^{n-1}$.
Therefore $$\{\pi(f(u))\}=\pi(f(C))\cap \{x_j=c\}$$
and the intersection in the right-hand side has tropical intersection number 1.

Let $f^\pi_t:C^\pi_t\to \cto^{n-1}$ be a sequence of holomorphic curves that converges to $\pi(f,\phi)$.
Let $f^{\pi,j}_t:C^\pi_t\to\CC^*$ be the $j$th coordinate of $f^\pi_t$.
We are looking for the family $f_t$ converging to $(f,\phi)$ in the form
$$(f^\pi_t, \alpha t^a(f^{\pi,j}_t- \beta t^c)):C^\pi_t\setminus\{f^{\pi,j}_t=\beta t^c\}\to\ctorn,$$
$\alpha,\beta\in\C^*$, $|\alpha|=|\beta|=1$, $a\in\R$. 

We set $a$ to be the maximum of the $n$th coordinate of the contracted edge $E\subset f(C)$.
This maximum is attached at the vertex $v\in C$.
This ensures the convergence of $\Log_t\circ f_t(C^\pi_t\setminus\{f^{\pi,j}_t=\beta t^c\})$ to 
 $f$ 
as we have $$f_t(C^\pi_t\setminus\{f^{\pi,j}_t=\beta t^c\})\subset P_t=\{z_n=\alpha t^a(z_j-\beta t^c)\}$$
and clearly $\Log_t(P_t)$ converges to the tropical hyperplane $P$ given by the tropical polynomial
$``z_n+a(z_j+c)"$ and we have $P\supset f(C)$.

It remains to choose the arguments $\alpha$ and $\beta$ to guarantee convergence at the phase level.
These unit complex numbers are determined by the phase structure $\phi$, namely by its value $\Phi_v:\Gamma_v\to\sonen$.
As $v$ is 3-valent the Riemann surface $\Gamma_v$ is a standard pair-of-pants.
The argument $\beta$ is determined by the boundary circle of $\Phi_v$
of slope vector $-e_n$
while the argument $\alpha$ is determined by the boundary circle of
$\Phi_v$ of slope vector $-e_j$.
(Recall that we already have $\pi\circ\Phi_v$
compatible with the limit of $f_t^\pi$.)
\end{proof}

Theorem \ref{realiz} now follows by combining Lemma \ref{lifttoline} and
Lemma \ref{proofforline} together.

\section{Tangency conditions in the phase-tropical
  world}\label{sec:proof corres}
\subsection{Proof of Theorems \ref{Corres} and \ref{Corres2}}
Note that the {\em classical} number $N_{d,0}(k;d_1,\dots,d_{3d-1-k})$ does not depend on
the choice of configuration of the $k$ points and $3d-1-k$ curves (as long as these constraints
are generic and all the curves we count are regular so that the corresponding enumerative problem is well-defined).
Thus we can take a family of constraints $\P_t,\L_t$ depending on the real parameter $t>1$
to compute the (independent of $t$) number $N_{d,0}(k;d_1,\dots,d_{3d-1-k})$ .

Recall that in the hypothesis of Theorem \ref{Corres} the tropical constraints $\P$, $\L$ are already
fixed and they are in tropical general position.

We choose any family $\P_t$ so that its tropical limit
in the sense of Definition \ref{troplim-points} is $\P$. It is easy to see that we can always make such choice.
 Indeed a point $p_t\in\ctor$ is determined by $\Log_t(p_t)\in\R^n$ and $\Arg(p_t)\in S^1\times S^1$.
Once we choose an arbitrary phase $\phi(p)$ the point $p_t$ is determined by $\Log_t(p_t)=p$
and $\Arg(p_t)=\phi(p)$. We do this for every point $p\in\P$.

Consider a tropical curve $L$ from $\L$. 
 Even though this curve is not
 rational, since it is immersed to $\R^2$, it can still be presented as 
the tropical limit of a 1-parametric real family of complex curves in $\cp^2$,
see \cite{Mik1}. Namely, there exists a phase for $L$  (viewed as an
embedding $L\to\R^2$) in the sense of Definition \ref{phase-curve}
and a family $L_t\subset \ctor$ such that $L$ is the tropical limit of $L_t$
in the sense of Definition \ref{wtroplimit}. 

For construction of $L_t$ in the case of general immersion
we refer to {\cite[Proposition 8.12]{Mik1}}. Note that in the case  when $L\to\R^2$ is an
embedding it is especially easy to construct this approximating
family and this can be done directly by {\em patchworking} \cite{V9}
(see also  {\cite[Chapters 7 and 11]{GKZ}}).

Namely, any embedded tropical curve $L\subset\R^2$ is given by
a tropical polynomial $$F(x,y)=\operatorname{max}\limits_{j,k\in\Z}\{a_{jk}+jx+ky\}
= `` \ \sum\limits_{j,k\in\Z} a_{jk}x^jy^k\ "$$
in two real variables $x,y$,
where $a_{jk}\in [-\infty,+\infty)$ and $a_{jk}=-\infty$ except for some
finitely many values of $(j,k)$ with $j\ge 0$ and $k\ge 0$, see
{\cite[Proposition 2.4]{Mik12}}.
The quotation marks here signify tropical arithmetic operations, and the formula
above can be viewed as the definition of these operations (addition and multiplication).
We set 
$$F_t(z,w)=\sum\limits_{j,k\in\Z}\alpha_{jk}t^{a_{jk}}z^jw^k,$$
$t>1$,
where we choose $\alpha_{jk}\in\C$ with $|\alpha_{jk}|=1$ arbitrarily.
The function $F_t$ is a polynomial in two complex variables $z,w$.
We define the curve $L_t\subset\ctor$ as the zero set of $F_t$.

Recall (see {\cite[Chapter 7]{GKZ}}) that the polynomial $F$ defines a subdivision of the Newton
polygon $$\Delta_F=\operatorname{Convex\ Hull}\{(j,k)\in\Z^2\ |\ a_{jk}\neq 0\}\subset\R^2.$$
This subdivision is defined by projections of the faces of the extended Newton polygon $\tilde\Delta_F$ of
which is the undergraph of $(j,k)\mapsto a_{jk}$, i.e. the set
$$\tilde\Delta_F=\operatorname{Convex\ Hull}\bigcup\limits_{j,k\ |\ a_{jk}\neq0}\{(j,k,t)\ |\ t\le a_{jk}\}.$$
Recall that the tropical polynomial $F$ is smooth if the projection of each finite face of $\tilde\Delta_F$
is a triangle of area $\frac12$ (it is easy to see that this area is the minimal possible for a lattice polygon).

Let $(x_0,y_0)\in\R^2$ be a point. Each open set $U\subset\R^2$ defines a real 1-parametric family 
of open sets in $\ctor$
$$U_t^{(x_0,y_0)}=(\Log_t\circ\tau_t^{(x_0,y_0)})^{-1}(U),$$
where $\tau_t^{(x_0,y_0)}:\ctor\to\ctor$ is a coordinatewise multiplication (i.e. multiplicative translation)
by $(t^{-x_0},t^{-y_0})$. We set $L_t^{(x_0,y_0)}=\tau_t^{(x_0,y_0)}(L_t)$.

By definition of  tropical hypersurfaces we have $(x_0,y_0)\in L$ if and only if there are at least two tropical 
monomials in $F$ that assume the same value at $(x_0,y_0)$
 and have value greater than the other monomials of $F$. 
Note that $L_t^{(x_0,y_0)}$ is defined by a polynomial
$F_t^{(x_0,y_0)}$ which is obtained by multiplying the coefficients of the $x^jy^k$ monomials of $F_t$
by $t^{x_0j+y_0k}$.
Thus if $(x_0,y_0)\notin L$ then 
for sufficiently large $t$ the absolute value of one monomial in $F_t$ is larger than the sum of
the absolute values of all the other monomials
and $U_t^{x_0,y_0}\cap L_t=\emptyset$ for any bounded open set $U\subset\R^2$.

Similarly, if $U$ is bounded, $t>>1$ is large and $(x_0,y_0)\in L$ we have
more than one dominating monomial for $F_t^{(x_0,y_0)}$
(after division by an appropriate power of $t$).
Furthermore, if $L$ is smooth all indices $(j,k)\in\Delta_F$ of the dominating
monomials for $F_t^{(x,y)}$ must be contained in a lattice triangle of area $\frac12$.
Thus the convex hull  
 $\Delta_{(x_0,y_0)}$ of such indices is either such a triangle
itself or one of its sides which is an interval of integer length 1.

Consider the {\em truncation} 
$$F_{\Delta_{(x_0,y_0)},t}^{(x_0,y_0)}(z,w) =\sum\limits_{(j,k)\in\Delta_{(x_0,y_0)}} t^{jx_0+ky_0}\alpha_{jk}t^{a_{jk}}z^jw^k,$$
cf. \cite{V9}.
Clearly in $U_t^{(x_0,y_0)}$ the polynomial $F_t^{(x_0,y_0)}$ is a small perturbation of
$F_{\Delta_{(x_0,y_0)},t}^{(x_0,y_0)}$ for large $t$.
However if $L$ is smooth so is the hypersurface $L_{\Delta_{(x_0,y_0)},t}^{(x_0,y_0)}$ defined by $F_{\Delta_{(x_0,y_0)},t}^{(x_0,y_0)}$.
Thus $L_t^{(x_0,y_0)}\cap U_t^{(x_0,y_0)}$ is a small perturbation of  $L_{\Delta_{(x_0,y_0)},t}^{(x_0,y_0)}\cap U_t^{(x_0,y_0)}$.

\begin{proof}[Proof of Theorem \ref{Corres}]
Consider the constraints $(\P_t,\L_t)$. As in the very beginning of
this paper, we set $\mathcal S_t=\S_t(d,0,\P_t,\L_t)$ to be
the set of all degree $d$ genus 0 curves that are passing through
$\P_t$ and tangent to $\L_t$ 
(see Section \ref{intro}).
By Proposition \ref{trop-compact} there exists a tropical limit for a (generalized) subsequence of $\S_t$.
We note that it must be a curve from $\S^{\T}(d,\P,\L)$. To see this we assume 
$f:C\to\R^2$ is such a limit. Since $\P_t\subset f_t(C_t)$ for some $f_t:C_t\to\ctor$ from $\S_t$
we see that $\P\subset f(C)$. Let us assume that there exists a line $L\in\L$ such that $f:C\to\R^2$ is not
pretangent to it. In such case every intersection of $L$ and $f(C)$ is disjoint from the vertices and
thus contained inside the edges of both $L$ and $C$. Therefore each intersection points $p\in f(C)\cap L$ must be transverse
intersections of the edges of $C$ and $L$. But this means that for sufficiently large $t$ the intersections of
$L_t$ and $f_t:C_t\to\ctor$ in $\Log_t^{-1}(U)$ are also transverse for any bounded $U\ni p$. But therefore
all intersection points of $L_t$ and $f_t(C_t)$ are transverse and we get a contradiction.

Thus any accumulation point of $\S_t$ when $t\to+\infty$ must be contained in $\S^{\T}(d,\P,\L)$ which is a finite
set of 3-valent curves by Proposition \ref{generic curves}. In turn, Proposition \ref{realiz} describes
all curves that have a chance to converge to an element $f:C\to\R^2$ from $\S^{\T}(d,\P,\L)$ through
its neighborhood $U$ in the space of all deformations of $f$ in the
class of phase-tropical morphisms.
Thus it suffices to describe those curves from $\Lambda_t(U)$ that pass through $\P_t$ and are tangent to $\L_t$.
We do it below.

Consider a small neighborhood $U$ of $f$ in the space of tropical curves. Such a neighborhood
itself consists of 3-valent curves $f'\in U$, $f':C'\to\R^2$. As we have already seen,
the curve $f'\in U$ is parameterized
by $\R^2\times\R^b$, where $b$ is the number of bounded edges of $C$, once we fix a reference vertex $v\in C$.
In these coordinates we define the curve $f':C'\to\R^2$ to be such curve that $C'$ is isomorphic to the curve $C$ as a graph,
the first $\R^2$ coordinate corresponds to $f'(v)-f(v)\in\R^2$ and the remaining coordinates correspond to the
difference in lengths of the corresponding edges of $C'$ and $C$. Note that in these coordinates $f$ corresponds
to the origin of $\R^2\times\R^b$.

These coordinates are naturally coupled with the coordinates responsible for the phase
structure. For this we choose an arbitrary reference phase-structure $\phi$ for $f$.
Namely, the first $\R^2$-coordinates are coupled with the $(S^1\times S^1)$-coordinates
of the arguments of the same coordinates
in $\ctor$. Each bounded edge defines a parameter for identifying the corresponding
boundary circles. This coordinate couples with the $\R$-coordinate in $\R^2\times\R^b$ corresponding 
to the same edge so that the increment in length corresponds to the positive direction of the twist.
Together the coupled coordinates form $\C^*$. Thus the space of all phase-tropical
curves corresponding to tropical curves from $U$ is $$\U=U\times (S^1\times S^1)\times (S^1)^b$$
that can be considered as a subspace of $\ctor\times(\C^*)^b$ obtained as $\Log_t^{-1}(U)$
for arbitrary $t>1$.

By Lemma \ref{lem1} each constraint $q\in\P\cup\L$ imposes a condition defining
a hyperplane (in the classical sense)  $\Lambda_p$ 
with a rational slope in $U\subset\R^2\times\R^b$.
Similarly, the space of all phase-tropical curves that can be approximated by classical 
curves passing through $p_t$ (in the case when $q=p\in\P$) or tangent to $L_t$ (in
the case when $q=L\in\L$) is defined by a subtorus $N_q\subset (S^1\times S^1)\times (S^1)^b$
of the same slope as shown below.

Consider a point $p\in\P$. The condition that $f'$ passes through $p$ is a linear condition on $f'$ in $\R^2\times\R^b$.
It defines a hyperplane (in the classical sense) with a rational
slope. 
The phase $\phi(p)$ must be contained in 
$\phi(e)=\Phi^\epsilon_v(b^v_\epsilon)\subset S^1\times S^1$, where $e$ is an edge of $C$ passing through $p$.
This imposes a linear condition in $(S^1\times S^1)\times (S^1)^b$.
Note that there might be several edges of $C$ passing through $p$ but Proposition \ref{generic curves}
ensures that all such edges have the same slope vector (as they are connected by a chain in the same line).
As 
 $\phi(e)$ 
 is an annulus covered $w_{f,e}$ times by $b^v_\epsilon$, the point $\phi(p)$ is covered
$w_{f,e}$ times.

Let us choose a straight line $G_p$ in $U$ that is transversal to $\Lambda_p$ in the sense that
the primitive integer vector parallel to $G_p$ forms an (integer) basis of $\Z^2\times \Z^b$ when taken with
some integer vectors parallel to $\Lambda_p$. Such a line can be equipped with a phase that is
a geodesic of the same slope vector in  $(S^1)^2\times (S^1)^b$. 
Together with the phase the line $G_p$ defines a proper annulus $\G_p\subset\U$.
The image $\Lambda_t(\G_p)$ contains some holomorphic curves passing through $p_t$.
Their number is determined by the edges of $C$ passing through $p\in\R^2$.

We claim that this number is equal to the sum of the weights of all edges of $C$ containing $p$.
Indeed, this number does not depend on the choice of $\G_p$ by topological reasons. Thus
we may choose for $\G_p$ the phased straight line obtained by multiplicative translation of $(f,\phi)$ by
a given $\C^*$-subgroup of $\ctor$. The points of $\Lambda_t(\G_p)$ passing through $p_t$
will correspond to the points of $b^v_\epsilon$ covering $\phi(p)$.
 
Consider $L\in\L$. First we suppose that the pretangency set consists of a single point $v\in\R^2$.
If $v$ is a vertex of $L$ we look at 
the \emph{coamoeba} of $L$ at $v$, i.e.
the closure $\phi(v)\subset S^1\times S^1$ of
$\Arg(L^v_{\Delta_v})$. 
As the curve $L$ is smooth its coamoeba is an image
of the coamoeba of a line in $\ctor$ under a linear automorphism
of the torus $S^1\times S^1$. This coamoeba consists of two triangles of equal area cut
by three geodesics in the torus whose slope vectors coincide with the
slope vectors  of the edges of $L$
adjacent to $v$. The two triangles share their three vertices, see Figure~\ref{coamoeba}.
We call these three vertices the {\em coamoeba vertices}.

\begin{figure}[h]
\centering
\begin{tabular}{ccc}
\includegraphics[width=4cm,angle=0]{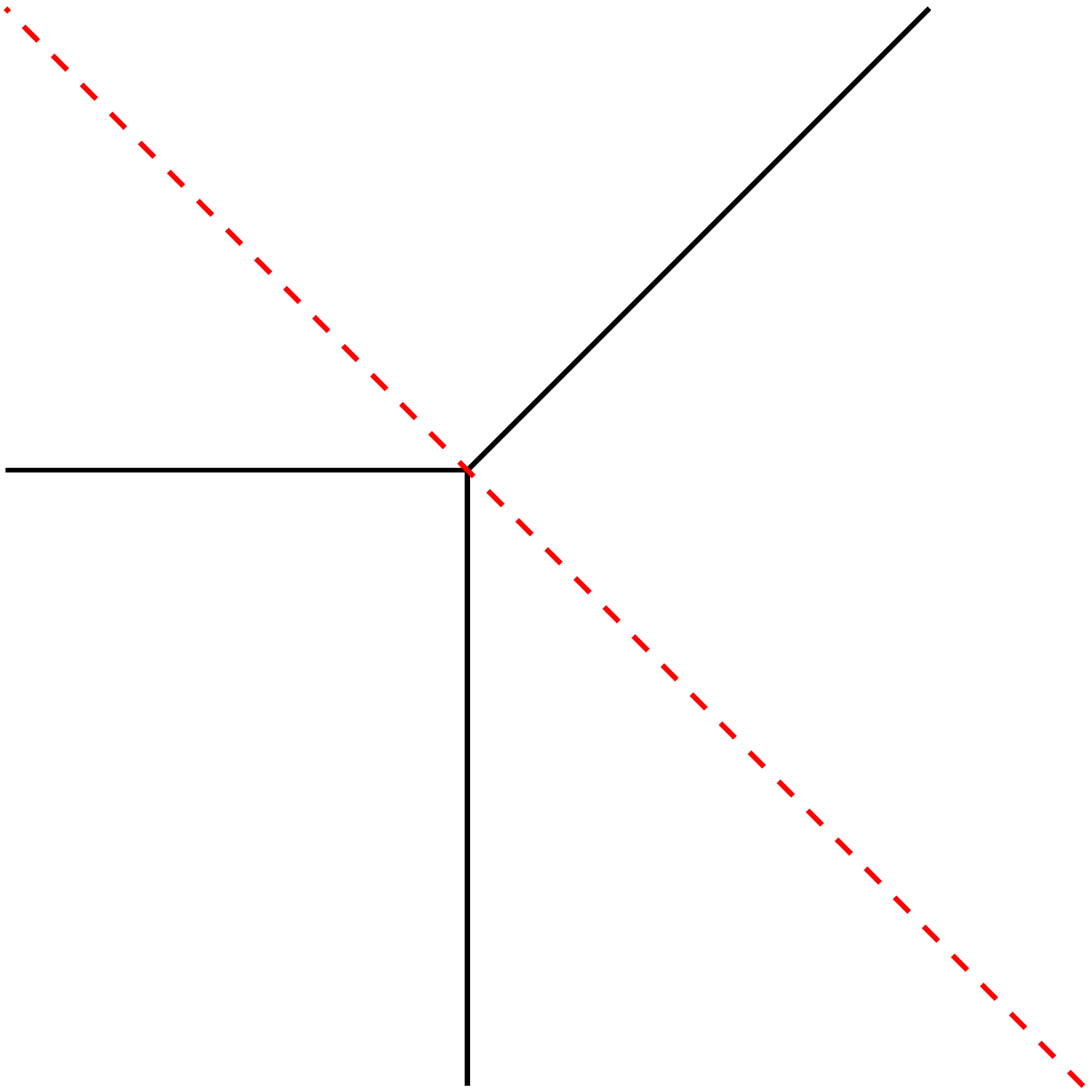}
& \hspace{8ex} &
\includegraphics[width=4cm, angle=0]{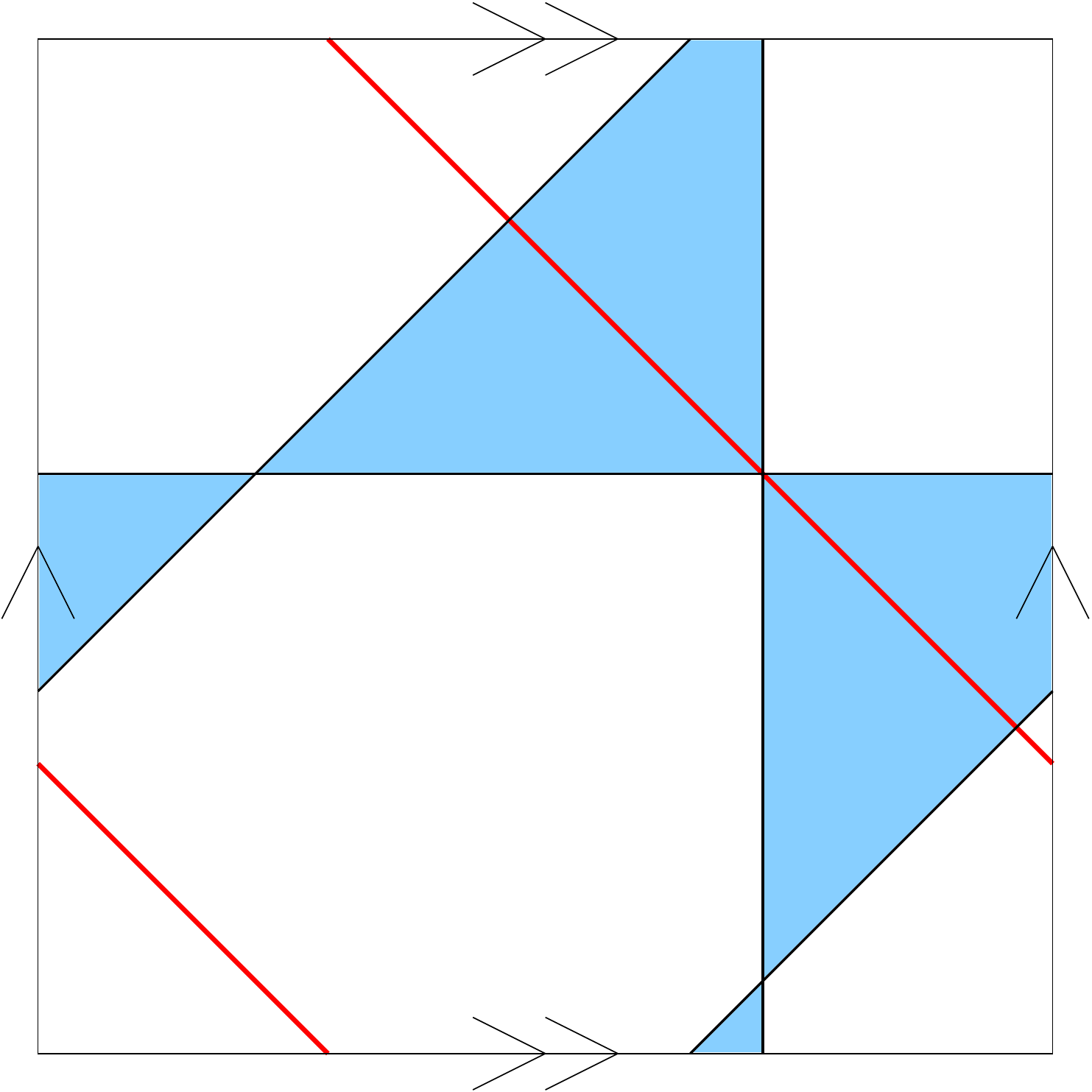}
\\ a) && b)
\end{tabular}

\caption{Two tangent tropical curves and their phases}
\label{coamoeba}
\end{figure}

The interior of the triangles of the coamoeba correspond to the the interior
of amoeba of 
$L^v_{\Delta_v}$, 
see e.g. the Theorem of \cite{Passare}. In turn the logarithmic
Gauss map (i.e. the map taking each point of $L$ to the slope of its tangent space after applying a branch
of the holomorphic logarithm map) 
on this interior takes imaginary values, see  {\cite[Lemma 3]{Mik-00}}, while the real values are
assumed on the boundary of the amoeba of the line. But the three arcs of the boundary of the
amoeba are contracted to the coamoeba vertices while three points where the logarithmic
Gauss map takes values equal to the slope vector of the edges adjacent to $v$ are blown up
to the three geodesics on the coamoeba.

But if the pretangency set is a point this means that any edge $e$ of $C$ that contains $v$
cannot be parallel to one of the edges adjacent to $v$. Therefore $\phi(e)$ and $\phi(v)$ must
intersect in a coamoeba vertex $u$, otherwise $C_t$ and $L_t$ cannot be tangent for large $t>>1$
as there are no nearby points with the same value of the logarithmic Gauss map.
Furthermore, locally near $u\in S^1\times S^1$, $\phi(e)$ must be
contained in the coamoeba
$\Arg(L^v_{\Delta_v})$ 
as only this arc contains the real value of the logarithmic
Gauss map corresponding to the slope vector of $e$. This completely determines $\phi(e)$ as well
as makes a linear condition on the space of phases for $\Lambda_q\subset U$.

As before we choose a straight line $G_L$ in $U$ transversal to $\Lambda_L$
and a phase for $G_L$ that forms $\G_L\subset\U$.
The image $\Lambda_t(\G_L)$ contains some holomorphic curves tangent to $L_t$.
By the same argument as before their number is the sum of the weights of all edges of $C$ passing through $v$.

The situation is similar if $v$ is an image of a vertex $\tilde v$ of $C$
and $f$ is an immersion near $\tilde v$. In this case the coamoeba of
the corresponding edge of $L$ must pass through one of the three coamoeba vertices of the phase of $\tilde v$,
the one determined by the slope vector of the edge $e$ of $L$ containing $v$. As we assume that $L$ is smooth
we have the weight of $e$ as well as the weight of $\Lambda_L$ equal to 1.

This reasoning can be easily modified to include the case when $f$ is not necessarily an immersion near $\tilde v$.
Consider the linear projection $\R^2\to\R$ such that its kernel is parallel to $e$. 
The exponentiation of this map gives us a multiplicatively-linear map
$\pi:\ctor\to\C^*$.
Let $\Psi_{\tilde{v}}:\Gamma^\circ_{\tilde v}\to\ctor$ be a
holomorphic map such that $\Phi_{\tilde{v}}=\Arg\circ\Psi_{\tilde{v}}$
(see Definition~\ref{phase-curve}). 
By the Riemann-Hurwitz formula,
the holomorphic map $\pi\circ
\Psi_{\tilde{v}}:\Gamma^\circ_{\tilde v}\to\C^*$
has a unique ramification point $r\in\Gamma^\circ_{\tilde v}$ (recall that as the vertex $\tilde{v}$ is 3-valent
the surface $\Gamma^\circ_{\tilde v}$ is a pair of pants). Note that $\Phi_{\tilde{v}}(r)$ must be contained
in the phase-boundary circle corresponding to $e$ in order for the corresponding approximation curves to be tangent.

Let $e\subset C$ be any edge adjacent to $\tilde v$.
Varying the phase structure of $C$ by slightly changing the orientation-reversing isometry $\rho_e$ (from Definition~\ref{phase-curve})
and applying $\Lambda_t$ (from Proposition~\ref{realiz}) for large but finite values of $t$ we have a unique curve tangent to $L_t$,
so that the weight of $\Lambda_L$ is again 1.

Applying Proposition~\ref{practical comp} inductively we see that the determinant $\det(\Lambda_{p_1},\ldots,
\Lambda_{p_k},\Lambda_{L_1},\ldots, \Lambda_{L_{3d-1-k}})$ computes the number of 
different phase structures satisfying to the phase tangency conditions at $v$. However the same
phase structure is counted several times, once for every automorphism of our tropical curve $f:C\to\R^2$
as inducing a phase structure by an automorphism of $f$ gives an isomorphic phase structure.
Thus we have to divide the result by $|Aut(f)|$.

Finally we consider the general case when the pretangency components are not necessarily points.
Let $E_L$ be a pretangency component. This component is a tree
(as it is contained in the tree $C$). Furthermore, the coamoeba of this component (by which we mean the
union of the argument of phases of all vertices and edges contained in $E_L$) is a circle 
which we denote $\phi(E_L)\subset S^1\times S^1$. Indeed all edges adjacent to a vertex of $E_L$ 
are parallel to the same line therefore the only possible change in coamoeba of an edge in $E_L$
is a number of times it runs through the same circle $\phi(E_L)$ while the number of times coincides
in  turn with the weight of the edge. The same can be said about the coamoeba of the part of $L$
corresponding to the same pretangency set. To obtain tangencies for $L_t$ and $C_t$ these two 
coamoebas must coincide.

To compute the weight of $\Lambda_L$ we consider again the properly embedded annulus $\G_L\subset\U$
obtained as a phased straight line in an integer direction transversal to $\Lambda_L$ 
as well as its image under $\Lambda_t$ for large $t>>1$.

To compute 
the number of curves in $\Lambda_t(\G_L)$ tangent to $\L_t$
we prepare an auxiliary phase tropical curve from $(f,\phi)$ and $E_L$. Namely we take the
subtree $C_L\subset C$ formed by the closed edges of $C$ intersecting $E_L$ and continue
the resulting new 1-valent edges to infinity.
This defines $f_L:C_L\to\R^2$.
The phase $\phi$ induces a phase $\phi_L$ for $f_L$.

Proposition~\ref{realiz} produces a real 1-parametric family of complex curves $C_t^{(E_L)}$ whose
tropical limit is $f_L:C_L\to\R^2$. As $\phi_L$ is induced by $\phi$
we can compute the weight of $\Lambda_L$ with the help of $C_t^{(E_L)}$ and a 1-parametric family
obtained by multiplicative translation in a direction transversal to $\Lambda_L$.

Furthermore, in this computation we can replace the family $L_t$ with the family $L_t^{(E_L)}$
obtained in a similar way as $C_t^{(E_L)}$. Namely we take the subtree $E_L\subset L$
formed by the closed edges of $L$ intersecting $E_L$ and continue the new 1-valent edges to infinity.
Denote the resulting (rational immersed) tropical curve by $L_L$.
Then we take the approximating family $L_t^{(E_L)}$ provided by Proposition~\ref{realiz}
for the phase structure induced by that of $L$.

Then the number of tangencies of $C_t^{(E_L)}$ and
$\tau_\lambda(L_t^{(E_L)})$, 
the multiplicative translation of $L_t^{(E_L)}$ by $\lambda\in\G_L$, 
can be computed by Euler characteristic calculus as follows. 
Projection of $C_t^{(E_L)}$ and $L_t^{(E_L)}$ along $G_L$ to $\C^*$ allows one
to define the fiber product of  $C_t^{(E_L)}$ and $L_t^{(E_L)}$ which we denote by $A_t$.
As there exists infinitely many directions $\Z$-transversal to $E_L$,
we may assume that the direction of $G_L$ is not parallel to any edge of $L$ or $C$.

Note that minus the Euler characteristic of $A_t$ equals to the number of tangencies between
$C_t^{(E_L)}$ and $\tau_\lambda(L_t^{(E_L)})$, $\lambda\in\G_L$ plus a correction term $\delta$ at infinity
by the Fubini theorem
for the calculus based on the Euler characteristic, see  
{\cite[Theorem 3.A]{Viro-Euler}}.
The correction at infinity is computed for each end of $L$ contained in the (classical) line $D\subset\R^2$ extending the pretangency set $E_L$.
Whenever $L$ has an end contained in $D$ we add to $\delta$ the number of the ends of $C_L$ contained in $D$
and going in the same direction.
Since there are two possible infinite directions in $D$, $\delta$ is the sum of two possible corrections. 

Indeed,
unless the point of the target $\C^*$ is an image of a tangency point for some $\lambda\in\G_L$,
the number of inverse images in the projection $A_t\to\C^*$ equals to the product of the degrees
of the projections $C_t^{(E_L)}\to \C^*$ and $L_t^{(E_L)}\to\C^*$. Thus the Euler characteristic
of $A_t$ equals to a multiple of $\chi(\C^*)=0$ minus the number of the tangency points in the family
parameterized by $\lambda$.

The Euler characteristic of the fiber product $A_t$ for large $t>>1$ can be computed from $C$ and $L$.
Indeed, each vertex $v$ of $L$ or $C$ (resp.) that belongs to $D$ gives a contribution to this Euler characteristic of
the fiber product $A_t$. The contribution is equal to the degree of projection of $C_t^{(E_L)}\to \C^*$ or
$L_t^{(E_L)}\to\C^*$ (resp.). In turn this degree can be computed from $f:C\to\R^2$ and $L$ (resp.) and
it is equal to the number of intersections of the edges $E$ of $C_L$ and $L_L$ (resp.) and the line $G_L^v$ parallel to $G_L$
and passing through $v$. Each point of $E\cap G_L^v$ contributes the corresponding tropical intersection number,
i.e. the absolute value of the determinant of the matrix formed by a primitive integer vector parallel to $E$ and a primitive
integer vector parallel to $G_L$ multiplied by the weight of $E$.

Note that if $E$ is not contained in $D$ the corresponding contribution can be excluded from the Euler characteristic
by passing to a smaller surface $C_t^{(E_L)}$ or $L_t^{(E_L)}$ (resp.) by taking an intersection with
$\Log_t^{-1}(V)$ for a small neighborhood $V\supset D$. Indeed this contribution corresponds to
tangencies with $\tau_\lambda(L_t^{(E_L)})$ for large $\lambda$ for large $t>>1$ and $\lambda$ cannot
tend to zero when $t\to+\infty$. Summarizing all contributions together we get $w_L$ from Section 3 as
the sum of the weights of the vertices $v$ of $L$ such that $v\in C$, where each weight equals to the sum of the weights
of $C_L$ passing through it plus the number of vertices of $C$ that are mapped on $L$ (as $L$ is embedded
and all its weights are 1) minus the contribution $\delta$ at infinity.

To conclude the proof we note that the intersection number of proper submanifolds
$$\Lambda_q\times N_q\subset \ctor\times (\C^*)^b=(\R^2\times \R^b)\times ((S^1\times S^1)\times (S^1)^b)$$
for all $q\in\P\cup\L$ is determined by their homology classes with closed support and thus
coincides with the corresponding tropical intersection number as both coincide with
the same intersection numbers in $H_*((S^1\times S^1)\times (S^1)^b)$. Thus each $f\in\S^{\T}(d,\P,\L)$ contributes
$\mu_{(\P,\L)}(f)$ to the Zeuthen number.
\end{proof}

\begin{proof}[Proof of Theorem~\ref{Corres2}]
To prove Theorem \ref{Corres2} we need to construct a suitable configuration of
immersed complex curves $L_j$ of genus $g_j$ starting from our configuration $L_j^{\mathbb T}$.
Such construction can be provided by Proposition \ref{realiz} (with large value of $t$) 
once we equip each tropical curve $L_j^{\mathbb T}$ with the phase structure in
the case when the tropical curves $L_j^{T}$ are rational, i.e. all $g_j=0$.
In the general case a family of immersed complex curves $L_j^t$ for large $t$ 
converging to $L_j^{\T}$ in the sense of Definition \ref{wtroplimit}
is provided by the proof of Theorem 1 of \cite{Mik1}, more precisely by Proposition 8.23.

The rest of the proof is similar to the proof of Theorem \ref{Corres}.
The only difference is that 
the constraints $L_j^{\T}$ no longer have to be smooth near their 3-valent vertices.
But, since the curve is immersed there is always a linear map 
${\mathbb R}^2\to{\mathbb R}^2$
such that near that vertex $L_j^{\T}$ is the image of a tropical line in ${\mathbb R}^2$.
Here the determinant $m$ of this linear map is the {\em multiplicity} of our vertex,
cf.  {\cite[ Definition 2.16]{Mik1}}. The exponent of this map 
is a multiplicatively-linear map $M:({\mathbb C}^*)^2\to ({\mathbb C}^*)^2$ of degree $m$.
Note that the genus of the part of $L_j^t$ approximating
$L_j^{\T}$ is zero by {\cite[Proposition 8.14]{Mik1}}.
Taking the pull-back by $M$ we reduce the problem to the case of smooth
curves which is already considered in the proof of Theorem \ref{Corres}.
\end{proof}

\subsection{Enumeration of real curves}\label{sec:real enumerative}
As already mentioned, the proof of Theorem \ref{Corres}
 establishes a
correspondence between phase-tropical curves  and complex curves 
close to the tropical limit. In
particular, if we choose real phases for all constraints in
$(\P,\L)$, it is possible to recover all real algebraic curves passing
through a configuration of real points and tangent to a configuration
of real lines when these points and lines are close to the tropical
limit.

\begin{exa}
Let us revisit Example  \ref{Exa:conic} from a real point of view.
For example, if all the three points in Example \ref{Exa:conic} have
phase $(1,1)$, then the tropical curve in Figure
\ref{Fig:conic-with-constraints} ensures that there exists a
configuration of three points and two lines in $\RR P^2$ such that
all four conics  passing through these points and tangent to these lines
are real.
 On the opposite, if the middle point has phase $(-1,1)$ and the two
 other points have phase $(1,1)$, then there exists a corresponding configuration of
 three points and two lines in $\RR P^2$ such that none of the four
 conics passing through these points and tangent to these lines are
 real.
\end{exa}

\begin{exa}\label{ex real conic}
One can interpret in tropical terms the method used in \cite{RoToVu}
to construct a configuration of 5 real conics such that all 
3264  conics tangent to these 5  conics are real. The main step in
this construction is to find 5 real lines $L_1,\ldots,L_5$ in $\RR
P^2$
and 5 points
$p_1\in L_1,\ldots, p_5\in L_5$ such that for any set
$I\subset\{1,\ldots,5\}$, all the conics passing through the points
$p_i$, $i\in I$, and tangent to the lines $L_j$, $j\in
\{1,\ldots,5\}\setminus I$ are real. 
As in \cite{RoToVu}, let us
start with the configuration depicted in Figure
\ref{3264}a, whose tropical analog is depicted in Figure \ref{3264}b
(without phase) and \ref{3264}c (equipped with the appropriate real
phases). 
Next, we perturb the double lines $L_i^2$ as depicted in
Figure \ref{3264}d 
(without phase, the cycle defined by the image is a twice a line)
 and \ref{3264}e (equipped with the appropriate real
phases). Then
 there exist 5 families of
 real conics converging 
 to our 5 phase conics and producing 3264 real conics as in
 \cite{RoToVu}.

It would be interesting to explore the possible numbers of real conics
tangent to 5 real conics, in connection to \cite{Wel5} and \cite{Ber3}. In
particular, does there exist a configuration of 5 real conics, any one of which lying
outside the others, such that exactly 32 real conics are tangent to
them?
\end{exa}
\begin{figure}[h]
\centering
\begin{tabular}{ccc}
\includegraphics[height=5cm, angle=0]{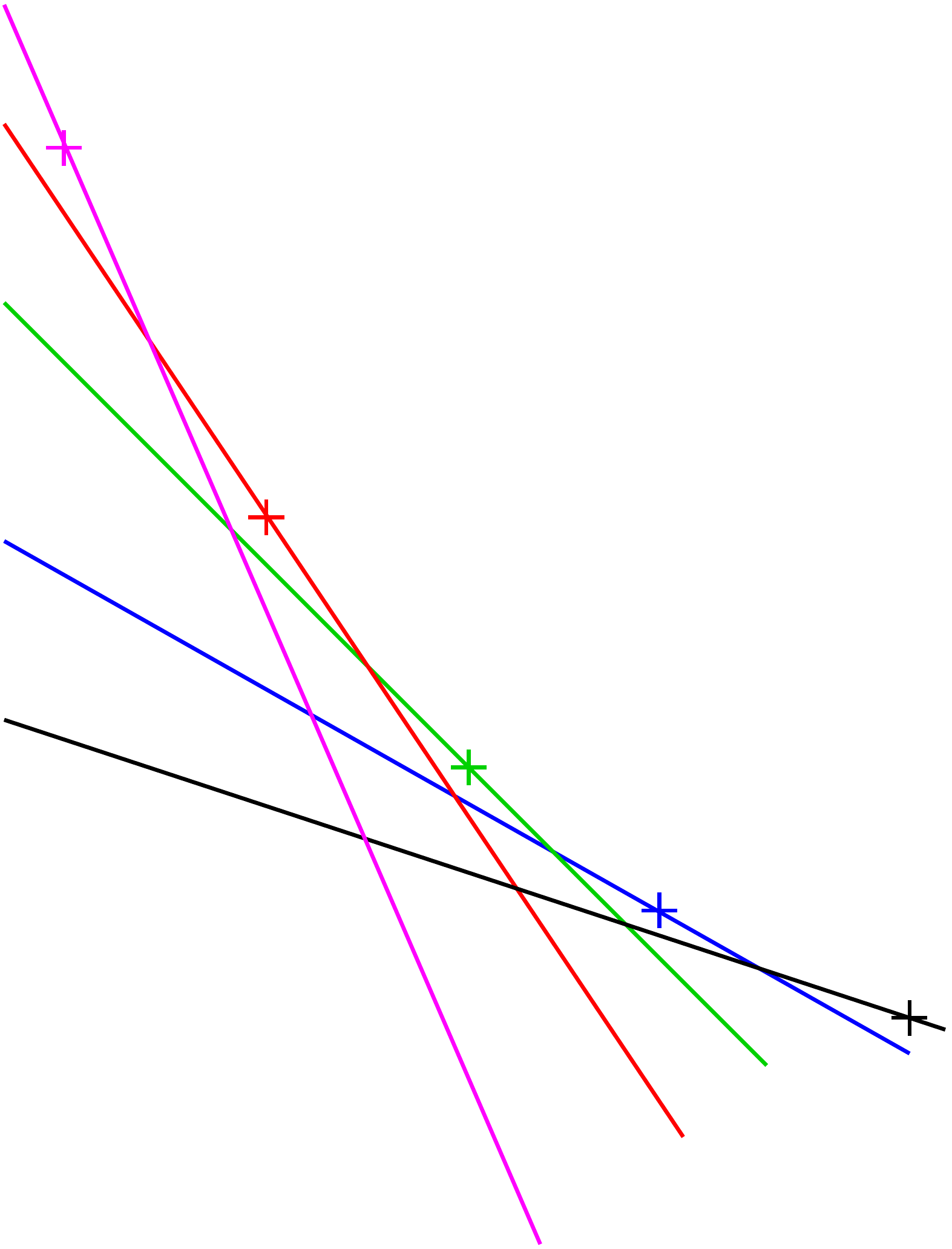}& \hspace{3ex}&
\includegraphics[height=4cm, angle=0]{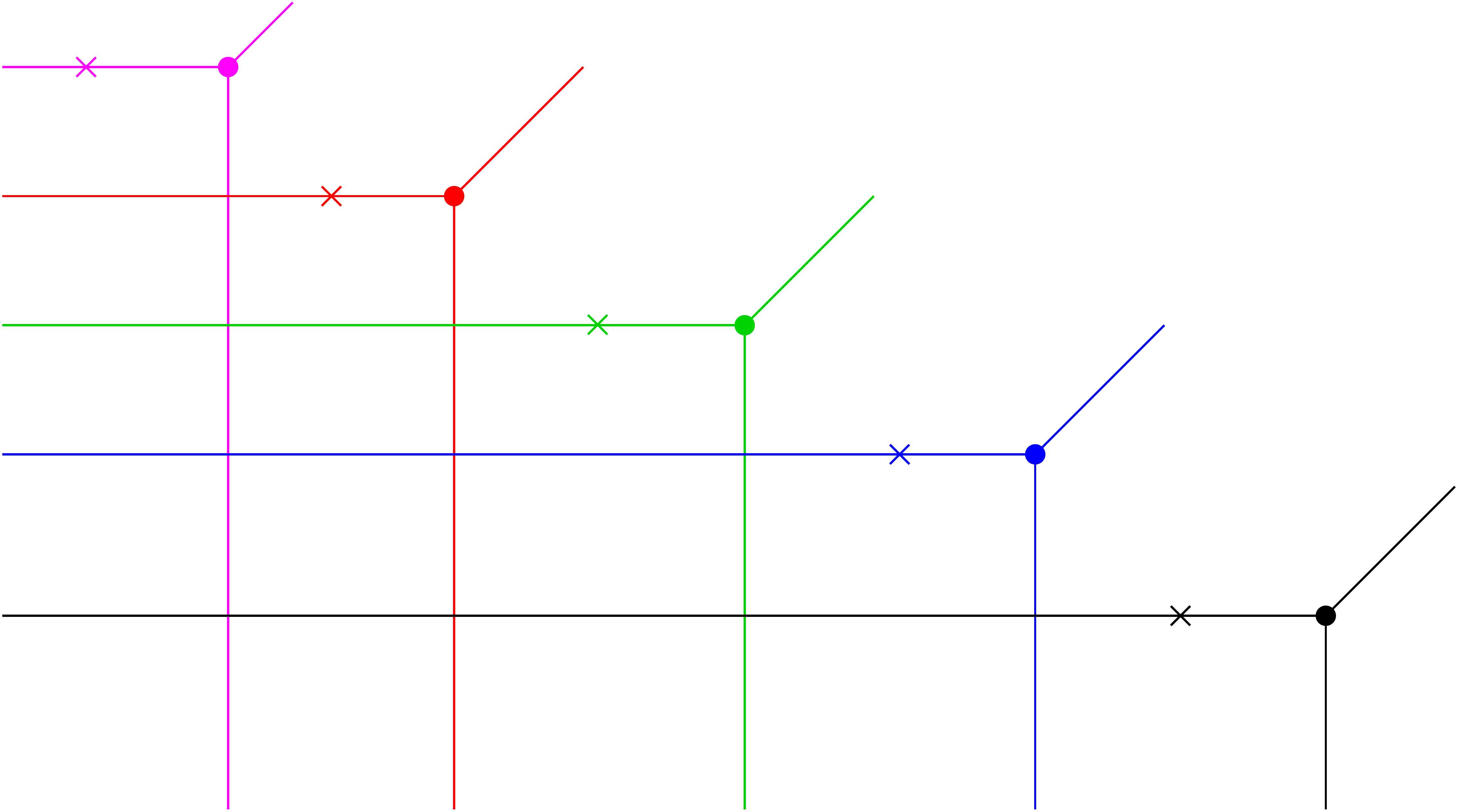}
\\
\\ a) &&b)
\\
\\\includegraphics[height=4cm, angle=0]{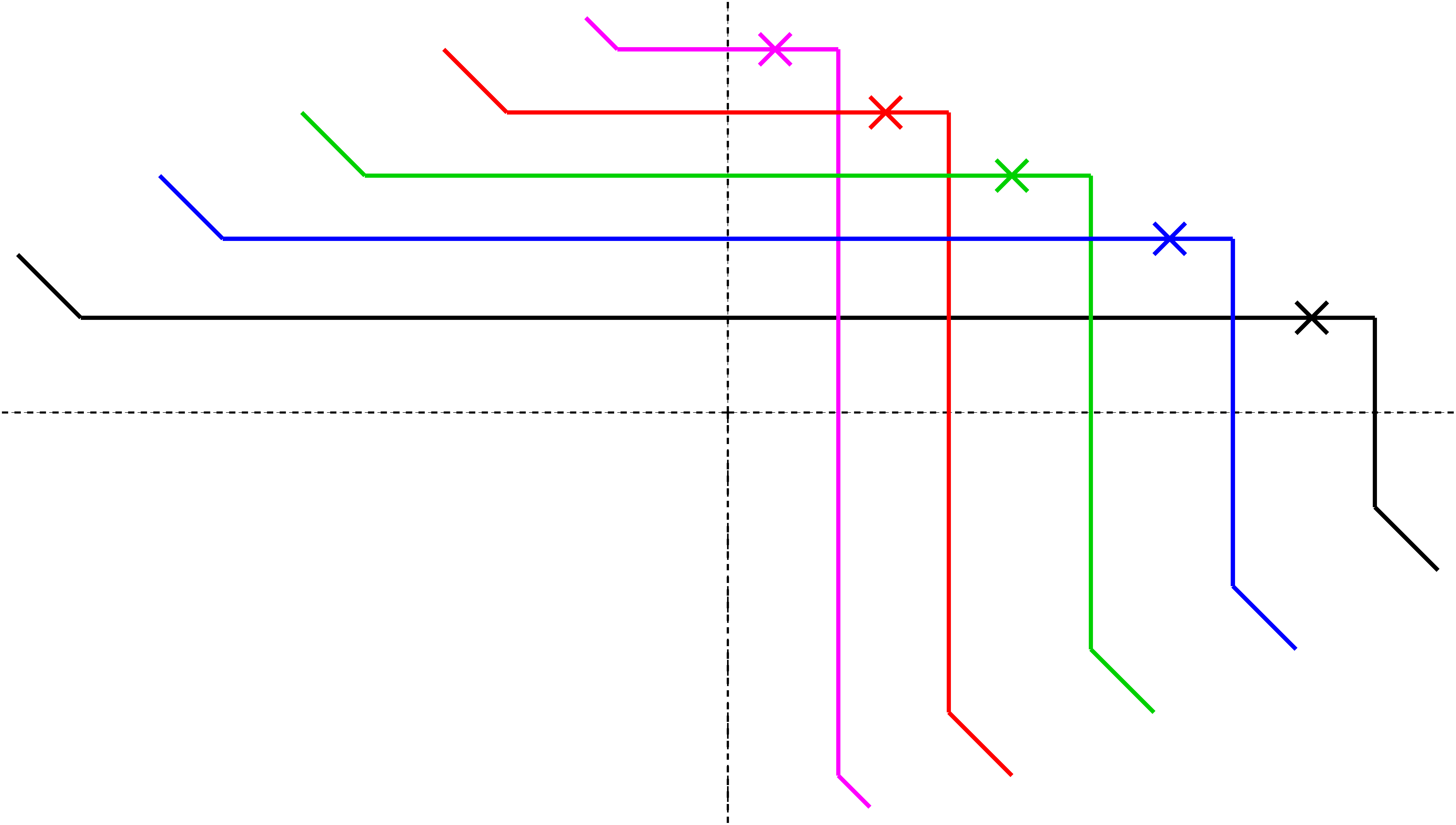}& &
\includegraphics[height=4cm, angle=0]{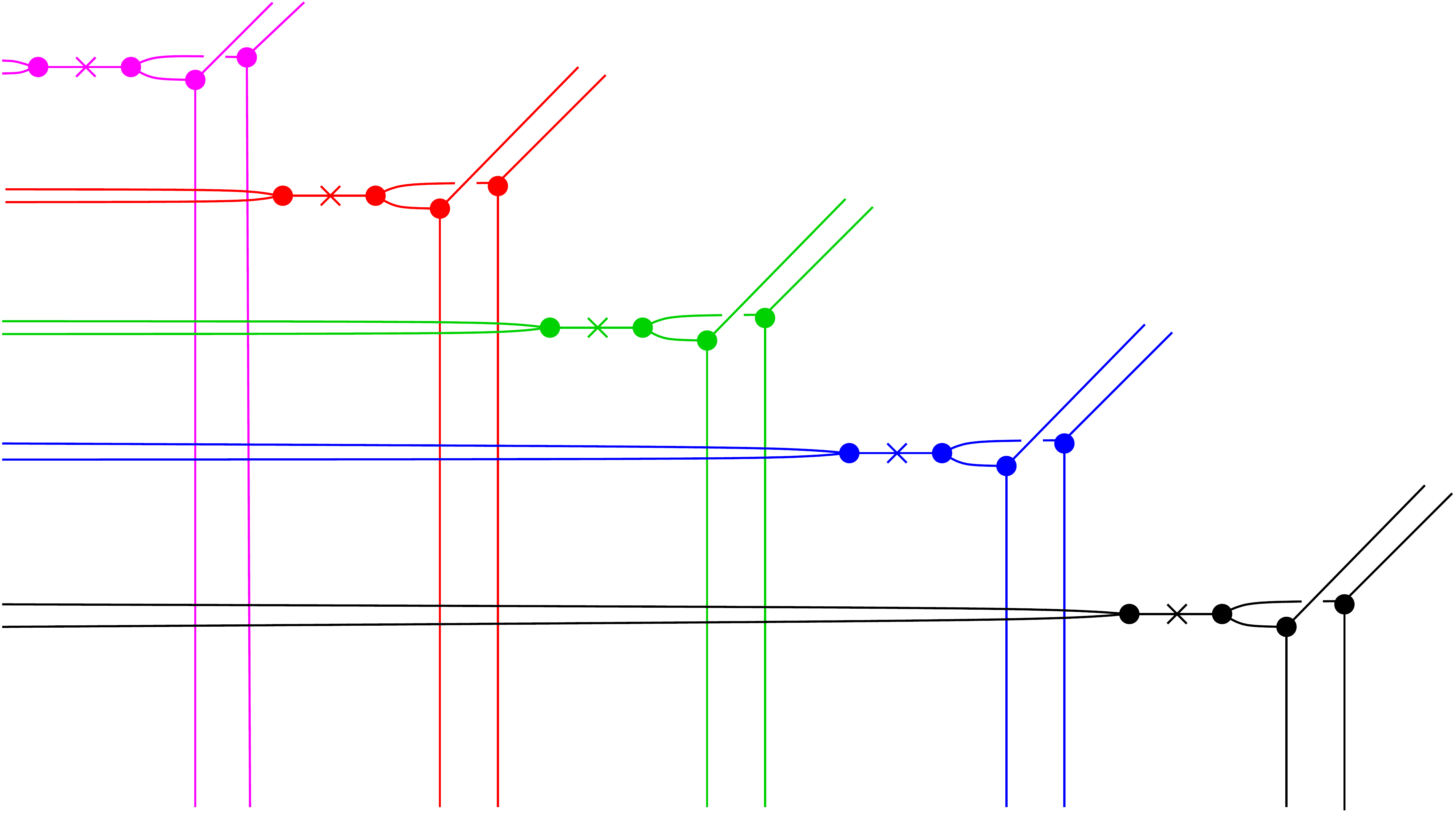}
\\
\\c) &&d)
\end{tabular}
\begin{tabular}{c}
\includegraphics[height=7cm, angle=0]{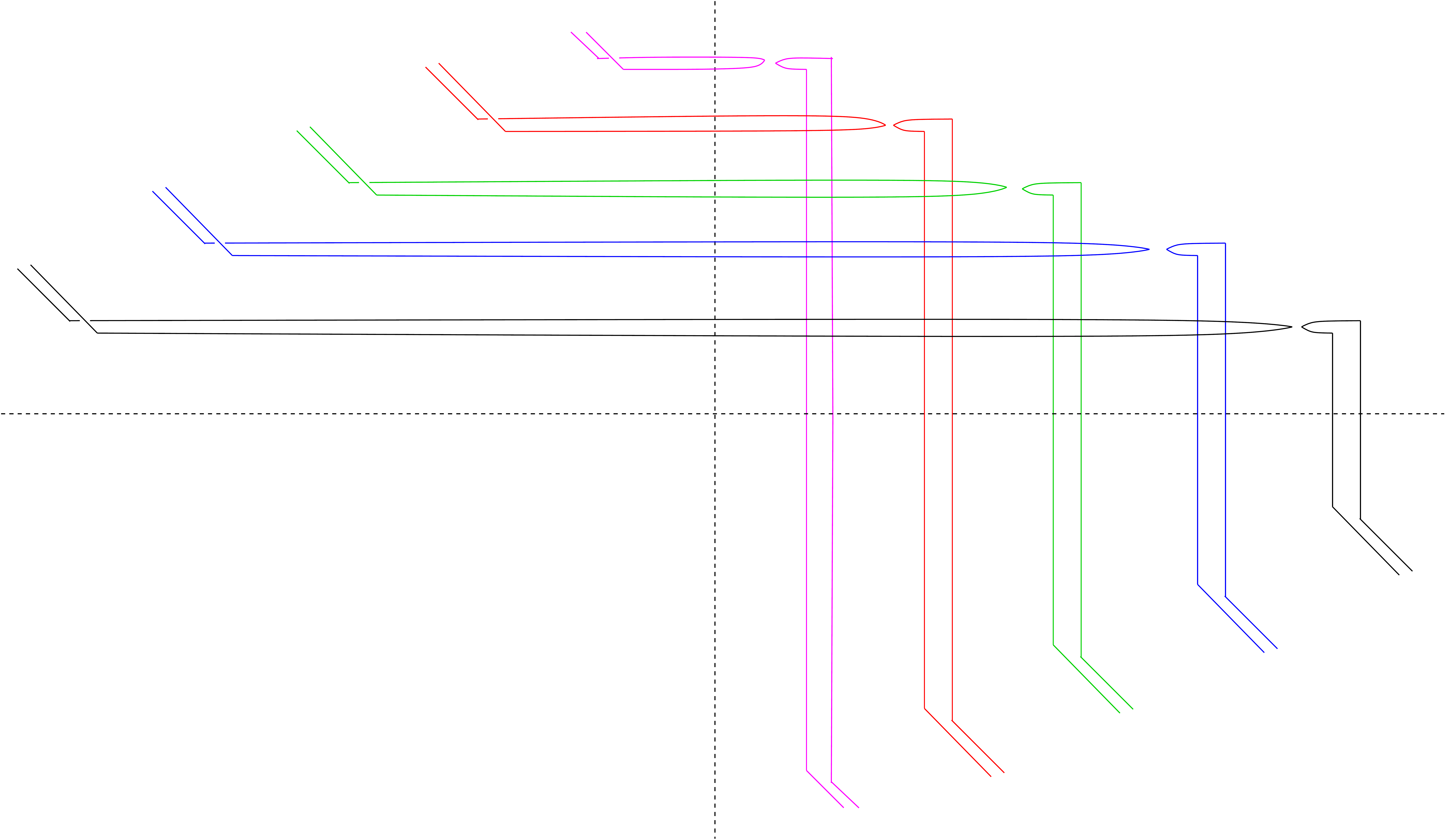}
\\
\\e)
\end{tabular}
\caption{3264 real conics tangent to 5 real conics}
\label{3264}
\end{figure}

Note that once the lines and points $L_i$ and $p_i$ are chosen as above,
 arguments used in Example \ref{ex real conic} also prove next proposition.
\begin{prop}
For any $0\le k\le 5$, any $d_1\ldots,d_{5-k}\ge 1$, and any
$g_1\ldots,g_{5-k}\ge 0$, 
there exists a generic configuration of $k$ points $p_1,\ldots,p_k$ 
in $\RR P^2$ and
$5-k$ immersed real algebraic curves  $C_1,\ldots,C_{5-k}$ with $C_i$
of degree $d_i$ and genus $g_i$ such that all conics passing through
$p_1,\ldots,p_k$
and tangent to $C_1,\ldots,C_{5-k}$ are real.
\end{prop}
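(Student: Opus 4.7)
The plan is to extend the tropical construction already carried out in Example \ref{ex real conic}. Keep the five real lines $L_1,\ldots,L_5\subset\RR P^2$ and the five real points $p_i\in L_i$ from that example, and fix any subset $I\subset\{1,\ldots,5\}$ of cardinality $k$, writing $\{1,\ldots,5\}\setminus I=\{j_1,\ldots,j_{5-k}\}$. The goal is to produce an immersed real curve $C_\ell$ of degree $d_{j_\ell}$ and genus $g_{j_\ell}$ replacing each $L_{j_\ell}$, in such a way that every conic through $\{p_i\}_{i\in I}$ tangent to the $C_\ell$'s is real.

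First, work at the tropical level. Pass to the tropical analogue of Example \ref{ex real conic}: the tropical lines $L_i^\TT$ and points $p_i^\TT\in L_i^\TT$ of Figures \ref{3264}b--c, each equipped with the real phase that renders all phase-tropical conics pretangent to the $L_j^\TT$ real. For each $\ell$, construct a simple effective tropical cycle $C_\ell^\TT$ of Newton polygon $T_{d_{j_\ell}}$ and geometric genus $g_{j_\ell}$, obtained by slightly perturbing $d_{j_\ell}$ parallel copies of $L_{j_\ell}^\TT$ and by creating the required $(d_{j_\ell}-1)(d_{j_\ell}-2)/2 - g_{j_\ell}$ tropical nodes to adjust the geometric genus. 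Equip $C_\ell^\TT$ with the real phase induced, sheet by sheet, from the real phase already fixed on $L_{j_\ell}^\TT$. By Proposition \ref{realiz} applied to each $C_\ell^\TT$, this phase-tropical cycle is the coarse phase-tropical limit of a real one-parameter family of immersed real algebraic curves of degree $d_{j_\ell}$ and genus $g_{j_\ell}$; pick one such curve $C_\ell$ for a sufficiently large parameter.

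Second, apply Theorem \ref{Corres2} to the configuration $(\P^\TT,\L^\TT)=(\{p_i^\TT\}_{i\in I},\{C_\ell^\TT\})$, which is generic in the sense of Definition \ref{def generic} provided the perturbations used to build the $C_\ell^\TT$ are generic. The theorem identifies every complex conic through the $p_i$ and tangent to the $C_\ell$ (for sufficiently large parameter) with a phase-tropical conic pretangent to the $C_\ell^\TT$. Because each pretangency component of such a phase-tropical conic with $C_\ell^\TT$ is, by construction of $C_\ell^\TT$ as a small perturbation of parallel sheets, concentrated along a single sheet parallel to $L_{j_\ell}^\TT$, the phase constraint imposed on the conic by $C_\ell^\TT$ is exactly the one imposed in the set-up of Example \ref{ex real conic} by $L_{j_\ell}^\TT$ itself. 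Since those phases were chosen so that every resulting phase-tropical conic is real (has a real lift in the sense of Remark \ref{rem:real phase morphism}), every complex conic in the count is real.

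The main obstacle is the last observation: one must verify that no phase-tropical conic pretangent to $C_\ell^\TT$ has a pretangency component straddling several sheets, which would mix the phase data and destroy the real-phase property. This requires a small local analysis around each pretangency component, using that the perturbation distance between parallel sheets can be chosen much larger than the diameter of any pretangency set appearing in the finite collection $\S^\TT(2,\P^\TT,\L^\TT)$ (itself finite by Corollary \ref{finite2}). Once this is established, the determinantal weights of Definition \ref{def trop mult} factor as products over the $d_{j_\ell}$ sheets and the argument from Example \ref{ex real conic} applies verbatim, yielding the claim.
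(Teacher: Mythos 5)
Your overall strategy --- keep the five lines $L_i$ and five points $p_i\in L_i$ of Example \ref{ex real conic}, replace each line used as a tangency constraint by a real immersed curve of the prescribed degree and genus obtained as an approximation of a perturbation of $d_{j_\ell}$ copies of $L_{j_\ell}^\TT$ equipped with real phases, and conclude via the correspondence theorem --- is exactly the strategy the paper intends (its proof consists of the single remark that the arguments of Example \ref{ex real conic} apply). However, there is a genuine gap at the step you yourself flag as the main obstacle, and the resolution you propose for it cannot work.

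The claim that every pretangency component of a tropical conic with $C_\ell^\TT$ is concentrated along a single sheet parallel to $L_{j_\ell}^\TT$ is false, and no choice of perturbation distances makes it true: any two of the $d_{j_\ell}$ translates of $L_{j_\ell}^\TT$ have stable intersection number $1$, so the sheets necessarily cross (near the vertex of $L_{j_\ell}^\TT$), and the conics interacting with $C_\ell^\TT$ in that crossing region are genuine, unavoidable contributions to the count rather than something to be ruled out. Quantitatively, if only ``tangent to one sheet'' contributions occurred, a pencil of conics through four points would contain $2d_{j_\ell}$ members tangent to $C_\ell$, whereas the correct number is $4d_{j_\ell}+2g_{j_\ell}-2$; the discrepancy $2d_{j_\ell}+2g_{j_\ell}-2$, positive as soon as $d_{j_\ell}\ge 2$, is exactly the contribution of the terms $2d'd''N_{d,g}(k+1;\ldots)$ in formula (\ref{break constraint}), i.e.\ of the conics passing through the region where the sheets cross. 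Since that region collapses onto the vertex of $L_{j_\ell}^\TT$, hence onto $p_{j_\ell}$, the reality of these conics must be deduced from the \emph{other} half of the property of the configuration of \cite{RoToVu} --- namely that for \emph{every} subset $I$ all conics through $\{p_i\}_{i\in I}$ and tangent to the remaining lines are real --- and not from the tangency-to-$L_{j_\ell}$ half alone. This is precisely why the base configuration is required to work for all subsets $I$ simultaneously; your argument as written establishes reality only for the tangent-to-a-sheet solutions and silently discards the rest. A secondary point: Theorem \ref{realiz} applies to rational phase-tropical curves, so for $g_{j_\ell}>0$ the approximating family of constraint curves must instead be taken from \cite{Mik1}, as is done in the proof of Theorem \ref{Corres2}.
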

For other  examples of totally real enumerative problems concerning conics
in $\RR P^n$,
see \cite{Br14}.

\section{Floor decompositions}\label{floor dec}

\renewcommand{\C}{\mathcal C}

\subsection{Motivation}\label{motivation}

In this section we give a purely combinatorial solution to the
computation of characteristic numbers.  To obtain totally
combinatorial objects we stretch our configuration of constraints in
the vertical direction,
i.e. we only consider configurations $(\P,\L)$
for which the difference of the
$y$-coordinates of any two elements of the set $\P\cup_{L\in\L}\Ve(L)$
is very big compared to the difference of their
$x$-coordinates.
For a sufficiently stretched configuration
$(\P,\L)$, tropical morphisms $f:C\to\RR^2$ in $\S^\TT(d,\P,\L)$ will
have a very simple decomposition into \textit{floors} linked together
by \textit{shafts}.  \textit{Marked floor diagrams} and their
multiplicities will encode the combinatoric of these decompositions
together with the distribution of $f^{-1}(\P)$ and the tangency
components of $f$ with elements of $\L$.  In the case where no
tangency condition is imposed, these new floor diagrams get simplified
to an equivalent of those introduced in \cite{Br7} and \cite{Br6b}.
The floor diagrams we define here allow us to compute characteristic
numbers of the plane in terms of a slight generalization of Hurwitz
number. Indeed the multiplicity of a marked floor diagram will be expressed in terms 
of \textit{open Hurwitz numbers} which  appear in two distinct ways in the count of $N_{d,0}(k;d_1,\ldots ,d_{3d-1-k})$. These numbers were introduced in
\cite{Br13}. We give in Appendix \ref{open Hurwitz} the definitions and
result from \cite{Br13} we need in this paper.

\begin{defi}
An \emph{elevator} of a  tropical morphism $f:C\to\RR^2$ is an edge $e$
of
$C$ with $u_{f,e}=\pm (0,1)$. A \emph{shaft} of $f$ is 
a connected
component of the topological closure of the union of all elevators of $f$.
The set of  shafts of $f:C\to\RR^2$ is
denoted by $\Sh(f)$.
 
A \emph{floor} of a tropical morphism $f:C\to\RR^2$ is a connected
component of the topological closure of $C\setminus \Sh (f)$. 
The \emph{degree} of a floor $\F$ of $f$, denoted by $\deg(\F)$, is the tropical
intersection number of $f(\F)$ with a generic vertical line of $\RR^2$.  
\end{defi}

Let us illustrate our approach on a simple case. Let us consider  $\L$
the set composed of the
five tropical lines depicted in Figure \ref{exa approach}. The set 
$\S^\TT(2,\emptyset,\L)$ is then reduced to the tropical morphism $f:C\to\RR^2$
depicted in Figure \ref{exa approach}b, which is of multiplicity 1.
 This  morphism has one
floor of degree 2, and one shaft made of three elevators. Let us
represent the morphism $f$ by the graph depicted in Figure 
\ref{exa approach}c, where the black vertex represents the shaft of
$f$, the white vertex represents the floor of $f$, and the edge
represents the weight 2 elevator of $f$ which join the shaft and the
floor of $f$. By remembering on this graph how are distributed the
tangency components of $f$ with the lines $L_i$, we obtain the labeled
graph depicted in Figure 
\ref{exa approach}d.

\begin{figure}[h]
\centering
\begin{tabular}{ccccc}
\includegraphics[width=3cm, angle=0]{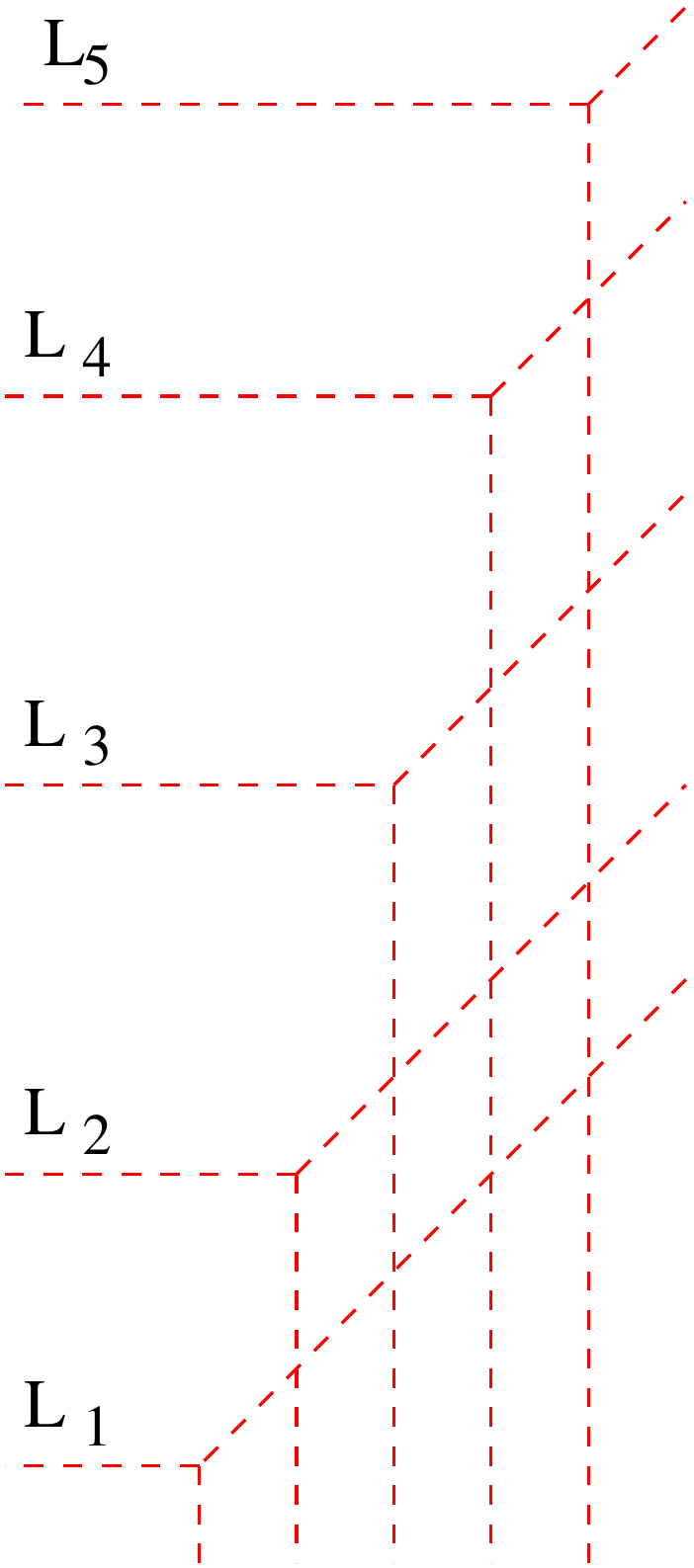}& \hspace{10ex} &
\includegraphics[width=3cm, angle=0]{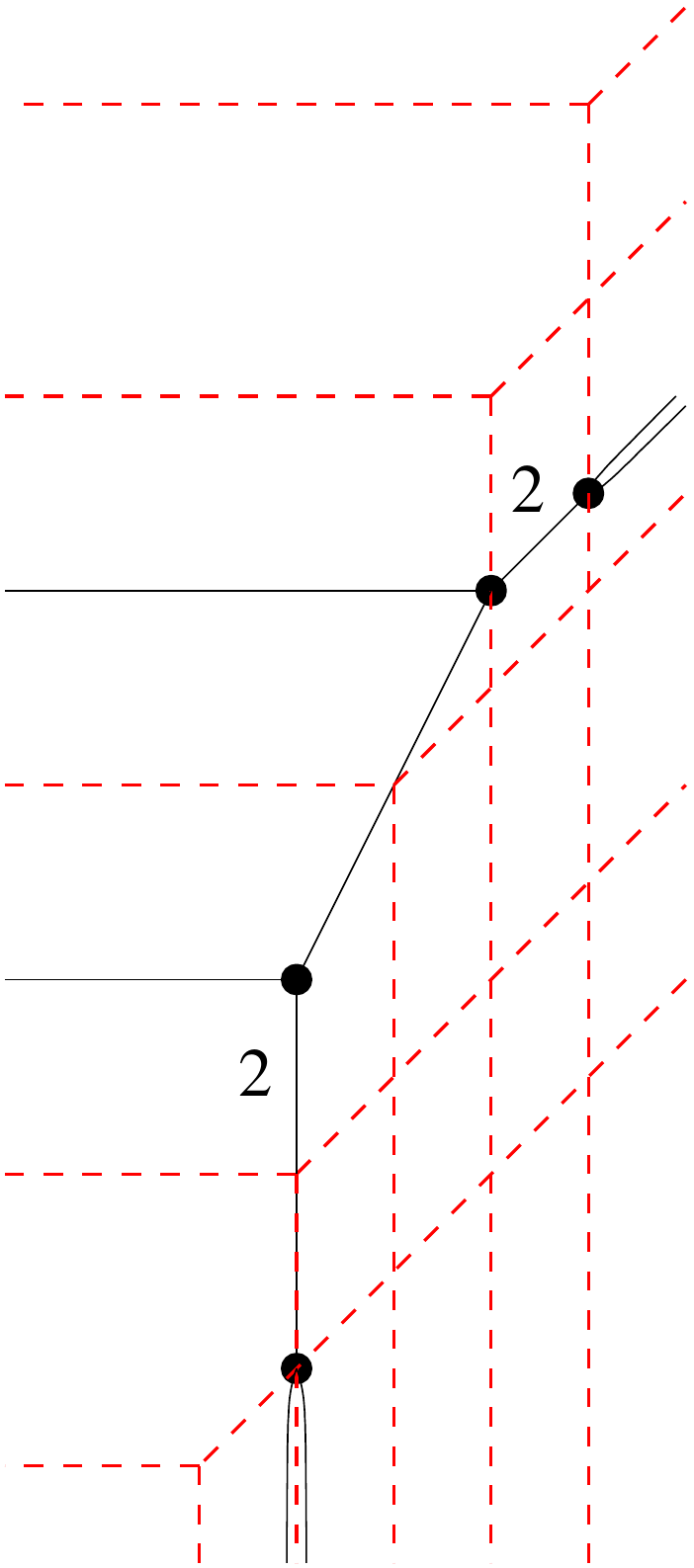}&
\includegraphics[width=3cm, angle=0]{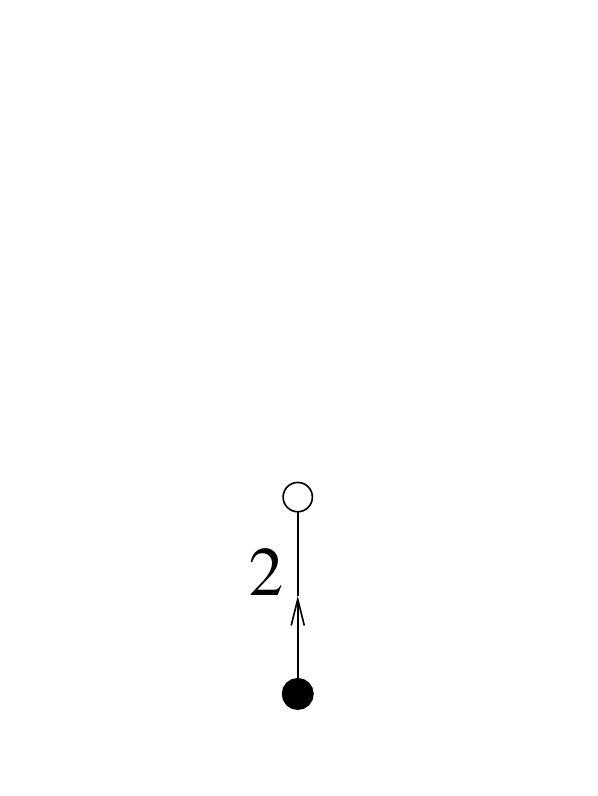}&
\includegraphics[width=3cm, angle=0]{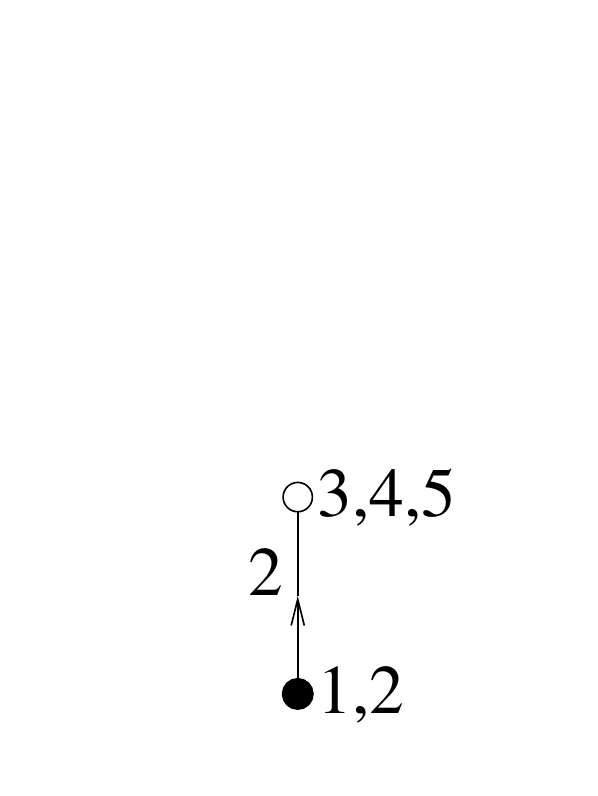} 
\\ a)& &b) &c) &d)
\end{tabular}

\caption{A tropical conic tangent to five lines, and its associated
 marked  floor diagram}
\label{exa approach}
\end{figure}

Let us define the two projections $\pi_x$ and $\pi_y$ as follows
$$\begin{array}{ccccccccc}\pi_x :&\RR^2&\to& \RR & \quad \text{and}\quad &
\pi_y :&\RR^2&\to& \RR
\\ & (x,y)&\mapsto&x
&& & (x,y)&\mapsto&y
 \end{array}.$$

Our main observation is the following:
\begin{itemize}
\item the map $\pi_x\circ f$ restricted to the
floor of $f$ is a tropical 
 ramified covering of $\RR$ of degree 2; its
critical values correspond to the vertical edge of the
lines $L_4$ and $L_5$ (see Figure \ref{exa approach 2}); 
\item  the map $\pi_y\circ f$ restricted to the 
shaft of $f$ is a tropical morphism with source a tropical curve with one 
boundary component; its critical value
correspond approximatively to the horizontal edge of $L_1$; the image
of its boundary component corresponds approximatively to the
horizontal edge of $L_3$. 
\end{itemize}

Vice versa, the
morphism $f:C\to\RR^2$ can be reconstructed out of the labeled diagram
of Figure \ref{exa approach}d in the following way: we first find the
tropical solutions of two tropical open Hurwitz problems, one for the
floor of $f$ and one for its shaft; next we glue them according to the
elevator joining this floor and this shaft, and the lines $L_2$ and
$L_3$.

\begin{figure}[h]
\centering
\begin{tabular}{c}
\includegraphics[width=8cm, angle=0]{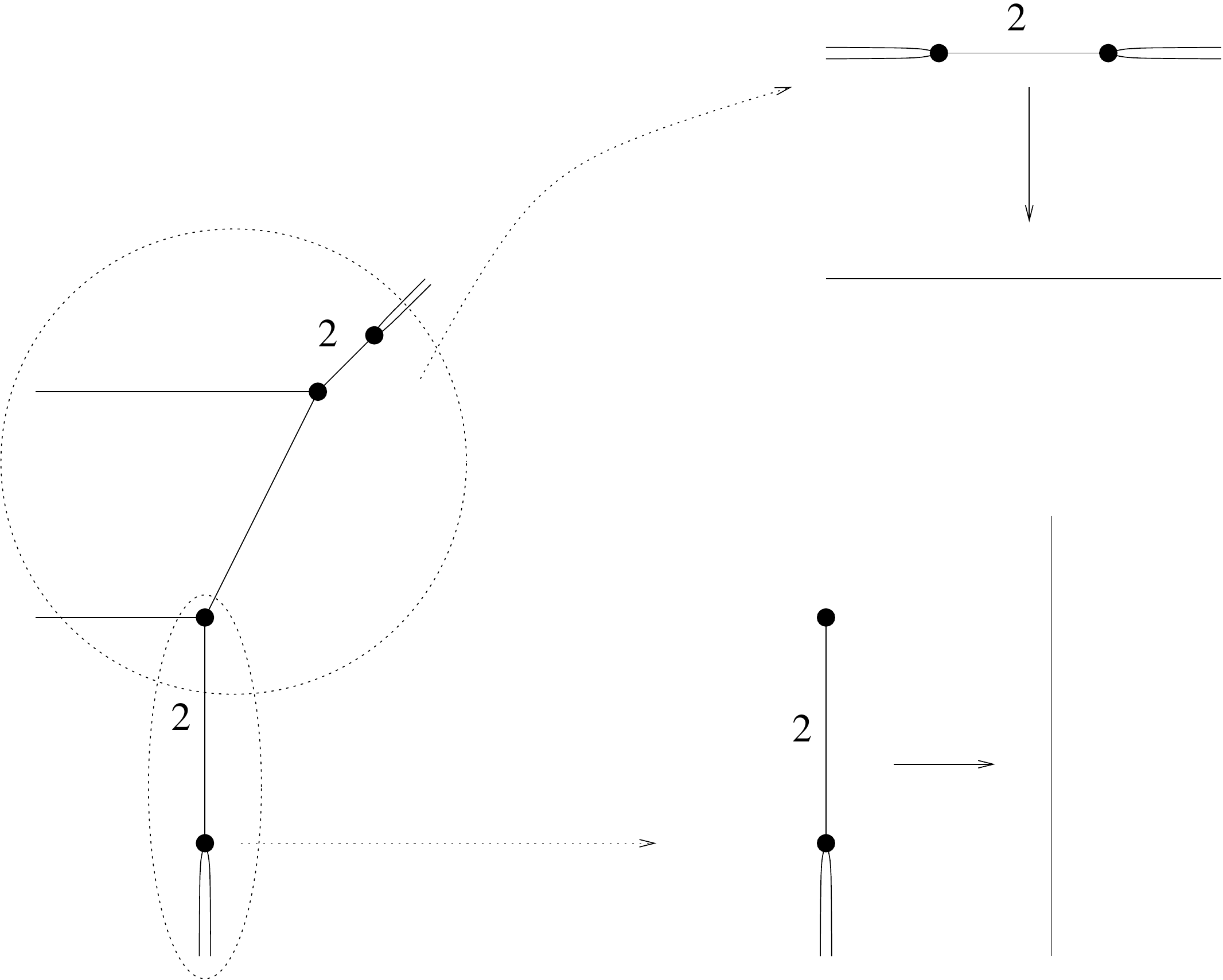}
\end{tabular}

\caption{From characteristic numbers to open Hurwitz numbers}
\label{exa approach 2}
\end{figure}

The floor of $f$ leads to the Hurwitz number $H(2)=\frac{1}{2}$; we
have two possibilities to attach the weight 2 elevator;
 making the floor tangent to $L_3$ gives  a factor 1; the Hurwitz
number we have to compute to reconstruct the shaft of $f$ is
$H(\delta,n)=\frac{1}{2}$ where $\delta(0)=2, \delta(1)=n(1)=0$, and
$n(0)=1$ 
(see Appendix \ref{open Hurwitz} for the definition of
Hurwitz numbers);
 making the shaft tangent to $L_2$ gives us a factor 1;
gluing the floor and the shaft along the weight 2 elevator gives
 an extra factor 2. Hence, the total multiplicity of $f$ is
$$\frac{1}{2}\times 2\times 1\times\frac{1}{2}\times 1\times 2=1 $$
as expected.

The next section is devoted to the generalization of the previous
computation to arbitrary degree and set of constrains.

\subsection{Floor diagrams}
Here we define floor diagrams and their markings. 
Our definitions are similar
in the spirit of those given in
\cite{Br7}, \cite{Br6b}, and \cite{FM}, but are somewhat different since our
enumerative problems also involve tangency conditions.
For simplicity, we only explain in detail how to turn the problem of
computing the numbers $N_{d,0}(k;1^{3d-1-k})$ into the enumeration of
marked floor diagrams. The general computation of the numbers 
$N_{d,0}(k;d_1,\ldots,d_{3d-1-k})$ in terms of floor diagrams require no
more substantial efforts, but makes the exposition heavier. Hence we
 restrict ourselves to the case of tangency with lines, which by
Equation
 (\ref{break constraint})
 is enough to recover all genus 0 characteristic numbers
of $\CC P^2$.

The floor diagrams we  deal with in this paper underlie bipartite trees, whose
vertices are divided between \textit{white} and \textit{black} vertices.
As usual, the divergence of a vertex $v$ of $\D$, denoted by
$\div(v)$,
 is the sum of the weight
of all its incoming adjacent 
edges minus the sum of the weight of all its outgoing adjacent edges.
\begin{defi}
A \emph{floor diagram} (of genus 0) is an oriented bipartite tree $\D$
equipped with a weight function  $w:\Ed(\D)\to \ZZ_{>0}$ such that
white vertices have positive divergence, and black vertices have
non-positive divergence.
\end{defi}

The
sum of the divergence of all white vertices is called the
\textit{degree} of $\D$. We denote by $\Ve^\circ(\D)$ the set of white
vertices of $\D$, and by 
$\Ve^\bullet(\D)$ the set of its black vertices.

As explained in Section \ref{motivation},
 a white vertex represents a \textit{floor} of
a  tropical
morphism, whereas a black vertex represents one of its  shafts.

\begin{exa}
All floor diagrams of degree 2 are depicted in
Figure \ref{fd conic 0}. We precise the weight of an edge of $\D$ only
if this latter is not 1.
\end{exa}
\begin{figure}[h]
\centering
\begin{tabular}{ccccc}
\includegraphics[width=3cm, angle=0]{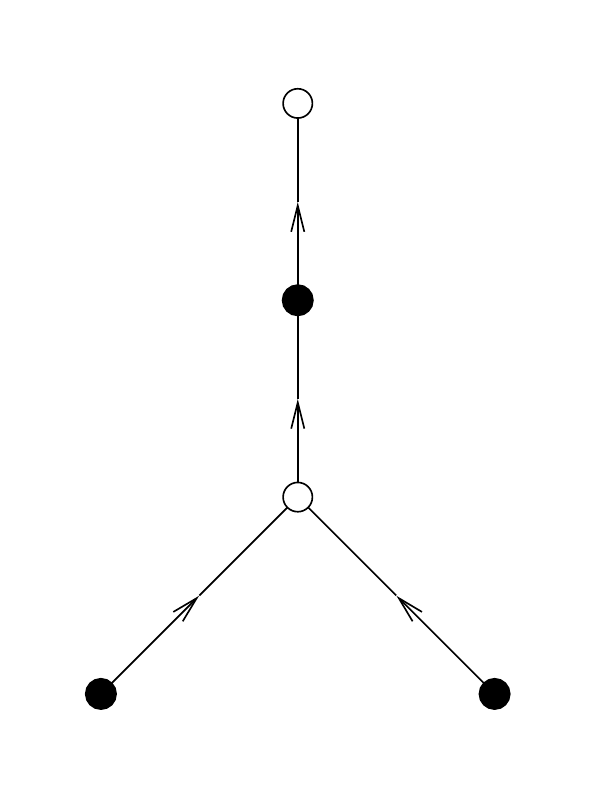}&
\includegraphics[width=3cm, angle=0]{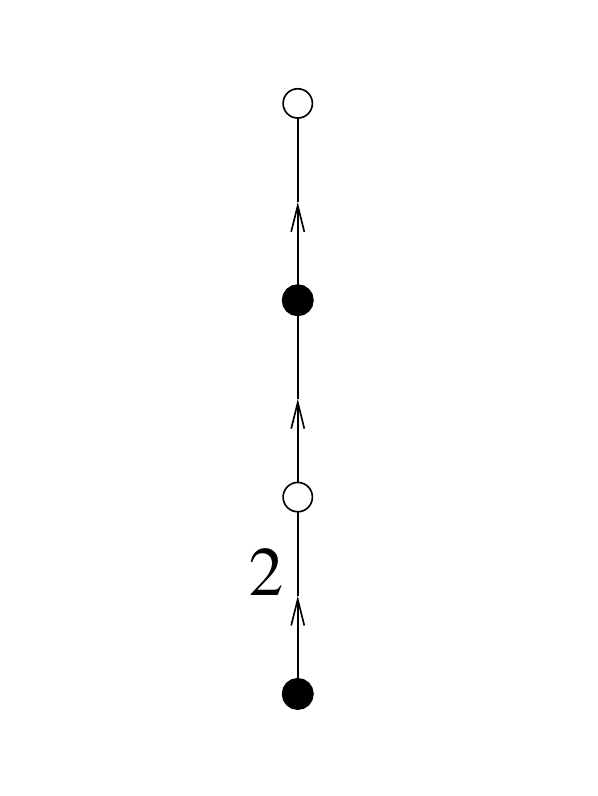}&
\includegraphics[width=3cm, angle=0]{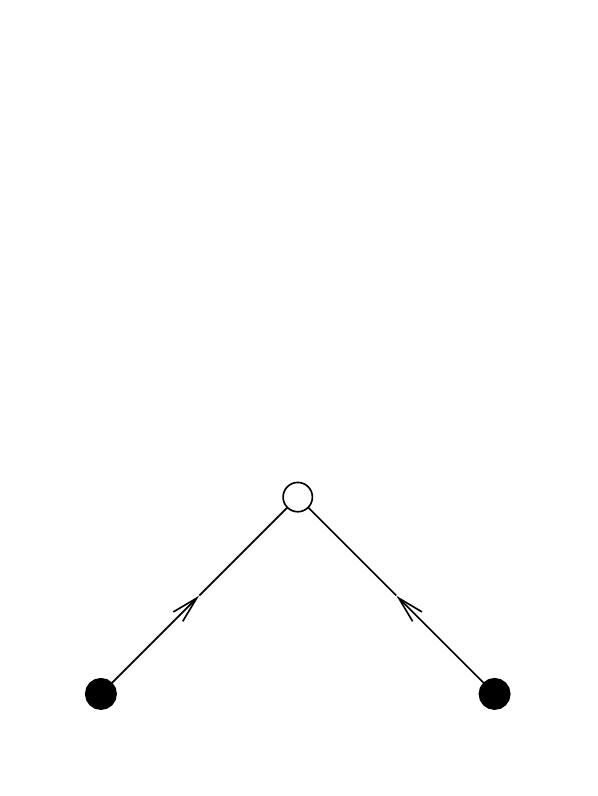}&
\includegraphics[width=3cm, angle=0]{Figures/FD2d.pdf}&
\includegraphics[width=3cm, angle=0]{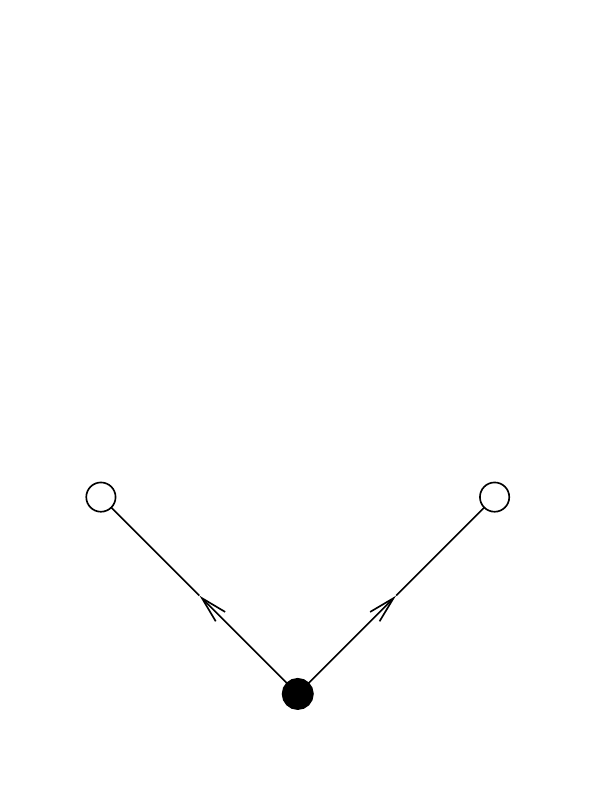}
\end{tabular}

\caption{Floor diagrams of degree 2}
\label{fd conic 0}
\end{figure}

Given a vertex $v$ of $\D$, we denote by $\Ve(v)$ the set of vertices
of $\D$ adjacent to $v$.

\begin{defi}
Let $\L^{comb}\sqcup \P^{comb}$ be a partition of the set
$\{1,\ldots,3d-1\}$.
A \emph{$\L^{comb}$-marking} of a floor diagram $\D$ of degree $d$ is a surjective map 
$m:\{1,\ldots,3d-1\}\to \Ve(\D)$ such that
\begin{itemize}
\item for any $v\in\Ve(\D)$, the set $m^{-1}(v)$ contains at most one
  point in $\P^{comb}$; moreover, if $v\in\Ve^\circ(\D)$ and 
 $m^{-1}(v)\cap \P^{comb}=\{i\}$, then  $i=\min(m^{-1}(v))$;

\item for any $v\in\Ve^\circ(\D)$, $|m^{-1}(v)|=2div(v)-1$;

\item for any $v\in\Ve^\bullet(\D)$, $|m^{-1}(v)|=val(v)-div(v)-1$;
  moreover there exists at most one element $i$ in $m^{-1}(v)$ such
  that $i> \max_{v'\in\Ve(v)} \min(m^{-1}(v'))$, and if such an
  element exists then we
  have $m^{-1}(v)\cap \P^{comb}= \emptyset$.

\end{itemize}
\end{defi}
Two $\L^{comb}$-markings $m:\{1,\ldots,3d-1\}\to \Ve(\D)$ and 
$m':\{1,\ldots,3d-1\}\to \Ve(\D')$ are \emph{isomorphic} if there exists an
isomorphism of bipartite graphs $\phi:\D\to \D'$ such that
$m=m'\circ\phi$. In this text, $\L^{comb}$-marked floor diagrams are
considered up to isomorphism.

The set $\{1,\ldots,3d-1\}$  represents the configuration
of constraints in the increasing height order (see Section
\ref{motivation}),
 the set $\L^{comb}$
represents the lines in the configuration, and the set $\P^{comb}$
represents the points.
Note that unlike in \cite{Br7}, \cite{Br6b}, and \cite{FM}, 
we do not consider the partial order on $\D$ defined by its
orientation. In particular, it makes no sense here to require the marking $m$
to be an increasing map.

In order to define the multiplicity of an $\L^{comb}$-marked floor
diagram, we first define the multiplicity of a vertex of $\D$. Recall
that the definitions of 
 open Hurwitz numbers $H(\delta,n)$ and
Hurwitz numbers $H(d)=H(d,0)$ 
 are given
in Definition  \ref{def open hurwitz}.

\begin{defi}\label{def mult white}
The \emph{multiplicity of a vertex $v$ in $\Ve^\circ(\D)$} is defined as
\begin{itemize}
\item  if
  $\min(m^{-1}(v))\in\P^{comb}$, then
$$\mu_{\L^{comb}}(v)=\div(v)^{\val(v)+1}H(\div(v))$$

\item  otherwise
$$\mu_{\L^{comb}}(v)=(\div(v)-2 + \val(v))\div(v)^{\val(v)}H(\div(v)).$$
\end{itemize}
\end{defi}

\begin{exa}
We give in Figure \ref{ex mult white} some examples of  multiplicities
of white vertices of a marked floor diagram. The corresponding Hurwitz
numbers are given in Proposition \ref{exa hurwitz}. We write the
elements of $m^{-1}(v)$ close to the vertex $v$.
\end{exa}
\begin{figure}[h]
\centering
\begin{tabular}{cccc}
\includegraphics[width=3cm, angle=0]{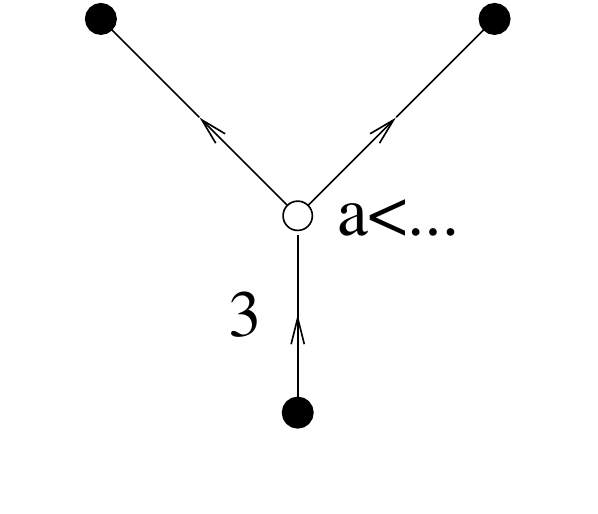}&
\includegraphics[width=3cm, angle=0]{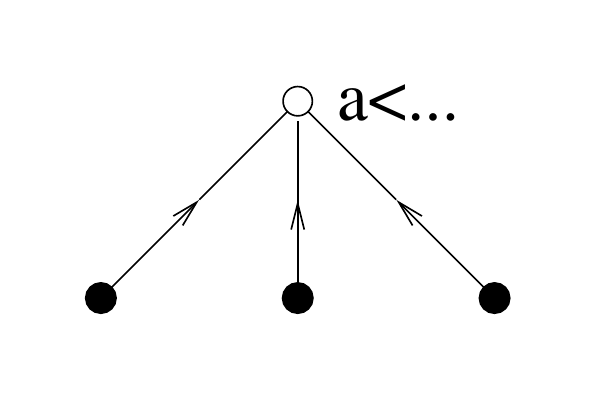}&
\includegraphics[width=3cm, angle=0]{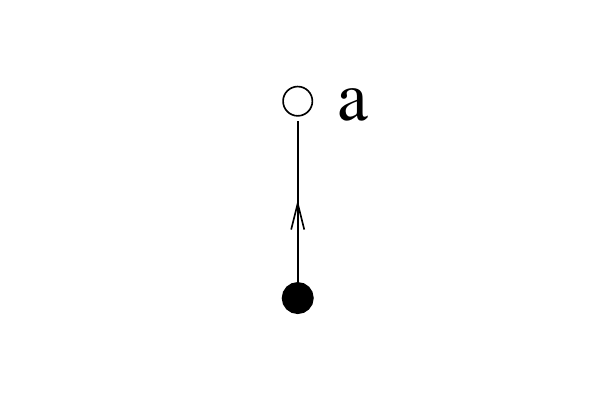}&
\includegraphics[width=3cm, angle=0]{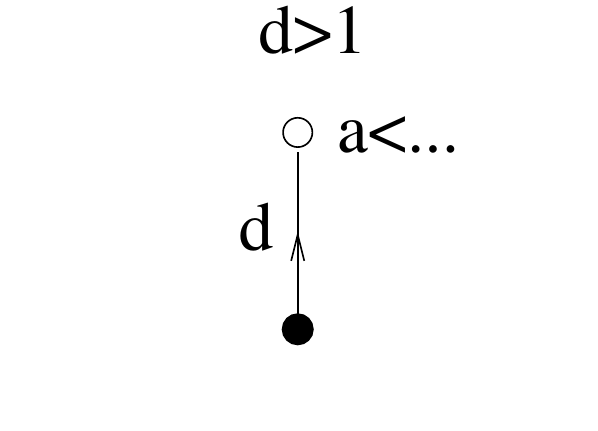}
\\ \begin {tabular}{l} 
 $a\in\P^{comb}$, $\mu=1$
  \\ 
   $a\in\L^{comb}$, $\mu=2$
\end{tabular}  & 
\begin {tabular}{l} $a\in\P^{comb}$, $\mu=324$
\\  $a\in\L^{comb}$, $\mu=432$ \end{tabular}    & 
\begin {tabular}{l} $a\in\P^{comb}$, $\mu=1$
\\  $a\in\L^{comb}$, $\mu=0$ \end{tabular}    & 
\begin {tabular}{l} $a\in\P^{comb}$, $\mu=\frac{d^{d-1}(2d-2)!}{d!}$
\\  $a\in\L^{comb}$, $\mu=\frac{d^{d-3}(2d-2)!}{(d-2)!}$ \end{tabular}   
\end{tabular}

\caption{Example of multiplicities of white vertices of $\D$}
\label{ex mult white}
\end{figure}

The definition of the multiplicity of a black
vertex $v$ of $\D$ requires a preliminary construction.
 The order  on $\{1,\ldots,3d-1\}$
induces an order on $\Ve(v)$ via the map $v'\mapsto\min(m^{-1}(v'))$. Note that
this order doesn't have to be compatible with the orientation of $\D$.
Let us
denote by $v'_1<\ldots <v'_s$ the elements of $\Ve(v)$ according to this
order. We denote by $e_i$ the edge of $\D$ joining the vertices $v$
and $v'_i$, and define $\varepsilon_i=1$ if $e_i$ is oriented toward $v$, and
$\varepsilon_i=-1$ otherwise.
Given $j\in m^{-1}(v)$ we define the integer $i_j$ by

$i_j=0$ if $j<\min(m^{-1}(v'_1))$;

$i_j=i$ if $\min(m^{-1}(v'_{i}))<j<\min(m^{-1}(v'_{i+1}))$;

$i_j=s$ if $j>\min(m^{-1}(v'_s))$.

\vspace{1ex}
\noindent We define two functions $\delta, \tilde n:\{0,\ldots,s\}\to \ZZ$ 
by

\begin{itemize}
\item $\delta(0)=-div(v)$,

\noindent   $\delta(i+1)=\delta(i)+
 \varepsilon_{i+1}w(e_{i+1})$;
\vspace{1ex}

\item $\tilde n(i)=|\{j\in m^{-1}(v)\ |\ i_j=i\}|$.
\end{itemize}

\noindent Given $i_0\in  \{0,\ldots,s\} $, we define the function $n_{i_0}:
 \{0,\ldots,s\}\to \ZZ_{\ge 0}$ by
$n_{i_0}(i_0)=\tilde n(i_0)-1$ and $n_{i_0}(i)=\tilde n(i)$ if $i\ne
 i_0$. Finally, we define 
 $\tilde N(i)=\sum_{l=0}^{i}\tilde n(l)$ 
and $\tilde N(-1)=0$.

\begin{defi}\label{def mult black}
The \emph{multiplicity of a vertex $v$ in $\Ve^\bullet(\D)$} is defined by
the following rules
\begin{itemize}
\item if $m^{-1}(v)\cap\P^{comb}=\{j\}$,
  then 
$$\mu_{\L^{comb}}(v)=\delta(i_j)H(\delta,n_{i_j})$$

\item if $m^{-1}(v)\cap\P^{comb}=\emptyset$ and 
$m^{-1}(v)$ contains an element $j$ such
  that $j> \max_{v'\in\Ve(v)} \min(m^{-1}(v'))$, 
  then 
$$\mu_{\L^{comb}}(v)=(2\val(v) -2)H(\delta,n_s)$$

\item otherwise,
$$\mu_{\L^{comb}}(v)=\frac{1}{2}\sum_{i=0}^s \left(\tilde n(i)
\left(2\delta(i)+ 2i
+\tilde N(i)+\tilde N(i-1)-1 +2\div(v) \right)H(\delta,n_{i})  \right).$$

\end{itemize}
\end{defi}
\begin{exa}
We give in Figure \ref{ex mult black} some examples of  multiplicities
of black vertices of a marked floor diagram. The corresponding open Hurwitz
numbers are given in Proposition \ref{exa hurwitz} and Example~\ref{ex comp open H}.
\end{exa}
\begin{figure}[h]
\centering
\begin{tabular}{cc}
\includegraphics[width=4cm, angle=0]{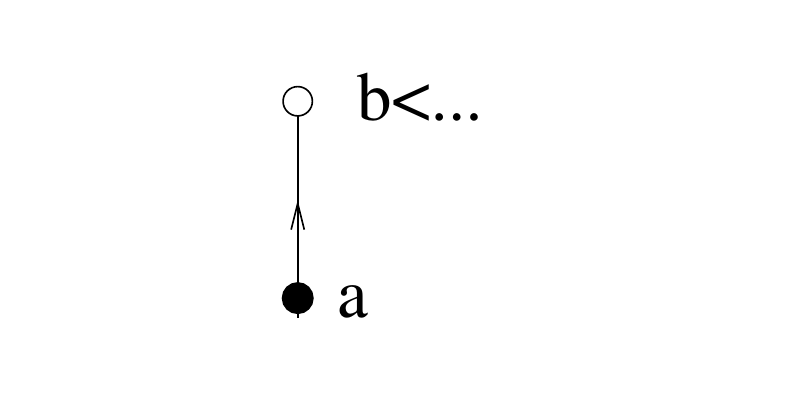}&
\includegraphics[width=4cm, angle=0]{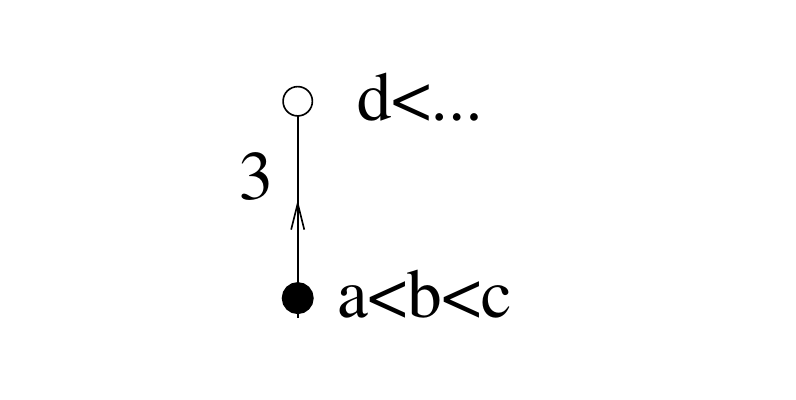}
\\ \begin {tabular}{l} $a>b$, $\mu=0$
  \\ $a<b$, $a\in\P^{comb}$, $\mu=1$
\\ $a<b$, $a\in\L^{comb}$, $\mu=0$  \end{tabular}  & 
\begin {tabular}{l} $c>d$, $\mu=0$
  \\ $c<d$, $\{a,b,c\}\cap\P^{comb}\ne\emptyset$, $\mu=3$
\\ $c<d$, $\{a,b,c\}\subset\L^{comb}$,
$\mu=3$  \end{tabular}   
\\ \\

\includegraphics[width=4cm, angle=0]{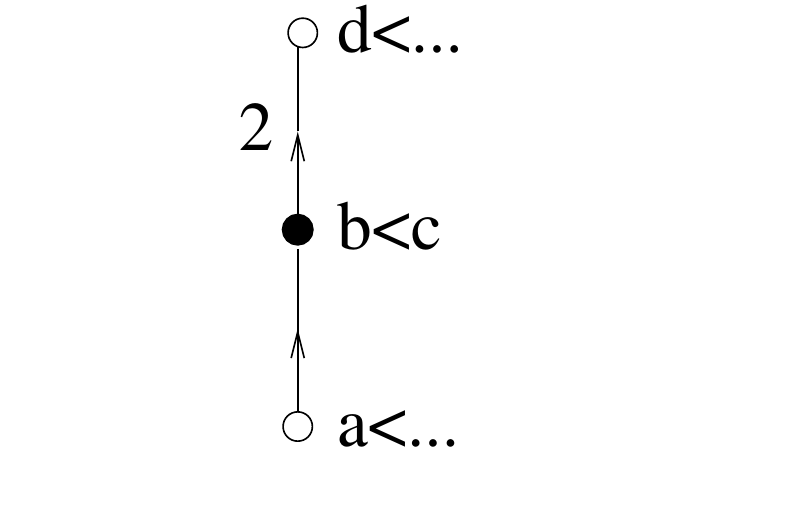}&
\includegraphics[width=4cm, angle=0]{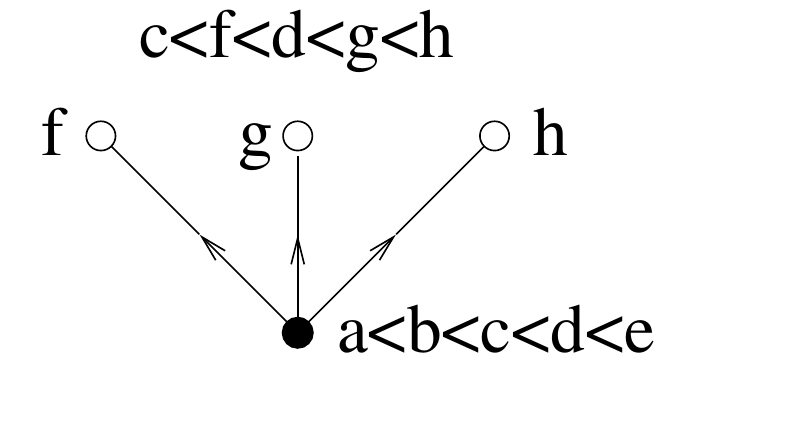}
\\ 
\begin {tabular}{l} $b<a$, $b\in\P^{comb}$, $\mu=1$ 
\\$b<a$, $b\in\L^{comb}$, $\mu=0$
\\$a<b<c<d$, $\{b,c\}\cap\P^{comb}\ne\emptyset$, $\mu=2$
  \\ $a<b<c<d$, $\{b,c\}\subset\L^{comb}$, $\mu=5$
  \\ $a<b<d<c$, $\{b,c\}\subset\L^{comb}$, $\mu=2$
\\ \hspace{1ex}
  \end{tabular}  &
\begin {tabular}{l} 
$e>h$, $e\in\L^{comb}$, $\mu=32$
  \\ $e<g$, $\{d,e\}\cap\P^{comb}\ne\emptyset$, $\mu=16$
  \\ $e<g$, $c\in\P^{comb}$, $\mu=6$
\\ $e<g$, $\{a,b,c,d,e\}\in\L^{comb}$, $\mu=62$  \end{tabular}  

\end{tabular}

\caption{Example of multiplicities of black vertices of $\D$}
\label{ex mult black}
\end{figure}

\begin{defi}
The \emph{multiplicity of an $\L^{comb}$-marked floor diagram} is defined as
$$\mu_{\L^{comb}}(\D,m)=\prod_{e\in\Ed(\D)}w(e)\prod_{v\in\Ve(\D)}\mu_{\L^{comb}}(v).$$
\end{defi}

Note that $\mu_{\L^{comb}}(\D,m)$ can be equal to 0.

\begin{exa}
We give in Figure \ref{ex mult fd} a few examples of  multiplicities
of $\L^{comb}$-marked floor diagrams.
\end{exa}
\begin{figure}[h]
\centering
\begin{tabular}{ccccc}
\includegraphics[width=3cm, angle=0]{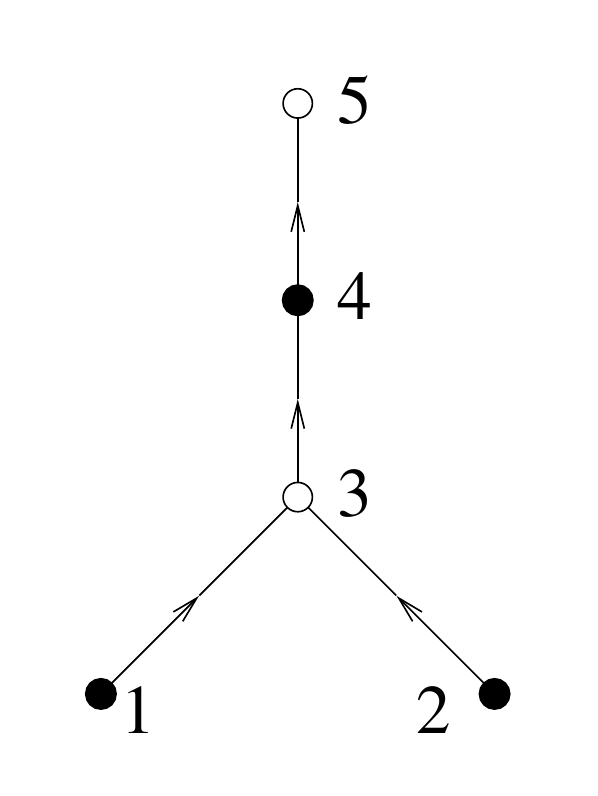}&
\includegraphics[width=3cm, angle=0]{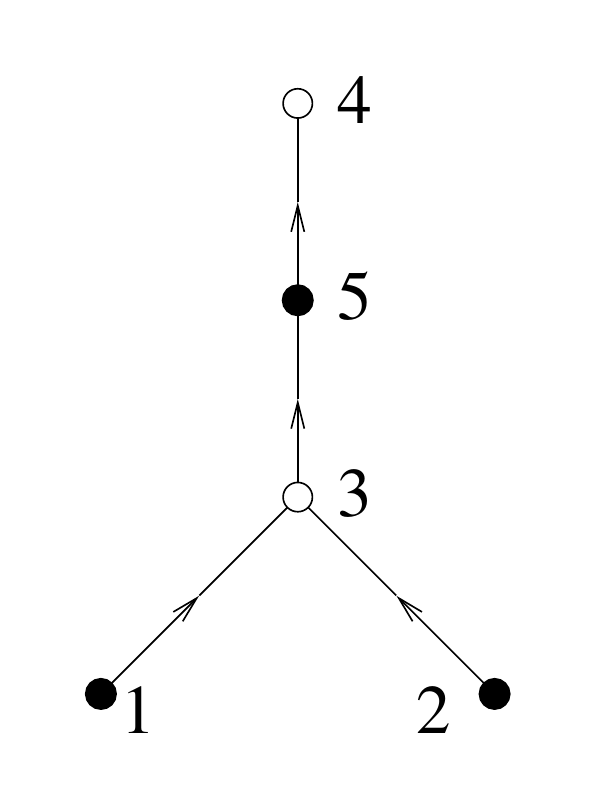}&
\includegraphics[width=3cm, angle=0]{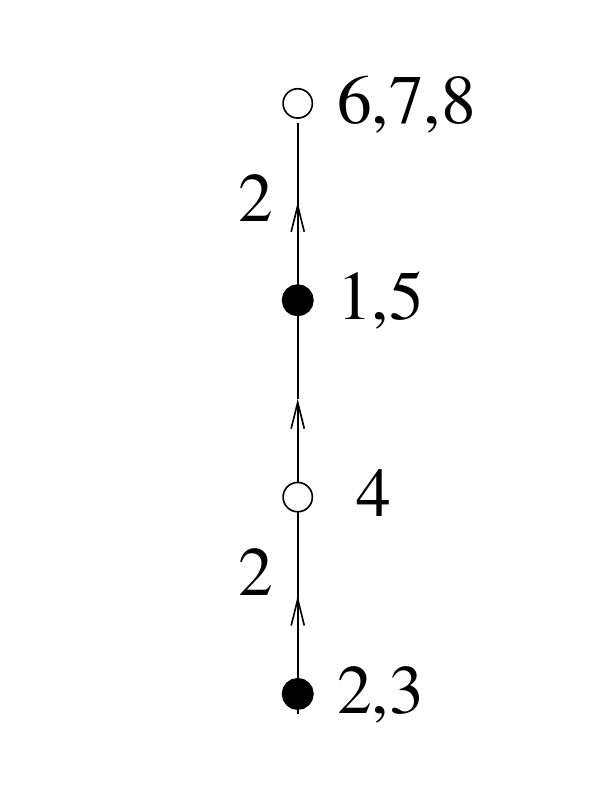}&
\includegraphics[width=3cm, angle=0]{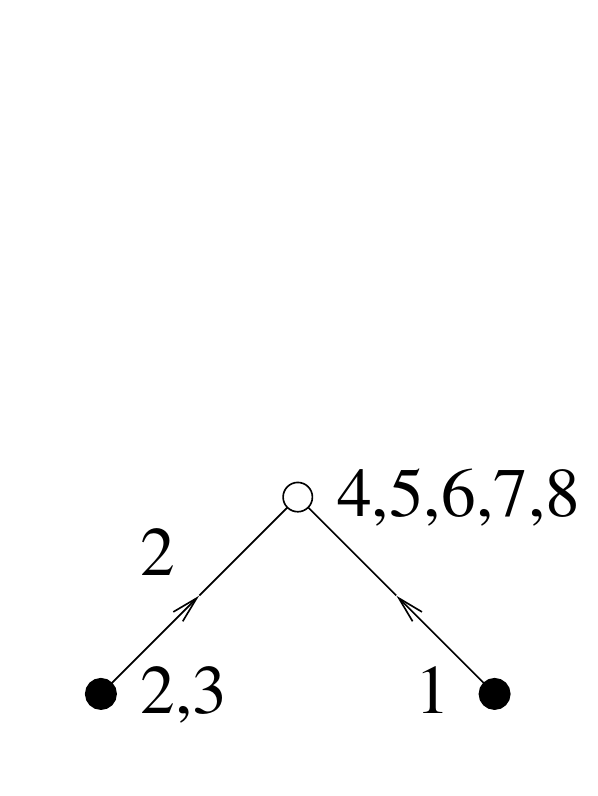}&
\includegraphics[width=3cm, angle=0]{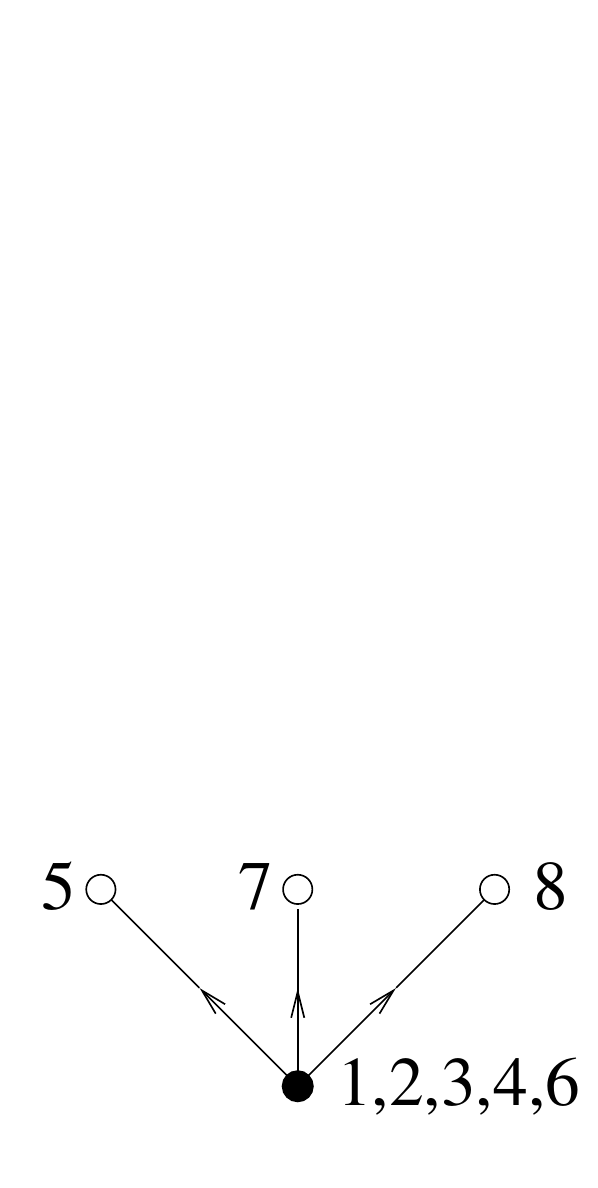}
\\ $\mu_{\emptyset}=1$ &$\mu_{\{5\}}=2$  &$\mu_{\{2,\ldots,8\}}=2$
&$\mu_{\{2,\ldots,8\}}=108$  &$\mu_{\{1,2,3,4,6\}}=80$  
\end{tabular}

\caption{Example of multiplicities of $\L^{comb}$-marked floor diagrams}
\label{ex mult fd}
\end{figure}

Next theorem is a direct consequence of Proposition \ref{fiber} and
Theorem \ref{Corres}.
\begin{thm}\label{fd thm}
For any $d\ge 1$,  $k\ge 0$, and $\L^{comb}\subset\{1,\ldots,3d-1\}$
of cardinal $3d-1-k$, we have
$$N_{d,0}(k;1^{3d-1-k})=\sum \mu_{\L^{comb}}(\D,m) $$
where the sum ranges over all $\L^{comb}$-marked floor diagrams of degree $d$.
\end{thm}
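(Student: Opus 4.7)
The plan is to apply Theorem \ref{Corres} for a specifically chosen generic configuration $(\P,\L)$ and then to establish a combinatorial bijection between tropical morphisms in $\S^\TT(d,\P,\L)$ and $\L^{comb}$-marked floor diagrams of degree $d$, under which the tropical multiplicity factors as the product defining $\mu_{\L^{comb}}(\D,m)$. First, I would pick $(\P,\L)$ "vertically stretched" in the sense of Section \ref{motivation}, so that the difference of $y$-coordinates between any two elements of $\P\cup\bigcup_{L\in\L}\Ve(L)$ dominates all $x$-differences, and order the $3d-1$ constraints by increasing height, matching them with the integers $1,\ldots,3d-1$ so that $\P^{comb}$ is the complement of $\L^{comb}$. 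Genericity of this configuration, which must be checked, follows from the analysis in Section \ref{generic conf}; in particular the conclusions of Proposition \ref{generic curves} hold.

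The main structural step is to show that for such a stretched configuration, any $f:C\to\RR^2$ in $\S^\TT(d,\P,\L)$ decomposes canonically into floors and shafts, and that this decomposition yields a bipartite tree $\D(f)$ with white vertices for floors (of positive divergence, the divergence being the floor's degree) and black vertices for shafts (of non-positive divergence). Since $(\P,\L)$ is generic and very stretched, each floor is traversed transversally once by each horizontal strip containing no constraint, and the tangency/incidence conditions distribute between floors and shafts in accordance with height: constraints at the height of a floor become markings on the corresponding white vertex (exactly $2\deg(\F)-1$ of them, matching the count $2\div(v)-1$), while those at the height of a horizontal band of elevators become markings on the corresponding black vertex. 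Defining $m(i)$ to be the vertex carrying the $i$-th constraint gives an $\L^{comb}$-marking of $\D(f)$, and conversely any marked floor diagram $(\D,m)$ is realized by gluing floors and shafts compatible with the marking. The combinatorial conditions on markings (at most one point-constraint per vertex, the "minimum" rule for white vertices, the upper-bound rule for black vertices) exactly encode the order in which the horizontal slices of $f$ are crossed.

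Next, I would compute the multiplicity. Since vertical projection $\pi_x\circ f$ restricted to a floor $\F$ is a tropical ramified covering of $\RR$ of degree $\div(v)$, and $\pi_y\circ f$ restricted to a shaft is a tropical morphism to $\RR$ from a rational curve with one boundary component and specified vertical slopes, the reconstruction of $f$ from $(\D,m)$ reduces to solving two families of one-dimensional tropical enumerative problems. These are precisely open Hurwitz problems, so their solutions are counted (with tropical multiplicity) by the numbers $H(\div(v))$ and $H(\delta,n_i)$ appearing in Definitions \ref{def mult white} and \ref{def mult black}; the combinatorial factors $\div(v)^{\val(v)}$, $(\div(v)-2+\val(v))$, $2\val(v)-2$, and the sum $\sum \tilde n(i)(2\delta(i)+\ldots)$ come respectively from the choices of attaching elevators to floors, from the position of the extra marking, and from the position of the tangency point inside a shaft. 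The formula of Proposition \ref{practical comp} expresses $\mu_{(\P,\L)}(f)$ as a product over edges and vertices of $C$; localizing this product at each floor and each shaft and summing over the positions of the constraints produces exactly the vertex multiplicities of Definitions \ref{def mult white}--\ref{def mult black}, while the edge weights $w(e)$ in $\D$ are accounted for by the factor $\prod_{e\in\Ed^0(C)}w_{f,e}$ attached to the elevators joining floors to shafts (the factor $2$ from Lemma \ref{cut1} combines with the weight to recover $w(e)$). This is the content of the unstated Proposition \ref{fiber}, which is the main technical obstacle: one must carefully track how the various factors of Proposition \ref{practical comp}, including the $w_q$ for points and lines and the $\frac{1}{|Aut(f)|}$, redistribute among floors and shafts, and how they match the open Hurwitz counts given in \cite{Br13}.

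Summing $\mu_{(\P,\L)}(f)$ over $\S^\TT(d,\P,\L)$ thus equals $\sum_{(\D,m)} \mu_{\L^{comb}}(\D,m)$, and Theorem \ref{Corres} identifies the left-hand side with $N_{d,0}(k;1^{3d-1-k})$. The hardest part will be the case-by-case multiplicity bookkeeping at black vertices, where the three cases of Definition \ref{def mult black} correspond to whether the shaft carries a point-constraint, a "top" tangency condition, or only intermediate tangency conditions; the symmetric sum formula in the third case emerges because the tangency point can lie on any of the sub-arcs of the shaft separated by the elevators, each contributing a distinct open Hurwitz number $H(\delta, n_i)$ weighted by the local tropical intersection factor $2\delta(i)+2i+\tilde N(i)+\tilde N(i-1)-1+2\div(v)$.
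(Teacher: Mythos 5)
Your proposal follows essentially the same route as the paper's own proof: choose a vertically stretched generic configuration, show every curve in $\S^\TT(d,\P,\L)$ decomposes into floors and shafts yielding an $\L^{comb}$-marked floor diagram, compute the fiber-wise sum of tropical multiplicities by localizing the product formula of Proposition \ref{practical comp} at floors and shafts and identifying the pieces with open Hurwitz numbers, and conclude via Theorem \ref{Corres}. The only slip is the parenthetical about a ``factor $2$ from Lemma \ref{cut1}'' (that lemma contributes the edge weight $w_{f,e}$, not a factor $2$), which does not affect the overall correctness of the outline.
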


Note that in the case $k=3d-1$, Theorem \ref{fd thm} agrees with
\cite[Theorem 3.6]{Br6b} and \cite[Theorem 1]{Br7}. Indeed, 
 a $\emptyset$-marked floor diagram 
has non-null multiplicity if and only if the marking is increasing
with respect to the partial order on $\D$ defined by its orientation;  in
this case, according to Example \ref{elem1},
 the different definitions of multiplicity of a marked floor
diagram coincide.

\begin{exa}
We  first compute the numbers $N_{2,0}(k;1^{5-k})$, with
$\L^{comb}=\{k+1,\ldots,5\}$. In each case, there is exactly one marked
floor diagram of positive multiplicity, depicted in Figure 
\ref{fd conic 1}.
\end{exa}
\begin{figure}[h]
\centering
\begin{tabular}{c|c|c|c|c}
&\includegraphics[width=3cm, angle=0]{Figures/FD2Ma.pdf}&
\includegraphics[width=3cm, angle=0]{Figures/FD2Mb.pdf}&
\includegraphics[width=3cm, angle=0]{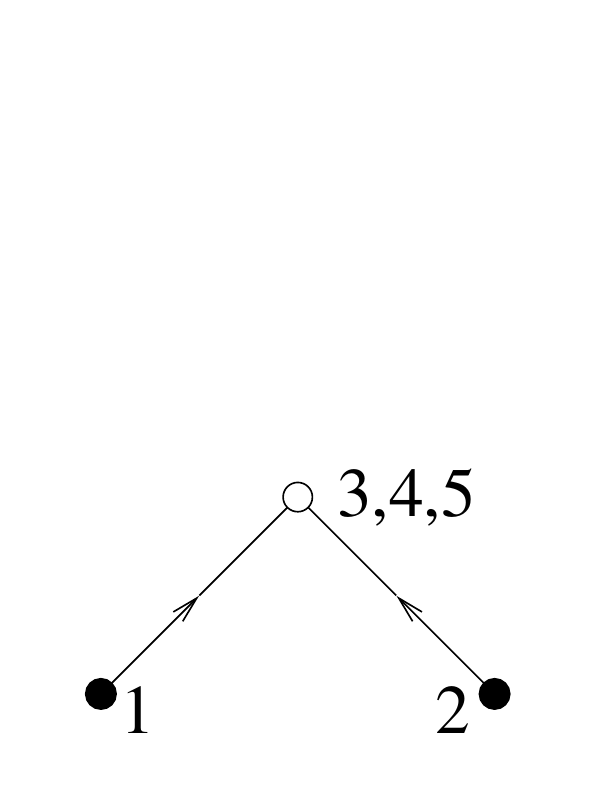}&
\includegraphics[width=3cm, angle=0]{Figures/FD2Md.pdf}
\\\hline $\mu_{\emptyset}$   &  1  & 0 & 0 &0
\\\hline $\mu_{\{5\}}$   & 0& 2  & 0 & 0

\\ \hline $\mu_{\{4,5\}}$   & 0&0    & 4 & 0
\\ \hline     $\mu_{\{3,4,5\}}$    & 0&0    & 4 & 0 
\\\hline $\mu_{\{2,3,4,5\}}$   & 0   &  0& 0& 2
\\\hline $\mu_{\{1,2,3,4,5\}}$   & 0   &  0& 0 &1

\end{tabular}

\caption{Computation of  $N_{2,0}(k;1^{5-k})$ with $\L^{comb}=\{k+1,\ldots,5\}$}
\label{fd conic 1}
\end{figure}

\begin{exa}
In order to  get used with  floor diagrams,
 we compute again  the numbers $N_{2,0}(k;1^{5-k})$, using now
$\L^{comb}=\{1,\ldots,5-k\}$. In this case we have one (resp. 3, and
 2) marked floor 
diagrams of positive multiplicity for $k=5,4,1,$ and 0 (resp. 3, and
2). These marked floor diagrams are depicted in Figure 
\ref{fd conic 2}.
\end{exa}
\begin{figure}[h]
\centering
\begin{tabular}{c|c|c|c}
&\includegraphics[width=3cm, angle=0]{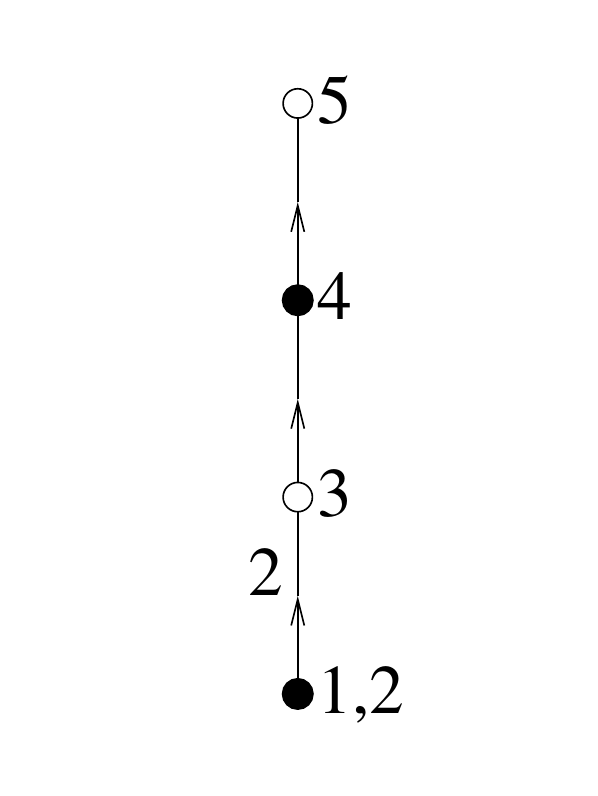}&
\includegraphics[width=3cm, angle=0]{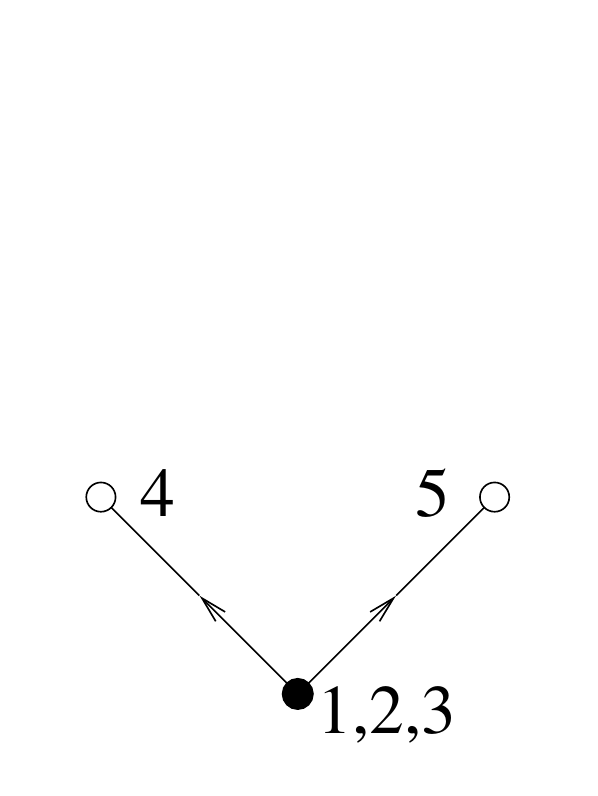}&
\includegraphics[width=3cm, angle=0]{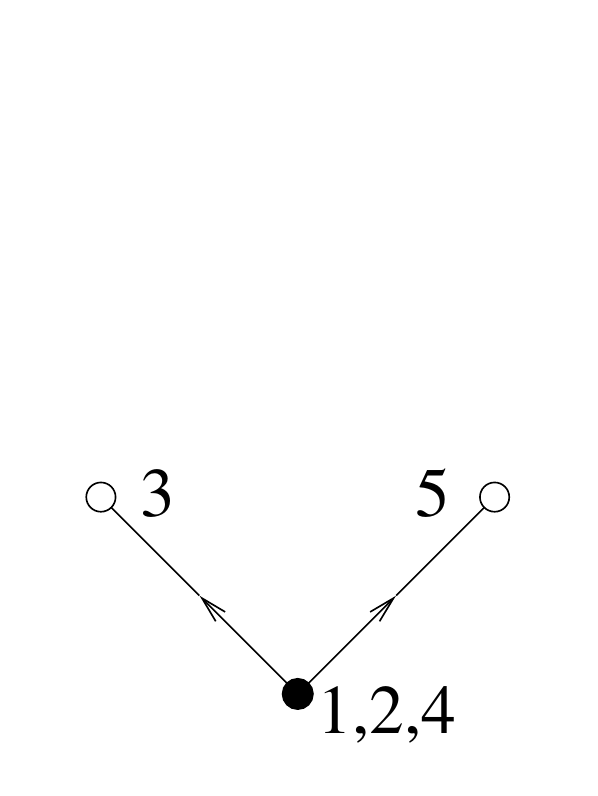}
\\\hline $\mu_{\{1\}}$   &  2  & 0 & 0

\\ \hline $\mu_{\{1,2\}}$   & 1   & 2 & 1
\\ \hline     $\mu_{\{1,2,3\}}$     & 1   & 3 & 0
\\\hline $\mu_{\{1,2,3,4\}}$   & 2   &  0& 0
\end{tabular}

\caption{Computation of  $N_{2,0}(k;1^{5-k})$ with $\L^{comb}=\{1,\ldots,5-k\}$}
\label{fd conic 2}
\end{figure}

\begin{exa}
 Figure \ref{fd cubic} represents all marked
floor diagrams of degree 3 with positive multiplicity when
$\L^{comb}=\{2,\ldots,8\}$.  Hence there are exactly 600 rational
cubics passing through 1 point 
and tangent to 7 lines.
\end{exa}
\begin{figure}[h]
\centering
\begin{tabular}{ccc}
\includegraphics[width=3cm, angle=0]{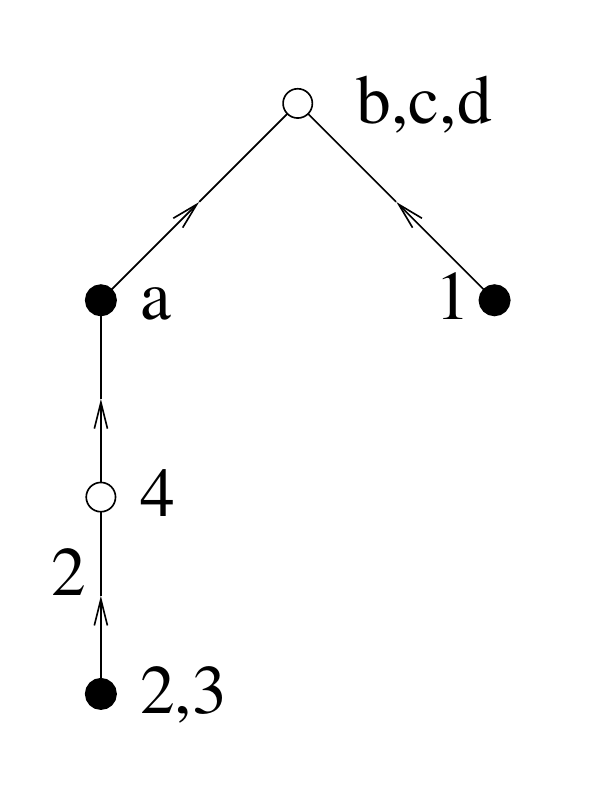}&
\includegraphics[width=3cm, angle=0]{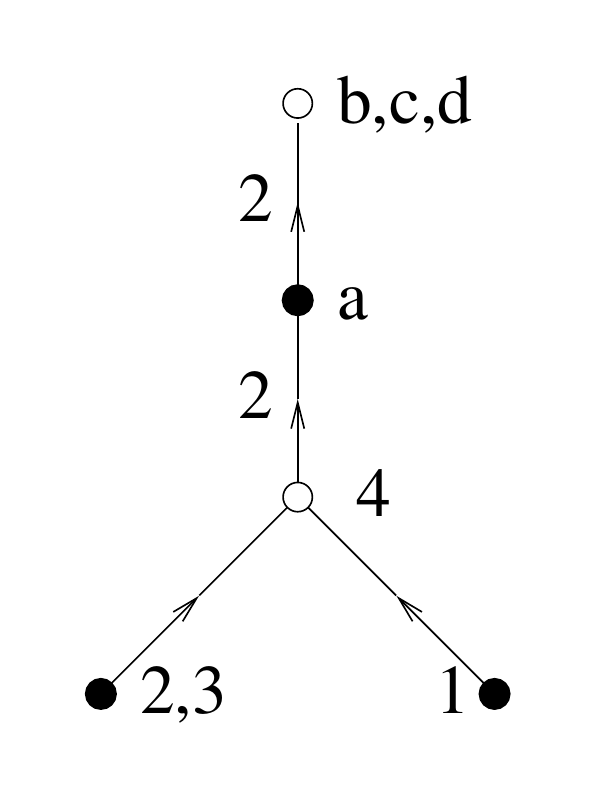}&
\includegraphics[width=3cm, angle=0]{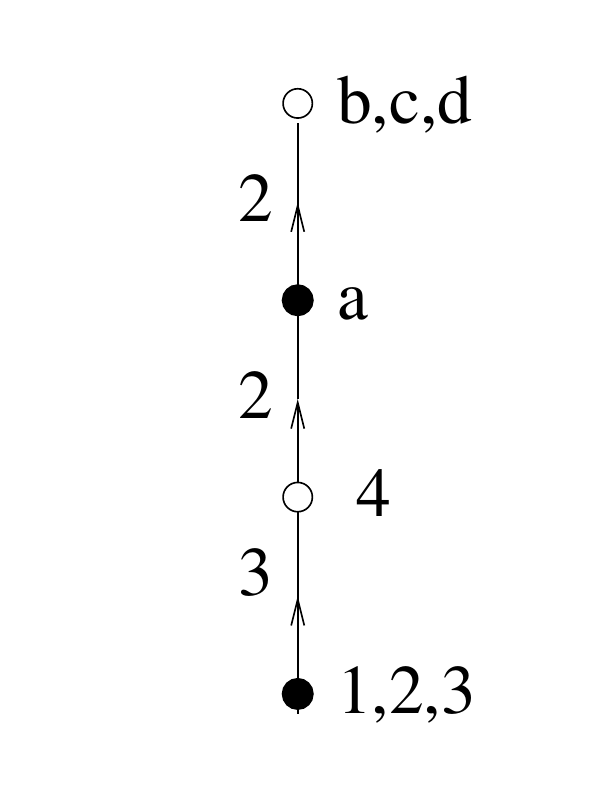}
\\\begin {tabular}{l} $a=5,6,7$, or 8 and $\mu=8$ \\ \hspace{1ex}  \end{tabular}  &
\begin {tabular}{l}$a=5$ and $\mu=12$\\  $a=6,7$, or 8 and
  $\mu=8$ \end{tabular} &
\begin {tabular}{l}$a=5$ and $\mu=54$\\  $a=6,7$, or 8 and
  $\mu=36$ \end{tabular} 
\end{tabular}

\begin{tabular}{cccc}

\\\includegraphics[width=3cm, angle=0]{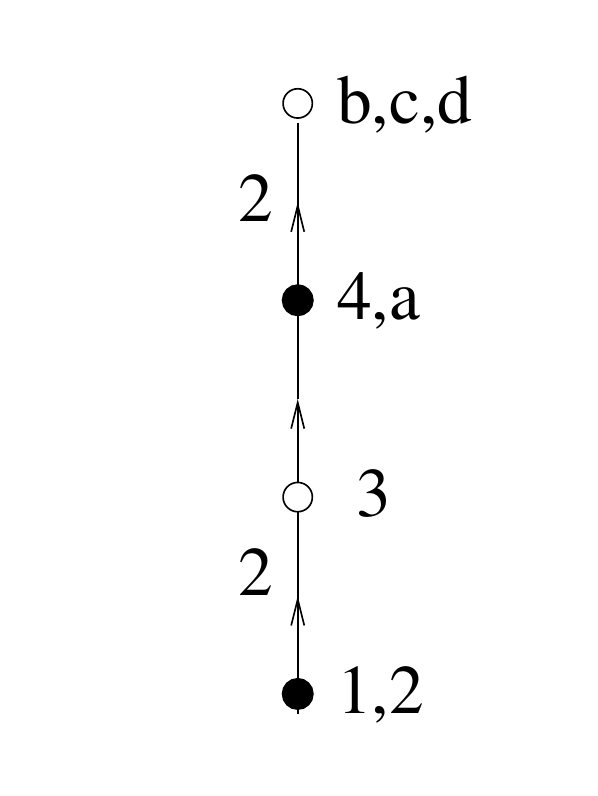}&
\includegraphics[width=3cm, angle=0]{Figures/FD3Me.pdf}&
\includegraphics[width=3cm, angle=0]{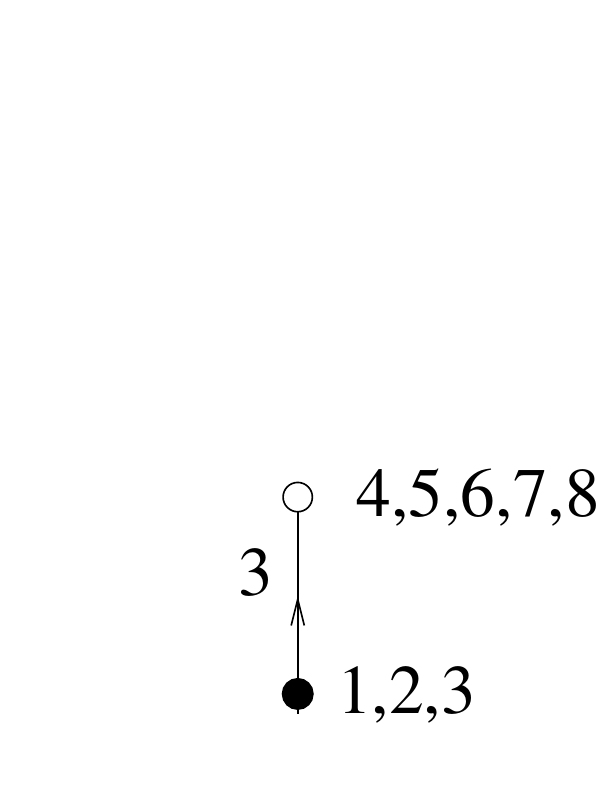}&
\includegraphics[width=3cm, angle=0]{Figures/FD3Mg.pdf}
\\\begin {tabular}{l} $a=5$ and $\mu=20$ \\ $a=6,7$, or 8 and
  $\mu=8$  \end{tabular}  &
\begin {tabular}{l} $\mu=2$\\ \hspace{2ex} \end{tabular} &
\begin {tabular}{l}$\mu=216$\\ \hspace{2ex} \end{tabular} &
\begin {tabular}{l}$\mu=108$\\ \hspace{2ex} \end{tabular}

\end{tabular}
\caption{Computation of  $N_{3,0}(1;1^{7})=600$ with $\L^{comb}=\{2,\ldots,8\}$}
\label{fd cubic}
\end{figure}

\begin{exa}
We give 
in  Figure \ref{fd cubic2} the sum of multiplicities of floor diagrams
of degree 3 with $\L^{comb}=\{1,2,3,4,6\}$, when this sum is positive.
In particular, we find that there are exactly 712 rational
cubics passing through 3 points 
and tangent to 5 lines.
\end{exa}
\begin{figure}[h]
\centering
\begin{tabular}{cccc}
\includegraphics[width=3cm, angle=0]{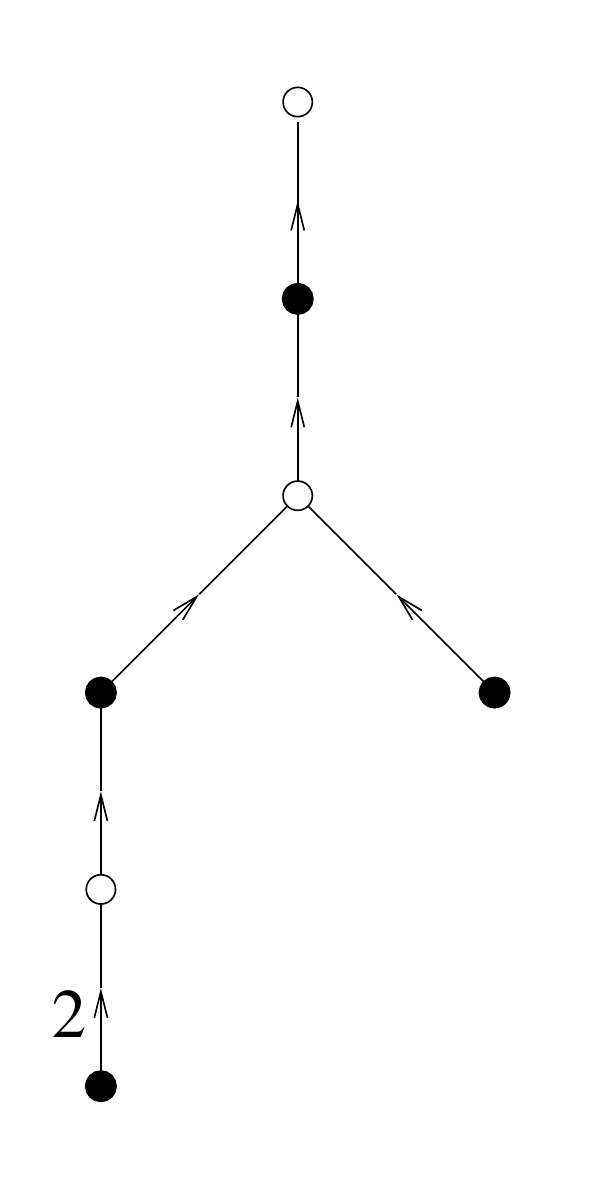}&
\includegraphics[width=3cm, angle=0]{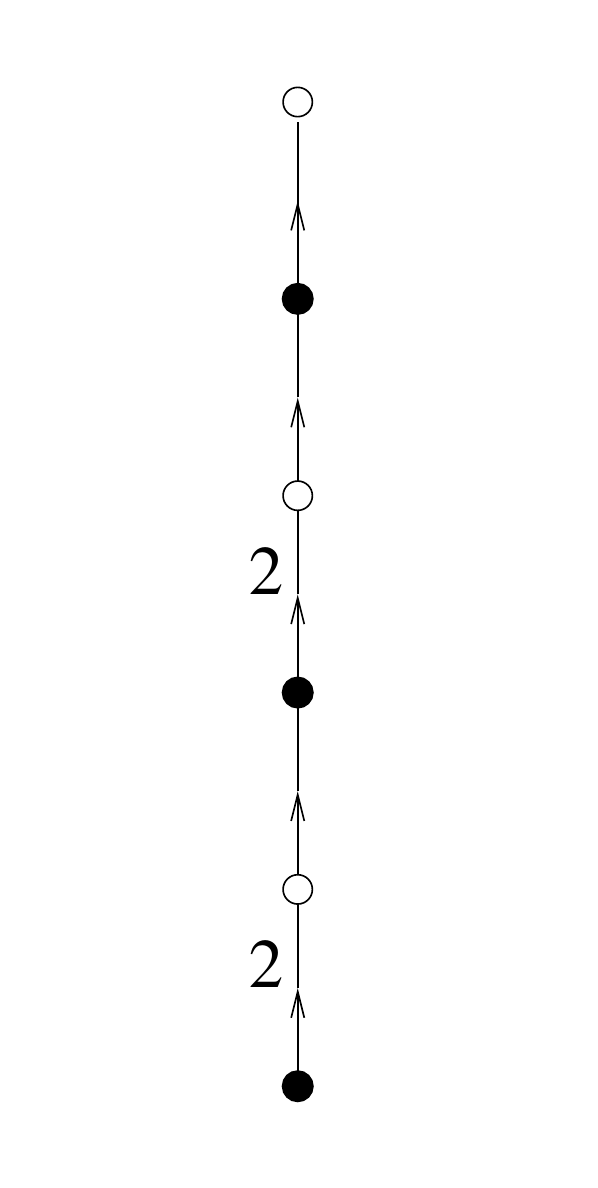}&
\includegraphics[width=3cm, angle=0]{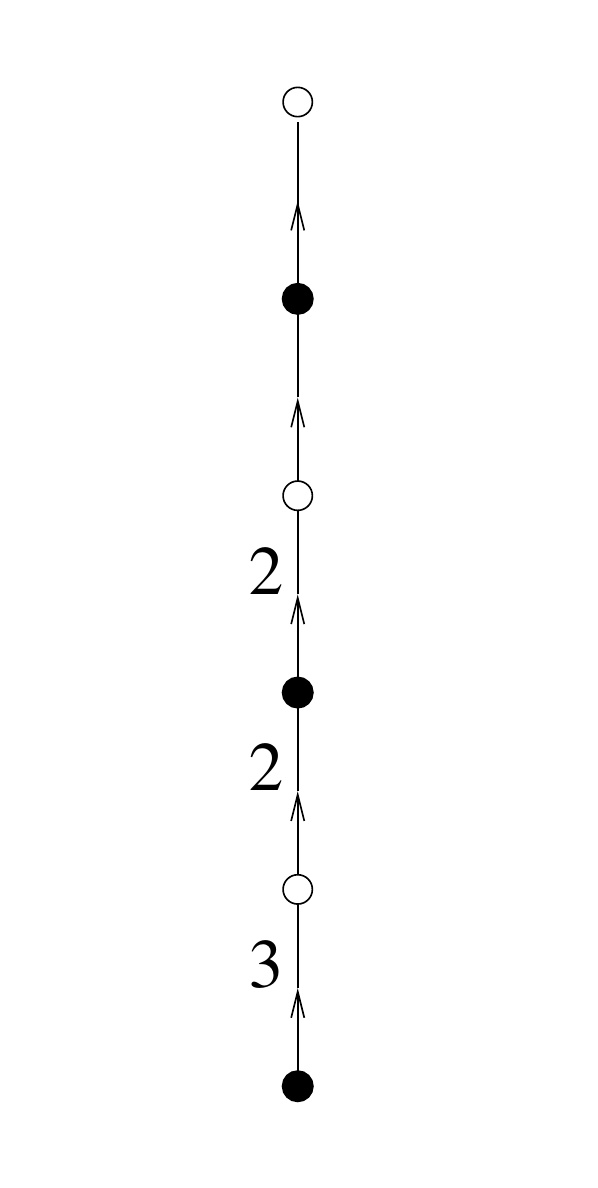}&
\includegraphics[width=3cm, angle=0]{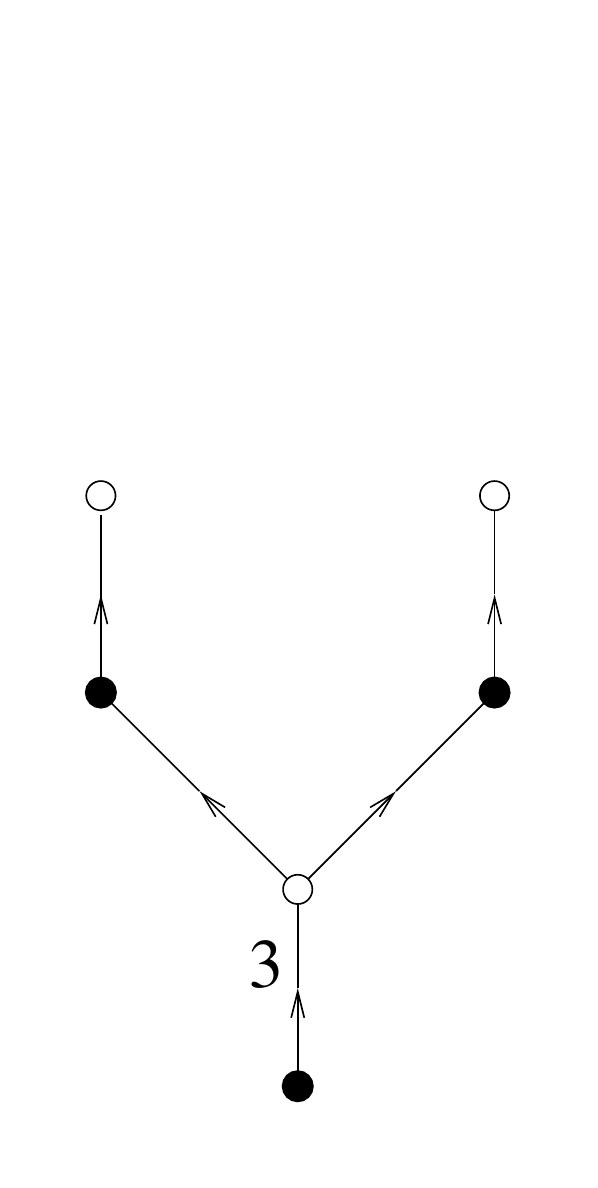}
\\ $\sum\mu=4 $& $\sum\mu=8 $ & $\sum\mu=72 $ & $\sum\mu=108 $
\end{tabular}

\begin{tabular}{cccc}

\\\includegraphics[width=3cm, angle=0]{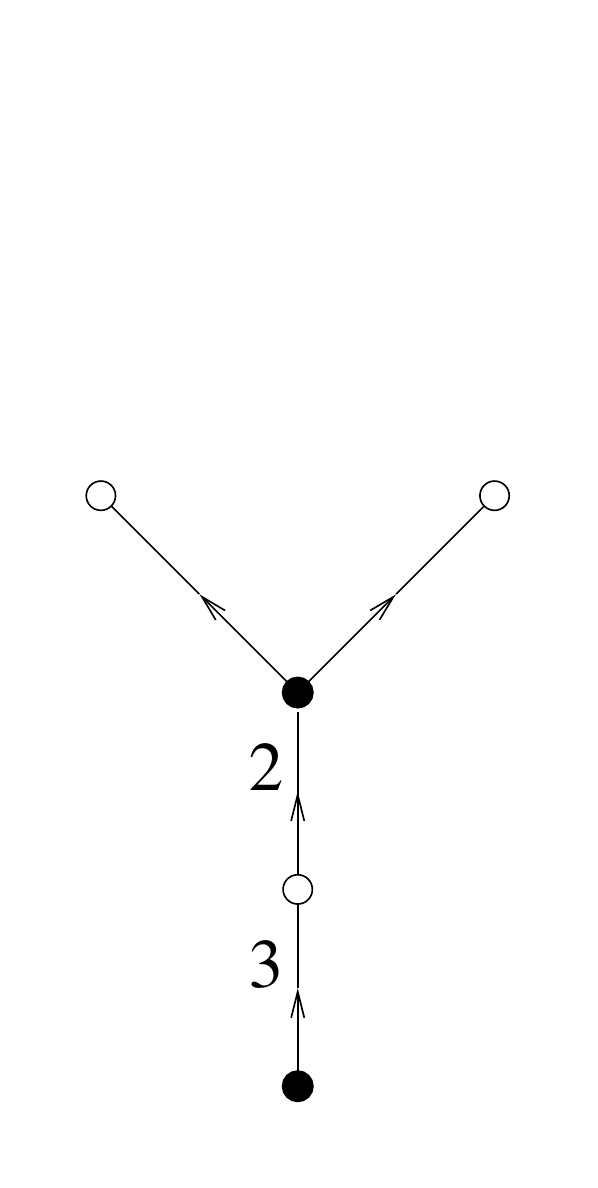}&
\includegraphics[width=3cm, angle=0]{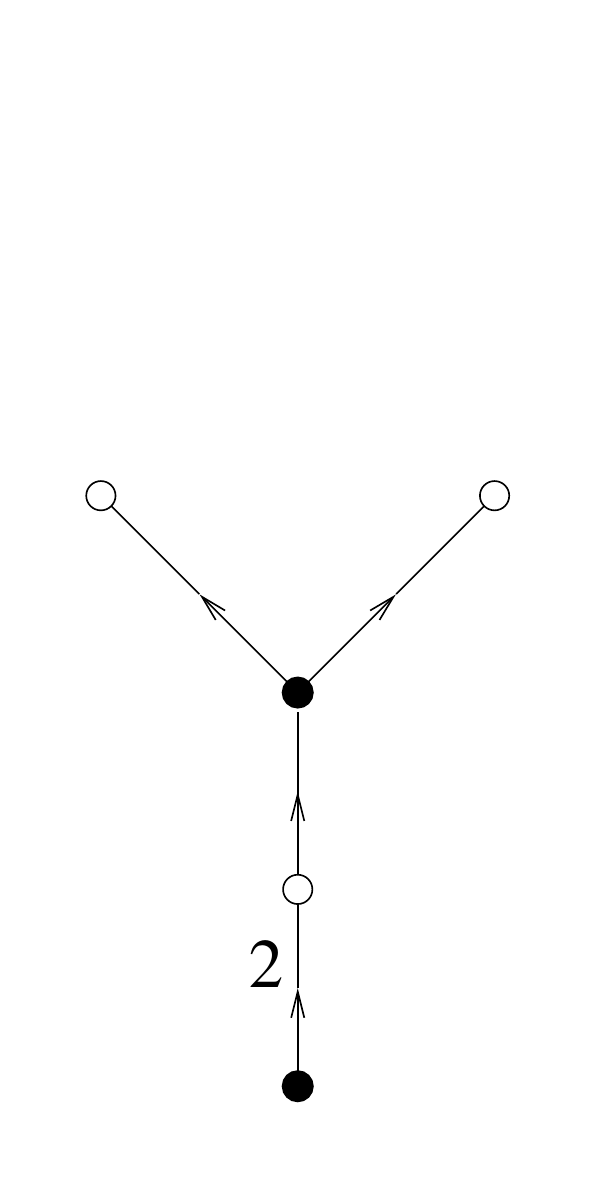}&
\includegraphics[width=3cm, angle=0]{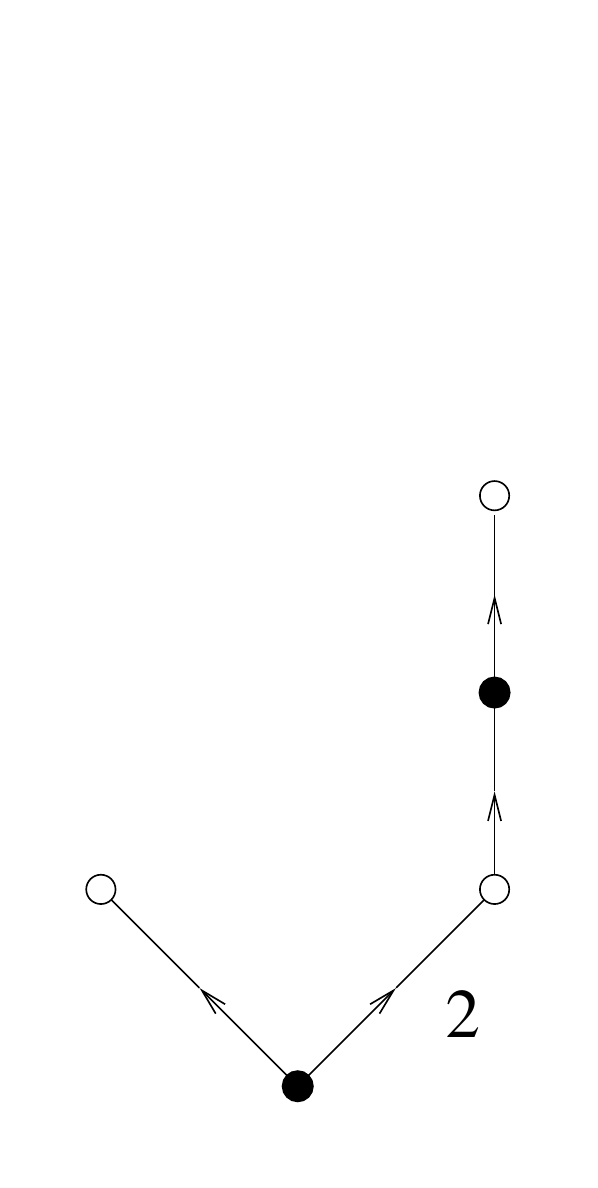}&
\includegraphics[width=3cm, angle=0]{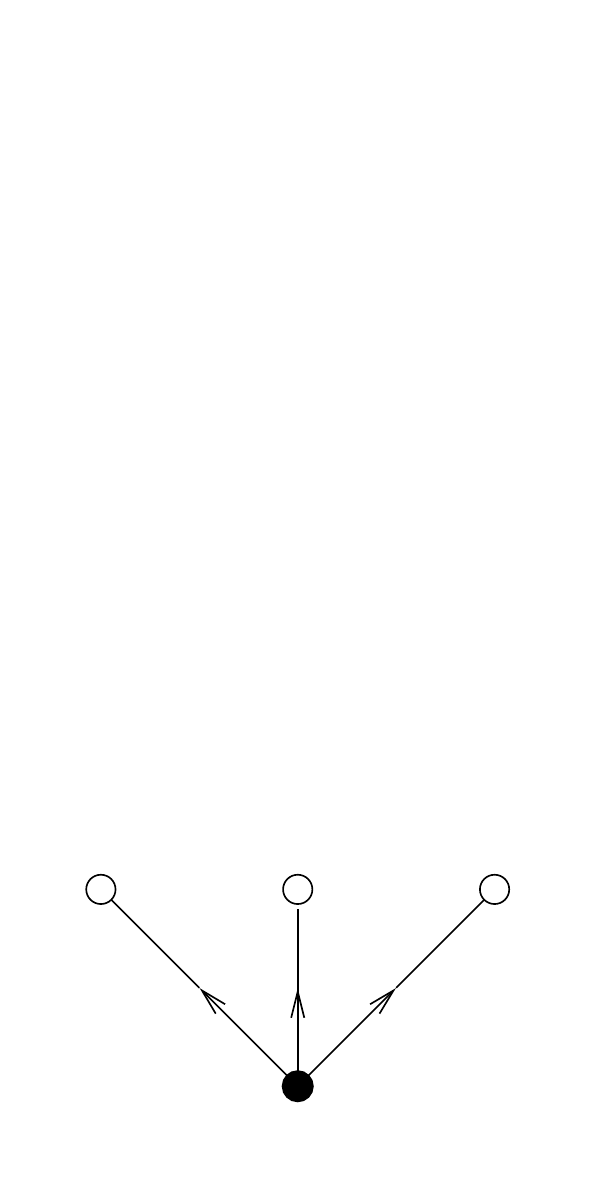}
\\ $\sum\mu=36 $& $\sum\mu=4 $ & $\sum\mu=400 $ & $\sum\mu=80 $

\end{tabular}
\caption{Computation of  $N_{3,0}(3;1^{5})=712$ with $\L^{comb}=\{1,2,3,4,6\}$}
\label{fd cubic2}
\end{figure}

\subsection{Proof of Theorem \ref{fd thm}}\label{proof fd}
The strategy of the proof of Theorem \ref{fd thm} is the same as in
the proof of \cite[Theorem 3.6]{Br6b}: we first prove that if 
points in $\P$, and vertices of elements of $\L$ lie
 in some strip $I\times\RR$, then all the vertices of a curve in
 $\S^\TT(d,\P,\L)$ tangent to $\L$ also lie in the strip
 $I\times\RR$. As a consequence, if the configuration
 $(\P,\L)$ is 
sufficiently stretched in the vertical direction, then all floors of a 
curve in
 $\S^\TT(d,\P,\L)$ tangent to $\L$  contain exactly 1 horizontal
constraint. It will then remain  to study how the constraints can
be distributed among the floors and the shafts of the tropical
curves we are counting.

The rest of this section is devoted to make precise the latter
explanations.

\vspace{2ex}
From now on, we fix $I=[a;b]\subset\RR$ a bounded interval, and a
generic configuration $(\P,\L)$ where $\P=\{p_1,\ldots,p_k\}$ and 
$\L=\{L_1,\ldots,L_{3d-1-k}\}$ is a set of $3d-1+k$ tropical lines in
$\RR^2$. 
 We  denote by $\eta_i$ the vertex of the line $L_i$. Recall that an
 element of $\S^\TT(d,\P,\L)$ with positive $(\P,\L)$-multiplicity is
 called tangent to $\L$.

\begin{lemma}\label{key lemma statement}
Suppose that $\P\bigcup\{\eta_1, \ldots,\eta_{3d-1-k}\}\subset I\times\RR$.
 Then for any tropical morphism  $f:C\to\RR^2$ in $\S^\TT(d,\P,\L)$
 tangent to $\L$, 
and for any vertex $v\in\Ve^0(C)$, we have $f(v)\in I\times\RR$.
\end{lemma}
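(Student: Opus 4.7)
The strategy is to argue by contradiction. Suppose some tropical morphism $f:C\to\RR^2$ in $\S^\TT(d,\P,\L)$ tangent to $\L$ has a vertex $v\in\Ve^0(C)$ with $f(v)\notin I\times\RR$; after possibly replacing $x$ by $-x$ one may assume the $x$-coordinate of $f(v)$ is strictly greater than $b$. First I would introduce the subgraph
$$C^+ \;=\; \overline{f^{-1}\bigl(\{x>b\}\times\RR\bigr)} \,\subset\, C,$$
which is non-empty by the choice of $v$ and inherits from $C$ the structure of a (possibly disconnected) tropical curve with boundary; its boundary points are exactly the points where $f(C)$ crosses the vertical line $\{b\}\times\RR$.

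Next I would collect what constraints the restriction $f|_{C^+}$ can actually see inside the open half-plane $\{x>b\}$. Since $\P\subset I\times\RR$, no point of $\P$ lies in this region, so $f|_{C^+}$ is subject to no point constraint. Likewise, since each vertex $\eta_i$ of $L_i$ satisfies $\eta_i\in I\times\RR$, the intersection $L_i\cap(\{x>b\}\times\RR)$ is just the open horizontal ray issuing from $\eta_i$ to the right. By extremality (Proposition~\ref{generic curves}, in particular property (2)) no vertex of $C$ is mapped to $\eta_i$, so any pretangency component of $f$ with $L_i$ meeting $\{x>b\}$ must sit inside that open horizontal ray and must contain the image of a vertex of $C$; property (4) further guarantees that each such pretangency is confined to a connected subgraph of $C^+$ whose $f$-image is a single segment.

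Now the heart of the argument is a dimension count, applied to each connected component $K$ of $C^+$. Let $s_K$ be the number of ends of $K$ (counting both the ends of $C$ contained in $K$ and the boundary points introduced when cutting along $\{x=b\}$), and let $\tau_K$ be the number of indices $i$ such that $f|_K$ is pretangent to $L_i$ in $\{x>b\}$. By a direct extension of Lemma~\ref{exp dim 1} to rational tropical curves with boundary, the rigid deformation space of $f|_K$ (with prescribed combinatorial type) has dimension at most $s_K-1$; the equations forcing each boundary point of $K$ to lie on $\{x=b\}$ cut out a codimension equal to the number of boundary points of $K$, and each of the $\tau_K$ tangency conditions cuts out one further independent codimension (a vertex of $K$ must land on a prescribed horizontal line, by the analysis of the preceding paragraph combined with Lemma~\ref{lem1}). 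I would then show that summing these local inequalities over all components $K$ of $C^+$, and comparing with the global count $|\P|+|\L|=3d-1$ used to determine $f\in\S^\TT(d,\P,\L)$, one obtains strictly fewer independent conditions in $\{x>b\}$ than are needed to rigidify $f|_{C^+}$, so that genericity of $(\P,\L)$ (Definition~\ref{def generic} and Corollary~\ref{finite2}) is violated.

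The main obstacle is making this dimension count tight. Concretely one has to check that
$$\sum_{K}(s_K-1)\;-\;(\text{boundary points})\;-\;\sum_K\tau_K\;<\;0$$
given that the ends of $C$ have fixed slopes and the total degree of $f$ is $d$. The bookkeeping uses the balancing condition in the $x$-direction: the weighted $x$-components of the ends of $K$ (with the boundary points counted as ends of slope $(-1,0)$) must sum to zero, and this constrains $s_K$ in terms of the number $\tau_K$ of horizontal rays that $f|_K$ can catch in $\{x>b\}$. Once this inequality is established, the contradiction with the finiteness of $\S^\TT(d,\P,\L)$ is immediate, and the symmetric case with $x$-coordinate below $a$ completes the proof.
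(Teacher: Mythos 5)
Your argument cannot work as written, because the statement it actually targets --- that no element of $\S^\TT(d,\P,\L)$ at all has a vertex outside $I\times\RR$ --- is false. Nowhere do you use the hypothesis that $f$ is \emph{tangent} to $\L$, which by Definition \ref{def trop mult} means $\mu_{(\P,\L)}(f)\neq 0$; you only use pretangency. But for a generic configuration there genuinely exist pretangent morphisms with a vertex far outside the strip: at the extremal such vertex $v$ (say the leftmost one), balancing forces $v$ to be adjacent to an end $e$ of direction $(-1,0)$, the only constraints reaching $\{x<a\}$ are the horizontal rays of the lines $L_i$ (incidentally, on the side $x>b$ that you chose, the ray of $L_i$ reaching that region has direction $(1,1)$, not horizontal), and the condition that $f(v)$ lie on such a ray is an honest codimension-one condition fixing the $y$-coordinate of $v$. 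It therefore supplies exactly one of the $3d-1$ conditions needed to rigidify $f$: your dimension count closes up to exactly zero rather than to a strict inequality in either direction, and no contradiction with genericity or finiteness arises. (Your write-up is also internally inconsistent on this point: you announce ``strictly fewer conditions than are needed to rigidify'', i.e.\ a positive-dimensional family, but then write the inequality as $<0$, i.e.\ overdetermination.)

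The paper's proof is local and hinges on the multiplicity, not on non-existence. At the extremal vertex $v$ the local picture is forced: $v$ is trivalent, adjacent to the end $e$ of direction $(-1,0)$ and to two edges of directions $(0,\pm1)$ and $(1,\alpha)$, and some $L_i$ must have its horizontal edge passing through $f(v)$ (this is what pins $v$). The pretangency component of $f$ with $L_i$ is then $e\cup\{v\}$: it contains one vertex of $C$ ($\mu=1$), one end of $C$ ($\lambda=1$), and no preimage of the vertex $\eta_i$ of $L_i$, so formula (\ref{wL}) gives $w_{L_i}=0+\mu-\lambda=0$ and hence $\mu_{(\P,\L)}(f)=0$, contradicting the tangency hypothesis. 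Any repair of your argument must replace the dimension count by this (or an equivalent) computation of the weight $w_{L_i}$ of the constraint that pins the extremal vertex.
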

\begin{proof}
The proof follows the same lines as the proof of \cite[Proposition
  5.3]{Br6b}. Suppose that there exists a tropical morphism
$f:C\to\RR^2$
 in $\S^\TT(d,\P,\L)$ and a vertex $v$ in $\Ve^0(C)$ such that
$f(v)=(x_v,y_v)$ with $x_v<a$. We can choose  $v$ such that no vertex of $C$
is mapped by $f$ to the half-plane $\{(x,y)\ | \ x<x_v\}$.
Since the configuration $(\P,\L)$ is generic, the set $\S^\TT(d,\P,\L)$
is finite and $v$ is a trivalent
vertex of $C$. Hence, the proof of 
\cite[Proposition 5.3]{Br6b} shows that $v$ is adjacent to an end $e$ of
direction $(-1,0)$, and two other edges of direction $(0,\pm 1)$ and
$(1,\alpha)$, and that one line $L_i$ has its horizontal edge passing
through $f(v)$ (see Figure \ref{key lemma}). Thus, $e\cup v$ is the
pretangency component of $f$ with $L_i$, and $\mu_{L_i}(f)=0$.

The case where there exist an element $f:C\to\RR^2$ in $\S^\TT(d,\P,\L)$
and a vertex $v$ in $\Ve^0(C)$ such that
$f(v)=(x_v,y_v)$ with $x_v>b$ works analogously.
\end{proof}
\begin{figure}[h]
\begin{center}
\begin{tabular}{c}
\includegraphics[height=3cm, angle=0]{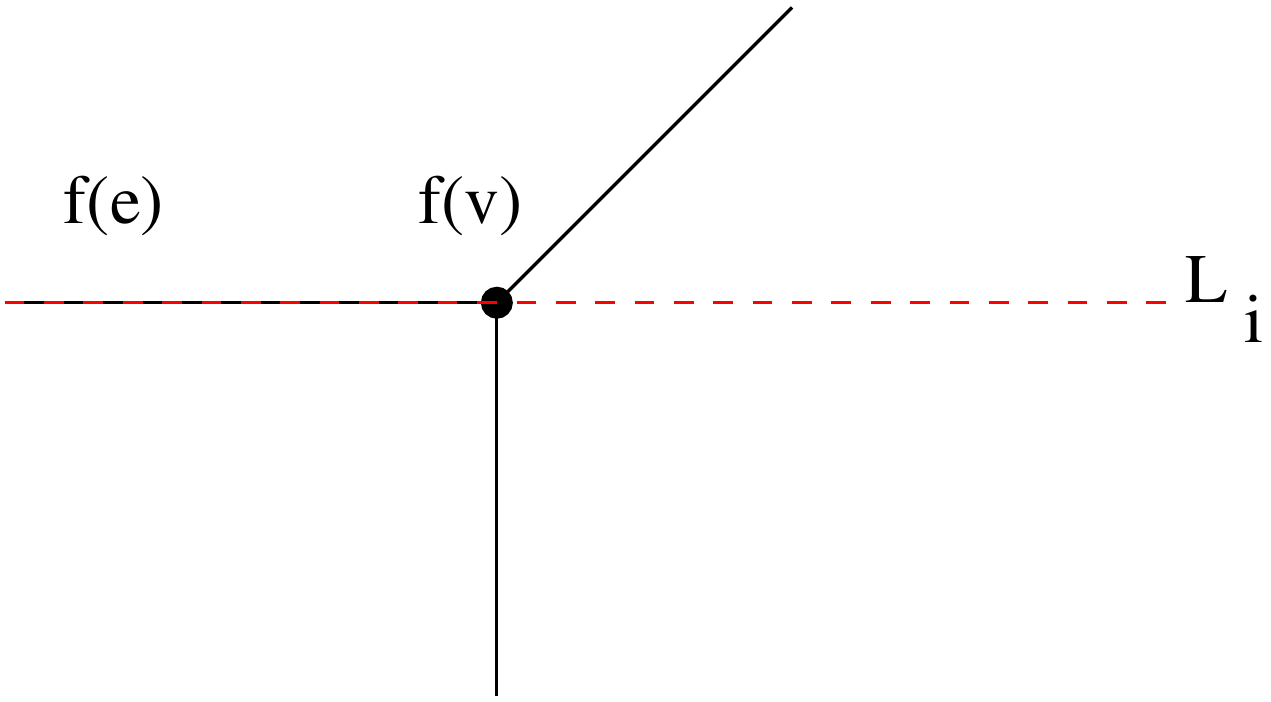}

\end{tabular}
\end{center}
\caption{No vertex $v\in\Ve^0(C)$ with $f(v)<a$ if  $f$ is tangent to $\L$}
\label{key lemma}
\end{figure}

Recall that pretangency sets are defined in Section \ref{sec:trop pretang}.
A line $L\in\L$ is called a \textit{vertical constraint}
(resp. a  \textit{horizontal constraint}) of a
morphism $f\in\S^\TT(d,\P,\L)$ if the pretangency set of $f$ and $L$
 is (resp. is not) contained in the
vertical edge of $L$. A point in $\P$ is at the same time a horizontal
and a vertical constraint of any morphism $f\in\S^\TT(d,\P,\L)$.
We say that a floor $\F$ (resp. a shaft $S$) of $f$ \emph{matches} the
constraint $q\in\P\cup\L$ if $\F$ (resp. $S$) contains $f^{-1}(q)$ or
contains the 
pretangency component of $f$ with $q$.

\begin{cor}\label{at most 1 horizontal}
If the points in 
 $\P\bigcup\{\eta_1, \ldots,\eta_{3d-1-k}\}$ are in $ I\times\RR$ and far
enough one from the others, 
then any floor of any
 tropical morphism  $f:C\to\RR^2$ in $\S^\TT(d,\P,\L)$ tangent to $\L$
 matches at most one horizontal constraint. 
\end{cor}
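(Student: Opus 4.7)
My plan hinges on Lemma~\ref{key lemma statement}: when $\P \cup \{\eta_1,\ldots,\eta_{3d-1-k}\} \subset I \times \RR$, every vertex of any morphism $f: C \to \RR^2$ in $\S^\TT(d,\P,\L)$ tangent to $\L$ has $x$-coordinate in $[a,b]$. I will convert this horizontal confinement into a uniform bound on the vertical range spanned by a single floor, after which the corollary follows by choosing the configuration sufficiently stretched vertically past that bound.

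First, let $\F$ be a floor of such an $f$. For any bounded edge $e \in \Ed^0(\F)$, the slope vector $u_{f,e} = (p,q)$ satisfies $p \neq 0$ (as $e$ is not an elevator), and both endpoints of $e$ lie in $I \times \RR$, so the horizontal displacement $l_e w_e |p| \leq b - a$ forces $l_e \leq b - a$. Combinatorial types of rational tropical morphisms of degree $d$ form a finite set: the weights $w_e$ and the slope coordinates $|p|, |q|$ are bounded by constants determined by the Newton polygon $T_d$, and the number of vertices of $\F$ is likewise bounded by a constant depending only on $d$. Summing vertical displacements $l_e w_e |q|$ along paths in the subtree $\F$ therefore yields a uniform bound $C(d)(b-a)$ on the $y$-spread of the vertices of $\F$. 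Since the unbounded edges of $\F$ have direction $(-1,0)$ or $(1,1)$, and the portion of a $(1,1)$-end lying in the strip $I \times \RR$ adds at most $b - a$ to the $y$-range, the vertical extent of $f(\F) \cap (I \times \RR)$ is bounded by some $C'(d)(b-a)$.

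Next, each horizontal constraint $q$ matched by $\F$ has a distinguished $y$-coordinate: the $y$-coordinate of $p_i$ if $q = p_i \in \P$, or $\eta_{L,y}$ if $q = L \in \L$. This distinguished $y$-coordinate differs by at most $b-a$ from the $y$-coordinate of some point of $f(\F) \cap (I \times \RR)$ witnessing the passage or tangency. For points of $\P$ this is trivial; for a horizontal tangency with a line $L$, by Proposition~\ref{generic curves} the pretangency set is a segment on $L$ and its ``lowest'' endpoint within $I \times \RR$ has $y$-coordinate within $b - a$ of $\eta_{L,y}$ (this is forced by the fact that the $(1,1)$-edge of $L$ meets $I \times \RR$ only in $y$-range $[\eta_{L,y}, \eta_{L,y} + (b-a)]$). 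Thus all distinguished $y$-coordinates of horizontal constraints matched by $\F$ lie in a single interval of length at most $C''(d)(b-a)$, and by choosing $(\P,\L)$ so that the distinguished $y$-coordinates of distinct elements of $\P \cup \{\eta_1,\ldots,\eta_{3d-1-k}\}$ are pairwise at distance greater than $C''(d)(b-a)$, each floor matches at most one horizontal constraint. The only mildly delicate point in carrying out this plan is the bookkeeping for $(1,1)$-ends and for pretangency segments lying along diagonal edges of the $L_i$, but this reduces to the finiteness of combinatorial types of degree-$d$ tropical morphisms.
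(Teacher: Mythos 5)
Your argument is correct and is essentially the paper's own proof, written out in full detail: the paper likewise combines Lemma~\ref{key lemma statement} with the observation that non-elevator edges have slope uniformly bounded in terms of $d$, so that each floor has $y$-extent at most $C(d)(b-a)$ inside the strip and the conclusion follows once the constraints are stretched beyond that bound. The extra bookkeeping you supply for the $(1,1)$-ends and for locating the pretangency set of a horizontal constraint near $\eta_{L,y}$ is exactly what the paper's ``immediate consequence'' implicitly absorbs.
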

\begin{proof}
If $e$ is an edge of $C$ with $u_{f,e}\ne (0,\pm 1)$, then the slope of $f(e)$ 
is uniformly bounded in terms of the degree of $f$. Hence, the
result
 is an immediate consequence of Lemma \ref{key lemma statement}.
\end{proof}
\begin{defi}
We say that a generic configuration $(\P,\L)$ is \emph{vertically
  stretched} if it satisfies the
hypothesis of Corollary \ref{at most 1 horizontal}.
\end{defi}
To any morphism $f:C\to\RR^2$ in $\S^\TT(d,\P,\L)$, we can naturally associate a
floor diagram  $\D_f$ of degree
$d$ in the following 
way: white vertices of $\D_f$ correspond to floors of $C$, black
vertices correspond to shafts of $C$, and a white vertex and a black
vertex of $\D_f$ are joined by an edge of weight $w$ 
if and only if the
corresponding floor and shaft are joined by an elevator of weight $w$;  an
elevator of $C$ has an orientation inherited from the standard
orientation of the line $\{x=0\}$ in $\RR^2$, and
edges of $\D_f$
inherits  this  orientation as well. 
Given a shaft $S$ of $f$, we define $l(S)$ to be the sum of the
number of floors of $C$ adjacent to $S$ and the number of
ends of
$C$ contained in $S$. In other words, if $S$ corresponds to the black
vertex $v_S$ of $\D_f$, then $l(S)=\val(v_S)-\div(v_S)$.

\begin{cor}\label{exact incidence}
Let $(\P,\L)$ be a vertically
  stretched configuration of constraints and
let $f$ be an element of $\S^\TT(d,\P,\L)$ tangent to $\L$. Then
\begin{itemize}
\item any floor $\F$ of $f$ matches exactly $2\deg(\F)-1$ constraints,
 and exactly one of them is horizontal;

\item any shaft $S$ matches exactly $l(S)-1$ constraints,
 and exactly one of them is vertical.

\end{itemize}

\end{cor}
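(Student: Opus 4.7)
My plan is to combine a global counting identity with dimension-based upper bounds, invoking the rigidity of Lemma \ref{inter constr} to promote these inequalities to equalities.

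First, I would verify the identity
\[
\sum_{\F}\bigl(2\deg(\F)-1\bigr)+\sum_{S}\bigl(l(S)-1\bigr)=3d-1,
\]
matching the total number of constraints. Writing $w$ and $b$ for the numbers of white and black vertices of $\D_f$, the tree structure of $\D_f$ (since $C$ is rational) gives $w+b-1$ edges, so $\sum_S\val(v_S)=w+b-1$. Vertical balancing of each floor (using that its only non-vertical end directions are $(-1,0)$ and $(1,1)$, with $\deg(\F)$ ends of each) forces $\div(v_\F)=\deg(\F)$, hence $\sum_S\div(v_S)=-d$ and consequently $\sum_S l(S)=w+b-1+d$, so the identity drops out. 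Moreover every constraint is matched by at least one floor or shaft by Proposition \ref{generic curves}: for a point $p\in\P$, $f^{-1}(p)$ lies on a segment of $C$ mapped into a single line of $\RR^2$, and that segment is either entirely vertical (so inside one shaft) or entirely non-vertical (so inside one floor); the same dichotomy governs the pretangency sets of the lines $L\in\L$ via the fourth bullet of that proposition.

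Then I would prove the upper bounds. For a floor $\F$ of degree $e$, I would truncate each of the $v_1$ elevators leaving $\F$ to view $\F$ as a rational tropical morphism to $\RR^2$ with $2e+v_1$ ends. By Lemma \ref{exp dim 1} its rigid deformation space has dimension $2e+v_1-1$, and the $v_1$ adjacent shafts impose $v_1$ linear conditions (prescribing the $x$-coordinates of the truncated elevator feet); their independence, following from the tree structure of $\D_f$ and from Lemma \ref{inter constr}, caps the number of constraints $\F$ can match at $2e-1$. Corollary \ref{at most 1 horizontal} further bounds the horizontal matches of $\F$ by one. An analogous dimension count for a shaft $S$, truncating non-elevators rather than elevators, yields the upper bound $l(S)-1$ per shaft; the bound of at most one vertical constraint per shaft follows from genericity, since the $x$-coordinate of $S$ generically coincides with that of at most one constraint of $(\P,\L)$.

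Finally, the chain
\[
3d-1=\#(\P\cup\L)\le\sum_\F\#(\text{matches of }\F)+\sum_S\#(\text{matches of }S)\le\sum_\F(2\deg(\F)-1)+\sum_S(l(S)-1)=3d-1
\]
forces all intermediate inequalities to be equalities, so every constraint matches exactly one part, each floor absorbs $2\deg(\F)-1$ constraints, and each shaft $l(S)-1$. The "exactly one horizontal" (resp. vertical) statement then follows because a floor (resp. shaft) with no horizontal (resp. vertical) constraint would admit a free vertical (resp. horizontal) translation, contradicting Lemma \ref{inter constr}. The main obstacle will be rigorously justifying the independence of the $v_1$ linear conditions in the floor dimension count and performing the analogous bookkeeping for shafts, whose boundary structure is slightly more subtle; in both cases, the tree structure of $\D_f$ and the genericity of $(\P,\L)$ enter decisively.
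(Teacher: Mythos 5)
Your global skeleton is exactly the paper's: the identity $\sum_\F(2\deg(\F)-1)+\sum_S(l(S)-1)=3d-1$ via the bipartite-tree combinatorics of $\D_f$ (your computation using $\div(v_\F)=\deg(\F)$ and $\sum_S\val(v_S)=|\Ed(\D_f)|$ is the same as the paper's), combined with per-floor and per-shaft upper bounds and the fact that every constraint is matched, forcing equality throughout. Where you diverge is in how the upper bounds $2\deg(\F)-1$ and $l(S)-1$ are obtained. The paper does not use a deformation-space dimension count at all: it observes that $\pi_x\circ f|_{\F}$ and $\pi_y\circ f|_{S}$ are tropical ramified coverings of $\RR$ with $2\deg(\F)$ and $l(S)$ ends respectively, so $\F$ has exactly $2\deg(\F)-2$ trivalent vertices and $S$ has exactly $l(S)-2$. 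Since (by genericity) each vertical constraint matched by a floor, and each horizontal constraint matched by a shaft, consumes its own vertex, the bounds follow from Corollary \ref{at most 1 horizontal} (at most one horizontal per floor) and genericity (at most one vertical per shaft) with no transversality argument needed. This vertex count also delivers the ``exactly one horizontal / exactly one vertical'' refinement for free once the squeeze closes: a floor matching $2\deg(\F)-1$ constraints with only $2\deg(\F)-2$ vertices available for vertical ones must match a horizontal one.

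The weak point of your route is the one you flag yourself: the independence of the $v_1$ elevator conditions, and of the constraint conditions relative to them, on the truncated floor's deformation space. Lemma \ref{inter constr} does not give this --- it is a statement about the hyperplanes $\Lambda_q$ in the deformation space of the \emph{whole} morphism $f$, not about conditions cut out on a single floor with its elevators truncated --- so as written this step is a genuine gap, even if it is repairable by rerunning the genericity machinery of Section \ref{generic conf} floor by floor. Your concluding translation argument also needs more care on the shaft side: a point of $\P$ is by convention both a horizontal and a vertical constraint, so ``no vertical constraint'' must first be unwound to exclude points and vertex-pretangencies, and a pretangency of the (vertical) shaft with a $(1,1)$-edge of a line is not preserved by a rigid horizontal translation of the shaft (the vertex must simultaneously slide vertically), so the ``free'' deformation is not literally a translation. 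Both issues evaporate if you adopt the paper's vertex count, which I would recommend as the cleaner completion of your argument.
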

\begin{proof}
A floor $\F$ and a shaft $S$ of $f$ have respectively exactly 
$2\deg(\F)-2$ and $l(S)-2$ vertices,
since $\pi_x\circ f_{|\F}:\F\to\RR$ and $\pi_y\circ f_{|S}:S\to\RR$
    are tropical (open) ramified coverings with respectively
    $2\deg(\F)$ and $l(S)$ ends. 
According to Corollary \ref{at most 1
horizontal} a floor $\F$ matches
 at most one horizontal constraint so $\F$ matches
at most $2\deg(\F)-1$ constraints. Since the configuration
$(\P,\L)$ is generic, a shaft $S$ matches
at most one vertical constraint, and hence at most $l(S)-1$
constraints. All together, we get the inequality
$$\sum_\F (2\deg(\F)-1) + \sum_S (l(S)-1)\le 3d-1.$$
We have
$$\sum_\F (2\deg(\F)-1) + \sum_S (l(S)-1)=2d -|\Ve^\bullet(\D_f)| -
|\Ve^\circ(\D_f)| +\sum_{v\in\Ve^\bullet(\D_f)}\val(v) +d $$
and the fact that $\D_f$ is a bipartite tree gives us 
$$|\Ve^\bullet(\D_f)| +
|\Ve^\circ(\D_f)| -\sum_{v\in\Ve^\bullet(\D_f)}\val(v) =|\Ve^\bullet(\D_f)| +
|\Ve^\circ(\D_f)| -|\Ed(\D_f)|=1$$
 so we obtain
$$\sum_\F (2\deg(\F)-1) + \sum_S (l(S)-1)=3d-1.$$
Hence all inequalities above are in fact equalities.
\end{proof}

The set $\P\cup\{\eta_1,\ldots, \eta_{3d-k-1}\}$ inherits a total order from
the map $\pi_y$, and we relabel elements of  this set by 
$\overline q_1 <\ldots  < \overline q_{3d-1}$.
 We define
the set $\L^{comb}\subset\{1,\ldots, 3d-1\}$ by
$$i\in \L^{comb}\Longleftrightarrow \overline q_i\in\L.$$
Given a morphism $f:C\to\RR^2$ in $\S^\TT(d,\P,\L)$ we define a map 
$m_f: \{1,\ldots, 3d-1\}\to\D_f$ by
$$m_f(i)=v\Longleftrightarrow \text{the floor or shaft of $C$
  corresponding to $v$ matches }\overline q_i.$$

\begin{lemma}
Let $(\P,\L)$ be a vertically
  stretched configuration of constraints.
If $f$ is tangent to $\L$, then $(\D_f,m_f)$ is an $\L^{comb}$-marked
floor diagram of degree $d$.
\end{lemma}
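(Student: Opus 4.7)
The plan is to verify that the pair $(\D_f, m_f)$ satisfies the three groups of axioms defining an $\L^{comb}$-marked floor diagram of degree $d$: the structural axioms for $\D_f$, the cardinality axioms for the fibers of $m_f$, and the positional axioms on elements of $\P^{comb}$.

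For the structural axioms, $\D_f$ is bipartite by construction, since a floor and a shaft are by definition separated by an elevator; it is a tree because $C$ is rational with $\partial C = \emptyset$. The divergence of each vertex is obtained by restricting the balancing condition of $f$ to the corresponding subgraph. For a floor $\F$, summing balancing equations over its vertices yields $\div(v_\F) = \deg(\F) > 0$. For a shaft $S$, the analogous computation, combined with Proposition \ref{generic curves} (all ends of $C$ have weight $1$ and none has direction $(0,1)$, since the Newton polygon is $T_d$), gives $\div(v_S) = -l_{\text{ends}}(S) \le 0$, where $l_{\text{ends}}(S)$ denotes the number of ends of $C$ contained in $S$; consequently $l(S) = \val(v_S) - \div(v_S)$.

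For the cardinality axioms, Corollary \ref{exact incidence} directly gives $|m_f^{-1}(v_\F)| = 2\deg(\F) - 1 = 2\div(v_\F) - 1$ and $|m_f^{-1}(v_S)| = l(S) - 1 = \val(v_S) - \div(v_S) - 1$; surjectivity of $m_f$ follows since every floor and every shaft matches at least one constraint.

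The main obstacle will be verifying the positional axioms on $\P^{comb}$, and for this the vertical stretching of $(\P, \L)$ is used decisively: it ensures that the index of a constraint tracks its $y$-coordinate closely enough that comparisons of $y$-coordinates carry over to comparisons of indices. For a floor $\F$, the unique horizontal constraint matched by $\F$ lies at the $y$-coordinate of $\F$, whereas any vertical match corresponds to a line $L$ whose vertex $\eta_L$ lies above $\F$ (since only then can the vertical edge of $L$ descend to a vertex of $\F$); hence if $\F$ matches a point $p \in \P$, then $p$ is the horizontal match and the smallest-index element of $m_f^{-1}(v_\F)$, while two points would produce two horizontal matches, contradicting Corollary \ref{exact incidence}. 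For a shaft $S$, point matches and horizontal matches all lie at indices at most the $y$-coordinate of the topmost floor adjacent to $S$, because $S$ cannot extend above its topmost attached floor (no up-ends of $C$ exist). The only way for an index in $m_f^{-1}(v_S)$ to exceed $\max_{v' \in \Ve(v_S)} \min(m_f^{-1}(v'))$ is via a line matched through its vertical edge with vertex above the topmost floor of $S$, which is a vertical match. Since Corollary \ref{exact incidence} allows at most one vertical match per shaft, at most one above-max index can arise; its presence occupies the unique vertical slot, forbidding any point match in $m_f^{-1}(v_S)$.
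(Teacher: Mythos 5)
Your proposal is correct and follows essentially the same route as the paper: the paper's own proof is the single sentence that the lemma is ``an immediate consequence of Corollary \ref{exact incidence} and the genericity of the configuration $(\P,\L)$,'' and your argument is a careful unpacking of exactly those two ingredients (the cardinality counts from Corollary \ref{exact incidence}, and the height/index comparisons enabled by vertical stretching, including the observation that a point of $\P$ is simultaneously a horizontal and a vertical constraint, which also gives the at-most-one-point condition at black vertices). No genuinely different idea is introduced, so there is nothing further to compare.
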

\begin{proof}
This is an immediate consequence of Corollary \ref{exact incidence}
and the genericity of the configuration $(\P,\L)$.
\end{proof}

Hence we have constructed a map $\phi_{(\P,\L)}$ which maps an element 
of $f\in \S^\TT(d,\P,\L)$ tangent to $\L$ to  the $\L^{comb}$-marked
floor diagram $(\D_f,m_f)$.
Theorem \ref{fd thm} is a direct consequence of Theorem \ref{Corres} and of 
the following proposition.
\begin{prop}\label{fiber}
Let $(\P,\L)$ be a vertically
  stretched configuration of constraints.
For any $\L^{comb}$ marked floor diagram $(\D,m)$  of degree $d$, we have
$$\sum_{f\in\phi_{(\P,\L)}^{-1}(\D,m)}\mu_{(\P,\L)}(f)=\mu_{\L^{comb}}(\D,m).$$
\end{prop}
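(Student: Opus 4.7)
The plan is to reconstruct any $f\in\phi_{(\P,\L)}^{-1}(\D,m)$ piece by piece from the combinatorial data $(\D,m)$, then show that the summed multiplicities factor over vertices of $\D$ and edges of $\D$, matching Definition \ref{def mult white} and Definition \ref{def mult black} together with the weight product. Concretely, each floor $\F$ of $f$ maps onto $\RR$ via $\pi_x\circ f_{|\F}$ as a tropical ramified covering whose ends come from the $3d$ unbounded ends of $C$ adjacent to $\F$ (of slopes $(-1,0),(0,\pm 1),(1,\alpha)$, etc.) together with the elevators adjacent to $\F$, whose weights are prescribed by $w\colon\Ed(\D)\to\ZZ_{>0}$; each shaft $S$ gives an open covering $\pi_y\circ f_{|S}$ of an interval of $\RR$ whose end-data $(\delta,n)$ are exactly those read off from $m$ at the corresponding black vertex $v_S$ (the $\varepsilon_i$ and $w(e_i)$ govern $\delta$, while the marked points that fall between consecutive adjacent floors govern $\tilde n$). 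By Corollary \ref{exact incidence} and genericity, the partition of the constraints $\overline q_1<\ldots<\overline q_{3d-1}$ among the vertices of $\D$ is dictated by $m$, and once one fixes the (unique) horizontal constraint on each floor and the (unique) vertical constraint on each shaft, both reconstruction problems become independent.

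Next I would apply Proposition \ref{practical comp} to express $\mu_{(\P,\L)}(f)$ as a product $\prod_{q}w_q\cdot\prod_{e\in\Ed^0(C)}w_{f,e}\cdot\prod_{v\in\Ve^0(C)}\mu_{(\P,\L)}(v)/|\operatorname{Aut}(f)|$, and then regroup this product according to whether an edge/vertex belongs to a floor or to a shaft, or is an elevator joining them. The elevator weights contribute exactly $\prod_{e\in\Ed(\D)}w(e)$; the remaining factors decompose as a product over $v\in\Ve^\circ(\D)\sqcup\Ve^\bullet(\D)$, and it remains to check that the sum of these local factors over all $f$ with fixed $(\D_f,m_f)=(\D,m)$ reproduces $\mu_{\L^{comb}}(v)$ for each vertex. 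The orientation convention on $\oC$ introduced in Section \ref{practical} is exactly what makes $\pi_x\circ f_{|\F}$ and $\pi_y\circ f_{|S}$ behave as ramified coverings in the sense of Appendix \ref{open Hurwitz}.

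For a white vertex $v$, the enumeration reduces to counting degree-$\div(v)$ rational coverings of $\RR$ with prescribed ramification at a single interior point (the horizontal constraint carried by $\F$) plus $\val(v)$ simple ramifications above fixed heights (one for each adjacent elevator), which gives the Hurwitz number $H(\div(v))$. The factor $\div(v)^{\val(v)}$ arises from the choice of sheet over each elevator (so that the gluing at each elevator is realizable), and the additional factor $\div(v)$ or $(\div(v)-2+\val(v))$ comes from counting the position of the horizontal constraint: if it is a point in $\P^{comb}$ one gets exactly $\div(v)$ lifts, whereas if it is a line in $\L^{comb}$ one gets $\div(v)-2+\val(v)$ possible pre-tangency locations (vertices of $\F$ plus non-horizontal ends of $\F$), reflecting the weight $w_L$ from \eqref{wL}. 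For a black vertex $v$, the analogous analysis using open Hurwitz numbers $H(\delta,n_i)$ produces each of the three cases in Definition \ref{def mult black}, the sum over $i$ in the third case corresponding to summing over the possible floors or ends where the (then necessarily interior to the shaft) tangency occurs.

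The main obstacle is the black-vertex computation, and specifically matching the combinatorial factor $\tilde n(i)(2\delta(i)+2i+\tilde N(i)+\tilde N(i-1)-1+2\div(v))$ in the third case of Definition \ref{def mult black} with an explicit count of realizations of an open tropical covering $\pi_y\circ f_{|S}$ that is pre-tangent to a line whose horizontal part lies \emph{inside} $f(S)$. One must carefully keep track of: the weight $\delta(i)$ of the elevator of $S$ containing the tangency, the $2\div(v)$ possible phases of the tangency coming from the weight $w_L$ formula applied locally, the number of edges of $S$ in each interval between consecutive adjacent floors (this is what $2i+\tilde N(i)+\tilde N(i-1)-1$ encodes after one takes into account both the interior vertices and the $\val(v)-\div(v)$ open ends of the shaft coverage), and finally the factor $\frac{1}{2}$ from the bipartition into the two sides of the tangency locus. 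Once this bookkeeping is done and matched with the open-Hurwitz counts supplied by Appendix \ref{open Hurwitz}, summing over reconstructions and taking the product over $\Ve(\D)\cup\Ed(\D)$ yields the equality, and the automorphism group $\operatorname{Aut}(f)$, described by Lemma \ref{automorphism}, is absorbed exactly by symmetric choices of multi-edges at the combinatorial level, so that no extra $|\operatorname{Aut}|$-factor remains.
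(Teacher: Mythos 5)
Your overall strategy coincides with the paper's: decompose $f$ into floors and shafts, use Proposition \ref{practical comp} to factor $\mu_{(\P,\L)}(f)$ over this decomposition, identify the floor and shaft counts with Hurwitz and open Hurwitz numbers via Theorem \ref{Hurwitz count}, and glue along elevators (whose weights produce $\prod_{e\in\Ed(\D)}w(e)$). The paper does exactly this, establishing the independence of the two reconstruction problems by the interval argument (the sets $I_{\varepsilon,q}$), which is also what makes the restriction $f_{|S_v}$ range bijectively over $\H^\TT(\delta,n_{i_v})$ as $f$ ranges over the fiber with fixed $i_v$.

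However, the proposal stops short of the two computations that constitute the actual content of the proof. First, the white-vertex contribution is isolated in the paper as a separate statement (Lemma \ref{lemma floor}): one must show that summing the local vertex multiplicities $\mu(v_1)=|\alpha|$ of Proposition \ref{practical comp} over the possible attachment points of each elevator yields a factor $\div(v)$ per elevator, and that the count of positions for the horizontal constraint (weighted by $w_{f,e}$ for a point, and by formula~(\ref{wL}) for a line) gives $\div(v)$, resp.\ $\val(v)-2+\div(v)$; you assert these factors but do not derive them, and your heuristic count of ``pre-tangency locations'' does not match the paper's computation $K=d_v+|\Ve^0(C_1)|-2d_v$. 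Second, and more seriously, for a black vertex of the third type you correctly single out as ``the main obstacle'' the matching of $\frac12\sum_i\tilde n(i)\bigl(2\delta(i)+2i+\tilde N(i)+\tilde N(i-1)-1+2\div(v)\bigr)H(\delta,n_i)$ with a sum over tangency positions, but you do not resolve it. The paper does so by first computing, via an Euler-characteristic argument and formula~(\ref{wL}), that the contribution for a fixed position $i_v$ is $r_{i_v}H(\delta,n_{i_v})$ with $r_{i_v}=\delta(i_v)+i_v+|\{i\in m^{-1}(v)\ |\ i<i_v\}|+\div(v)$, and then summing the arithmetic progression $\sum_{j=\tilde N(i-1)}^{\tilde N(i)-1}j$; in particular the factor $\frac12$ arises from this summation of consecutive integers, not, as you suggest, from ``the bipartition into the two sides of the tangency locus.'' As written, the argument therefore has a genuine gap precisely at the step you flag, and one concretely misidentified ingredient.
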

\begin{proof}
Let $(\D,m)$ be a $\L^{comb}$-marked floor diagram. We start with two
easy
 observations. Given a tropical morphism
 $f\in\phi_{(\P,\L)}^{-1}(\D,m)$ and
 $v\in\Ve^\circ(\D)$ corresponding to the floor $\F_v$ of $f$, 
then the horizontal constraint matched by $\F_v$ 
is   $\overline q_{\min(m^{-1}(v))}$.
If $v\in\Ve^\bullet(\D)$ corresponds to the shaft $S_v$ of $f$, 
then the vertical constraint matched by $S_v$ is
\begin{enumerate}
\item the point $\overline q_{i_v}$ in $\P$ where
  $i_v$ is  the unique element of $m^{-1}(v)\cap\P^{comb}$ 
  if this  set is non-empty;

\item the line $\overline q_{i_v}$ in $\L$ 
where  
  $i_v=\max(m^{-1}(v))$ if  
$i_v>\max_{v'\in\Ve(v)}\min((m^{-1}(v'))$;

\item a line $\overline q_{i_v}$ where  $i_v\in m^{-1}(v)$ otherwise.
\end{enumerate}
Hence, we see that the horizontal constraint
matched by $S_v$  is not 
determined by $\D$  only in the case (3) above.
We denote by $\Ve^{\bullet,3}(\D)$ the subset of  
$\Ve^{\bullet}(\D)$ composed of vertices in this case.

Given $\varepsilon >0$ and $p\in \P$ (resp. $L\in\L$ with vertex
$\eta$), 
we denote by $I_{\varepsilon,p}$ (resp. $I_{\varepsilon,L}$) the
interval of $\RR$
  centered in $\pi_y(p)$ (resp. $\pi_y(\eta)$) and of length
  $\varepsilon$. Let us denote by $\overline q'_1<\ldots <\overline
  q'_{\val(v)}$ the elements of $\P\cup\L$ which are horizontal
  constraints for a floor of $f$ adjacent to the shaft $S_v$, and by 
$x_i$ is the center of the 
interval $I_{\varepsilon,\overline q'_i}$. We define the set $\Q^\TT$
to be the set of centers of all
intervals $I_{\varepsilon,L}$ with $L$ a non vertical constraint of
$S_v$ matched by $S_v$.
Now we can consider the set
$\H^\TT(\delta, n_{i_v})$ corresponding to the $x_i$'s and the set
$\Q^\TT$
  (see Section \ref{open Hurwitz} 
for the definition of
$\H^\TT$, and Definition \ref{def mult black} 
for the definition of $\delta$ and
$n_{i_v}$).
Since the points of
$\P\cup\{\eta_1,\ldots,\eta_{3d-1-k}\}$ are contained in the strip
$I\times\RR$ and very far one from the others,  Lemma \ref{key lemma
  statement} tells us that there exists
$\varepsilon>0$ depending only on $a$,$b$, and $d$ (in particular independent
on $f$, $\P$, and $\L$), such that 
\begin{itemize}
\item if $v'$ is a vertex of $S_v$ such that
 $f(v')\in L\in\L$ with $L\ne \overline q_{i_v}$, then
 $\pi_y\circ f(v')\in I_{\varepsilon,L}$;

\item if $v'$ is a vertex of $S_v$ which is also a vertex of
 a floor matching
  the horizontal constraint $q$, then $\pi_y\circ f(v')\in I_{\varepsilon,q}$;

\item if $q\ne q'$ then $I_{\varepsilon,q}\cap I_{\varepsilon,q'}=\emptyset$.
\end{itemize}
Hence, there exists a unique element $g$ of $\H^\TT(\delta, n_{i_v})$
which can be obtained from a deformation $(g_t:C'_t \to \RR^2)_{t\in[0;1]}$ 
of the restriction $f_{|S_v}$  in its deformation space  such that
$g_t(\Ve(C'_t))\subset\cup_{q\in\P\cup \L} I_{\varepsilon,q}$ for all $t$.
 Moreover when $f$ ranges over all elements of 
$\phi_{(\P,\L)}^{-1}(\D,m)$ with fixed $i_v$,
there is a natural bijection between all possible
restrictions $f_{|S_v}$ and the set $\H^\TT(\delta, n_{i_v})$.

Once again, since the points of
$\P\cup\{\eta_1,\ldots,\eta_{3d-1-k}\}$ are contained in the strip
$I\times\RR$ and very far one from the others, to construct a
morphism $f$ in $\phi_{(\P,\L)}^{-1}(\D,m)$, 
it is enough to construct independently the restriction of 
$f$ on the floors and on
the shafts of $C$, and to glue all these pieces together along elevators.
It follows from Definition \ref{def mult black}, Proposition
 \ref{practical comp}, and Theorem \ref{Hurwitz count},
 that  the contribution
of a vertex $v\in\Ve^\bullet(\D)\setminus \Ve^{\bullet,3}(\D)$
 is equal to $\mu_{\L^{comb}}(v)$. 
In the case of a vertex $v\in\Ve^{\bullet,3}(\D)$,
 an easy Euler characteristic computation and formula~(\ref{wL}) for the weight associated to a tangency show that the contribution of
$v$ with a fixed $i_v$ is, in the notation of Definition \ref{def
  mult black}, 
$$  r_{i,v}H(\delta,n_{i_v})$$
where $r_{i_v}=(\delta(i_v)+i+ |\{i\in m^{-1}(v)\ | \ i<i_v\}| +\div(v))$.
Hence, according to Lemma \ref{lemma floor},
the sum of the
$(\P,\L)$-multiplicity of all morphisms $f\in
\phi_{(\P,\L)}^{-1}(\D,m)$ with fixed $i_v$ for all $v\in\Ve^{\bullet,3}(\D)$
 is exactly 
$$\prod_{e\in\Ed(\D)}w(e) \prod_{v\in\Ve(\D)\setminus \Ve^{\bullet,3}(\D)}\mu_{\L^{comb}}(v) 
\prod_{v\in\Ve^{\bullet,3}(\D)}
\Big(r_{i_v}
H(\delta,n_{i_v}) \Big). $$
Given $v\in\Ve^{\bullet,3}(\D)$,  we have
$$\begin{array}{lll}
\sum_{i_v} 
\Big(r_{i_v}H(\delta,n_{i_v}) \Big) &=&  \sum_{i=0}^s
\left(\left(\tilde n(i)(\delta(i)+i +\div(v)) + 
\sum_{j=\tilde N(i-1)}^{\tilde N(i)-1}j \right)H(\delta,n_{i}) \right)
\\ &=& \sum_{i=0}^s
\left(\left(\tilde n(i)(\delta(i)+i +\div(v)) + 
\frac{\tilde N(i)^2-\tilde N(i-1)^2 -(\tilde N(i)-\tilde N(i-1)) }{2}
\right)H(\delta,n_{i}) \right)
\\ &=& \mu_{\L^{comb}}(v) 
\end{array}.$$
Hence we have
$$\sum_{f\in\phi_{(\P,\L)}^{-1}(\D,m)}\mu_{(\P,\L)}(f)
=\prod_{e\in\Ed(\D)}w(e) \prod_{v\in\Ve(\D)}\mu_{\L^{comb}}(v) 
= \mu_{\L^{comb}}(\D,m) $$
as desired.
\end{proof}

\begin{lemma}\label{lemma floor}
Let $d_v\ge 1$ and $w_1,\ldots,w_l>0$ be integer numbers.
Choose 
a generic configuration $(\P,\L)$ of constraints such that
$\P=\{p_1,\ldots, p_l\}\subset I\times\RR$, and $\L=\{L_1,\ldots,
  L_{2d_v-2}\}\subset I\times\RR$ is a set of vertical lines.
Choose also a point $p_0\in I\times\RR$ and
a tropical line
 $L_0$ whose vertex $\eta_0$ is in $I\times\RR$ such that the
configuration $(\P\cup\{p_0\},\L\cup\{L_0\})$ is generic. Suppose that 
the points $p_i$ and $\eta_0$ are very far one from the others, and denote by
  $\C_{p_0}(2d_v+l,\P,\L)$ (resp.  $\C_{L_0}(2d_v+l,\P,\L)$)
the set of all minimal tropical morphisms $f:C\to \RR^2$ such that
\begin{enumerate}
\item $C$ has  $l$ ends of direction $(0,\pm 1)$; given  such an end
  $e$ of $C$, $f(e)$ passes through one point $p_i$ of $\P$, 
and $w_{f,e}=w_i$;
\item $C$ has
$d_v$ ends of direction $(-1,0)$ and weight 1,
 and $d_v$ ends of direction $(1,1)$ and weight 1;

\item $f$ is pretangent to all lines in $\L$;

\item $f$  passes through $p_0$ (resp.  is pretangent to
  $L_0$).

\end{enumerate}
Then
$$\sum_{f\in\C_{p_0}(2d_v+l,\P,\L)}\mu_{(\P\cup{p_0},\L)}(f)=
d_v^{l+1}H(d_v)\prod_{e\in\Ed^\infty(C),\ u_{f,e}=(0,\pm 1)}w(e) $$
and 
$$\sum_{f\in\C_{L_0}(2d_v+l,\P,\L)}\mu_{(\P,\L\cup{L_0})}(f)=
(l-2+d_v)d_v^{l}H(d_v)\prod_{e\in\Ed^\infty(C),\ u_{f,e}=(0,\pm
  1)}w(e). $$
Moreover,  any tropical morphism $f\in \C_{p_0}(2d_v+l,\P,\L)$
(resp. $f\in \C_{L_0}(2d_v+l,\P,\L)$) has exactly one floor, and
this floor matches $p_0$ (resp. $L_0$). 
\end{lemma}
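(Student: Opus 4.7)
The strategy is to exploit the stretched configuration to reduce the problem to a Hurwitz enumeration on a single floor, then assemble the total multiplicity via Proposition \ref{practical comp}. First, since the points $p_i$ and vertices $\eta_j$ are very far apart vertically, Lemma \ref{key lemma statement} forces every vertex of any $f\in\C_{p_0}(2d_v+l,\P,\L)$ or $f\in\C_{L_0}(2d_v+l,\P,\L)$ to lie in the strip $I\times\RR$. Combined with conditions (1) and (2) on the ends of $C$, this shows that $C$ consists of a single floor of degree $d_v$ (spanning the $d_v$ ends of direction $(-1,0)$ and the $d_v$ ends of direction $(1,1)$) with the $l$ vertical ends attached as isolated elevators; in particular, this floor matches the extra constraint $p_0$ or $L_0$, establishing the last assertion of the lemma.

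Next, the projection $\pi_x\circ f$ restricted to the floor is a rational tropical ramified covering of $\RR$ of degree $d_v$ with $2d_v$ ends of weight $1$, where the pretangency with each vertical line $L_j$ corresponds to a simple critical value at $\pi_x(\eta_j)$. By Theorem \ref{Hurwitz count} from Appendix \ref{open Hurwitz}, the tropical Hurwitz count of such covers equals $H(d_v)$. For each such cover, the elevator carrying $p_i$ may be attached to any of the $d_v$ sheets of the cover lying above $\pi_x(p_i)$ (its $y$-coordinate being fixed by $p_i$), contributing a total factor of $d_v^l$. In the $\C_{p_0}$ case, passing through $p_0$ selects which of the $d_v$ sheets above $\pi_x(p_0)$ realizes the correct $y$-coordinate, giving the extra factor $d_v$ and total $d_v^{l+1}H(d_v)$. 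In the $\C_{L_0}$ case, pretangency with $L_0$ can be realized either by overlap of one of the $d_v$ horizontal ends of the floor with the horizontal end of $L_0$, or by requiring one of the internal vertices of the floor to be mapped to a non-vertical edge of $L_0$; a careful enumeration of compatible placements yields the combined factor $l-2+d_v$.

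Proposition \ref{practical comp} then assembles the result: the elevator weights $w_{p_i}=w_i$ combine with the end weights to produce $\prod_{e\in\Ed^\infty(C),\,u_{f,e}=(0,\pm 1)}w(e)$, all vertical-line tangency weights equal $1$, and the product of vertex multiplicities and interior edge weights over the floor matches the Hurwitz count $H(d_v)$ after division by $|\mathrm{Aut}(f)|$ (whose only nontrivial contributions come from symmetric elevator attachments). The main obstacle is the combinatorial enumeration in the $\C_{L_0}$ case: verifying rigorously that the pretangency placements with $L_0$---after distinguishing edge-overlap from vertex-based pretangencies, treating the three possible edges of $L_0$ separately, and discarding the degenerate cases where a pretangency component would coincide with an elevator or with a critical vertex of the Hurwitz cover---sum to exactly $l-2+d_v$, and that the signs and orientations in the vertex multiplicities $\mu_{(\P,\L)}(v)$ conspire to produce $H(d_v)$ rather than a weighted variant.
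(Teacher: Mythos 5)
Your overall strategy coincides with the paper's: show there is a single floor, project it by $\pi_x$ onto a tropical Hurwitz cover of $\RR$ counted by $H(d_v)$ via Theorem \ref{Hurwitz count}, and then multiply by the weighted number of ways to attach the $l$ elevators and to satisfy the extra horizontal constraint. However, there is a genuine gap exactly where you admit one: the factor $l-2+d_v$ in the $\C_{L_0}$ case is asserted, not derived, and your sketch of where it should come from is not the correct bookkeeping. In the paper's proof the summand $d_v$ arises from the configurations where $f_1(C_1)$ passes through the \emph{vertex} $\eta_0$ of $L_0$ (the weighted sum of the edges of $C_1$ lying over $\eta_0$ equals the degree $d_v$ of the cover), not from ``overlap of one of the $d_v$ horizontal ends of the floor with the horizontal end of $L_0$''; on the contrary, the paper observes that whenever an end of $C_1$ of direction $(-1,0)$ or $(1,1)$ meets the pretangency component in infinitely many points the multiplicity vanishes. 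The summand $l-2$ then comes from the remaining configurations where a $3$-valent vertex of $C_1$ is mapped to an edge of $L_0$: of the $|\Ve^0(C_1)|=2d_v+l-2$ vertices, the $2d_v$ adjacent to a non-vertical end are excluded by the vanishing just mentioned, leaving $l-2$ admissible placements, each contributing $1$ after the orientation/multiplicity analysis of Proposition \ref{practical comp}. Since the second displayed formula of the lemma is precisely this count, deferring it leaves the main content unproved.

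Two smaller points. First, your single-floor argument leans on Lemma \ref{key lemma statement}, which is stated for elements of $\S^\TT(d,\P,\L)$; the curves here have a prescribed Newton fan with $l$ vertical ends of weights $w_i$, so that lemma does not apply verbatim. The paper instead counts vertices directly: $C$ is a rational trivalent curve with $2d_v+l$ ends, hence $2d_v+l-2$ vertices, of which $l$ carry the vertical ends and $2d_v-2$ are forced onto the vertical lines $L_1,\dots,L_{2d_v-2}$, leaving no room for an interior vertical edge. Second, the factor $d_v$ per elevator (and per point constraint) is a \emph{weighted} sum --- of the vertex multiplicities $|\alpha|$, respectively of the weights of the supporting edges, over all admissible attachments --- rather than a count of $d_v$ distinct ``sheets''; these agree only because the cover has degree $d_v$, and your phrasing obscures why the weights $w_q$ and the determinant of Proposition \ref{practical comp} combine to give exactly $d_v$.
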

\begin{proof}
The fact that $f$ has only one floor is straightforward: $C$ has
$2d_v+l-2$ vertices, exactly $l$ of which are adjacent to a vertical
end of $C$, so $C$ has exactly 1 vertex mapped to each line
$L_i$, $i> 0$; in particular, it has no other vertical edge than its
vertical ends. Since the floor of $f$ has to match a horizontal
constraint, it matches necessarily $p_0$ or $L_0$.

In the following, we use notations of Appendix \ref{open Hurwitz}.
Let us denote by $\Q^\TT$  the set of
intersection points of all tropical lines $L_i$ with $\{y=0\}$ when $i>0$, and
let us consider the set
$\H^\TT(\delta,n)$ with $s=0$, $\delta(0)=d_v$, and $n(0)=2d_v-2$.
Let us
 fix an element $f_0:C_0\to \{y=0\}=\RR$ in $\H^\TT(\delta,n)$.
Consider a sequence of $l$ open tropical modifications $\pi:C_1\to
C_0$, and 
a minimal tropical morphism $f_1:C_1\to \RR^2$ 
 satisfying conditions (1)-(2)-(3) such that $\pi_x\circ
 f_1=f_0\circ\pi$. 
Composing $f_1$ with 
a translation in the $(0,1)$ direction, we construct a finite number
of elements of  $\C_{p_0}(2d_v+l,\P,\L)$ and  $\C_{L_0}(2d_v+l,\P,\L)$.

\begin{itemize}
\item To construct an element of $\C_{p_0}(2d_v+l,\P,\L)$,
 we have to make one of the
edges of $C_1$ 
 pass through $p_0$. Once
this is done, the orientation of the curve $C_1$ defined in
Section \ref{practical} is as follows: the rays emanating from $p_0$
are oriented away from $p_0$, and exactly one edge  $e$ with
$u_{f_1,e}\ne (0,\pm 1)$ is
  oriented toward $v_1$ at a vertex $v_1$
 of $C_1$.
 Hence the multiplicity of such a vertex $v_1$ is 
$\mu(v_1)=|\alpha|$ if $u_{f_1,e}= (\alpha,\beta)$.
The multiplicity can then be computed via Proposition~\ref{practical comp}. 
Notice that for a given choice of morphism $f_0$, and every thing being fixed but for one vertical edge (resp. the edge containing the marked point), summing the corresponding $\mu(v_1)$ (resp. weights of the supporting edges) over all the possible choices, one gets a factor $d_v$. Thus adding the $(\P\cup{p_0},\L)$-multiplicity of all possible
morphisms $f_1$ constructed in this way starting with a fixed $f_0$, we obtain
$d_v^{l+1}\mu_H(f_0)$.
 Considering all possible tropical
    morphisms $f_0\in \H^\TT(\delta,n)$, Theorem \ref{Hurwitz count}
    tells us that we obtain 
$$\sum_{f\in\C_{p_0}(2d_v+l,\P,\L)}\mu_{(\P\cup{p_0},\L)}(f)=
d_v^{l+1}H(d_v)\prod_{e\in\Ed^\infty(C),\ u_{f,e}=(0,\pm 1)}w(e).$$

\item To construct an element of $\C_{L_0}(2d_v+l,\P,\L)$,
we have 2 possibilities: either
$f_1(C_1)$ passes through the vertex $\eta_0$ of $L_0$, 
or a vertex of $C_1$ is mapped to
$L_0$. 
Once
this is done, the orientation of the curve $C_1$ defined in
Section \ref{practical} is as follows ($E_0$ denotes the union of all
 pretangency
components of $f_1$ with $L_0$):
an edge $e$ with $u_{f_1,e}\ne (0,\pm 1)$ intersecting $E_0$ but
not included in $E_0$ is oriented away from $E_0$, and 
exactly one edge  $e$ with
$u_{f_1,e}\ne (0,\pm 1)$ is
  oriented toward $v_1$ at a vertex $v_1$
 of $C_1\setminus E_0$.
Note that if there exists an end of $C_1$ with direction $(-1,0)$ or
$(1,1)$ and intersecting $E_0$ in infinitely many points, 
then $\mu_{(\P,\L\cup{L_0})}(f_1)=0$.
Hence,  adding the $(\P,\L\cup{L_0})$-multiplicity of all possible
morphisms $f_1$ constructed in this way starting with a fixed $f_0$, 
we obtain 
$d_v^{l}K\mu_H(f_0)$, where $K=d_v+ |\Ve^0(C_1)|-2d_v=l-2+d_v$. Considering all possible tropical
    morphisms $f_0\in \H^\TT(\delta,n)$,
 Theorem \ref{Hurwitz count} tells us that we obtain
$$\sum_{f\in\C_{L_0}(2d_v+l,\P,\L)}\mu_{(\P,\L\cup{L_0})}(f)=
(l-2+d_v)d_v^{l}H(d_v)\prod_{e\in\Ed^\infty(C),\ u_{f,e}=(0,\pm
  1)}w(e). $$

\end{itemize}
Hence the lemma is proved.

\end{proof}

\appendix
\section{Open Hurwitz numbers}\label{open Hurwitz}
We recall here the definition of open Hurwitz numbers. These
numbers were introduced in \cite{Br13} and are a slight generalization
of well known Hurwitz numbers. For simplicity, we restrict ourselves 
 to the special
cases we need in this paper. We refer to \cite{Br13} for more details
and examples 
about open Hurwitz numbers and their tropical counterpart (see also \cite{CJM}).

Let $s\ge 0$ be an integer number, and 
$\delta, \ n:\{0,\ldots,s\}\to \ZZ_{\ge 0}$ be two functions.
Choose a collection of $s$ embedded circles $c_1,\ldots, c_s$
in 
the sphere
$S^2$ such that $c_1$  (resp. $c_s$) bounds a disk $D_0$
(resp. $D_{s}$), 
and $c_i$ and
$c_{i+1}$ bound an annulus $D_{i}$ for $1\le i\le s-1$.
Choose also a collection $\Q$ of points in $S^2\setminus \cup_{i=1}^s
c_i$, such that each $D_i$ contains exactly $n(i)$ points of $\Q$.
Let us  consider the set $\H(\delta,n)$ of all
equivalence class of ramified coverings $f:\Sigma\to
S^2$ where

\begin{itemize}
\item $\Sigma$ is a  connected compact oriented surface of genus 0 with $s$ boundary
  components;

\item $f(\partial \Sigma)\subset \cup_{i=1}^sc_i$;

\item $f$ is unramified over $S^2\setminus \Q$;
 
\item  $f_{|f^{-1}(D_i)}$ has degree $\delta(i)$ for each $i$;

\item  each point in $\Q$ is a simple critical value of $f$;

\item for each circle $c_i$, the set $f^{-1}(c_i)$  contains exactly one
  connected component $c$ of $\partial \Sigma$, and
$f_{|c}:c\to c_i$  is an
 unramified covering of degree $|\delta(i)-\delta(i-1)|$.

\end{itemize}
Two continuous maps   $f : \Sigma\to S^2$ and $f':\Sigma'
\to S^2$ are considered  equivalent if  there exists
a homeomorphism $\Phi:\Sigma\to \Sigma'$  such that
 $f'\circ \Phi= f$. 

Note that the cardinal of the set $\Q$ is prescribed by the 
Riemann-Hurwitz formula
$$|\Q|=\delta(0)+\delta(s) +s -2.$$

\begin{defi}\label{def open hurwitz}
The \emph{open Hurwitz number} $H(\delta,n)$ is defined as
$$H(\delta,n)=\sum_{f\in  \H(\delta,n)}\frac{1}{|Aut(f)|}.  $$
\end{defi}

Note that we can naturally extend the definition of the numbers
$H(\delta,n)$ to the case where $\delta:\{0,\ldots,s\}\to\ZZ$ is any
function by
setting
$$H(\delta,n)=0\quad \text{if}\quad \Im\delta\nsubseteq \ZZ_{\ge
0}. $$

\begin{exa}\label{elem1}
If $s=2$, $\delta(0)=\delta(2)=n(0)=n(1)=n(2)=0$, and
$\delta(1)=d$, 
one computes easily
$$H(\delta,n)=\frac{1}{d}.$$
\end{exa}

In the special case where $s=0$,
  we recover usual Hurwitz numbers. In particular $\delta$ is
 just a positive integer number, the degree of the maps  we are
 counting and that  we just denote by $d$. We simply denote this
 Hurwitz number 
by $H(d)$.

\begin{prop}[Hurwitz]\label{exa hurwitz}
For any $d\ge 1$, then
$$H(d)=\frac{d^{d-3}(2d-2)!}{d!}.  $$
\end{prop}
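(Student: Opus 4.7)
The plan is to reduce $H(d)$ to a classical combinatorial count in the symmetric group and then invoke Hurwitz's original argument. When $s=0$, the set $\mathcal{H}(d)$ consists of equivalence classes of degree $d$ ramified coverings $f:\Sigma\to S^2$ where $\Sigma$ is a sphere and $f$ has $2d-2$ prescribed simple branch points (the count $2d-2$ coming from Riemann--Hurwitz, which is already noted in the excerpt). The monodromy representation based at a point outside the branch locus sets up a bijection between $\mathcal{H}(d)$ and the set of tuples $(\tau_1,\ldots,\tau_{2d-2})$ of transpositions in $S_d$ satisfying $\tau_1\cdots\tau_{2d-2}=\mathrm{id}$, generating a transitive subgroup, taken modulo simultaneous conjugation by $S_d$. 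Weighting by $1/|\mathrm{Aut}(f)|$ corresponds exactly to dividing the cardinality of the set of \emph{ordered, unquotiented} such tuples by $d!$, so the proposition becomes a statement about counting transitive factorizations of the identity in $S_d$ as a product of $2d-2$ transpositions.

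Next I would apply the Frobenius character formula for factorizations in a finite group to get
\[
\#\bigl\{(\tau_1,\ldots,\tau_{2d-2})\,:\,\tau_i\in C,\ \tau_1\cdots\tau_{2d-2}=1\bigr\} \;=\; \frac{|C|^{2d-2}}{d!}\sum_{\lambda\vdash d}\frac{\chi^{\lambda}(C)^{2d-2}}{\chi^{\lambda}(1)^{2d-4}},
\]
where $C$ is the conjugacy class of a transposition and $\chi^{\lambda}$ is the irreducible character of $S_d$ indexed by $\lambda$. The transitivity constraint would then be enforced by an inclusion--exclusion over set-partitions of $\{1,\ldots,d\}$ recording the orbits of the subgroup generated by the $\tau_i$. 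After this combinatorial cleanup, the numerical content is entirely contained in the character sum, which I would evaluate using the classical identity
\[
\frac{\chi^{\lambda}(C)}{\chi^{\lambda}(1)} \;=\; \binom{d}{2}^{-1}\sum_{(i,j)\in\lambda}(j-i)
\]
together with the hook length formula for $\chi^{\lambda}(1)$.

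The main obstacle is exactly this final evaluation: the character sum is a sum over all partitions of $d$ with a high-degree numerator, and a rather delicate telescoping reduces it to the closed form $d^{d-3}(2d-2)!/d!$. For this reason I would in practice prefer the bijective route via Denes' theorem, which states that the number of ordered factorizations of a fixed $d$-cycle as a product of $d-1$ transpositions equals the Cayley number $d^{d-2}$ of labelled trees on $d$ vertices. The passage from factorizations of a $d$-cycle into $d-1$ transpositions to transitive factorizations of the identity into $2d-2$ transpositions is achieved by a standard cut-and-join / parking function construction, and produces precisely the extra combinatorial factor needed to recover $d^{d-3}(2d-2)!/d!$ after dividing by the $d!$ coming from simultaneous conjugation.
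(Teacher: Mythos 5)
The paper does not prove this proposition: it is stated as a classical result of Hurwitz and used as a black box (the only thing the paper actually proves about Hurwitz numbers is the tropical correspondence $H(\delta,n)=H^{\mathbb T}(\delta,n)$, quoted from elsewhere). So there is no in-paper argument to compare yours against, and I will assess your proposal on its own terms. Your first step is correct and is the standard one: with $s=0$ the set $\mathcal H(d)$ consists of connected degree-$d$ covers of $S^2$ with $2d-2$ simple branch points (genus $0$ is then automatic from Riemann--Hurwitz), the monodromy representation identifies these with transitive tuples of $2d-2$ transpositions multiplying to the identity modulo simultaneous conjugation, and the orbit--stabilizer argument converts the $1/|\mathrm{Aut}(f)|$ weighting into division of the ordered count by $d!$. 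Up to this point the argument is complete and correct.

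The gap is that neither of your two proposed ways of finishing actually establishes the enumerative identity, which is the entire content of the proposition. The character-theoretic route is only set up: the Frobenius formula and the central character of a transposition are stated correctly, but the sum over all partitions of $d$ combined with the inclusion--exclusion for transitivity is precisely the hard part, and you explicitly defer it. More seriously, the bijective route misidentifies the needed input. D\'enes' theorem counts the $d^{d-2}$ minimal factorizations of a fixed \emph{$d$-cycle} into $d-1$ transpositions; what you need here is the number $(2d-2)!\,d^{d-3}$ of minimal transitive factorizations of the \emph{identity} into $2d-2$ transpositions. These are the $k=1$ and $k=d$ instances, respectively, of the Goulden--Jackson formula for minimal transitive factorizations of a permutation with $k$ cycles, and the second does not follow from the first by any ``standard cut-and-join / parking function construction'' --- the cut-and-join recursion relates the whole family of genus-$0$ Hurwitz numbers across all ramification profiles, so invoking it to pass from the $d$-cycle case to the identity case amounts to assuming the general formula you are trying to prove. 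To close the gap you would need to either carry out the character-sum evaluation, prove the Goulden--Jackson formula (or at least its $1^d$ specialization) directly, or give the Crescimanno--Taylor-style bijection with labelled trees for the identity case itself.
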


\vspace{2ex}
Let us now define tropical open Hurwitz numbers let us relate them to
the open Hurwitz numbers we just defined. 
Recall that we have chosen $s\ge 0$  an integer number, and 
$\delta, \ n:\{0,\ldots,s\}\to \ZZ_{\ge 0}$  two functions.

Choose a collection of $s$ points $x_1< \ldots < x_s$
in $\RR$, and define  $D^\TT_0=(-\infty;x_1)$,
$D^\TT_i=(x_i,x_{i+1})$, and  $D^\TT_s=(x_s;-\infty)$.
Choose another collection of points
 $\Q^\TT$ in $\RR\setminus \{x_1,\ldots, x_s\}$, 
such that each $D^\TT_i$ contains exactly $n(i)$ points of $\Q^\TT$.
Let us  consider the set $\H^\TT(\delta,n)$ of all
equivalence class of minimal tropical morphisms $f:C\to\RR$ where

\begin{itemize}
\item $C$ is a  rational tropical curve with $s$ boundary
  components;

\item $f(\partial C)\subset\{x_1,\ldots,x_s\}$;

\item $f(\Ve^0(C))\subset \Q^\TT$;
 
\item  given $0\le i\le s$ and $p\in\D^\TT_i$, if we denote by $e_1,\ldots,e_r$
  the edges of $C$ which contain a point of $f^{-1}(p)$, then we have
  $\sum_{j=1}^r w_{f,e_j}=\delta(i)$;

\item  for each $p\in\Q^\TT$, $f^{-1}(p)$ contains exactly one element of
  $\Ve^0(C)$, which is a 3-valent vertex of $C$;

\item for each $1\le i\le s$,  the set $f^{-1}(x_i)$  contains exactly one
  boundary component of $C$, adjacent to an edge of weight
 $|\delta(i)-\delta(i+1)|$;

\item each end of $C$ is of weight 1.

\end{itemize}
As previously,  two tropical morphisms   
$f :C\to \RR$ and $f':C':\to \RR$ are considered equivalent 
 if  there exists
a tropical isomorphism $\Phi:C\to C'$ such that
 $f'\circ \Phi=\phi\circ f$.

Once again, we have
$$|\Q^\TT|=\delta(0)+\delta(s) +s -2.$$

Given $v$ a puncture of $C$  adjacent to the end $e$, we set
  $w_{f,v}=w_{f,e}$, and we define $w_{f,\infty}$ as the product of
the weights $w_{f,v}$ when $v$ ranges over all punctures of $C$.
The multiplicity of an element of $\H^\TT(\delta,n)$ is then defined as
$$\mu_H(f)=\frac{\prod_{e\in\Ed(C)}w_{f,e}}{w_{f,\infty} }. $$ 

\begin{defi}
The \emph{tropical open Hurwitz number} $H^\TT(\delta,n)$ is defined as
$$H^\TT(\delta,n)=\sum_{f\in  \H^\TT(\delta,n)}\frac{1}{|Aut(f)|}\mu_H(f).  $$
\end{defi}

As previously we  extend the definition of the numbers
$H^\TT(\delta,n)$  by
setting
$$H^\TT(\delta,n)=0\quad \text{if}\quad \Im\delta\nsubseteq \ZZ_{\ge 0}. $$

\begin{thm}[{\cite[Theorem 2.11]{Br13}}]\label{Hurwitz count}
For any two functions $\delta:\{0,\ldots, s\}\to \ZZ$ and 
$n:\{0,\ldots, s\}\to \ZZ_{\ge 0}$, we have 
$$H(\delta,n)=H^\TT(\delta,n).$$
\end{thm}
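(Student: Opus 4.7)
The plan is to establish Theorem \ref{Hurwitz count} via a correspondence between classical ramified coverings of $\CC^*$ and tropical morphisms to $\RR$, in the same spirit as Theorems \ref{Corres} and \ref{Corres2} but one dimension lower, and therefore considerably easier to implement.

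First, I would approximate the tropical data by classical data: choose families of complex critical values $\widetilde{\Q}_t \subset \CC^*$ and complex circles $\{|z| = t^{x_i}\}$ whose tropicalizations under $\log_t|\cdot|$ recover $\Q^\TT$ and $\{x_1, \dots, x_s\}$. For each large $t$, the set $\H(\delta,n)$ is canonically in bijection with equivalence classes of holomorphic ramified coverings $\widetilde f_t : \widetilde\Sigma_t \to \CC^*$ having degree $\delta(i)$ over each annulus $\{t^{x_i} < |z| < t^{x_{i+1}}\}$ and branched exactly over $\widetilde\Q_t$ with simple ramification. Then, by the one-dimensional analogue of Proposition \ref{trop-compact}, any sequence $\widetilde f_{t_j}$ admits a subsequential tropical limit $f : C \to \RR$ which lies in $\H^\TT(\delta,n)$; conversely, by the one-dimensional analogue of Theorem \ref{realiz}, every $f \in \H^\TT(\delta,n)$ is approached by some such family.

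The main enumerative step is to count, for each fixed tropical morphism $f \in \H^\TT(\delta,n)$, the number of complex lifts. At each trivalent vertex $v$ of $C$ the local covering is a three-punctured sphere mapping to a disk with one (simple) critical value, and such a covering is rigid: it is uniquely determined by the weights of its three adjacent edges. Along each interior edge $e$ of $C$ of weight $w_{f,e}$, the local model is a $w_{f,e}$-fold annular covering, and the gluing of the two boundary circles admits $w_{f,e}$ cyclic identifications, producing a factor of $w_{f,e}$ in the number of lifts. Multiplying these contributions over all edges gives exactly $\prod_{e\in\Ed(C)} w_{f,e}/w_{f,\infty} = \mu_H(f)$ (here $w_{f,\infty}=1$ since all ends of $C$ have weight 1 by definition); dividing by $|Aut(f)|$ accounts for the tropical automorphisms, which correspond bijectively to automorphisms of the associated classical cover. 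Summing over all $f$ then yields $H(\delta,n)=\sum_f \mu_H(f)/|Aut(f)| = H^\TT(\delta,n)$.

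The main obstacle will be the careful bookkeeping at each vertex and edge, and in particular the verification that the classical automorphism group of a ramified cover matches the tropical automorphism group of its tropical limit under the correspondence. An alternative and perhaps more elementary route would be to show directly that both $H$ and $H^\TT$ satisfy the same cut-and-join type recursion, obtained by moving a single critical value across one of the circles $c_i$ (respectively across one of the points $x_i$); this would reduce the theorem to a matching base case such as $s=2$ with no critical values, where Example \ref{elem1} already gives $1/d$ on both sides, and would bypass the analytic correspondence entirely at the cost of some combinatorial gymnastics.
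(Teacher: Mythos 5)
First, note that this paper contains no proof of Theorem \ref{Hurwitz count}: the statement is imported verbatim from \cite{Br13} (Theorem 2.11 there), so there is no internal argument to compare against. Your overall strategy --- degenerate the base, match each tropical cover in $\H^\TT(\delta,n)$ with the classical covers in $\H(\delta,n)$ limiting to it, and compute local contributions at vertices and edges --- is essentially the strategy of \cite{Br13}, except that there the degeneration is implemented combinatorially (a gluing formula expressing $H(\delta,n)$ as a sum of products of open Hurwitz numbers over pieces of $S^2$ cut along additional circles, arranged so that each piece carries at most one branch point) rather than via an analytic tropical limit in the style of Proposition \ref{trop-compact} and Theorem \ref{realiz}. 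Your alternative route through a cut-and-join type recursion is also viable and is how \cite{CJM} treat the closed case; for open Hurwitz numbers one must additionally track how a branch point crosses one of the circles $c_i$, which changes the functions $\delta$ and $n$.

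The one step that would fail as written is the assertion that tropical automorphisms ``correspond bijectively to automorphisms of the associated classical cover'', so that the number of classical lifts of $f$ is exactly $\prod_e w_{f,e}$ and the division by $|Aut(f)|$ happens for the same reason on both sides. Already for $s=0$ and $d=2$ this is false: the unique tropical cover has one bounded edge of weight $2$, two weight-$1$ ends on each side, $\mu_H(f)=2$ and $|Aut(f)|=4$, whereas the unique classical double cover of $S^2$ branched at two points has automorphism group of order $2$; the two cyclic gluings along the weight-$2$ circle produce isomorphic classical covers. What is true, and what you must actually prove, is the weighted identity $\sum_{\tilde f} 1/|Aut(\tilde f)| = \mu_H(f)/|Aut(f)|$, the sum running over isomorphism classes of classical covers degenerating to $f$: the $w_{f,e}$ gluings along an edge need not yield non-isomorphic covers, and the deficit is exactly compensated by the mismatch of automorphism groups. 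Relatedly, the vertex local model is not always automorphism-free --- when the two outgoing weights at a trivalent vertex coincide, the local pair-of-pants cover carries a $\ZZ/2\ZZ$ automorphism --- and this feeds into the same bookkeeping. These are standard and fixable subtleties, present in every degeneration formula for Hurwitz numbers, but as stated your accounting does not close.
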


\begin{exa}\label{ex comp open H}
Using Theorem \ref{Hurwitz count} we compute easily the following open
Hurwitz numbers:

\noindent if $s=1$, $\delta(0)=2$, $\delta(1)=0$, $n(0)=1$, $n(1)=0$,
then $H(\delta,n)=\frac{1}{2}$.

\noindent if $s=2$, $\delta(0)=1$, $\delta(1)=2$,  $\delta(2)=0$,
$n(0)=n(2)=0$, $n(1)=1$,
then $H(\delta,n)=1$.

\noindent if $s=1$, $\delta(0)=3$, $\delta(1)=0$, 
$n(0)=2$, $n(1)=0$,
then $H(\delta,n)=1$.

\end{exa}

\small
\def\rightmark{\em Bibliography}

\bibliographystyle{alpha}
\bibliography{Biblio.bib}

\begin{thebibliography}{ABLdM11}

\bibitem[AB]{ArdBlo}
F.~Ardila and F.~Block.
\newblock Universal polynomials for severi degrees of toric surfaces.
\newblock arXiv:1012.5305.

\bibitem[ABLdM11]{Br8}
A.~Arroyo, E.~Brugall\'e, and L.~Lopez~de Medrano.
\newblock Recursive formula for {W}elschinger invariants.
\newblock {\em Int Math Res Notices}, 5:1107--1134, 2011.

\bibitem[Alu88]{Alu89}
P.~Aluffi.
\newblock The characteristic numbers of smooth plane cubics.
\newblock In {\em Algebraic geometry ({S}undance, {UT}, 1986)}, volume 1311 of
  {\em Lecture Notes in Math.}, pages 1--8. Springer, Berlin, 1988.

\bibitem[Alu90]{Alu90}
P.~Aluffi.
\newblock The enumerative geometry of plane cubics. {I}. {S}mooth cubics.
\newblock {\em Trans. Amer. Math. Soc.}, 317(2):501--539, 1990.

\bibitem[Alu91]{Al1}
P.~Aluffi.
\newblock The enumerative geometry of plane cubics. {II}. {N}odal and cuspidal
  cubics.
\newblock {\em Math. Ann.}, 289(4):543--572, 1991.

\bibitem[Alu92]{Alu92}
Paolo Aluffi.
\newblock Two characteristic numbers for smooth plane curves of any degree.
\newblock {\em Trans. Amer. Math. Soc.}, 329(1):73--96, 1992.

\bibitem[BBM11]{Br13}
B.~Bertrand, E.~Brugall{\'e}, and G.~Mikhalkin.
\newblock Tropical open {H}urwitz numbers.
\newblock {\em Rend. Semin. Mat. Univ. Padova}, 125:157--171, 2011.

\bibitem[Ber]{Ber4}
B.~Bertrand.
\newblock {É}numeration asymtotique de courbes réelles de cogenre fixé.
\newblock In preparation.

\bibitem[Ber08]{Ber3}
B.~Bertrand.
\newblock Real {Z}euthen numbers for two lines.
\newblock {\em Int. Math. Res. Not. IMRN}, (8):Art. ID rnn014, 8, 2008.

\bibitem[BGM]{BGM}
F.~Block, A.~Gathmann, and H.~Markwig.
\newblock Psi-floor diagrams and a {C}aporaso-{H}arris type recursion.
\newblock to appear in Israel J. of Math.

\bibitem[BLdM]{BruLop2012}
Erwan Brugall{é} and L.~Lopez~de Medrano.
\newblock Inflection points of real and tropical plane curves.
\newblock {\em to appear in Journal of Singularities}.

\bibitem[BMa]{Br6}
E.~Brugall\'e and G.~Mikhalkin.
\newblock Floor decompositions of tropical curves in any dimension.
\newblock In preparation. Preliminary version available at
  http://www.math.jussieu.fr/{$\sim$}brugalle/miscellaneous.html.

\bibitem[BMb]{Br12}
E.~Brugall\'e and G.~Mikhalkin.
\newblock Realizability of superabundant curves.
\newblock In preparation.

\bibitem[BM07]{Br7}
E.~Brugall\'e and G.~Mikhalkin.
\newblock Enumeration of curves via floor diagrams.
\newblock {\em Comptes Rendus de l'Académie des Sciences de Paris}, série I,
  345(6):329--334, 2007.

\bibitem[BM08]{Br6b}
E.~Brugall\'e and G.~Mikhalkin.
\newblock Floor decompositions of tropical curves : the planar case.
\newblock {\em Proceedings of 15th {G}\"okova {G}eometry-{T}opology
  Conference}, pages 64--90, 2008.

\bibitem[BP]{Br14}
E.~Brugall\'e and N.~Puignau.
\newblock Enumeration of real conics and maximal configurations.
\newblock To appear in Journal of the Eur. Math. Soc.

\bibitem[BPS08]{BIT}
N.~Berline, A.~Plagne, and C.~Sabbah, editors.
\newblock {\em Géométrie tropicale}. Éditions de l'École Polytechnique,
  Palaiseau, 2008.
\newblock available at http://www.math.polytechnique.fr/xups/vol08.html.

\bibitem[CH98]{CapHar1}
L.~Caporaso and J.~Harris.
\newblock Counting plane curves of any genus.
\newblock {\em Invent. Math.}, 131(2):345--392, 1998.

\bibitem[Cha64]{Chas1}
M.~Chasles.
\newblock Construction des coniques qui satisfont à cinq conditions.
\newblock {\em C. R. Acad. Sci. Paris}, 58:297--30, 1864.

\bibitem[CJM10]{CJM}
R.~Cavalieri, P.~Johnson, and H.~Markwig.
\newblock Tropical {H}urwitz numbers.
\newblock {\em J. Algebraic Combin.}, 32(2):241--265, 2010.

\bibitem[DT12]{Dic1}
A.~Dickenstein and L.~F. Tabera.
\newblock Singular tropical hypersurfaces.
\newblock {\em Discrete Comput. Geom.}, 47(2):430--453, 2012.

\bibitem[EGH00]{EGH}
Y.~Eliashberg, A.~Givental, and H.~Hofer.
\newblock Introduction to symplectic field theory.
\newblock {\em Geom. Funct. Anal.}, (Special Volume, Part II):560--673, 2000.
\newblock GAFA 2000 (Tel Aviv, 1999).

\bibitem[FM10]{FM}
S.~Fomin and G.~Mikhalkin.
\newblock Labelled floor diagrams for plane curves.
\newblock {\em Journal of the European Mathematical Society}, 12:1453--1496,
  2010.

\bibitem[Ghy08]{Ghy1}
E.~Ghys.
\newblock Trois mille deux cent soixante-quatre...
\newblock {\em Images des {M}athématiques, {CNRS}}, 2008.
\newblock http://images.math.cnrs.fr/Trois-mille-deux-cent-soixante.html.

\bibitem[GKP02]{GKP02}
T.~Graber, J.~Kock, and R.~Pandharipande.
\newblock Descendant invariants and characteristic numbers.
\newblock {\em Amer. J. Math.}, 124(3):611--647, 2002.

\bibitem[GKZ94]{GKZ}
I.~M. Gelfand, M.~M. Kapranov, and A.~V. Zelevinsky.
\newblock {\em Discriminants, resultants, and multidimensional determinants}.
\newblock Mathematics: Theory \& Applications. Birkh\"auser Boston Inc.,
  Boston, MA, 1994.

\bibitem[GM07]{GM1}
A.~Gathmann and H.~Markwig.
\newblock {The numbers of tropical plane curves through points in general
  position}.
\newblock {\em Journal für die reine und angewandte Mathematik (Crelle's
  Journal)}, 602:155--177, 2007.

\bibitem[GM08]{GM3}
A.~Gathmann and H.~Markwig.
\newblock {Kontsevich's formula and the WDVV equations in tropical geometry}.
\newblock {\em Adv. Math.}, 217(2):537--560, 2008.

\bibitem[IKS03]{IKS1}
I.~Itenberg, V.~Kharlamov, and E.~Shustin.
\newblock Welschinger invariant and enumeration of real rational curves.
\newblock {\em Int. Math. Research Notices}, 49:2639--2653, 2003.

\bibitem[IMS07]{Mik9}
I.~Itenberg, G~Mikhalkin, and E.~Shustin.
\newblock {\em Tropical Algebraic Geometry}, volume~35 of {\em Oberwolfach
  Seminars Series}.
\newblock Birkhäuser, 2007.

\bibitem[IP04]{IP00}
E.-N Ionel and T.~H. Parker.
\newblock The symplectic sum formula for {G}romov-{W}itten invariants.
\newblock {\em Ann. of Math.}, 159(2):935--1025, 2004.

\bibitem[Mai71]{Mai71}
S.~Maillard.
\newblock {\em Recherche des caract{\'e}ristiques des syst{\`e}mes
  {\'e}l{\'e}mentaires de courbes planes du troisi{\`e}me ordre.}
\newblock PhD thesis, Facult\'e des sciences de Paris, 1871.

\bibitem[Mik]{Mik08}
G.~Mikhalkin.
\newblock {Phase-tropical curves I. Realizability and enumeration}.
\newblock In preparation.

\bibitem[Mik00]{Mik-00}
G.~Mikhalkin.
\newblock Real algebraic curves, the moment map and amoebas.
\newblock {\em Ann. of Math. (2)}, 151(1):309--326, 2000.

\bibitem[Mik04]{Mik12}
G.~Mikhalkin.
\newblock Decomposition into pairs-of-pants for complex algebraic
  hypersurfaces.
\newblock {\em Topology}, 43:1035--106, 2004.

\bibitem[Mik05]{Mik1}
G.~Mikhalkin.
\newblock {Enumerative tropical algebraic geometry in $\mathbb R^2$}.
\newblock {\em J. Amer. Math. Soc.}, 18(2):313--377, 2005.

\bibitem[Mik06]{Mik3}
G.~Mikhalkin.
\newblock Tropical geometry and its applications.
\newblock In {\em International Congress of Mathematicians. Vol. II}, pages
  827--852. Eur. Math. Soc., Z\"urich, 2006.

\bibitem[Mik07]{Mik-whatis}
Grigory Mikhalkin.
\newblock What is{$\dots$}a tropical curve?
\newblock {\em Notices Amer. Math. Soc.}, 54(4):511--513, 2007.

\bibitem[MO07]{MikhalkinOkounkov}
G.~Mikhalkin and A.~Okounkov.
\newblock Geometry of planar log-fronts.
\newblock {\em Mosc. Math. J.}, 7(3):507--531, 575, 2007.

\bibitem[Nis]{Nishinou}
T.~Nishinou.
\newblock Correspondence theorems for tropical curves.
\newblock arXiv:0912.5090.

\bibitem[NS06]{NishinouSiebert}
T.~Nishinou and B.~Siebert.
\newblock Toric degenerations of toric varieties and tropical curves.
\newblock {\em Duke Math. J.}, 135(1):1--51, 2006.

\bibitem[Pan99]{Pan99}
R.~Pandharipande.
\newblock Intersections of {$\bold Q$}-divisors on {K}ontsevich's moduli space
  {$\overline M_{0,n}(\bold P^r,d)$} and enumerative geometry.
\newblock {\em Trans. Amer. Math. Soc.}, 351(4):1481--1505, 1999.

\bibitem[Pas08]{Passare}
M.~Passare.
\newblock How to compute $\sum 1/n^2$ by solving triangles.
\newblock {\em Amer. Math. Monthly}, 115:745--752, 2008.

\bibitem[RGST05]{St2}
J.~Richter-Gebert, B.~Sturmfels, and T.~Theobald.
\newblock First steps in tropical geometry.
\newblock In {\em Idempotent mathematics and mathematical physics}, volume 377
  of {\em Contemp. Math.}, pages 289--317. Amer. Math. Soc., Providence, RI,
  2005.

\bibitem[RTV97]{RoToVu}
F.~Ronga, A.~Tognoli, and T.~Vust.
\newblock The number of conics tangent to five given conics: the real case.
\newblock {\em Rev. Mat. Univ. Complut. Madrid}, 10(2):391--421, 1997.

\bibitem[Sch79]{Schu79}
H.~Schubert.
\newblock {\em Kalk\"ul der abz\"ahlenden {G}eometrie}.
\newblock Springer-Verlag, Berlin, 1979.
\newblock Reprint of the 1879 original, With an introduction by Steven L.
  Kleiman.

\bibitem[Shu05]{Sh3}
E.~Shustin.
\newblock {Patchworking singular algebraic curves, non-Archimedean amoebas and
  enumerative geometry}.
\newblock {\em Algebra i Analiz}, 17:170--214, 2005.

\bibitem[Sot]{Sot2}
F.~Sottile.
\newblock 3264 real conics.
\newblock
  http://www.math.tamu.edu/{$\sim$}sottile/research/stories/3264/index.html.

\bibitem[Thu97]{Thurston97}
W.~Thurston.
\newblock {\em Three-dimensional geometry and topology. {V}ol. 1}, volume~35 of
  {\em Princeton Mathematical Series}.
\newblock Princeton University Press, Princeton, NJ, 1997.
\newblock Edited by Silvio Levy.

\bibitem[Tyo12]{Tyomkin}
Ilya Tyomkin.
\newblock Tropical geometry and correspondence theorems via toric stacks.
\newblock {\em Math. Ann.}, 353(3):945--995, 2012.

\bibitem[Vak99]{Vak99}
R.~Vakil.
\newblock The characteristic numbers of quartic plane curves.
\newblock {\em Canad. J. Math.}, 51(5):1089--1120, 1999.

\bibitem[Vak00]{Vak1}
R.~Vakil.
\newblock The enumerative geometry of rational and elliptic curves in
  projective space.
\newblock {\em J. Reine Angew. Math. (Crelle's Journal)}, 529:101--153, 2000.

\bibitem[Vak01]{Vak01}
R.~Vakil.
\newblock Recursions for characteristic numbers of genus one plane curves.
\newblock {\em Ark. Mat.}, 39(1):157--180, 2001.

\bibitem[Vir88]{Viro-Euler}
O.~Ya. Viro.
\newblock Some integral calculus based on {E}uler characteristic.
\newblock In {\em Lecture Notes in Math.}, volume 1346, pages 127--138.
  Springer Verlag, 1988.

\bibitem[Vir01]{V9}
O.~Viro.
\newblock Dequantization of real algebraic geometry on logarithmic paper.
\newblock In {\em European Congress of Mathematics, Vol. I (Barcelona, 2000)},
  volume 201 of {\em Progr. Math.}, pages 135--146. Birkh\"auser, Basel, 2001.

\bibitem[Wel06]{Wel5}
J.~Y. Welschinger.
\newblock Towards relative invariants of real symplectic 4-manifolds.
\newblock {\em Geom. {F}unct. {A}nal.}, 16(5):1157--1182, 2006.

\bibitem[Zeu72]{Zeu2}
H.~G. Zeuthen.
\newblock D\'eterminations des caract\'eristique des syst\'emes
  \'el\'ementaires de cubiques.
\newblock {\em Comptes rendus des séances de l'{A}cadémie des sciences de
  {P}aris}, 74:521--526, 1872.

\bibitem[Zeu73]{Zeu73}
H.~G. Zeuthen.
\newblock Almindelige {E}genskaber ved {S}ystemer af plane {K}urver.
\newblock {\em naturvidenskabelig og mathematisk {A}fd.}, (10):286--393, 1873.

\end{thebibliography}

\end{document}